\newcommand{\Mdef}[2]{\newcommand{#1}{\relax \ifmmode #2 \else $#2$\fi}}
\DeclareMathOperator{\Ctwo}{C_2}
\newcommand{\sm}{\wedge}
\newcommand{\tensor}{\otimes}
\newcommand{\rost}{\bigstar}
\newcommand{\rhost}{*\rho -}
\newcommand{\Hom}{\mathrm{Hom}}
\newcommand{\Ext}{\mathrm{Ext}}
\Mdef{\bhom}{\mathbf{\hat{H}om}}
\Mdef{\Mod}{\mathrm{mod}}
\newcommand{\st}{\; | \;}
\numberwithin{equation}{section}
\newtheorem{thm}[equation]{Theorem}
\newtheorem{lemma}[equation]{Lemma}
\newtheorem{prop}[equation]{Proposition}
\newtheorem{cor}[equation]{Corollary}
\theoremstyle{definition}
\newtheorem{defn}[equation]{Definition}
\newtheorem{example}[equation]{Example}
\newtheorem{examples}[equation]{Examples}
\newtheorem{remark}[equation]{Remark}
\newcommand{\qqed}{\qed \\[1ex]}
\renewenvironment{proof}[1][\hspace*{-.8ex}]{\noindent {\bf Proof #1:\;}}{\qqed}
\Mdef{\PH} {\Phi^H}
\Mdef{\PK} {\Phi^K}
\Mdef{\PL} {\Phi^L}
\Mdef{\PT} {\Phi^{\T}}
\Mdef{\ef}{E{\cF}_+}
\Mdef{\etf}{\widetilde{E}{\cF}}
\Mdef{\eg}{E{G}_+}
\Mdef{\etg}{\tilde{E}{G}}
\newcommand{\Mpi}{\underline{\pi}^{C_2}}
\Mdef{\infl}{\mathrm{inf}}
\Mdef{\defl}{\mathrm{def}}
\Mdef{\res}{\mathrm{res}}
\Mdef{\ind}{\mathrm{ind}}
\Mdef{\coind}{\mathrm{coind}}
\Mdef{\univ}{\mathcal{U}}
\newcommand{\CP}{\mathbb{C} P}
\Mdef{\Fp}{\mathbb{F}_p}
\Mdef{\Zpinfty}{\Z /p^{\infty}}
\Mdef{\Zpadic}{\Z_p^{\wedge}}
\newcommand{\bi}{\begin{itemize}}
\newcommand{\be}{\begin{enumerate}}
\newcommand{\bc}{\begin{center}}
\newcommand{\bd}{\begin{description}}
\newcommand{\ei}{\end{itemize}}
\newcommand{\ee}{\end{enumerate}}
\newcommand{\ec}{\end{center}}
\newcommand{\ed}{\end{description}}
\newcommand{\lra}{\longrightarrow}
\Mdef{\we}{\mathbf{we}}
\Mdef{\fib}{\mathbf{fib}}
\Mdef{\cof}{\mathbf{cof}}
\Mdef{\BI}{\mathcal{BI}}
\newcommand{\holim}{\mathop{ \mathop{\mathrm {holim}} \limits_\leftarrow} \nolimits}
\newcommand{\hocolim}{\mathop{  \mathop{\mathrm {holim}}\limits_\rightarrow} \nolimits}
\DeclareMathOperator*{\hcolim}{hocolim}
\DeclareMathOperator*{\clim}{colim}
\DeclareMathOperator*{\hlim}{holim}
\Mdef{\A}{\mathbb{A}}
\Mdef{\B}{\mathbb{B}}
\Mdef{\C}{\mathbb{C}}
\Mdef{\D}{\mathbb{D}}
\Mdef{\E}{\mathbb{E}}
\Mdef{\T}{\mathbb{T}}
\Mdef{\F}{\mathbb{F}}
\Mdef{\G}{\mathbb{G}}
\Mdef{\I}{\mathbb{I}}
\Mdef{\N}{\mathbb{N}}
\Mdef{\Q}{\mathbb{Q}}
\Mdef{\R}{\mathbb{R}}
\Mdef{\bbS}{\mathbb{S}}
\Mdef{\Z}{\mathbb{Z}}
\Mdef{\bA}{\mathbb{A}}
\Mdef{\bB}{\mathbb{B}}
\Mdef{\bC}{\mathbb{C}}
\Mdef{\bD}{\mathbb{D}}
\Mdef{\bE}{\mathbb{E}}
\Mdef{\bF}{\mathbb{F}}
\Mdef{\bG}{\mathbb{G}}
\Mdef{\bH}{\mathbb{H}}
\Mdef{\bI}{\mathbb{I}}
\Mdef{\bJ}{\mathbb{J}}
\Mdef{\bK}{\mathbb{K}}
\Mdef{\bL}{\mathbb{L}}
\Mdef{\bM}{\mathbb{M}}
\Mdef{\bN}{\mathbb{N}}
\Mdef{\bO}{\mathbb{O}}
\Mdef{\bP}{\mathbb{P}}
\Mdef{\bQ}{\mathbb{Q}}
\Mdef{\bR}{\mathbb{R}}
\Mdef{\bS}{\mathbb{S}}
\Mdef{\bT}{\mathbb{T}}
\Mdef{\bU}{\mathbb{U}}
\Mdef{\bV}{\mathbb{V}}
\Mdef{\bW}{\mathbb{W}}
\Mdef{\bX}{\mathbb{X}}
\Mdef{\bY}{\mathbb{Y}}
\Mdef{\bZ}{\mathbb{Z}}
\Mdef{\cA}{\mathcal{A}}
\Mdef{\cB}{\mathcal{B}}
\Mdef{\cC}{\mathcal{C}}
\Mdef{\mcD}{\mathcal{D}} 
\Mdef{\cE}{\mathcal{E}}
\Mdef{\cF}{\mathcal{F}}
\Mdef{\cG}{\mathcal{G}}
\Mdef{\mcH}{\mathcal{H}} 
\Mdef{\cI}{\mathcal{I}}
\Mdef{\cJ}{\mathcal{J}}
\Mdef{\cK}{\mathcal{K}}
\Mdef{\mcL}{\mathcal{L}}
\Mdef{\cM}{\mathcal{M}}
\Mdef{\cN}{\mathcal{N}}
\Mdef{\cO}{\mathcal{O}}
\Mdef{\cP}{\mathcal{P}}
\Mdef{\cQ}{\mathcal{Q}}
\Mdef{\mcR}{\mathcal{R}}
\Mdef{\cS}{\mathcal{S}}
\Mdef{\cT}{\mathcal{T}}
\Mdef{\cU}{\mathcal{U}}
\Mdef{\cV}{\mathcal{V}}
\Mdef{\cW}{\mathcal{W}}
\Mdef{\cX}{\mathcal{X}}
\Mdef{\cY}{\mathcal{Y}}
\Mdef{\cZ}{\mathcal{Z}}
\Mdef{\ca}{\mathcal{a}}
\Mdef{\ct}{\mathcal{t}}
\Mdef{\At}{\tilde{A}}
\Mdef{\Bt}{\tilde{B}}
\Mdef{\Ct}{\tilde{C}}
\Mdef{\Et}{\tilde{E}}
\Mdef{\Ht}{\tilde{H}}
\Mdef{\Kt}{\tilde{K}}
\Mdef{\Lt}{\tilde{L}}
\Mdef{\Mt}{\tilde{M}}
\Mdef{\Nt}{\tilde{N}}
\Mdef{\Pt}{\tilde{P}}
\Mdef{\tA}{\tilde{A}}
\Mdef{\tB}{\tilde{B}}
\Mdef{\tC}{\tilde{C}}
\Mdef{\tE}{\tilde{E}}
\Mdef{\tH}{\tilde{H}}
\Mdef{\tK}{\tilde{K}}
\Mdef{\tL}{\tilde{L}}
\Mdef{\tM}{\tilde{M}}
\Mdef{\tN}{\tilde{N}}
\Mdef{\tP}{\tilde{P}}
\Mdef{\ft}{\tilde{f}}
\Mdef{\xt}{\tilde{x}}
\Mdef{\yt}{\tilde{y}}
\Mdef{\Ab}{\overline{A}}
\Mdef{\Bb}{\overline{B}}
\Mdef{\Cb}{\overline{C}}
\Mdef{\Db}{\overline{D}}
\Mdef{\Eb}{\overline{E}}
\Mdef{\Fb}{\overline{F}}
\Mdef{\Gb}{\overline{G}}
\Mdef{\Hb}{\overline{H}}
\Mdef{\Ib}{\overline{I}}
\Mdef{\Jb}{\overline{J}}
\Mdef{\Kb}{\overline{K}}
\Mdef{\Lb}{\overline{L}}
\Mdef{\Mb}{\overline{M}}
\Mdef{\Nb}{\overline{N}}
\Mdef{\Ob}{\overline{O}}
\Mdef{\Qb}{\overline{Q}}
\Mdef{\Rb}{\overline{R}}
\Mdef{\Sb}{\overline{S}}
\Mdef{\Tb}{\overline{T}}
\Mdef{\Ub}{\overline{U}}
\Mdef{\Vb}{\overline{V}}
\Mdef{\Wb}{\overline{W}}
\Mdef{\Xb}{\overline{X}}
\Mdef{\Yb}{\overline{Y}}
\Mdef{\Zb}{\overline{Z}}
\Mdef{\db}{\overline{d}}
\Mdef{\hb}{\overline{h}}
\Mdef{\qb}{\overline{q}}
\Mdef{\rb}{\overline{r}}
\Mdef{\tb}{\overline{t}}
\Mdef{\ub}{\overline{u}}
\Mdef{\vb}{\overline{v}}
\Mdef{\hc}{\hat{c}}
\Mdef{\he}{\hat{e}}
\Mdef{\hf}{\hat{f}}
\Mdef{\hA}{\hat{A}}
\Mdef{\hH}{\hat{H}}
\Mdef{\hJ}{\hat{J}}
\Mdef{\hM}{\hat{M}}
\Mdef{\hP}{\hat{P}}
\Mdef{\hQ}{\hat{Q}}
\Mdef{\thetab}{\overline{\theta}}
\Mdef{\phib}{\overline{\phi}}
\Mdef{\uA}{\underline{A}}
\Mdef{\uB}{\underline{B}}
\Mdef{\uC}{\underline{C}}
\Mdef{\uD}{\underline{D}}
\Mdef{\uvb}{\underline{\vb}}
\Mdef{\ul}{\underline{l}}
\Mdef{\bolda}{\mathbf{a}}
\Mdef{\boldb}{\mathbf{b}}
\Mdef{\bfD}{\mathbf{D}}
\Mdef{\fm}{\frak{m}}
\Mdef{\fp}{\frak{p}}
\Mdef{\eps}{\epsilon}
\newcommand{\Zu}{\underline{\Z}}
\newcommand{\HZu}{H\Zu}
\newcommand{\kR}{k\R}
\newcommand{\MUR}{M\R}
\newcommand{\pp}{\delta} 
\newcommand{\Ftwo}{\mathbb{F}_2}
\newcommand{\siftys}{S^{\infty \sigma}}
\newcommand{\eqp}{(EC_2)_+}
\newcommand{\BPn}{BP\langle n \rangle}
\newcommand{\BPRn}{BP\R \langle n \rangle}
\newcommand{\BPR}{BP\R}
\newcommand{\vbn}[1]{\overline{v}_{#1}}
\newcommand{\cEtn}{\widetilde{\cE}_n}
\newcommand{\vob}{\vbn{1}}
\newcommand{\vtb}{\vbn{2}}
\newcommand{\tmfot}{tmf_1(3)}
\newcommand{\cEt}{\widetilde{\mathcal{E}}}
\newcommand{\xu}{\mathbf{x}}
\newcommand{\Pb}[1]{\overline{P}_{#1}}
\newcommand{\Jbn}{\overline{J}_n}
\newcommand{\alphat}{\tilde{\alpha}}
\newcommand{\cell}{\mathrm{Cell}}
\definecolor{todo}{rgb}{1,0,0}
\begin{document}
\title{Gorenstein duality for Real spectra}

\author{J.P.C.Greenlees}
\address{School of Mathematics and Statistics, Hicks Building, 
Sheffield S3 7RH. UK.}
\email{j.greenlees@sheffield.ac.uk}

\author{L.Meier}
\address{Universit\"at Bonn, Endenicher Allee 60, 53115 Bonn}
\email{lmeier@math.uni-bonn.de}
\date{}

\begin{abstract}
Following Hu and Kriz, we study the $C_2$-spectra $\BPRn$ and $E\R(n)$
that refine the usual truncated Brown--Peterson and the
Johnson--Wilson spectra. In particular, we show that they satisfy
Gorenstein duality with a representation grading shift and identify
their Anderson duals. We also compute the associated local cohomology spectral sequence in the cases $n=1$ and $2$.
\end{abstract}

\thanks{We are grateful to the Hausdorff Institute of Mathematics in
  Bonn for providing us the opportunity in Summer 2015 for the
  discussions starting this work.
We also  thank Vitaly Lorman and Nicolas Ricka for helpful discussions. }
\maketitle

\tableofcontents
\newpage

\section{Introduction}
\subsection{Background}
\subsubsection*{Philosophy}
For us, \emph{Real spectrum} is a loose term for a $C_2$-spectrum built
from the $C_2$-spectrum $M\R$ of Real bordism, considered by Araki,
Landweber and Hu--Kriz \cite{HK}. The present article shows that
bringing together Real spectra and Gorenstein duality reveals rich and interesting structures. 

It is part of our philosophy that theorems about Real spectra can
often be shown in the same style as theorems for the underlying
complex oriented spectra although the details might be more
difficult, and groups needed to be graded over the real representation
ring $RO(C_2)$ (indicated by $\rost$) rather than over the integers
(indicated by  $*$).   This extends a well known
phenomenon: complex orientability of equivariant spectra
makes it easy to reduce questions to integer gradings, and we show 
that even  in the absence of complex orientability, good
behaviour of coefficients can be seen by grading with
representations.  

\subsubsection*{Bordism with reality}
In studying these spectra, the real regular representation $\rho=\R
C_2$ plays a special role. We write $\sigma$ for the sign
representation on $\R $ so that $\rho =1+\sigma$. 
 One of the crucial features of $M\R$ is that 
 it is \emph{strongly even} in the sense of \cite{HM}, i.e.
\begin{enumerate}
\item \label{item:Restr} the restriction functor $\pi_{k\rho}^{C_2}M\R \to \pi_{2k}MU$ is an isomorphism for all $k\in\Z$, and
\item the groups $\pi_{k\rho-1}^{C_2}M\R$ are zero for all $k\in\Z$.
\end{enumerate}

We now localize at 2, and (with two  exceptions) all spectra and
abelian groups will henceforth  be $2$-local. The Quillen idempotent
has a $C_2$-equivariant refinement, and this defines the
$C_2$-spectrum $\BPR$ as a summand of $M\R_{(2)}$. The isomorphism (\ref{item:Restr})  allows us to lift the usual $v_i$ to classes $\vb_i\in\pi_{(2^i-1)\rho}^{C_2}\BPR$. The Real spectra we are interested in are quotients of $\BPR$ by sequences of $\vb_i$ and localizations thereof. For example, we can follow \cite{HK} and \cite{Hu} and define 
$$\BPRn = \BPR/(\vb_{n+1},\vb_{n+2},\dots)$$ 
and 
$$E\R(n) = \BPRn[\vb_n^{-1}].$$
These spectra are still strongly even, as we will show. Apart from the big literature on $K$-theory with Reality (e.g.\ \cite{Ati66}, \cite{Dugger} and \cite{B-G10}), Real spectra have been studied by Hu and Kriz, in a series of papers by Kitchloo and Wilson (see e.g.\ \cite{K-W15} for one of the latest installments), by Banerjee \cite{Banerjee}, by Ricka \cite{Ricka} and by Lorman \cite{Lorman}. 
 A crucial point is that a morphism between strongly even
 $C_2$-spectra is an equivalence if it is an equivalence of underlying
 spectra \cite[Lemma 3.4]{HM}. 

We are interested in two dualities for Real spectra: Anderson duality
and Gorenstein duality. These are closely related \cite{GS} but apply
to different classes of spectra. 

\subsubsection*{Anderson duality}
The Anderson dual $\Z^X$ of a spectrum $X$ is an integral version of
its Brown-Comenetz dual (in accordance with our general principle,
$\Z$ denotes the $2$-local integers). The homotopy groups of the
Anderson dual  lie in a short exact sequence
\begin{align}\label{eq:IntroductionSES} 0 \to \Ext_{\Z}^1(\pi_{-*-1}X ,\Z) \to \pi_*(\Z^X) \to
\Hom_{\Z}(\pi_{-*}X, \Z) \to 0. \end{align}

One reason to be interested in the computation of Anderson duals is
that they show up in universal coefficient sequences (see
\cite{Anderson} or Section \ref{subsec:Anderson}). The situation is
nicest for spectra that are Anderson self-dual in the sense that
$\Z^X$ 
is a suspension of $X$. Many important spectra like $KU$, $KO$, $Tmf$ \cite{Sto12} or
$Tmf_1(3)$ are indeed Anderson self-dual. These examples are all unbounded as the sequence \eqref{eq:IntroductionSES} nearly forces them to be.  

Anderson duality also works $C_2$-equivariantly
as first explored by \cite{Ricka}; the only change in the above short
exact sequence is that equivariant homotopy groups are used. The $C_2$-spectra $K\R$
\cite{H-S14} and $Tmf_1(3)$ \cite{HM} are also $C_2$-equivariantly
self-Anderson dual, at least if we allow suspensions by
\emph{representation spheres}. 

One simpler example is essential background: if $\Zu$ denotes the constant Mackey functor (i.e., with restriction being the identity and
induction being multiplication by 2) then the Anderson dual of its
Eilenberg-MacLane spectrum is the Eilenberg-MacLane spectrum for the
dual Mackey functor  $\underline{\Z}^*=\Hom_{\Z}(\underline{\Z}, \Z)$
(i.e., with restriction being multiplication by 2 and induction being
the identity).  It is then easy to check that in fact $H(\Zu^*)\simeq \Sigma^{2(1-\sigma)}\HZu$.
(From one point of view this is the fact that $\R P^1=S(2\sigma)/\Ctwo$ is equivalent to the circle). 
The dualities we find are in a sense all 
dependent on this one. 

\subsubsection*{Gorenstein duality}
By contrast with Anderson self-duality,  many connective ring spectra
are Gorenstein in the sense of \cite{DGI}. We sketch the theory here,
 explaining it more  fully in Sections \ref{sec:kR} and \ref{sec:dishonest}.

The starting point is a connective commutative ring $C_2$-spectrum $R$,
whose $0$th homotopy Mackey functor is constant at $\Z$: 
$$\underline{\pi}^{\Ctwo}_0(R)\cong \Zu.$$ 
This gives us a map $R\lra \HZu$ of commutative ring spectra by killing
homotopy groups.  We say that $R$
is {\em Gorenstein} of shift $a\in RO(C_2)$ if there is an equivalence of $R$-modules
$$\Hom_R(\HZu ,R)\simeq \Sigma^a\HZu. $$

We are interested in the duality this often entails. 
Note that the  Anderson dual $\Z^R$ obviously  has the Matlis lifting property
$$\Hom_R(\HZu, \Z^R)\simeq \HZu^*,  $$
where $\Z^*=\Hom_{\Z}(\Zu, \Z)$ as above. Thus if $R$ is Gorenstein,
in view of the 
equivalence  $H(\Zu^*)\simeq \Sigma^{2(1-\sigma)}\HZu$,  we have equivalences
\begin{align*}\Hom_R(\HZu ,\cell_{\HZu} R)&\simeq \Hom_R(\HZu ,R)\\
&\simeq \Sigma^a\HZu\\
&\simeq \Sigma^{a-2(1-\sigma)}H(\Zu^*) \\
&\simeq
\Hom_R(\HZu, \Sigma^{a-2(1-\sigma)}\Z^R).\end{align*}
Here, $\cell_{\HZu}$ denotes the $\HZu$-$\mathbb{R}$-cellularization as in Section \ref{sec:Cell}. We would like to remove the $\Hom_R(\HZu, \cdot)$ from the composite equivalence above. 

\begin{defn}
We say that $R$ has {\em Gorenstein duality} of shift $b$ if we have
an equivalence of $R$-modules
$$\cell_{\HZu} R \simeq \Sigma^b \Z^R.$$
\end{defn}

 As in the non-equivariant setting, the passage from Gorenstein to
 Gorenstein duality requires showing that
the above composite equivalence is compatible with the right  
action of $\cE =\Hom_R(\HZu, \HZu)$. This turns out to be considerably
more delicate than the non-equivariant counterpart because
connectivity is harder to control; but if  one can lift the
$R$-equivalence to  an $\cE$-equivalence,  the conclusion is that if $R$
is Gorenstein of shift $a$ then it has Gorenstein duality of shift $b=a-2(1-\sigma)$. 


\subsubsection*{Local cohomology}
The duality statement becomes more interesting when the cellularization can be
constructed algebraically.  For any finitely generated ideal $J$ of the $RO(C_2)$-graded
coefficient ring  $R_{\rost}^{C_2}$, we may form the stable Koszul
complex $\Gamma_JR$, which only depends on the radical of $J$. In our
examples, this applies to the augmentation ideal $J=\ker(R_{\rost}^{C_2}\lra
\HZu_{\rost}^{C_2})$, which may be radically generated by finitely
many  elements $\vb_i$ in degrees which are multiples of
$\rho$.  Adapting the usual proof to the Real context, 
Proposition \ref{prop:cell}  shows that 
$\Gamma_JR\lra R $ is
$\HZu$-$\R$-cellularization: 
$$\cell_{\HZu}R\simeq \Gamma_JR. $$
The $RO(C_2)$-graded homotopy groups of $\Gamma_JR$ can be computed using a spectral sequence involving local cohomology. 

\subsubsection*{Conclusion}
 In favourable cases the Gorenstein condition on a ring spectrum $R$
 implies Gorenstein duality for $R$; this in turn establishes a strong
 duality property on the $RO(C_2)$-graded coefficient ring, expressed using local cohomology. 

\subsection{Results}
Our main theorems establish Gorenstein duality for a large family of Real spectra. We present in this introduction the particular cases of $\BPRn$ and $E\R(n)$, deferring the more general theorem to Section \ref{sec:results}. Let again $\sigma$ denote the non-trivial
representation of $\Ctwo$ on the real line and $\rho =1+\sigma$ the 
real regular representation. Furthermore set $D_n = 2^{n+1}-n-2$ so
that $D_n\rho = |\vb_1|+\cdots + |\vb_n|$. Other terms in the statement will be explained
in Section \ref{sec:AKG}. 
 
\begin{thm}
For each $n\geq 1$ the $\Ctwo$-spectrum $\BPRn$ is Gorenstein
of shift $-D_n\rho -n$, and has Gorenstein duality of shift
$-D_n\rho -n-2(1-\sigma)$. This means that
$$\Z_{(2)}^{\BPRn} \simeq \Sigma^{D_n\rho+n+2(1-\sigma)} \Gamma_{\Jb_n}\BPRn,$$
where $\Jb_n = (\vb_1,\dots, \vb_n)$. This induces a local cohomology
spectral sequence
$$H^*_{\Jb_n}(\BPRn^{\Ctwo}_{\rost})\Rightarrow \pi^{\Ctwo}_{\rost}(\Sigma^{-D_n\rho
  -n-2(1-\sigma)}\Z_{(2)}^{\BPRn}). $$
\end{thm}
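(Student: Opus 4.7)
The plan is to follow the three-stage blueprint sketched in the introduction: first verify the Gorenstein condition for $\BPRn$ by a Koszul argument, then upgrade it to Gorenstein duality via the Matlis lifting built from $H(\Zu^*)\simeq \Sigma^{2(1-\sigma)}\HZu$, and finally translate the resulting statement about $\cell_{\HZu}\BPRn$ into an Anderson dual identification and the associated local cohomology spectral sequence.

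For the first step, the map $\BPRn \to \HZu$ is obtained by iteratively killing the classes $\vb_1, \ldots, \vb_n$, which live in degree $|\vb_i|=(2^i-1)\rho$ and, by the strongly-even property of $\BPRn$, form a regular sequence in the $RO(\Ctwo)$-graded coefficient ring. For a single cofibre sequence $\Sigma^{|\vb_i|}\BPRn \xrightarrow{\vb_i} \BPRn \to \BPRn/\vb_i$, applying $\Hom_{\BPRn}(-,\BPRn)$ yields
$$\Hom_{\BPRn}(\BPRn/\vb_i, \BPRn) \simeq \Sigma^{-|\vb_i|-1}(\BPRn/\vb_i).$$
Iterating along the whole regular sequence gives a total internal shift of $\sum_{i=1}^n (2^i-1)\rho = D_n\rho$ and $n$ Koszul suspensions, so
$$\Hom_{\BPRn}(\HZu, \BPRn) \simeq \Sigma^{-D_n\rho - n}\HZu,$$
which is precisely the Gorenstein condition with shift $a = -D_n\rho - n$.

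For the second step, the computation displayed in the introduction already produces, for any Gorenstein $R$ of shift $a$, an equivalence of $R$-modules
$$\Hom_R(\HZu, \cell_{\HZu} R) \simeq \Hom_R(\HZu, \Sigma^{a-2(1-\sigma)}\Z^R),$$
so the remaining task is to remove $\Hom_R(\HZu,-)$ from both sides. Concretely, one must promote this underlying $R$-equivalence to an equivalence of right modules over $\cE = \Hom_R(\HZu, \HZu)$, since both $\cell_{\HZu}R$ and $\Z^R$ are then recovered by tensoring over $\cE$ with $\HZu$. This is the main technical obstacle: as the introduction warns, controlling the relevant connectivity in the $RO(\Ctwo)$-graded setting is considerably trickier than in the non-equivariant case, because $RO(\Ctwo)$ is not totally ordered and there is no a priori ``bottom cell'' to anchor the obstruction theory. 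I would attack this by determining $\cE$ explicitly (expected to be an $RO(\Ctwo)$-graded exterior algebra on duals of $\vb_1, \ldots, \vb_n$, concentrated in suitable $\rho$-negative degrees), establishing its $\rho$-coconnectivity, and then running the standard obstruction argument, exploiting that morphisms between strongly-even $\Ctwo$-spectra are detected on their underlying spectra to control the critical layers.

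For the third step, Proposition \ref{prop:cell} identifies $\cell_{\HZu}\BPRn \simeq \Gamma_{\Jb_n}\BPRn$ with $\Jb_n = (\vb_1,\dots,\vb_n)$, so the Gorenstein duality equivalence of step two rewrites as
$$\Z_{(2)}^{\BPRn} \simeq \Sigma^{D_n\rho + n + 2(1-\sigma)}\Gamma_{\Jb_n}\BPRn,$$
which is the stated identification of the Anderson dual. The spectral sequence is then the standard local cohomology spectral sequence associated to $\Gamma_{\Jb_n}$ acting on the $RO(\Ctwo)$-graded ring $\BPRn^{\Ctwo}_{\rost}$, with $E_2$-page $H^*_{\Jb_n}(\BPRn^{\Ctwo}_{\rost})$ converging to $\pi^{\Ctwo}_{\rost}\Gamma_{\Jb_n}\BPRn$; inserting the identification above rewrites the abutment as $\pi^{\Ctwo}_{\rost}(\Sigma^{-D_n\rho - n - 2(1-\sigma)}\Z_{(2)}^{\BPRn})$, as required.
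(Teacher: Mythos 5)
Your blueprint is correct in outline and matches the paper's Gorenstein-side strategy, and the shift computation in step one is right, but there is a genuine gap in how you set up the framework and a real risk in step two that the paper addresses differently.

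The most serious issue: you compute $\Hom_{\BPRn}(\HZu,\BPRn)$ and work with $\cE=\Hom_{\BPRn}(\HZu,\HZu)$, which presupposes that $\BPRn$ is a (commutative) ring spectrum. For $n\geq 3$ this is not known, precisely as in the non-equivariant story for $BP$ and $BP\langle n\rangle$. The paper explicitly sidesteps this by working over the genuine commutative ring $S=\MUR_{(2)}$: the Gorenstein condition is formulated as $\Hom_S(S/\Jbn,\BPRn)\simeq\Sigma^{-D_n\rho-n}\HZu$ with $\Jbn=(\vb_1,\dots,\vb_n)$ an ideal in $\pi^{C_2}_\bigstar S$, and the relevant endomorphism algebra is $\cEtn=\Hom_S(S/\Jbn,S/\Jbn)$ rather than $\Hom_{\BPRn}(\HZu,\HZu)$. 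Your Koszul calculation of the shift still goes through in this framework (each self-dual cofibre $\Sigma^{|\vb_i|}S\to S\to S/\vb_i$ contributes $\Sigma^{-|\vb_i|-1}$), but as written your argument is not valid for general $n$.

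Your step two is also more optimistic than what is actually carried out. You propose to identify $\cE$ explicitly as an exterior algebra and then prove \emph{uniqueness} of the right $\cE$-module structure on $\HZu$, in analogy with Lemma~\ref{lem:UniquenesskR} for $\kR$. The paper pointedly declines to prove such a uniqueness statement for $\cEtn$ when $n>1$ (``This may be true for $\cEtn$-module structures, but we will instead just prove\ldots that the two particular $\cEtn$-modules\ldots are equivalent''). Instead it builds, by obstruction theory over a filtration of $X_1=\Hom_S(S/\Jbn,\Sigma^{D_n\rho+n}S)$ with cells in degrees $-d\rho$ (using the vanishing $\pi^{C_2}_{-d\rho-1}\HZu=0$), a specific map $\alpha:X_1\to Y_2$, and then extends $\alpha$ over $X_1\tensor_S R$ by translating the lifting problem into one over $S$-modules and exploiting that $T\to T\tensor_S R$ is a split inclusion of underlying $MU$-modules (via the homotopy unital $MU$-algebra structure of $BP\langle n\rangle$). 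That split-injectivity argument on $\underline\pi^{C_2}_0$ and $\underline\pi^{C_2}_{-1}$ is the concrete device that replaces the uniqueness you propose; your sketch of ``establishing $\rho$-coconnectivity and running the standard obstruction argument'' would need to be fleshed out to something of this shape, and in particular you would have to confront the fact that neither side of the comparison a priori carries a compatible $\cEtn$-module structure coming from a ring map $\BPRn\to\HZu$.

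Step three is fine once steps one and two are repaired: Proposition~\ref{prop:cell} identifies $\Gamma_{\Jbn}\BPRn$ with the $\HZu$-$\R$-cellularization, and the local cohomology spectral sequence is the standard one from Section~\ref{sec:Koszul}.
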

\vspace{0.3cm}
\begin{thm}\label{thm:ER(n)}
For each $n\geq 1$ the $\Ctwo$-spectrum $E\R(n)$ has Gorenstein duality of shift
$-D_n\rho -(n-1)-2(1-\sigma)$. This means that
\begin{align*}
 \Z_{(2)}^{E\R(n)} &\simeq \Sigma^{D_n\rho+(n-1)+2(1-\sigma)} \Gamma_{\Jb_{n-1}}E\R(n) \\
 &\simeq \Sigma^{(n+2)(2^{2n+1}-2^{n+2})+n+3}\Gamma_{J_{n-1}}E\R(n) ,
\end{align*}
for $J_{n-1} = \Jb_{n-1}\cap\pi_*^{C_2}E\R(n)$. This induces likewise a local cohomology spectral sequence.
\end{thm}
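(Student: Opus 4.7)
The plan is to derive this statement from the Gorenstein duality for $\BPRn$ just established, exploiting the identification $E\R(n)=\BPRn[\vb_n^{-1}]$. The shift in the exponent (from $-n$ for $\BPRn$ to $-(n-1)$ for $E\R(n)$) arises from a single cofibre/fibre rotation accompanying the removal of $\vb_n$ from the defining ideal; the rest of the numerology is inherited directly from the $\BPRn$ case.

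The comparison is set up via two parallel triangles obtained from the $\vb_n$-localization cofibre sequence $\Gamma_{\vb_n}\BPRn \to \BPRn \to E\R(n)$ of $\BPRn$-modules. First, applying $\Gamma_{\Jb_{n-1}}$, and using $\Gamma_{\Jb_{n-1}}\Gamma_{\vb_n}=\Gamma_{\Jb_n}$ (since $\Jb_n = \Jb_{n-1}+(\vb_n)$) together with the fact that $\Gamma_{\Jb_{n-1}}$ commutes with $\vb_n$-localization, yields the cofibre sequence
\[ \Gamma_{\Jb_n}\BPRn \to \Gamma_{\Jb_{n-1}}\BPRn \to \Gamma_{\Jb_{n-1}}E\R(n). \]
Second, Anderson-dualizing the same localization sequence produces the fibre sequence
\[ \Z^{E\R(n)} \to \Z^{\BPRn} \to \Z^{\Gamma_{\vb_n}\BPRn}. \]
Writing $c_n = D_n\rho+n+2(1-\sigma)$, the preceding theorem identifies the middle terms through $\Z^{\BPRn}\simeq \Sigma^{c_n}\Gamma_{\Jb_n}\BPRn$. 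The remaining ingredient is a compatible identification of the right-hand terms,
\[ \Z^{\Gamma_{\vb_n}\BPRn} \simeq \Sigma^{c_n}\Gamma_{\Jb_{n-1}}\BPRn. \]
Granting this, applying $\Sigma^{c_n}$ to the first cofibre sequence matches it term-by-term with the rotation of the second, and reading off the third term gives $\Sigma^{c_n}\Gamma_{\Jb_{n-1}}E\R(n) \simeq \Sigma \Z^{E\R(n)}$, equivalently $\Z^{E\R(n)} \simeq \Sigma^{c_n - 1}\Gamma_{\Jb_{n-1}}E\R(n)$, the claimed shift $D_n\rho+(n-1)+2(1-\sigma)$.

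The main obstacle is the local-duality identification $\Z^{\Gamma_{\vb_n}\BPRn} \simeq \Sigma^{c_n}\Gamma_{\Jb_{n-1}}\BPRn$ together with compatibility of the connecting map with that of the cofibre sequence downstairs. I expect this to follow from a $\Ctwo$-equivariant form of Greenlees--May local duality applied to the Gorenstein ring $\BPRn$, with careful tracking of $RO(\Ctwo)$-graded suspensions; in the general framework the paper develops, it is most likely an instance of a localization principle asserting that inverting a single generator of the defining ideal preserves Gorenstein duality with a unit shift. Finally, the alternative equivalence phrased in terms of the integer-graded ideal $J_{n-1}=\Jb_{n-1}\cap\pi_*^{\Ctwo}E\R(n)$ follows from the invertibility of $\vb_n$ in $E\R(n)$: each $\vb_i$ with $i<n$ is associate to an integer-graded element via multiplication by a suitable power of $\vb_n$, so the two local cohomologies agree up to a concrete $RO(\Ctwo)$-shift computed directly from $|\vb_n|=(2^n-1)\rho$ and $\rho=1+\sigma$.
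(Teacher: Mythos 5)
Your overall strategy---derive the $E\R(n)$ result from the $\BPRn$ result by comparing the two triangles obtained from the localization cofibre sequence $\Gamma_{\vb_n}\BPRn\to\BPRn\to E\R(n)$---is structurally sound and does lead to the right conclusion, and the shift bookkeeping is correct. However, the key step, namely the identification $\Z^{\Gamma_{\vb_n}\BPRn}\simeq\Sigma^{c_n}\Gamma_{\Jb_{n-1}}\BPRn$ together with the compatibility of connecting maps, is exactly where the content lies, and you only say that you ``expect'' it to follow from an equivariant Greenlees--May local duality. This leaves a genuine gap. The ``obvious'' local-duality input produces only $\Z^{\Gamma_{\vb_n}\BPRn}\simeq (\Z^{\BPRn})^{\wedge}_{\vb_n}\simeq\Sigma^{c_n}(\Gamma_{\Jb_n}\BPRn)^{\wedge}_{\vb_n}$; to pass from this $\vb_n$-completion to $\Sigma^{c_n}\Gamma_{\Jb_{n-1}}\BPRn$ you still have to carry out essentially the same argument the paper uses to prove the $M[\vb_n^{-1}]$ theorem directly. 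Concretely, the paper's proof identifies $\holim(\cdots\xrightarrow{\vb_n}\Gamma_{\vb_n}N)$ with $\Sigma^{-1}N[\vb_n^{-1}]$ when $N=\Gamma_{\Jb_{n-1}}\BPRn$ using the bounded $\vb_n$-divisibility lemmas, and then commutes $\Gamma_{\Jb_{n-1}}$ past $\vb_n$-localization. In your setup the very same lemmas are needed to show that the projection $\holim_{\vb_n}\Gamma_{\vb_n}N\to\Gamma_{\vb_n}N$ is (up to equivalence of its source) the connecting map of the localization triangle, which is precisely what guarantees both the middle-term identification and the compatibility of the two triangles. So nothing is actually saved, and the step you defer is not a soft formality: it is the technical heart of the paper's theorem and has to be proved, not invoked.

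A secondary point: in your last paragraph you claim the $\vb_i$ for $i<n$ can be moved to integer degrees by multiplying by powers of $\vb_n$. This does not work, because $|\vb_n^k|=(2^n-1)k\rho$ has nonnegative $\sigma$-part, so $|\vb_i\vb_n^k|$ always has nonzero $\sigma$-component. One must instead use the invertible class $x=\vb_n^{1-2^n}u^{2^n(1-2^{n-1})}$ of degree $-2^{2n+1}+2^{n+2}-\rho$, whose $\sigma$-degree is $-1$; this is what allows $\vb_i x^{2^i-1}$ to land in integer degree, and it is the construction the paper uses. The conclusion (that $\Gamma_{J_{n-1}}=\Gamma_{\Jb_{n-1}}$ on $E\R(n)$-modules) is correct, since the two ideals differ by multiplication by units, but the mechanism needs this correction.
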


We note that this has implications for the  $\Ctwo$-fixed point spectrum
$(\BPRn)^{\Ctwo}=BPR\langle n\rangle$. The graded ring
$$\pi_*(BPR\langle n\rangle)=\pi_*^{\Ctwo}(\BPRn)$$
is the integer part of the $RO(\Ctwo)$-graded coefficient ring
$\pi^{\Ctwo}_{\rost}(\BPRn)$. However, since the ideal
$\Jb_n$ is not generated in integer degrees, the statement for $BPR\langle n\rangle$
is usually rather complicated, and one of our main messages is that
working with the equivariant spectra gives more insight. On the other hand, $ER(n) = E\R(n)^{C_2}$ has integral Gorenstein duality because one can use the additional periodicity to move the representation suspension and the ideal $\Jb_n$ to integral degrees. 

We will discuss the general result in more detail later, but the two
first cases are about familiar ring spectra. 

\begin{example} (See Sections \ref{sec:kR} and \ref{sec:kRlcss}.) 
For $n=1$, connective $K$-theory with Reality $\kR$ is $2$-locally a
form of $BP\R\langle 1\rangle$.
 For this example we can work without
2-localization, so that $\Z$ means the integers. Our first theorem states
that $\kR$ is Gorenstein of shift $-\rho-1=-2-\sigma$
and that it has Gorenstein duality of shift $-4+\sigma$.  This just means that 
$$\Z^{k\R} \simeq \Sigma^{4-\sigma} \fib(k\R \to K\R).$$
The local cohomology spectral sequence collapses to a short exact sequence associated to the fibre sequence just mentioned. We
will see in Section \ref{sec:kRlcss} that the sequence is not split, even as abelian groups. 

Theorem \ref{thm:ER(n)} recovers the main result of \cite{H-S14},
i.e.\ that $\Z^{K\R} \simeq \Sigma^4K\R$, which also implies $\Z^{KO}
\simeq \Sigma^4 KO$. It is a special feature of the case $n=1$ that we
also get a nice duality statement for the fixed points in the
connective case. Indeed, by considering the $RO(C_2)$-graded homotopy
groups of $k\R$, one sees \cite[3.4.2]{B-G10} that
$$(\kR \tensor S^{-\sigma})^{\Ctwo}\simeq \Sigma^{1}ko. $$
This implies that connective $ko$ has untwisted Gorenstein duality of shift
$-5$, i.e.\ that 
$$\Z^{ko} \simeq \Sigma^5\fib(ko\to KO).$$ 
This admits a closely related non-equivariant proof, combining 
the fact that $ku$ is Gorenstein (clear from coefficients) and the
fact  that complexification $ko\lra ku$ is relatively  Gorenstein
(connective version of Wood's theorem \cite[4.1.2]{B-G10}). 
\end{example}

\begin{example}
(See Examples \ref{exa:forms} and \ref{exa:tmf} or  Lemma \ref{lem:BPRnGordish} and Corollary \ref{cor:BPRnGorDdish}.) 
The 2-localization of the $C_2$-spectrum $tmf_1(3)$ is a form of $BP\R\langle
2\rangle$, and the theorem is closely related to results in
\cite{HM}. It states
that $tmf_1(3)$ is Gorenstein of shift $-4\rho-2=-6-4\sigma $
and has Gorenstein duality of shift $-8-2\sigma$. We show in Section
\ref{sec:tmfotlcss} that there are non-trivial 
differentials in the local cohomology spectral
sequence.

Passing to fixed points we obtain the 2-local equivalence
$$BPR\langle 2\rangle=(BP\R\langle 2\rangle)^{\Ctwo}=tmf_0(3).$$
By contrast with the $n=1$ case, as observed in \cite{HM}, $tmf_0(3)$ does not have untwisted
Gorenstein duality of any integer degree. 

A variant of Theorem \ref{thm:ER(n)} also computes the
$C_2$-equivariant Anderson dual of $TMF_1(3)$ and the computation of
the Anderson dual of $Tmf_1(3)$ from \cite{HM} follows as well.

The results apply to $tmf_1(3)$ and $TMF_1(3)$ themselves (i.e.,
with just 3 inverted, and not all other odd primes). 
\end{example}

We remark that our main theorem also recovers the main result of \cite{Ricka} about the Anderson self-duality of integral Real Morava K-theory.

\subsection{Guide to the reader}
While the basic structure of this paper is easily visible from the table of contents, we want to comment on a few features. 

The precise statements of our main results can be found in Section \ref{sec:results}.  We will give two different proofs of them. One (Part 3) might be called `the hands on approach' which is
elementary and explicit, and one (Part 2) uses Gorenstein techniques
inspired by commutative algebra. 
The intricacy  and dependence 
on specific calculations in the explicit approach and the make the conceptual approach
valuable. The subtlety of the structural requirements of the
conceptual approach make the reassurance of the explicit approach
valuable. The exact results proved in Parts 2 and 3 are also
slightly different. 

While the Gorenstein approach only relies on the knowledge of the homotopy groups of $\HZu$ and the reduction theorem Corollary \ref{cor:reduction}, we need detailed information about the homotopy groups of quotients of $\BPR$ for the hands-on approach. In Appendix \ref{Appendix}, we give a streamlined account of the computation of $\pi_{\rost}^{C_2}\BPR$ (which appeared first in \cite{HK}). In Section \ref{sec:BPRn}, we give a rather self-contained account of the homotopy groups of $\BPRn$ and of other quotients of $\BPR$, which can also be read independently of the rest of the paper. While some of this is rather technical, most of the time we just have to use Corollary \ref{Cor:crucial} whose statement (though not proof, perhaps) is easy to understand.

We give separate arguments for the computation of the Anderson dual of
$k\R$ so that this easier case might illustrate the more complicated
arguments of our more general theorems. Thus, if the reader is only
interested in $k\R$, he or she might ignore most of this paper. More
precisely, under this assumption one might proceed as follows: First one looks at Section \ref{sec:kRgroups} for a quick reminder on $\pi_{\rost}^{C_2}k\R$, then one skims through Sections \ref{sec:Basics} and \ref{sec:AKG} to pick up the relevant definitions and then one proceeds directly to Section \ref{sec:kR} or Section \ref{sec:kRagain} to get the proof of the main result in the case of $k\R$. Afterwards one may look at the pictures and computations in the rest of Section \ref{sec:kRlcss} to see what happens behind the scenes of Gorenstein duality.

\vspace{1cm}
\part{Preliminaries and results}\vspace{0.5cm}
\section{Basics and conventions about $C_2$-spectra}\label{sec:Basics}
\subsection{Basics and conventions}
We will work in the homotopy category of genuine $G$-spectra  (i.e., stable for suspensions by $S^V$ for any finite
dimensional representation $V$) for $G =\Ctwo$, the group of order $2$. We will denote by $\tensor$ the derived smash product of spectra.

We may combine the equivariant and non-equivariant homotopy groups of
a $C_2$-spectrum into a Mackey functor, which we denote by $\underline{\pi}_*^{C_2}X$ and denote $C_2$-equivariant and underlying homotopy groups correspondingly by $\pi^{C_2}_*X$ and $\pi^e_*X$. For an abelian group $A$, we  write $\underline{A}$ for the constant Mackey functor (i.e., restriction maps are the identity),
and $\underline{A}^*$ for its dual (i.e., induction maps are the identity). 
We write $HM$ for the Eilenberg-MacLane spectrum associated to a
Mackey functor $M$. 

Another $C_2$-spectrum of interest to us is $\kR$, the $C_2$-equivariant connective cover of
Atiyah's $K$-theory with Reality \cite{Ati66}. The term ``Real
spectra'' derives from this example. The examples of Real bordism and the other $C_2$-spectra derived from it will be discussed in Section \ref{sec:BPRn}.

We will usually grade our homotopy groups by the real
representation ring $RO(\Ctwo)$, and we write $M_{\rost}$ for $RO(\Ctwo)$-graded groups. In addition to the real sign representation $\sigma$ and the regular representation $\rho$ the virtual
representation $\pp =1-\sigma$ is also significant. Examples of
$RO(\Ctwo)$-graded homotopy classes are the geometric Euler classes
$a_V\colon S^0 \lra S^V$; in particular,  $a=a_{\sigma}$ will play a central role. In addition to $a$, we will also often have a class $u=u_{2\sigma}$ of degree $2\pp$.  

We often want to be able to discuss gradings by certain subsets of
$RO(\Ctwo)$. To start with we often want to refer to gradings by multiples
of the regular representation (where we write $M_{*\rho}$), but we also
need to discuss gradings of the form $k\rho -1$. For this, we use the notation
$$\rhost =\{ k\rho \st k\in \Z\}\cup \{ k\rho -1\st k\in \Z\}. $$
Following \cite{HM} we call an $RO(\Ctwo)$-graded object $M$ {\em even} if
$M_{k\rho -1}=0$ for all $k$. An $RO(\Ctwo)$-graded Mackey functor is {\em
  strongly even} if it is even and all the Mackey functors in gradings
$k\rho$ are constant. We call a $C_2$-spectrum (strongly) even if its homotopy groups are (strongly) even.

If $X$ is a strongly even $C_2$-spectrum and $x\in \pi_{2k}X$, we denote by $\overline{x}$ its counterpart in $\pi_{k\rho}^{C_2}X$. If we want to stress that we consider a certain spectrum as a $C_2$-spectrum, we will also sometimes indicate this by a bar; for example, we may write $\overline{tmf_1(3)}$ if we want to stress the $C_2$-structure of $tmf_1(3)$.

\subsection{Cellularity} \label{sec:Cell}
\label{subsec:Rcell}
In a general triangulated category, it is conventional to say $M$ is \emph{$K$-cellular} if
$M$ is in the localizing subcategory generated by $K$ (or equivalently
by all integer suspensions of $K$). A reference in the case of spectra is \cite[Sec 4.1]{DGI}. We say that a $C_2$-spectrum $M$ is \emph{$K$-$\R$-cellular} (for a $C_2$-spectrum $K$)
if it is in the localizing subcategory generated by the suspensions
$S^{k\rho}\tensor K$ for all integers $k$. We note that this is the
same as the localizing subcategory generated by integer suspensions of
$K$ and $(\Ctwo)_+\tensor K$ because of the cofibre sequence
$$(C_2)_+ \to S^0 \to S^\sigma.$$
We say that a map $N\to M$ is a \emph{$K$-$\R$-cellularization} if $N$ is $K$-$\R$-cellular and the induced map
$$\Hom(K, N) \to \Hom(K, M)$$
is an equivalence of $C_2$-spectra. The $K$-$\R$-cellularization is clearly unique up to equivalence. 

We note that cellularity and $\R$-cellularity are definitely
different. For example  $(\Ctwo)_+$ is not $S^0$-cellular, but it is
$S^0$-$\R$-cellular. 

In this article, we will only use $\R$-cellularity. 

\subsection{The slice filtration}
\label{subsec:slice}
Recall from \cite[Section 4.1]{HHR} or \cite{SlicePrimer} that the \emph{slice cells} are the $\Ctwo$-spectra of the form 
\begin{itemize}
 \item $S^{k\rho}$ of dimension $2k$,
 \item $S^{k\rho-1}$ of dimension $2k-1$, and
 \item $S^k\tensor (\Ctwo)_+$ of dimension $k$.
\end{itemize}
A $\Ctwo$-spectrum $X$ is $\leq k$ if for every slice cell $W$ of dimension $\geq k+1$ the mapping space $\Omega^\infty \Hom_{\mathbb{S}}(W, X)$ is equivariantly contractible. As explained in \cite[Section 4.2]{HHR}, this leads to the definition of $X \to P^kX$, which is the universal map into a $\Ctwo$-spectrum that is $\leq k$. The fibre of $$X \to P^kX$$
is denoted by $P_{k+1}X$. The $k$\emph{-slice} $P_k^kX$ is defined as the fibre of $$P^kX\to P^{k-1}X$$
or, equivalently, as the cofibre of the map $P_{k+1}X \to P_kX$. We have the following two useful propositions:

\begin{prop}[\cite{SlicePrimer}, Cor 2.12, Thm 2.18]
\label{prop:sliceodd}
 If $X$ is an even $\Ctwo$-spectrum, then $P^{2k-1}_{2k-1}X = 0$ for all $k\in \Z$.
\end{prop}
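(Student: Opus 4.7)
The plan is to reduce the vanishing of $P^{2k-1}_{2k-1}X$ to a vanishing of two homotopy groups, using the structure theorem for $(2k-1)$-slices. Theorem 2.18 of the Slice Primer identifies every $(2k-1)$-slice $Y$ for $\Ctwo$ with a suspension of an Eilenberg--MacLane spectrum,
$$Y \simeq \Sigma^{k\rho-1} H\underline{M},$$
where the Mackey functor $\underline{M}$ is recovered from the two homotopy groups
$$\underline{M}(\Ctwo/\Ctwo) = \pi^{\Ctwo}_{k\rho-1} Y, \qquad \underline{M}(\Ctwo/e) = \pi^{e}_{2k-1} Y.$$
Thus $P^{2k-1}_{2k-1}X$ vanishes provided $\pi^{\Ctwo}_{k\rho-1} P^{2k-1}_{2k-1}X = 0$ and $\pi^{e}_{2k-1} P^{2k-1}_{2k-1}X = 0$.

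To verify these vanishings, I would thread through the two fibre sequences of the slice tower,
$$P_{2k-1}X \to X \to P^{2k-2}X, \qquad P_{2k}X \to P_{2k-1}X \to P^{2k-1}_{2k-1}X.$$
Since $P^{2k-2}X$ is slice $\leq 2k-2$, the groups $[W, P^{2k-2}X]^{\Ctwo}$ vanish for each slice cell $W = S^{k\rho-1}$, $S^{k\rho}$, $(\Ctwo)_+ \wedge S^{2k-1}$ and $(\Ctwo)_+ \wedge S^{2k}$, and the long exact sequence of the first fibre sequence yields $\pi^{\Ctwo}_{k\rho-1} P_{2k-1}X \cong \pi^{\Ctwo}_{k\rho-1} X$ and $\pi^{e}_{2k-1} P_{2k-1}X \cong \pi^{e}_{2k-1} X$. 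For the second fibre sequence, the underlying spectra of all slice cells of dimension $\geq 2k$ are at least $(2k-1)$-connected, so $\pi^{e}_{n} P_{2k}X = 0$ for $n<2k$; this settles the underlying side. The analogous equivariant vanishing needed to transfer the computation from $P_{2k-1}X$ to $P^{2k-1}_{2k-1}X$ follows either by a short cell-structure argument for $S^{k\rho-2}$ using the Euler cofibre sequence $(\Ctwo)_+ \to S^0 \to S^\sigma$, or by applying the structure theorem to the adjacent slices.

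Combining these, $\pi^{\Ctwo}_{k\rho-1} P^{2k-1}_{2k-1}X \cong \pi^{\Ctwo}_{k\rho-1} X$ and $\pi^{e}_{2k-1} P^{2k-1}_{2k-1}X \cong \pi^{e}_{2k-1} X$, both of which vanish by the evenness of $X$. Hence the associated Mackey functor $\underline{M}$ is zero and $P^{2k-1}_{2k-1}X \simeq 0$. The main obstacle is the structural input: identifying arbitrary $(2k-1)$-slices for $\Ctwo$ as suspensions of Eilenberg--MacLane spectra, for which one cites the Slice Primer. Once this is in hand the remaining argument is a routine long exact sequence chase, with only the equivariant transfer across the second fibre sequence requiring a small amount of extra care.
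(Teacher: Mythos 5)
The paper itself gives no proof of this proposition; it is quoted directly from Hill's Slice Primer (Cor.~2.12 and Thm.~2.18). Your overall strategy — use the structure theorem for $(2k-1)$-slices of $C_2$-spectra to reduce the statement to the vanishing of $\pi^{\Ctwo}_{k\rho-1}$ and $\pi^e_{2k-1}$ of the slice, and then chase the two fibre sequences of the slice tower — is the right one and is in the spirit of the cited reference. The first fibre sequence step and the underlying-homotopy step are both correct as written.

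The genuine gap is the step you label ``a short cell-structure argument for $S^{k\rho-2}$.'' What is actually needed is $\pi^{\Ctwo}_{k\rho-2}P_{2k}X = 0$, and the Euler cofibre sequence alone does not close this. Smashing $(\Ctwo)_+\to S^0\to S^\sigma$ with the appropriate representation sphere and using $\pi^e_j P_{2k}X = 0$ for $j<2k$ shows that $[S^{(k-2)+b\sigma}, P_{2k}X]$ is independent of $b$ for all $b\le k+1$, but the process never arrives, for all $k$ simultaneously, at a degree where the group visibly vanishes (in particular for $k\le 0$ one cannot push $b$ up to $0$). Passing to the colimit as $b\to-\infty$ identifies the common value with $\pi_{k-2}\Phi^{\Ctwo}P_{2k}X$, and one must then invoke the nontrivial fact that $\Phi^{\Ctwo}$ of a slice $\ge 2k$ $\Ctwo$-spectrum is $(k-1)$-connected (because $\Phi^{\Ctwo}$ of a slice cell of dimension $\ge 2k$ is either trivial or a sphere of dimension $\ge k$); this is a separate connectivity input from the Slice Primer/HHR, not a consequence of the Euler cofibre sequence. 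Your second proposed route, ``applying the structure theorem to the adjacent slices,'' is not developed enough to evaluate. So the outline is sound, but the equivariant connectivity of $P_{2k}X$ — equivalently of its geometric fixed points — is a missing ingredient that must be cited or proved, and it is more than a small amount of extra care.
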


\begin{prop}[\cite{SlicePrimer}, Cor 2.16, Thm 2.18]
\label{prop:sliceven}
 If $X$ is a $\Ctwo$-spectrum such that the restriction map in
 $\underline{\pi}^{\Ctwo}_{k\rho}$ is injective, then $P^{2k}_{2k}X$ is
 equivalent to the Eilenberg-MacLane spectrum
 $\underline{\pi}^{\Ctwo}_{k\rho} X$.
\end{prop}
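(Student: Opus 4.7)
My plan has two conceptual stages: first, identify the general shape of any $2k$-slice for $C_2$-spectra, and second, pin down the particular Mackey functor that shows up under the injectivity hypothesis.

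For the first stage, I would invoke the classification of slice cells in dimension $2k$: they are $S^{k\rho}$ and $(\Ctwo)_+\wedge S^{2k}$, both of which are $(k\rho)$-suspensions of $0$-dimensional slice cells. Combined with the fact that $0$-slices are precisely the Eilenberg-MacLane spectra of Mackey functors, this gives $P^{2k}_{2k}X \simeq \Sigma^{k\rho}H\underline{N}$ for some Mackey functor $\underline{N}$. It then suffices to prove $\underline{N}\cong \underline{\pi}^{\Ctwo}_{k\rho}X$.

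For the identification I would apply $\underline{\pi}^{\Ctwo}_{k\rho}$ to the fibre sequence $P^{2k}_{2k}X \to P^{2k}X \to P^{2k-1}X$. Because $P^{2k-1}X$ is $\leq 2k-1$, the slice-cell vanishing condition applied to $S^{k\rho}$ (dim $2k$) kills $\pi^{\Ctwo}_{k\rho + i}$ for $i\geq 0$, and the corresponding condition for $(\Ctwo)_+\wedge S^{2k}$ kills $\pi^e_{2k+i}$ for $i\geq 0$. So both $\underline{\pi}^{\Ctwo}_{k\rho}(P^{2k-1}X)$ and $\underline{\pi}^{\Ctwo}_{k\rho+1}(P^{2k-1}X)$ vanish, and $\underline{N}\cong \underline{\pi}^{\Ctwo}_{k\rho}(P^{2k}X)$. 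Now compare with $X$ via $P_{2k+1}X \to X\to P^{2k}X$: since $P_{2k+1}X$ is $\geq 2k+1$, it is underlying $(2k)$-connected, so at the $\Ctwo/e$-level we get $\pi^e_{2k}X \xrightarrow{\cong}\underline{N}(\Ctwo/e)$. At the $\Ctwo/\Ctwo$-level the commutative square of restrictions, together with the injectivity of $\res_X\colon \pi^{\Ctwo}_{k\rho}X \hookrightarrow \pi^e_{2k}X$, forces the natural map $\pi^{\Ctwo}_{k\rho}X \to \underline{N}(\Ctwo/\Ctwo)$ to be injective as well.

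The main obstacle is establishing surjectivity at $\Ctwo/\Ctwo$, equivalently the vanishing of the connecting map $\underline{N}(\Ctwo/\Ctwo)\to \pi^{\Ctwo}_{k\rho-1}(P_{2k+1}X)$. Here I expect to use the Mackey-functor structure of the would-be cokernel — whose $\Ctwo/e$-value is zero — combined with the explicit form of the low-dimensional contributions to $P_{2k+1}X$ (controlled by Proposition~\ref{prop:sliceodd} and further slice vanishing) to rule out any nontrivial class. With this in hand, the map $\underline{\pi}^{\Ctwo}_{k\rho}X \to \underline{N}$ is an isomorphism of Mackey functors, and the proposition follows.
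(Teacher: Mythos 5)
The paper itself gives no proof of this statement: it is cited directly from \cite{SlicePrimer}, Cor.\ 2.16 and Thm.\ 2.18, so there is nothing in the source to compare against. Your outline is correct through the injectivity step (identifying the $2k$-slice as $\Sigma^{k\rho}H\underline{N}$, reducing $\underline N$ to $\underline{\pi}^{\Ctwo}_{k\rho}P^{2k}X$ via the vanishing of $\underline{\pi}^{\Ctwo}_{k\rho}$ and $\underline{\pi}^{\Ctwo}_{k\rho+1}$ of $P^{2k-1}X$, getting the $(\Ctwo/e)$-isomorphism from the underlying $2k$-connectivity of $P_{2k+1}X$, and deducing injectivity at $(\Ctwo/\Ctwo)$ from the restriction square). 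But the step you flag as the ``main obstacle'' is a genuine gap, and the repair you sketch does not close it. Knowing that the cokernel $\underline C$ of $\underline{\pi}^{\Ctwo}_{k\rho}X\to\underline N$ vanishes at $\Ctwo/e$ does not force $\underline C=0$ (Mackey functors concentrated at $\Ctwo/\Ctwo$ are perfectly legitimate), and the injectivity hypothesis constrains the restriction of $\underline{\pi}^{\Ctwo}_{k\rho}X$, not that of $\underline N$, so it tells you nothing directly about $\underline C$. Nor is $\pi^{\Ctwo}_{k\rho-1}(P_{2k+1}X)$ controlled by Proposition~\ref{prop:sliceodd} or the slice connectivity of $P_{2k+1}X$: slice $\geq 2k+1$ gives bounds on integer-graded homotopy Mackey functors, not on the $RO(\Ctwo)$-graded group $\pi^{\Ctwo}_{k\rho-1}$, which need not vanish for a slice $\geq 2k+1$ spectrum.

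The missing fact (which is precisely Cor.\ 2.16 in \cite{SlicePrimer}) is that $\underline{\pi}^{\Ctwo}_{k\rho}X\to\underline{\pi}^{\Ctwo}_{k\rho}P^{2k}X$ is \emph{always} surjective, with no hypothesis whatsoever; the injectivity hypothesis is then exactly what upgrades this surjection to an isomorphism (Thm.\ 2.18). For surjectivity one should not look at $P_{2k+1}X$ but at the cellular construction of $P^{2k}X=\hocolim_i X_i$, where $X_{i+1}$ is the cofibre of a wedge of slice cells $W$ of dimension $\geq 2k+1$ mapping into $X_i$. Then $\pi^{\Ctwo}_{k\rho}X_i\to\pi^{\Ctwo}_{k\rho}X_{i+1}$ is surjective because $\pi^{\Ctwo}_{k\rho-1}(W)=0$ for every such slice cell: for $W=S^{l\rho}$ or $S^{l\rho-1}$ with $l\geq k+1$ this is the equivariant $(l-k-1)$-connectivity of $S^{(l-k)\rho}$, and for $W=(\Ctwo)_+\wedge S^m$ with $m\geq 2k+1$ it is $\pi^e_{2k-1-m}\bbS=0$. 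Passing to the colimit gives the surjectivity. This is a real argument that your proposal replaces with a guess, so the gap you identified is not a detail you could reasonably defer.
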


This allows us to give a characterization of an Eilenberg-MacLane spectrum based on regular representation degrees.
\begin{cor}
\label{cor:characterizingZ}
Any even $\Ctwo$-spectrum $X$ with 
$$\underline{\pi}^{\Ctwo}_{k\rho}(X)=\begin{cases}\underline{A} & \text{ if } k= 0 \\
0 & \text{ else}\end{cases}$$                          
 for an abelian group $A$ is equivalent to $H\underline{A}$.
\end{cor}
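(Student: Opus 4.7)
My plan is to use the slice tower: compute all slices of $X$ via the two propositions just established in Subsection 2.3, and then invoke convergence of the slice filtration to conclude that $X$ agrees with its unique nontrivial slice.

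First, since $X$ is even, Proposition \ref{prop:sliceodd} immediately gives $P^{2k-1}_{2k-1}X \simeq 0$ for all $k \in \Z$, so all odd slices vanish. For the even slices, I would verify the injectivity hypothesis of Proposition \ref{prop:sliceven} on the restriction map in $\underline{\pi}^{C_2}_{k\rho}X$: when $k=0$ the Mackey functor is the constant $\underline{A}$, whose restriction is by definition the identity and hence injective; when $k \neq 0$ the Mackey functor is zero and the condition is vacuous. Applying Proposition \ref{prop:sliceven} then yields $P^{2k}_{2k}X \simeq H\underline{\pi}^{C_2}_{k\rho}X$, which equals $H\underline{A}$ at $k=0$ and is $0$ otherwise.

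From the slice cofibre sequences $P^n_nX \to P^nX \to P^{n-1}X$, the maps $P^nX \to P^{n-1}X$ are equivalences for every $n \neq 0$, while for $n=0$ the fibre is $H\underline{A}$. Slice-tower convergence then gives $X \simeq P^nX$ for $n \gg 0$, so $X \simeq P^0X$; the vanishing of all negative slices forces $P^{-1}X \simeq 0$, and hence the fibre sequence $H\underline{A} \simeq P^0_0X \to P^0X \to P^{-1}X$ collapses to an equivalence $X \simeq P^0X \simeq H\underline{A}$. The main obstacle is justifying this convergence, particularly the identification $P^{-1}X \simeq 0$, which needs a boundedness property for $X$: this is implicit in the hypotheses, since the restriction of $\underline{A}$ is the identity implies the underlying spectrum of $X$ is $HA$ concentrated in degree zero, and one can appeal to the standard convergence results for the slice filtration of such connective, bounded $C_2$-spectra.
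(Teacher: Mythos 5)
Your proof is correct and follows essentially the same path as the paper's: identify the odd slices as zero via Proposition~\ref{prop:sliceodd}, identify the even slices via Proposition~\ref{prop:sliceven} (after checking its injectivity hypothesis, which the paper leaves implicit), and then invoke slice-tower convergence. The paper's own proof is just these two observations followed by a citation of \cite[Theorem~4.42]{HHR} for convergence, so your more detailed unpacking of the convergence step (including the observation that the underlying spectrum of $X$ is already $HA$, which is what makes the boundedness hypothesis of that theorem applicable) is a fleshing-out of the same argument rather than a different one.
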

\begin{proof}
By the last two propositions, we have
\[P^k_k X\simeq \begin{cases} H\underline{A} & \text{ if } k=0 \\
                0 & \text{ else}
               \end{cases}
\]
By the convergence of the slice spectral sequence \cite[Theorem 4.42]{HHR}, the result follows.
\end{proof}

\section{Anderson duality, Koszul complexes and Gorenstein duality}\label{sec:AKG}

\subsection{Duality for abelian groups}\label{sec:DAb}
It is convenient to establish some conventions for abelian groups to
start with, so as to fix notation. 

Pontrjagin duality is defined for all graded abelian groups $A$
by 
$$A^{\vee}=\Hom_{\Z}(A, \Q /\Z).  $$
Similarly,  the rational dual is defined by 
$$A^{\vee \Q}=\Hom_{\Z}(A, \Q ). $$

Since $\Q $ and $\Q /\Z$ are injective abelian groups these two
dualities are homotopy invariant and  pass to the
derived category. Finally  the Anderson dual $A^*$ is defined by
applying $\Hom_{\Z}(A, \cdot )$ to  the exact sequence
$$0\lra \Z \lra \Q \lra \Q/\Z\lra 0$$ 
so that we have a triangle 
$$A^*\lra A^{\vee \Q }\lra A^{\vee }. $$

If $M$ is a free abelian group, then the
Anderson dual is simply calculated by 
$$M^*=\Hom_{\Z}(M, \Z)$$
(since $M$ is free, the $\Hom$ need not be derived). 

If $M$ is a graded abelian group  which is an $\Ftwo$-vector
space then up to suspension the Anderson dual is  the vector space dual: 
$$M^{\vee}=\Hom_{\Ftwo}(M, \Ftwo)\simeq \Sigma^{-1} M^*$$
(since vector spaces are free,  $\Hom$ need not be derived).

\subsection{Anderson duality}
\label{subsec:Anderson}
Anderson duality is the attempt to topologically realize the algebraic duality from the last subsection. It appears that it was invented by Anderson (only published in mimeographed notes \cite{Anderson}) and Kainen \cite{Kainen}, with similar ideas by Brown and Comenetz \cite{B-C76}. For brevity and consistency, we will only use the term Anderson duality instead of Anderson--Kainen duality or Anderson--Brown--Comenetz duality throughout. We will work in the category of $\Ctwo$-spectra, where Anderson duality was first explored by Ricka in \cite{Ricka}. 

Let $I$ be an injective abelian group. Then we let $I^{\mathbb{S}}$ denote the $C_2$-spectrum representing the functor 
$$X \mapsto \Hom(\pi_{*}^{C_2}X,I).$$
For an arbitrary $C_2$-spectrum, we define $I^X$ as the function spectrum $F(X, I^{\mathbb{S}})$. For a general abelian group $A$, we choose an injective resolution 
$$A \to I \to J$$
and define $A^X$ as the fibre of the map $I^X \to J^X$. For example, we get a fibre sequence
$$\Z^X\lra \Q^X\lra (\Q/\Z)^X.$$
In general, we get a short exact sequence of homotopy groups
$$0 \to \Ext_{\Z}(\pi_{-k-1}^{C_2}(X), A) \to \pi_k^{C_2} (A^X) \to \Hom(\pi_{-k}^{C_2}(X), A) \to 0.$$
The analogous exact sequence is true for $RO(C_2)$-graded Mackey functor valued homotopy groups by replacing $X$ by $(C_2/H)_+ \sm \Sigma^VX$. 
Our most common choices will be $A = \Z$ and $A=\Z_{(2)}$.

From time to time we we use the following property of Anderson duality: If $R$ is a strictly commutative $C_2$-ring spectrum and $M$ an $R$-module, then $\Hom_R(M, A^R) \simeq A^M$ as $R$-modules as can easily be seen by adjunction.

One of the reasons to consider Anderson duality is that it provides universal coefficient sequences. In the $C_2$-equivariant world, this takes the following form \cite[Proposition 3.11]{Ricka}:
\[0 \to \Ext_\Z^1(E_{\alpha-1}^{\Ctwo}(X), A) \to (A^E)_{\alpha}^{\Ctwo}(X) \to \Hom_\Z(E_{\alpha}^{\Ctwo}(X), A) \to 0,\]
where $E$ and $X$ are $C_2$-spectra, $\alpha\in RO(C_2)$ and $A$ is an abelian group. 

Our first computation is the Anderson dual of the Eilenberg--MacLane
spectrum of the constant Mackey functor $\underline{\Z}$.

\begin{lemma}
\label{lem:Zu}
The Anderson dual of the Eilenberg-MacLane spectrum $\HZu$ (as an
$\bbS$-module) is given by 
$$\Z^{\HZu}\simeq\HZu^* \simeq \Sigma^{2\pp} \HZu, $$
where $\delta = 1-\sigma$.
\end{lemma}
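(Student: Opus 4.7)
The plan is to prove both equivalences by reducing to the Eilenberg--MacLane characterization from Corollary \ref{cor:characterizingZ}: I will check that each spectrum is even and has $\underline{\pi}^{C_2}_{k\rho}$ concentrated in degree $k=0$ with value $\underline{\Z}^*$.

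For the first equivalence $\Z^{H\underline{\Z}} \simeq H\underline{\Z}^*$, I would invoke the Anderson duality universal coefficient sequence with $A = \Z$, evaluated at $\Sigma^{k\rho} H\underline{\Z}$ and at $\Sigma^{k\rho - 1} H\underline{\Z}$. Since $H\underline{\Z}$ is strongly even and $\underline{\Z}$ is valued in free abelian groups, every $\Ext_\Z^1$ term vanishes, and the surviving $\Hom_\Z$ term gives $\Hom_\Z(\underline{\Z}, \Z) = \underline{\Z}^*$ in degree $0 \cdot \rho$, zero in all other $k\rho$ degrees, and zero in every $k\rho - 1$ degree. Corollary \ref{cor:characterizingZ} then identifies $\Z^{H\underline{\Z}}$ with $H\underline{\Z}^*$.

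For the second equivalence $H\underline{\Z}^* \simeq \Sigma^{2\pp} H\underline{\Z}$, the task reduces via Corollary \ref{cor:characterizingZ} to computing the $RO(C_2)$-graded Mackey functor homotopy groups $\underline{\pi}^{C_2}_{k\rho - 2\pp}(H\underline{\Z})$ and $\underline{\pi}^{C_2}_{k\rho - 1 - 2\pp}(H\underline{\Z})$. I would extract these from the cofibre sequence
\[ S(2\sigma)_+ \to S^0 \to S^{2\sigma} \]
smashed with $H\underline{\Z}$. Because $S(2\sigma)$ is a free $C_2$-space with $S(2\sigma)/C_2 \simeq \R P^1 \simeq S^1$ (the ``from one point of view'' identification pointed to in the introduction), the term $S(2\sigma)_+ \wedge H\underline{\Z}$ is an induced spectrum whose homotopy Mackey functors are computable directly from the underlying homotopy of $H\Z$. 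Combined with the known integer-graded homotopy of $H\underline{\Z}$, the long exact sequence pins down $\underline{\pi}^{C_2}_{k\rho - 2\pp}(H\underline{\Z})$ as $\underline{\Z}^*$ for $k = 0$ and zero otherwise, and shows evenness in the twisted grading.

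The principal obstacle is to identify the Mackey functor structure on $\underline{\pi}^{C_2}_{-2\pp}(H\underline{\Z})$ as the dualised $\underline{\Z}^*$ (restriction is multiplication by $2$, induction is the identity) rather than as $\underline{\Z}$. The restriction/transfer get interchanged precisely because the ``free cell'' in $S(2\sigma)$ contributes through its quotient $\R P^1 = S^1$, which sits equivariantly as a transfer-type map; this is exactly the twist predicted by the informal remark that $H(\underline{\Z}^*) \simeq \Sigma^{2(1-\sigma)} H\underline{\Z}$ encodes the equivalence $\R P^1 \simeq S^1$. Once this tracking is done carefully in the long exact sequence, Corollary \ref{cor:characterizingZ} completes the argument.
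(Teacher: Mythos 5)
Your overall strategy matches the paper's: use the Anderson duality short exact sequence to compute the homotopy Mackey functors of $\Z^{\HZu}$, and use the geometric fact $S(2\sigma)/\Ctwo \simeq \R P^1 \simeq S^1$ to identify the $\Sigma^{2\pp}$-shift (the paper does this by computing $H^*_{C_2}(S^{2\sigma-2};\Zu) = H^*(S^{2\sigma-2}/C_2;\Z)$, which is the same $\R P^1$ calculation packaged differently). For the second equivalence your route via the cofibre sequence $S(2\sigma)_+ \to S^0 \to S^{2\sigma}$ is a legitimate variant of the same computation.

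There are, however, two points to tighten. First, you appeal to Corollary~\ref{cor:characterizingZ} to conclude that a spectrum with $\underline{\pi}^{C_2}_{k\rho}$ concentrated at $k=0$ with value $\Zu^*$ must be $H\Zu^*$, but that corollary as stated only produces Eilenberg--MacLane spectra $H\underline{A}$ of \emph{constant} Mackey functors $\underline{A}$, and $\Zu^*$ is not constant. The fix is easy --- Proposition~\ref{prop:sliceven} only requires injectivity of the restriction map, which holds for $\Zu^*$ (restriction is multiplication by $2$), so the corollary's proof goes through --- but you should note this, or, better, avoid the slice machinery entirely: since the Anderson SES already shows $\underline{\pi}^{C_2}_n(\Z^{\HZu})$ is $\Zu^*$ for $n=0$ and zero otherwise, the ordinary integer-graded Eilenberg--MacLane characterization does the job directly. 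This is in fact what the paper's one-line argument does. Second, for evenness you say ``zero in every $k\rho-1$ degree'' by appealing to the $\Ext^1$-terms vanishing, but the $\Hom_\Z(\underline{\pi}^{C_2}_{-k\rho+1}\HZu,\Z)$ term in the SES in degree $k\rho-1$ also needs to be seen to vanish; this holds because those groups are either zero (by connectivity considerations) or $2$-torsion (multiples of $a$), but that check should be made explicit.
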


\begin{proof}
The first equivalence follows from the isomorphisms
$$\underline{\pi}^{C_2}_*(\Z^{\HZu}) \cong \Hom_{\Z}(\underline{\pi}^{C_2}_{-*}\HZu, \Z) \cong \Zu^*.$$

Since 
$$\pi_*^{\Ctwo}(S^{2-2\sigma}\tensor \HZu)=H^*_{\Ctwo}(S^{2\sigma -2};
\Zu)=H^*(S^{2\sigma -2}/\Ctwo; \Z), $$
and $S^{2\sigma}=S^0*S(2\sigma)$  is the unreduced suspension of of $S(2\sigma)$, the second equivalence is a calculation of
the cohomology of $\R P^1$ . 
\end{proof}

\begin{remark}
This proof shows that if $\Ctwo$ is replaced by a cyclic group of any order
 we still have
$$\Z^{\HZu}=\HZu^* \simeq \Sigma^{\lambda} \HZu$$
where $\lambda =\eps -\alpha$ (with $\eps$ the trivial one
dimensional complex representation and $\alpha$ a faithful one
dimensional representation). 
\end{remark}

Anderson duality works, of course, also for non-equivariant spectra. We learned the following proposition comparing the equivariant and non-equivariant version in a conversation with Nicolas Ricka.

\begin{prop}\label{prop:AndersonFixed}Let $A$ be an abelian group. We have $(A^X)^{C_2} \simeq A^{(X^{C_2})}$ for every $C_2$-spectrum $X$. 
\end{prop}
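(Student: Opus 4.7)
The plan is to prove this by Yoneda: I will show that $(A^X)^{\Ctwo}$ and $A^{X^{\Ctwo}}$ corepresent the same functor on the homotopy category of nonequivariant spectra, naturally in $X$. First I reduce to the case $A = I$ injective. Choose an injective resolution $0 \to A \to I \to J \to 0$. Both functors $Y \mapsto (Y^X)^{\Ctwo}$ and $Y \mapsto Y^{X^{\Ctwo}}$ (on the variable $Y$) preserve fibre sequences, so once the equivalence is established naturally for $I$ and for $J$, the general case follows by passing to fibres.

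For injective $I$ and any nonequivariant spectrum $W$, I will use the chain of adjunctions
\[
[W, (I^X)^{\Ctwo}] \;\cong\; [\epsilon^* W, I^X]^{\Ctwo} \;\cong\; [\epsilon^* W \tensor X, I^{\bbS_{\Ctwo}}]^{\Ctwo} \;\cong\; \Hom_{\Z}\!\big(\pi_0^{\Ctwo}(\epsilon^* W \tensor X),\, I\big),
\]
where $\epsilon^*$ denotes the trivial-action inflation (left adjoint to $(-)^{\Ctwo}$), and the final isomorphism is the defining property of the equivariant Anderson representing spectrum $I^{\bbS_{\Ctwo}}$, which holds as an isomorphism (not merely a short exact sequence) precisely because $I$ is injective.

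The key input is then the projection formula for the adjunction $(\epsilon^*, (-)^{\Ctwo})$: for every nonequivariant $W$ and every $\Ctwo$-spectrum $X$, there is a natural equivalence $W \tensor X^{\Ctwo} \simeq (\epsilon^* W \tensor X)^{\Ctwo}$. I will justify this by noting that both sides are homology theories in $W$ (they preserve cofibre sequences and coproducts), so it suffices to check $W = \bbS$, which reduces to the tautology $\pi_*^{\Ctwo}(X) = \pi_*(X^{\Ctwo})$; the case of general $W$ then follows from cellular induction on $\bbS$. Applying $\pi_0$ of this formula and chaining with the nonequivariant Anderson property yields
\[
\Hom_{\Z}\!\big(\pi_0^{\Ctwo}(\epsilon^* W \tensor X),\, I\big) \;\cong\; \Hom_{\Z}\!\big(\pi_0(W \tensor X^{\Ctwo}),\, I\big) \;\cong\; [W \tensor X^{\Ctwo},\, I^{\bbS}] \;\cong\; [W, I^{X^{\Ctwo}}].
\]
Since all identifications are natural in $W$ and $X$, Yoneda produces the desired equivalence $(I^X)^{\Ctwo} \simeq I^{X^{\Ctwo}}$, and naturality allows us to pass from $I, J$ back to $A$.

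The main obstacle is establishing the projection formula cleanly and, in particular, being careful that the promised equivalence is natural as a map of functors in $X$ (so that the injective-resolution step in the reduction actually works). Both points are standard in equivariant stable homotopy theory but deserve explicit mention, since the naive analogue of the formula with geometric fixed points in place of categorical ones would behave quite differently.
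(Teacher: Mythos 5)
Your argument is essentially the paper's own: the same reduction to injective $A$ via a two-term injective resolution, the same chain of adjunctions using that inflation $\epsilon^* = \infl_e^{C_2}$ is left adjoint to $(-)^{C_2}$, the same appeal to the projection formula $(\epsilon^*W\tensor X)^{C_2}\simeq W\tensor X^{C_2}$ (which the paper phrases as fixed points commuting with filtered colimits and cofibre sequences, hence with smashing against trivial-action spectra), and the same final Yoneda step. The only cosmetic difference is that you perform the reduction to the injective case at the outset rather than at the end, and you phrase the projection formula verification as a "homology theories in $W$" argument rather than the paper's "commutes with colimits" phrasing; for completeness one should exhibit the natural comparison map $W\tensor X^{C_2}\to(\epsilon^*W\tensor X)^{C_2}$ (via the counit and strong monoidality of $\epsilon^*$) before invoking cellular induction, but this is implicit in both write-ups.
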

\begin{proof}Let $\infl_e^{C_2} Y$ denote the inflation of a spectrum $Y$ to a $C_2$-spectrum with `trivial action', i.e.\ the left derived functor of first regarding it as a naive $C_2$-spectrum with trivial action and then changing the universe. This is the (derived) left adjoint for the fixed point functor \cite[Prop 3.4]{MM02}.

Let $I$ be an injective abelian group. Then there is for every spectrum $Y$ a natural isomorphism 
\begin{align*}
[Y, (I^X)^{C_2}] &\cong [\infl_e^{C_2} Y, I^X]^{C_2} \\
&\cong  \Hom(\pi_0^{C_2}(\infl_e^{C_2} Y \tensor X), I)\\
&\cong \Hom(\pi_0(Y\tensor X^{C_2}), I) \\
&\cong [Y, I^{(X^{C_2})}].
\end{align*}
Here, we use that fixed points commute with filtered homotopy colimits and cofibre sequences and therefore also with smashing with a spectrum with trivial action. Thus, there is a canonical isomorphism in the homotopy category of spectra between $I^{(X^{C_2})}$ and $(I^X)^{C_2}$ that is also functorial in $I$ (by Yoneda). For a general abelian group $A$, we can write $A^{(X^{C_2})}$ as the fibre of $(I^0)^{X^{C_2}} \to (I^1)^{X^{C_2}}$ (and similarly for the other side) for an injective resolution $0\to A\to I^0\to I^1$. Thus, we obtain a (possibly non-canonical) equivalence between $A^{(X^{C_2})}$ and $(A^X)^{C_2}$.
\end{proof}
\begin{remark}An analogous result holds, of course, for every finite group $G$. 
\end{remark} 

\subsection{Koszul complexes and derived power torsion}\label{sec:Koszul}
Let $R$ be a non-equivariantly $E_{\infty}$ $C_2$-ring spectrum and $M$ be an
$R$-module. In this section we will recall two versions of stable
Koszul complexes. Among their merits is that they provide models for
cellularization or $\R$-cellularization in cases of interest for us. A basic reference for the material in this section is \cite{G-M95}.  

As classically, the $r$-power torsion in a module $N$ can be defined as the kernel of $N \to N[\frac1r]$, we define the \emph{derived $J$-power torsion} of $M$ with respect to an ideal $J = (x_1,\dots, x_n) \subseteq \pi^{\Ctwo}_{\rost}(R)$ as
\begin{align*}\Gamma_J M = \fib(R \to R[\frac1{x_1}]) \tensor_R \cdots
  \tensor_R \fib(R \to R[\frac1{x_n}]) \tensor_R M .
\end{align*}
This is also sometimes called the \emph{stable Koszul complex} and also denoted by $K(x_1,\dots, x_n)$. As shown in \cite[Section 3]{G-M95}, this only depends on the ideal $J$ and not on the chosen generators. As algebraically, the derived functors of $J$-power torsion are the local cohomology groups, we might expect a spectral sequence computing the homotopy groups of $\Gamma_J M$ in terms of local cohomology. As in \cite[Section 3]{G-M95}, this takes the form
\begin{align}\label{eqn:localcohomology} H_J^s(\pi_{\rost+V}^{C_2}M) \Rightarrow \pi_{V-s}^{C_2}(\Gamma_JM).\end{align}

Our second version of the Koszul complex can be defined in the
one-generator case as
$$\kappa_R(x) = \hocolim \Sigma^{(1-l)|x|}R/x^l$$
for $x \in \pi^{\Ctwo}_{\rost}(R)$.
Here, the map $R/x^l \to \Sigma^{-|x|}R/x^{l+1}$ is induced by the diagram of cofibre sequences
\[
 \xymatrix{\Sigma^{|x^l|}R \ar[r]^{x^l}\ar[d]^=& R \ar[r]\ar[d]^x & R/x^l\ar@{-->}[d] \\
 \Sigma^{|x^l|}R \ar[r]^{x^{l+1}} & \Sigma^{-|x|}R \ar[r] & \Sigma^{-|x|}R/x^{l+1}
 }
\]

More generally, we can make for a sequence $\xu = (x_1,\dots, x_n)$ in $\pi^{\Ctwo}_{\rost}(R)$ the definition
\begin{align*}\kappa_R(\xu; M) &:= \kappa_R(x_1)\tensor_R\cdots \tensor_R \kappa_R(x_n)\tensor_R M\\
&\simeq \hocolim \Sigma^{-((l_1-1)+\cdots (l_n-1))|x|} M/(x_1^{l_1},\dots, x_n^{l_n})\end{align*}

The spectrum $\kappa_R(x)$ comes with an obvious filtration by $\Sigma^{(1-l)|x|}R/x^l$ with filtration quotients $\Sigma^{-l|x|}R/x$. We can smash these filtrations together to obtain a filtration of $\kappa_R(\xu)$
with filtration quotients wedges of summands of the form $\Sigma^{-l_1|x_1| -\cdots - l_n|x_n|}R/(x_1,\dots, x_n)$ (see \cite[1.3.11-12]{tilson} or \cite[2.8, 2.12]{tilsonArXiv}). Using the following lemma, we obtain also a corresponding filtration on $\Gamma_JR$. 

\begin{lemma}\label{lem:Koszul} For $\xu$ as above, we have 
\begin{align*}
 \kappa_R(\xu) &\simeq \Sigma^{|x_1|+\cdots +|x_n|+n}\Gamma_JR.
\end{align*}
\end{lemma}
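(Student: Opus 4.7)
The strategy is to reduce to the case $n=1$ and then assemble by tensoring, since both $\kappa_R(\xu)$ and $\Gamma_J R$ are multiplicative in the generators by their very definitions.

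\textbf{Single generator case.} For a single element $x\in \pi^{C_2}_{\rost}(R)$ of degree $d=|x|$, the first step is to exhibit $\Gamma_{(x)}R$ itself as a sequential homotopy colimit of shifted $R/x^l$'s, parallel to the definition of $\kappa_R(x)$. Writing $R[1/x]$ as the telescope of $R \xrightarrow{x} \Sigma^{-d}R \xrightarrow{x} \Sigma^{-2d}R \to \cdots$ and taking cofibres of the canonical maps $R \xrightarrow{x^l} \Sigma^{-ld}R$ level-wise, one obtains
\[
R[1/x]/R \;\simeq\; \hocolim_l\; \Sigma^{-ld}R/x^l,
\]
where the transition map $\Sigma^{-ld}R/x^l \to \Sigma^{-(l+1)d}R/x^{l+1}$ is the one induced by the comparison of cofibre sequences whose middle vertical arrow is multiplication by $x$ — that is, \emph{exactly} the map used to define $\kappa_R(x)$, merely at a different overall suspension. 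Since $R[1/x]/R \simeq \Sigma\,\Gamma_{(x)}R$, I obtain $\Gamma_{(x)}R \simeq \hocolim_l \Sigma^{-ld-1}R/x^l$. Comparing with $\kappa_R(x) = \hocolim_l \Sigma^{(1-l)d}R/x^l$ term-by-term, the suspension difference is constantly $(1-l)d - (-ld-1) = d+1$, so $\kappa_R(x) \simeq \Sigma^{d+1}\Gamma_{(x)}R$.

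\textbf{Assembly.} For general $\xu = (x_1,\ldots,x_n)$, I just smash the single-generator equivalences over $R$:
\[
\kappa_R(\xu) \;=\; \kappa_R(x_1)\tensor_R\cdots\tensor_R \kappa_R(x_n) \;\simeq\; \Sigma^{|x_1|+1}\Gamma_{(x_1)}R \tensor_R\cdots\tensor_R \Sigma^{|x_n|+1}\Gamma_{(x_n)}R,
\]
and then pull the suspensions out and recognize the remaining tensor product as $\Gamma_J R$ by the definition of the stable Koszul complex recorded earlier in the section, $\Gamma_J R = \fib(R\to R[1/x_1])\tensor_R\cdots\tensor_R \fib(R\to R[1/x_n])$. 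This yields the claimed shift of $|x_1|+\cdots+|x_n|+n$.

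\textbf{Main obstacle.} The only nontrivial point is checking that the transition maps in the two sequential diagrams really are the same (up to the overall suspension), so that their homotopy colimits match and not just their terms. This amounts to comparing the octahedral/cofibre diagrams
\[
\Sigma^{ld}R \xrightarrow{x^l} R \to R/x^l \quad\text{versus}\quad R \xrightarrow{x^l} \Sigma^{-ld}R \to \Sigma^{-ld}R/x^l,
\]
both of which receive a multiplication-by-$x$ comparison to the next stage, and observing that they are related by a global suspension shift of $ld+1$; this shift is constant in the indexing variable, so it passes through the hocolim as a uniform suspension. The rest is bookkeeping of gradings, where the $+1$ contributed per generator comes from the identification $\Gamma_{(x)}R \simeq \Sigma^{-1}(R[1/x]/R)$ rather than from the Koszul colimit itself.
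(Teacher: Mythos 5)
Your argument is correct and reaches the right conclusion, and it is a genuinely different route from the paper's: the paper's ``proof'' is a bare citation of \cite[Lemma 3.6]{G-M95}, whereas you give a self-contained direct argument. The key points you identify are exactly the right ones: writing $R[1/x]$ as a telescope and taking levelwise cofibres against the constant tower gives $R[1/x]/R \simeq \hocolim_l \Sigma^{-l|x|}R/x^l$, the rotation of the fibre sequence for $\Gamma_{(x)}R$ contributes the extra $\Sigma^{-1}$, the resulting term-by-term shift against $\kappa_R(x) = \hocolim_l \Sigma^{(1-l)|x|}R/x^l$ is the \emph{constant} $(1-l)|x| - (-l|x|-1) = |x|+1$, and the multi-variable case assembles by smashing single-generator equivalences over $R$ since both $\kappa_R(\xu)$ and $\Gamma_J R$ are $R$-module tensor products of their one-variable building blocks. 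What your approach buys is transparency: the reader sees exactly where each summand of the shift $|x_1|+\cdots+|x_n|+n$ comes from (the $|x_i|$ from aligning the sequential colimits, the $+1$ per generator from $\Gamma_{(x_i)}R \simeq \Sigma^{-1}(R[1/x_i]/R)$), rather than having to unwind Greenlees--May's conventions.

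One caveat on exposition. In your ``Main obstacle'' paragraph you say the two displayed cofibre sequences are ``related by a global suspension shift of $ld+1$'' and that ``this shift is constant in the indexing variable.'' Neither half of this sentence is right as written: the two displayed triangles differ by $\Sigma^{ld}$ (not $ld+1$), and $ld$ is manifestly not constant in $l$. What \emph{is} constant in $l$ — and what actually matters — is the relative suspension between the $l$-th term of $\kappa_R(x)$, namely $\Sigma^{(1-l)d}R/x^l$, and the $l$-th term of $\Sigma^{-1}(R[1/x]/R)$, namely $\Sigma^{-ld-1}R/x^l$; that difference is $d+1$, and since the transition maps in both towers are induced from the same commuting square with vertical maps $(=,\,x)$ on the first two terms, the constant shift $\Sigma^{d+1}$ intertwines the two directed systems and hence passes to the colimits. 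You in fact compute exactly this in the ``Single generator case'' paragraph, so the argument is sound; I would simply replace the garbled sentence with the statement above so the justification matches what you already proved.
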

\begin{proof}
See \cite[Lemma 3.6]{G-M95}.
\end{proof}

We can also define $\kappa_R(\xu;M)$ (and likewise the other versions of Koszul complexes) for an infinite sequence of $x_i$ by just taking the filtered homotopy colimit over all finite subsequences. Usually Lemma \ref{lem:Koszul} breaks down in the infinite case.

\begin{remark}\label{rmk:hocolim}
 The homotopy colimit defining $\kappa_R(\xu;M)$ has a directed cofinal subsystem, both in the
finite and in the infinite case. Indeed, the colimit ranges over all sequences $(l_1,l_2,\dots)$ with only finitely many entries nonzero. For the directed subsystem, we start with $(0,0,\dots)$ and raise in the $n$-th step the first $n$ entries by $1$. Directed homotopy colimit are well-known to be weak colimits in the homotopy category of $R$-modules, i.e.\ every system of compatible maps induces a (possibly non-unique) map from the homotopy colimit \cite[Sec 3.1]{Mar83} \cite[Sec II.5]{Sch07}.
\end{remark}

One of the reason for introducing $\Gamma_JM$ is that it provides a model for the $\R$-cellularization of $M$ with respect to $R/J = (R/x_1) \tensor_R \cdots \tensor_R (R/x_n)$ in the sense of Section \ref{sec:Cell}.
\begin{prop}\label{prop:cell}
 Suppose that $x_1\dots, x_n \in \pi_{*\rho}^{C_2}R$ and set $J = (x_1,\dots, x_n)$. Then $\Gamma_JM \to M$ is a $\R$-cellularization with respect to $R/J$ in the (triangulated) category of $R$-modules. 
\end{prop}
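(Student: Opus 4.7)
The plan is to verify the two defining properties of an $R/J$-$\R$-cellularization in the category of $R$-modules: first, that $\Gamma_J M$ is $R/J$-$\R$-cellular, and second, that $\Gamma_J M \to M$ induces an equivalence after applying $\Hom_R(R/J,-)$. The subtle point will be the first property, since the filtration subquotients that naturally arise are $R/J$-modules rather than literal representation suspensions of $R/J$, so one needs the intermediate observation that every $R/J$-module, regarded as an $R$-module, lies in the localizing subcategory generated by $\{S^{k\rho}\tensor R/J\}_{k\in\Z}$.

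For the cellularity, I would use Lemma~\ref{lem:Koszul} to identify $\Gamma_J M$ with a representation suspension of $\kappa_R(\xu;M)$, and then combine the sequential filtrations $R/x_i^l \lra \Sigma^{-|x_i|}R/x_i^{l+1}$ defining each $\kappa_R(x_i)$ (smashed over $R$ with $M$) to produce a filtration of $\kappa_R(\xu;M)$ whose successive quotients are finite wedges of $\Sigma^{-\sum l_i|x_i|}R/J\tensor_R M$. The hypothesis $x_i\in\pi_{*\rho}^{\Ctwo}R$ ensures that the suspension degrees are integer multiples of $\rho$. Each such subquotient is an $R/J$-module, and any $R/J$-module is built, as an $R/J$-module and hence as an $R$-module, from cells $\{S^V\tensor R/J\st V\in RO(\Ctwo)\}$. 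By the argument in Section~\ref{subsec:Rcell} based on the cofibre sequence $(\Ctwo)_+ \to S^0 \to S^\sigma$, these cells all lie in the localizing subcategory generated by $\{S^{k\rho}\tensor R/J\}$, so $\Gamma_J M$ is $R/J$-$\R$-cellular.

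For the second property I would show the cofibre $C = \cofibre(\Gamma_J M \to M)$ satisfies $\Hom_R(R/J,C)\simeq 0$. Unpacking the definition of $\Gamma_J$, $C$ is the total cofibre of the cube assembled from the localizations $M[1/x_{i_1}\cdots x_{i_k}]$ over non-empty subsets $\{i_1,\dots,i_k\}\subseteq\{1,\dots,n\}$. On any such localization some $x_i$ acts invertibly, so applying $\Hom_R(-,M[1/x_{i_1}\cdots x_{i_k}])$ to the triangle $\Sigma^{|x_i|}R\xrightarrow{x_i}R\to R/x_i$ yields $\Hom_R(R/x_i,M[1/x_{i_1}\cdots x_{i_k}])\simeq 0$. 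Since $R/J$ is constructed as an iterated cofibre of multiplication by the $x_j$'s on intermediate quotients $R/(x_1,\dots,x_{j-1})$, this vanishing propagates to $\Hom_R(R/J,M[1/x_{i_1}\cdots x_{i_k}])\simeq 0$, and closure of $\Hom_R(R/J,-)$ under cofibre sequences forces $\Hom_R(R/J,C)\simeq 0$. Combined with the cellularity established above, this identifies $\Gamma_J M\to M$ as the desired $R/J$-$\R$-cellularization.
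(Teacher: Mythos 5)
Your cellularity argument has a genuine gap. You claim that each Koszul filtration subquotient $\Sigma^{-\sum l_i|x_i|}R/J\tensor_R M$ ``is an $R/J$-module'' and then invoke a general principle that ``any $R/J$-module is built, as an $R/J$-module and hence as an $R$-module, from cells $\{S^V\tensor R/J\}$.'' But $R/J=\cof(R\xrightarrow{x_1}R)\tensor_R\cdots\tensor_R\cof(R\xrightarrow{x_n}R)$ is only given as an $R$-module; the proposition makes no assumption that it carries a ring structure, and Koszul quotients of ring spectra generally do not. So ``$R/J$-module'' is not available as a notion, and the supporting claim about building $R/J$-modules from $R/J$-cells has no meaning here. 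The paper sidesteps this cleanly: every $R$-module is $R$-$\R$-cellular (the generators $(C_2/H)_+\tensor\Sigma^nR$ of the category of $R$-modules lie in the localizing subcategory generated by $\{S^{k\rho}\tensor R\}$, using the cofibre sequence $(C_2)_+\to S^0\to S^\sigma$ exactly as in your last step), so applying the exact, coproduct-preserving functor $R/J\tensor_R(-)$ to an $R$-$\R$-cell structure of $M$ exhibits the specific spectrum $M/J=R/J\tensor_R M$ as $R/J$-$\R$-cellular. Your filtration then finishes the cellularity of $\kappa_R(\xu;M)$ as you intend; no claim about arbitrary $R/J$-modules is needed or wanted.

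The second half is correct and takes a genuinely different route from the paper. You analyze the cofibre of $\Gamma_J M\to M$ globally via the cube of localizations $M[1/x_{i_1}\cdots x_{i_k}]$, using that $\Hom_R(R/x_i,N)\simeq 0$ when $x_i$ acts invertibly on $N$; the paper instead inducts on the number of generators, writing $\Gamma_J M=\Gamma_{x_n}(\Gamma_{(x_1,\dots,x_{n-1})}M)$ and reducing to $\Hom_R(A/x,B[x^{-1}])\simeq 0$. Both rest on the same one-generator vanishing; the induction avoids the bookkeeping of the cube, whereas your version makes the ``at each vertex some $x_i$ is inverted'' structure more visible. One small suggestion on the propagation step: rather than threading the vanishing through an iterated cofibre construction of $R/J$ in a fixed order of the $x_j$ (which would force the inverted generator to come first), it is cleaner to use the adjunction $\Hom_R(R/J,N)\simeq\Hom_R(R/J',\Hom_R(R/x_i,N))$ with $J'=(x_1,\dots,\widehat{x_i},\dots,x_n)$, so that the choice of inverted $x_i$ can depend on the vertex of the cube.
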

\begin{proof}
 Clearly, $\kappa_R(x_1,\dots, x_n; M)$ is $\R$-cellular with respect to $M/J$; furthermore $M/J$ is $R/J$-$\R$-cellular as clearly $M$ is $R$-cellular. To finish the proof, we have to show that 
 $$\Hom_R(R/J, \Gamma_JM) \to \Hom_R(R/J, M)$$
 is an equivalence. Note that $\Gamma_JM = \Gamma_{x_n}(\Gamma_{(x_1,\dots, x_{n-1})}M)$. Thus, it suffices by induction to show that 
 $$\Hom_R(A/x, \Gamma_x B) \to \Hom_R(A/x, B)$$
 is an equivalence for all $R$-modules $A,B$. This is equivalent to  
 $$\Hom_R(A/x, B[x^{-1}]) = 0$$
which is true as multiplication by $x$ induces an equivalence
 $$\Hom_R(A,B[x^{-1}]) \xrightarrow{x^*} \Hom_R(\Sigma^{|x|}A, B[x^{-1}]).\qedhere$$
\end{proof}

\begin{cor}\label{cor:cellular}
 Let $M$ be a connective $R$-module and $A$ an abelian group. Then the Anderson dual $A^M$ is $\R$-cellular with respect to $R/J$ for every ideal $J\subset \pi_{\rost}^{C_2}$ finitely generated in degrees $a+b\sigma$ with $a\geq 1$ and $a+b\geq 1$. 
\end{cor}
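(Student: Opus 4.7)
The plan is to use the stable Koszul complex from Section \ref{sec:Koszul} as a candidate $R/J$-$\R$-cellularization of $A^M$. Specifically, I would establish the corollary by proving two things: that $\Gamma_J A^M$ is itself $R/J$-$\R$-cellular, and that the augmentation $\Gamma_J A^M \to A^M$ is an equivalence.

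For the first step, Lemma \ref{lem:Koszul} together with the Koszul filtration on $\kappa_R(\xu; A^M)$ discussed right before it exhibits $\Gamma_J A^M$ as a filtered colimit of $R$-modules whose successive quotients are $RO(\Ctwo)$-graded suspensions of $R/J \tensor_R A^M$. Since $R/J \tensor_R A^M$ is an $R/J$-module, it is built out of copies of $S^{p+q\sigma} \tensor R/J$. Writing $p+q\sigma = (p-q) + q\rho$ and invoking the observation from Section \ref{sec:Cell} (via the cofibre sequence $(C_2)_+ \to S^0 \to S^\sigma$) that the localizing subcategory generated by $\{S^{k\rho} \tensor R/J\}_{k \in \Z}$ coincides with the one generated by $\{S^k \tensor R/J,\, (C_2)_+ \tensor S^k \tensor R/J\}_{k\in \Z}$, each such suspension is $R/J$-$\R$-cellular. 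Hence so is $\Gamma_J A^M$.

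For the second step, the map $\Gamma_J A^M \to A^M$ is an equivalence iff $A^M[1/x] \simeq 0$ for each generator $x$ of $J$. I would verify this by showing that $\pi^H_k(A^M[1/x]) = 0$ for $H \in \{e, \Ctwo\}$ and all $k \in \Z$. Since integer homotopy commutes with filtered colimits, this amounts to showing that $\pi^H_{k+n|x|}(A^M)$ vanishes for $n$ sufficiently large. By the universal coefficient sequence from Section \ref{subsec:Anderson}, this group is an extension of $\Hom(\pi^H_{-k-n|x|}(M), A)$ by $\Ext(\pi^H_{-k-n|x|-1}(M), A)$, so it is enough to show the relevant homotopy groups of $M$ vanish. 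Writing $|x| = a+b\sigma$, the degree $-k-n|x|$ has underlying dimension $-k-n(a+b)$, which tends to $-\infty$ since $a+b \geq 1$, and fixed-point dimension $-k-na$, which tends to $-\infty$ since $a \geq 1$.

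The main obstacle will be the following structural vanishing: for a connective $\Ctwo$-spectrum $M$, one needs $\pi^{\Ctwo}_\alpha(M) = 0$ whenever both $\alpha^e < 0$ and $\alpha^{\Ctwo} < 0$. I would prove this by induction on a cell structure of $M$ with cells $S^n$ and $(C_2)_+ \tensor S^n$ for $n \geq 0$, combining non-equivariant connectivity of the sphere with the cofibre sequence $(C_2)_+ \to S^0 \to S^\sigma$. The two hypotheses $a \geq 1$ and $a+b \geq 1$ are used here precisely to guarantee that both dimensions of $-k-n|x|$ eventually become negative; weakening either would leave one direction uncontrolled, so this is the step where the exact form of the hypothesis enters.
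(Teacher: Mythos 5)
Your proposal is correct and takes essentially the same route as the paper: reduce via Proposition \ref{prop:cell} to showing $\Gamma_J A^M \simeq A^M$, which follows from $A^M[1/x]\simeq 0$ for each generator $x$, established by combining the Anderson universal coefficient sequence with the connectivity vanishing $\pi^{C_2}_{c+d\sigma}M = 0$ when $c<0$ and $c+d<0$. The one minor redundancy is your first step: the $R/J$-$\R$-cellularity of $\Gamma_J A^M$ is already part of the statement of Proposition \ref{prop:cell} (its proof shows $\kappa_R(\xu;A^M)$ is $R/J$-$\R$-cellular), so re-deriving it from the Koszul filtration, while correct, is not needed once you cite that proposition. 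Also, you claim "iff" where only the "if" direction is needed or used; the converse is not obvious and plays no role, so the "iff" should just be a one-directional implication.
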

\begin{proof}
 By the last proposition, we have to show that $\Gamma_JA^M \simeq A^M$. For this it suffices to show that $A^M[x^{-1}]$ is contractible for every generator $x$ of $J$. As $M$ is connective, we know that $\pi_{a+b\sigma}M = 0$ if $a<0$ and $a+b<0$ (this follows, for example, using the cofibre sequence $(C_2)_+ \to S^0 \to S^\sigma$). Thus, $\pi_{a+b\sigma}A^M = 0$ if $a>0$ and $a+b>0$. The result follows. 
\end{proof}

\section{Real bordism and the spectra $BP\mathbb{R}\langle n \rangle$}\label{sec:BPRn}
\subsection{Basics and homotopy fixed points}\label{sec:BPRBasics}
The $\Ctwo$-spectrum $M\R$ of \emph{Real bordism} was originally defined by Araki and Landweber. In the naive model of $\Ctwo$-spectra, 
where a $\Ctwo$-spectrum is just given as a sequence $(X_n)$
 of pointed $\Ctwo$-spaces together with maps 
 $$\Sigma^{\rho}X_n \to X_{n+1}$$
 it is just given by the Thom spaces $M\R_n = BU(n)^{\gamma_n}$ 
 with complex conjugation as $\Ctwo$-action. Defining it as a strictly commutative $\Ctwo$-orthogonal spectrum requires more care and was done 
 in \cite[Example 2.14]{SchEquiv} and \cite[Section B.12]{HHR}. An important fact is that the geometric fixed points of $M\R$ are equivalent to 
 $MO$ (first proven in \cite{A-M} and reproven in \cite[Proposition B.253]{HHR}).
 
As shown in \cite{Ara79} and \cite[Theorem 2.33]{HK}, there is a
splitting 
$$M\R_{(2)} \simeq \bigoplus_{i}\Sigma^{m_i\rho}BP\R,$$
where the underlying spectrum of $BP\R$ agrees with $BP$. This
splitting corresponds on geometric fixed points to the splitting 
$$MO \simeq \bigoplus_{i}\Sigma^{m_i}H\F_2.$$ 
As shown in \cite{HK} (see also
Appendix \ref{Appendix}), the restriction map 
$$\pi_{*\rho}^{\Ctwo}BP\R\to \pi_{2*}BP$$ 
is an isomorphism. Choose now arbitrary
indecomposables $v_i \in \pi_{2(2^i-1)}BP$ and denote their lifts to
$\pi_{(2^i-1)\rho}^{\Ctwo}BP\R$ and their images in $\pi_{(2^i-1)\rho}^{\Ctwo}M\R$
by $\vb_i$. We denote by $\BPRn$ the quotient 
$$BP\R/(\vb_{n+1},\vb_{n+2},\dots)$$ 
in the homotopy category of $M\R$-modules. At least a priori, this depends on the choice of $v_i$. 

We want to understand the homotopy groups of $\BPRn$. This was first
done by Hu in \cite{Hu} (beware though that Theorem 2.2 is not correct
as stated there) and partially redone in   \cite{K-W13}. For the
convenience of the reader, we will give the computation again. Note
that our proofs are similar but not identical to the ones in the
literature. The main difference is that we do not use ascending
induction and prior knowledge of $H\Z$ to compute $\Phi^{C_2}\BPRn$,
but precise knowledge about $\pi_{\rost}^{C_2}BP\R$ -- this is not
simpler than the original approach, but gives extra information about
other quotients of $BP\R$, which we will need later. We recommend that
the reader looks at Appendix A for a complete understanding of the
results that follow. 

We will use the
Tate square \cite{GMTate} and consider the  following diagram in which
the rows are cofibre sequences: 

 \[\xymatrix@C=0.7cm{
  \BPRn \tensor (EC_2)_+ \ar[r]\ar[d]^\simeq & \BPRn \ar[r]\ar[d]& \BPRn\tensor \tilde{E}C_2 \ar[d]\ar[r] &\Sigma\BPRn\tensor (EC_2)_+  \ar[d]\\
  \BPRn^{(EC_2)_+} \tensor (EC_2)_+ \ar[r] & \BPRn^{(EC_2)_+} \ar[r]& \BPRn^{(EC_2)_+}\tensor \tilde{E}C_2 \ar[r] & \Sigma\BPRn\tensor (EC_2)_+ 
  }
 \]

After taking fixed points this becomes
 \[\xymatrix{
  \BPRn_{h\Ctwo} \ar[r]\ar[d]^= & \BPRn^{\Ctwo} \ar[r]\ar[d]& \BPRn^{\Phi \Ctwo} \ar[d]\ar[r] &\Sigma\BPRn_{h\Ctwo} \ar[d]\\
  \BPRn_{h\Ctwo} \ar[r] & \BPRn^{h\Ctwo} \ar[r]& \BPRn^{t\Ctwo} \ar[r] & \Sigma\BPRn_{h\Ctwo} 
  }
 \]

 First, we compute the homotopy groups of the homotopy fixed points. For this we use the $RO(C_2)$-graded homotopy fixed point spectral sequence, described for example in \cite[Section 2.3]{HM}.
 
 \begin{prop}\label{prop:BPRnHomotopy}
  The $RO(\Ctwo)$-graded homotopy fixed point spectral sequence
  \[E_2=H^*(\Ctwo; \pi^e_{\rost}\BPRn) \cong \Z_{(2)}[\vb_1,\dots, \vb_n, u^{\pm 1}, a]/2a \Rightarrow \pi^{\Ctwo}_\bigstar (\BPRn^{(E\Ctwo)_+})\]
  has differentials generated by $d_{2^{i+1}-1}(u^{2^{i-1}}) = a^{2^{i+1}-1}\vb_i$ for $i\leq n$ and $E_{2^{n+1}} = E_\infty$.
 \end{prop}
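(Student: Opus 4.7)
The plan is to compute the HFPSS for $\BPRn$ in three stages.

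For the $E_2$-page, the underlying spectrum of $\BPRn$ is a $2$-local form of $BP\langle n\rangle$, so $\pi^e_*\BPRn = \Z_{(2)}[v_1,\ldots,v_n]$; extending to the $RO(C_2)$-grading via the orientation class $u$ in degree $2\delta$ gives $\pi^e_{\rost}\BPRn = \Z_{(2)}[\vb_1,\ldots,\vb_n, u^{\pm 1}]$ on the relevant gradings. Combining with the standard computation $H^*(C_2; \Z_{(2)}) = \Z_{(2)}[a]/(2a)$ yields the claimed $E_2$.

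For the differentials, the quotient map $\BPR \to \BPRn$ is a map of commutative $C_2$-ring spectra, inducing a multiplicative map of HFPSS. By Appendix~\ref{Appendix} (following \cite{HK}), the HFPSS for $\BPR$ has differentials $d_{2^{i+1}-1}(u^{2^{i-1}}) = a^{2^{i+1}-1}\vb_i$ for every $i \geq 1$. Naturality transfers exactly those with $i \leq n$ into the HFPSS for $\BPRn$, since $\vb_i$ is nonzero in $\BPRn$ precisely for those indices.

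For the collapse $E_{2^{n+1}} = E_\infty$, the classes $\vb_i$ and $a$ are manifestly permanent cycles, so by multiplicativity the task reduces to showing that $u^{2^n}$ is a permanent cycle. Naturality from $\BPR$ already disposes of all differentials $d_r(u^{2^n})$ with $r \leq 2^{n+2}-1$, since the potential $\BPR$-target $a^{2^{n+2}-1}\vb_{n+1}$ has image zero in $\BPRn$. For $r \geq 2^{n+2}$, where naturality fails because $u^{2^n}$ is dead in the $\BPR$ HFPSS, I would invoke either a sparseness check on the $RO(C_2)$-bidegree of surviving classes, or a comparison with the $\vb_n$-inverted spectrum $E\R(n)$, in which $u^{2^n}$ is forced to be a permanent cycle by $\vb_n$-periodicity and this permanence descends along the localization map.

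The main obstacle is precisely this last step: ruling out higher-length differentials on $u^{2^n}$ cannot be accomplished by naturality from $\BPR$ alone and requires either a delicate bidegree sparseness argument or an additional structural input such as $\vb_n$-periodicity.
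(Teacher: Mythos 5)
Your outline of the three stages is correct, and the naturality argument for producing the differentials $d_{2^{i+1}-1}(u^{2^{i-1}}) = a^{2^{i+1}-1}\vb_i$ and for disposing of $d_r(u^{2^n})$ up to $r = 2^{n+2}-1$ is a legitimate alternative to the paper's route (the paper instead re-runs the combinatorial argument of Proposition~\ref{prop:Differentials} using the vanishing $a^{2^{i+1}-1}\vb_i = 0$ in $\pi^{C_2}_{\bigstar}\BPRn^{(EC_2)_+}$). However, the gap you flag at the end is genuine, and the proof is incomplete until it is closed. Your alternative suggestion of deducing permanence of $u^{2^n}$ from $\vb_n$-periodicity of $E\R(n)$ is also risky: in the standard treatments (Hu--Kriz, Kitchloo--Wilson) the periodicity of $E\R(n)$ is \emph{itself} established by running this HFPSS and showing $u^{2^n}$ (up to an $a$-power twist) is a permanent cycle, so invoking it here is circular unless you have an independent proof of periodicity.

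The sparseness argument you gesture at is in fact short, and it closes the gap cleanly; it is what the paper does. Suppose $d_m \neq 0$ for some $m \geq 2^{n+1}$. Since every class in filtration $f$ is an $a^f$-multiple of a filtration-$0$ class and $a$ is a permanent cycle, there must be a nonzero $d_m$ on a $0$-line class $u^l\vb$; Leibniz and the permanence of $\vb$ (image from $BP\R$) push this down to a nonzero $d_m$ on $u^l$. The target is then $a^m u^{l'}\vb'$ for some polynomial $\vb'$ in $\vb_1,\dots,\vb_n$. But on $E_{2^{n+1}}$ one already has $a^{2^{i+1}-1}\vb_i = 0$ for all $1 \le i \le n$, hence $a^m\vb_i = 0$ since $m \geq 2^{n+1} > 2^{i+1}-1$; so $\vb'$ must be a unit, i.e.\ constant. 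Now compare $RO(C_2)$-degrees: $|u^l|-1 = (2l-1) - 2l\sigma$ must equal $|a^m u^{l'}| = 2l' - (2l'+m)\sigma$, which forces $2l-1 = 2l'$ and $2l = 2l'+m$, i.e.\ $m = 1$. This contradicts $m \geq 2^{n+1}$, so no such differential exists and $E_{2^{n+1}} = E_\infty$. Note that the paper's version of the argument handles all powers $u^l$ at once rather than reducing to $u^{2^n}$ alone, which is slightly more robust (it dispenses with worrying separately about $u^{-2^n}$ and the classes $\vb_i u^{2^i j}$ by absorbing them into the Leibniz step).
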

 \begin{proof}
  The description of $E_{2^{n+1}}$ is entirely analogous to the proof of \ref{prop:Differentials}, using that $a^{2^{i+1}-1}\vb_i = 0$ in $\pi_\bigstar^{\Ctwo}\BPRn^{(E\Ctwo)_+}$. Now we need to show that there are no further differentials: As every element in filtration $f$ is divisible by $a^f$ in $E^{2^{n+1}}$, the existence of a nonzero $d_m$ (with $m\geq 2^{n+1}$) implies the existence of a nonzero $d_m$ with source in the $0$-line. Moreover, a nonzero $d_m$ of some element $u^{l}\vb$ (for $\vb$ a polynomial in the $\vb_i$) on the $0$-line implies a nonzero $d_m$ on $u^{l}$ as $\vb$ is a permanent cycle (in the image from $BP\R$). The image of such a differential must be of the form $a^mu^{l'}\vb'$, where $\vb'$ is a polynomial in $\vb_1,\dots, \vb_n$. As $a^m\vb_i = 0$ for $1\leq i\leq n$ in $E^{2^{n+1}}$, the polynomial $\vb'$ must be constant. Counting degrees, we see that 
 \[(2l-1)-2l\sigma = |u^l|-1 = |a^mu^{l'}| = 2l' -(2l'+m)\sigma\]
 and thus $m = 2l-2l' = 1$. This is clearly a contradiction. 
 \end{proof}
 
 \begin{cor}
  We have 
  $$\pi^{C_2}_{\rost}(\BPRn^{(EC_2)_+}\tensor \tilde{E}C_2) \cong \F_2[u^{\pm 2^n}, a^{\pm 1}].$$
   In particular, we get $\pi_*\BPRn^{tC_2} \cong \F_2[x^{\pm 1}]$, where $x = u^{2^n}a^{-2^{n+1}}$ and $|x| = 2^{n+1}$. These are understood to be additive isomorphisms.
 \end{cor}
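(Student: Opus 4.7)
My plan is to deduce the corollary directly from Proposition \ref{prop:BPRnHomotopy} by inverting the class $a$, since smashing with $\tilde{E}C_2 = S^{\infty\sigma}$ is exactly $a$-localization; taking fixed points after smashing with $\tilde{E}C_2$ gives the Tate construction, and the HFPSS of Proposition \ref{prop:BPRnHomotopy} localizes to the corresponding Tate (or $\sigma$-geometric) spectral sequence with the same differentials. Thus I would begin by writing the $E_2$-page of this localized spectral sequence as
\[
\F_2[\vb_1,\dots,\vb_n,u^{\pm 1},a^{\pm 1}]
\]
(the relation $2a=0$ forces $2=0$ once $a$ is invertible) with differentials $d_{2^{i+1}-1}(u^{2^{i-1}}) = a^{2^{i+1}-1}\vb_i$ for $1\leq i \leq n$.

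Next I would compute the successive pages inductively. Because $a$ is invertible and a permanent cycle, the differential $d_3(u) = a^3\vb_1$ gives $\vb_1 = d_3(a^{-3}u)$, so both $\vb_1$ and all odd powers of $u$ disappear and $E_4 \cong \F_2[\vb_2,\dots,\vb_n,u^{\pm 2},a^{\pm 1}]$. Iterating: on $E_{2^{i+1}}$ the class $u^{2^{i-1}}$ is a cycle, and $d_{2^{i+1}-1}(u^{2^{i-1}}) = a^{2^{i+1}-1}\vb_i$ lets us write $\vb_i$ as a boundary, killing $\vb_i$ together with the half of the powers of $u^{2^{i-1}}$ that are not permanent cycles. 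Inducting yields $E_{2^{i+1}} \cong \F_2[\vb_{i+1},\dots,\vb_n,u^{\pm 2^i},a^{\pm 1}]$, and after the final differential $d_{2^{n+1}-1}$ we land at $E_{2^{n+1}} \cong \F_2[u^{\pm 2^n},a^{\pm 1}]$, which is $E_\infty$ by the proposition.

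Convergence should not be an obstacle: inverting $a$ makes the spectral sequence a Tate spectral sequence which is horizontally $a$-periodic, so the usual conditional/strong convergence arguments go through, and since $E_\infty$ has been identified additively, we conclude
\[
\pi^{C_2}_{\rost}(\BPRn^{(EC_2)_+}\tensor \tilde{E}C_2) \cong \F_2[u^{\pm 2^n}, a^{\pm 1}]
\]
as asserted. For the integer-graded statement, I note $|u^{2^n}| = 2^{n+1}\delta = 2^{n+1}(1-\sigma)$ and $|a| = -\sigma$, so a monomial $u^{2^n k}a^{\ell}$ sits in integer degree precisely when the coefficient of $\sigma$ vanishes, i.e.\ $\ell = -2^{n+1}k$, giving degree $2^{n+1}k$. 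Setting $x = u^{2^n}a^{-2^{n+1}}$ of degree $2^{n+1}$, the integer-graded part is $\F_2[x^{\pm 1}]$.

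The main potential obstacle is simply being precise about what it means to pass from the HFPSS to the spectral sequence for $X^{(EC_2)_+}\tensor\tilde{E}C_2$; this is standard (smashing with $\tilde{E}C_2$ inverts $a=a_\sigma$ since $\tilde{E}C_2 = \colim S^{k\sigma}$), but one should cite or check that the inherited spectral sequence converges strongly with the same pattern of differentials. Once that is in place, the computation is mechanical induction on $n$.
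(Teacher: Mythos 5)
Your proof is correct, and it works, but it takes a slightly different route than the paper.

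The paper localizes the \emph{answer}: it notes that $\pi^{C_2}_{\rost}(\BPRn^{(EC_2)_+}\tensor \tilde{E}C_2)\cong \pi^{C_2}_{\rost}(\BPRn^{(EC_2)_+})[a^{-1}]$ (using $\tilde{E}C_2\simeq \colim_k S^{k\sigma}$ and the fact that $\pi_\bigstar$ commutes with filtered colimits), and then reads off from the $E_\infty$-page of Proposition~\ref{prop:BPRnHomotopy} that the $\vb_i$ are $a$-power torsion while the $u^{2^n m}$ are not. You instead localize the \emph{spectral sequence}: you invert $a$ on the $E_2$-page and rerun the differentials inductively, obtaining $E_{2^{i+1}}\cong \F_2[\vb_{i+1},\dots,\vb_n,u^{\pm 2^i},a^{\pm 1}]$ at each stage. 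Both are valid and yield the same answer; your inductive recomputation of the pages is a bit longer but is perhaps more explicit about why exactly $\F_2[u^{\pm 2^n},a^{\pm 1}]$ survives. The trade-off is that your version must address strong convergence of the $a$-localized (Tate) spectral sequence — you flag this, and it is indeed unproblematic here because the spectral sequence degenerates at a finite page — whereas the paper's version sidesteps this by never forming the localized spectral sequence in the first place; it only needs the elementary observation that inverting $a$ on the (finitely filtered in each degree) homotopy groups produces the same result as inverting $a$ on $E_\infty$. Your integer-graded extraction of $\F_2[x^{\pm 1}]$ with $x=u^{2^n}a^{-2^{n+1}}$ agrees with the paper.
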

 \begin{proof}
  Recall that 
  $$\pi^{C_2}_{\rost}(\BPRn^{(EC_2)_+}\tensor \tilde{E}C_2) = \pi^{C_2}_{\rost}(\BPRn^{(EC_2)_+})[a^{-1}].$$
  as $S^{\infty\sigma}$ is a model of $\tilde{E}C_2$. The result follows as all $\vb_i$ are $a$-power torsion, but $u^{2^nm}$ is not. 
 \end{proof}
 
 \subsection{The homotopy groups of $BP\mathbb{R}\langle n\rangle$}\label{sec:BPRnC2}
 
 Computing the homotopy groups of the fixed points is more delicate
 than the computation of the homotopy fixed points. We first have to
 use our detailed knowledge about the homotopy groups of $\BPR$.
 Given a sequence $\underline{l} = (l_1,\dots)$, we denote by
 $BP\R/\underline{\vb}^{\underline{l}}$ the spectrum
 $BP\R/(\vb^{l_{i_1}}_{i_1}, \vb^{l_{i_2}}_{i_2},\dots)$, where $i_j$
 runs over all indices such that $l_{i_j}\neq 0$. Similarly
 $BP\R/\vb_i^j$ is understood to be $BP\R$ if $j=0$. We use the
 analogous convention when we have algebraic quotients of homotopy
 groups. 

We recommend the reader  skips the proof of the following result for
first reading, as the technical detail is not particularly
illuminating. 
 
 \begin{prop}\label{prop:BPBound}
Let $k\geq 1$ and $\underline{l} = (l_1,l_2, \dots)$ be a sequence of nonnegative integers with $l_k=0$. Then the element $\vb_k$ acts injectively on $(\pi_{*\rho-c}^{\Ctwo}\BPR)/\underline{\vb}^{\underline{l}}$ if $0\leq c \leq 2^{k+1}+1$, with a splitting on the level of $\Z_{(2)}$-modules. 
\end{prop}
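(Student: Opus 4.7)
My plan is to rely on the explicit $RO(C_2)$-graded description of $\pi_{\rost}^{C_2}\BPR$ supplied by Appendix \ref{Appendix}, together with the slice (equivalently, homotopy fixed point) spectral sequence differentials generated by $d_{2^{i+1}-1}(u^{2^{i-1}}) = a^{2^{i+1}-1}\vb_i$. The case $c=0$ is immediate: since $\pi_{*\rho}^{\Ctwo}\BPR \cong \Z_{(2)}[\vb_1,\vb_2,\dots]$ is a polynomial ring and $l_k=0$, the element $\vb_k$ is a non-zero-divisor on the quotient by $\underline{\vb}^{\underline{l}}$, and the $\Z_{(2)}$-splitting is given by the monomial basis elements not divisible by $\vb_k$. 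The case $c=1$ is vacuous by strong evenness.

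For $2\leq c\leq 2^{k+1}+1$, the groups $\pi_{*\rho-c}^{\Ctwo}\BPR$ consist of $2$-power torsion classes; the appendix supplies an explicit $\Z_{(2)}$-basis in the Hu--Kriz style, consisting of a ``torsion core'' (assembled from $a^\ell u^j$-classes detecting the boundaries $a^\ell \vb_i$) multiplied by monomials $\vb_1^{a_1}\vb_2^{a_2}\cdots$. The role of the bound $c\leq 2^{k+1}+1$ is that this is precisely the range in which no $\vb_k$-multiple of a basis class has yet been killed: the differential $d_{2^{k+1}-1}(u^{2^{k-1}}) = a^{2^{k+1}-1}\vb_k$ begins to annihilate $\vb_k$-multiples of basis classes only in strictly larger $c$, by a direct degree count. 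In particular, $\vb_k$-multiplication increments the exponent $a_k$ of each basis element and is therefore injective on $\pi_{*\rho-c}^{\Ctwo}\BPR$ itself, with the non-$\vb_k$-multiples giving a $\Z_{(2)}$-splitting.

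Passing to the quotient by $\underline{\vb}^{\underline{l}}$ (with $l_k=0$), the quotient inherits a $\Z_{(2)}$-basis obtained by restricting the exponents to $0\leq a_i<l_i$ for $i\neq k$ while leaving $a_k$ unrestricted. Multiplication by $\vb_k$ continues to act as $a_k\mapsto a_k+1$, so it remains injective, and the required $\Z_{(2)}$-splitting of the injection is afforded by the sub-$\Z_{(2)}$-module spanned by those basis elements with $a_k=0$.

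The main obstacle will be the borderline case $c = 2^{k+1}+1$: this is exactly where a new $\vb_k$-divisibility relation threatens to materialize via the $d_{2^{k+1}-1}$-differential, and one must verify directly that no hidden additive extension in the Hu--Kriz filtration obstructs the clean basis factorization ``torsion core $\cdot \vb_k^{a_k}\cdot(\text{other monomials})$''. This verification is internal to the Hu--Kriz analysis of $\pi_{\rost}^{C_2}\BPR$ and is what pins down the sharp bound $2^{k+1}+1$; pushing to $c = 2^{k+1}+2$ the argument genuinely fails since then $a^{2^{k+1}-1}\vb_k \cdot u^{2^{k-1}-1}$-type classes appear as new relations.
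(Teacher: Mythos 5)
Your proposal correctly identifies the right setting (the Hu--Kriz description of $\pi_{\bigstar}^{\Ctwo}\BPR$ as a subalgebra of $\Z_{(2)}[a,\vb_i,u^{\pm1}]/(2a,\,\vb_i a^{2^{i+1}-1})$, with relations) and the right flavor of argument (degree counting to rule out torsion), and the $c=0,1$ cases are handled correctly. However, the core of the proposition --- the cases $2\le c\le 2^{k+1}+1$ --- is asserted rather than proved, and one of the assertions is not sound as stated.

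The gap is twofold. First, you claim the quotient $(\pi_{\rost}^{\Ctwo}\BPR)/\underline{\vb}^{\underline l}$ ``inherits a $\Z_{(2)}$-basis obtained by restricting the exponents to $0\le a_i<l_i$ for $i\neq k$ while leaving $a_k$ unrestricted.'' This cannot be taken for granted because $\pi_{\bigstar}^{\Ctwo}\BPR$ is a proper subalgebra, not a free polynomial ring: a monomial $\vb\,\vb_s^{l_s}u^m$ belongs to the subalgebra, and factors as $\vb_s^{l_s}\cdot(\vb u^m)$ \emph{inside} the subalgebra, only if $m$ is divisible by $2^i$ for some $\vb_i$ dividing $\vb$ (or by $2^s$, etc.). So ``lies in the ideal $(\vb_s^{l_s})$ of $\pi_{\bigstar}^{\Ctwo}\BPR$'' is a genuinely nontrivial $u$-divisibility condition, and new $\vb_k$-torsion is created in the quotient in degrees outside the stated range. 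The paper's proof devotes most of its space to pinning down exactly which monomials become $\vb_k$-torsion after quotienting (classes of the form $\vb_s^{l_s-1}\vb_s(j)$ with $2^{k-s}\mid j$), and then estimating their $\rho$-defect $c$ to show $c\ge 2^{k+1}+2$.

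Second, you locate the ``main obstacle'' at the single borderline degree $c=2^{k+1}+1$, but that is not where the work is: the paper treats all degrees uniformly by identifying the two potential sources of $\vb_k$-torsion --- divisibility by $a^{2^{k+1}-1}$ (where the relation $a^{2^{k+1}-1}\vb_k=0$ kicks in), and membership in the ideal generated by the $\vb_s^{l_s}$ --- and in each case carrying out the degree count that you gesture at but do not perform. Without that count, the claim that ``$\vb_k$-multiplication increments the exponent $a_k$ and is therefore injective'' is precisely the statement to be proved, not a step toward it. The paper also adds a short separate argument for the $\Z_{(2)}$-split injectivity (ruling out $\vb_k y=2z$ with $y$ not divisible by $2$), which your proposal does not address outside the case $c=0$.
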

\begin{proof}
Recall from Appendix \ref{Appendix} that $\pi_\bigstar^{\Ctwo}BP\R$ is isomorphic to the subalgebra of
 $$P/(2a, \vb_ia^{2^{i+1}-1})$$
 (where $i$ runs over all positive integers) generated by $\vb_i(j) = u^{2^ij}\vb_i$ (with $i,j\in\Z$ and $i\geq 0$) and $a$, where $P = \Z_{(2)}[a, \vb_i, u^{\pm 1}]$.  The degrees of the elements are $|a| = 1-\rho$ and $$|\vb_i(j)| = (2^i-1)\rho + 2^ij(4-2\rho) = (2^i-1-2^{i+1}j)\rho + 2^{i+2}j.$$
 We add the relations $\vb_i^{l_i} = 0$ if $l_i\neq 0$.
 
We will first show that the ideal of $\vb_k$-torsion elements in
$(\pi_\bigstar^{\Ctwo}BP\R)/\underline{\vb}^{\underline{l}}$ is
contained in the ideal generated by $a^{2^{k+1}-1}$ and
$\vb_s^{l_s-1}\vb_s(j)$ for $s$ with $l_s \neq 0$ and $j$ divisible by
$2^{k-s}$ if $s<k$. Indeed, because the ideal $(2a, \vb_ia^{2^{i+1}-1}, \underline{\vb}^{\underline{l}}) \subset P$ is generated by monomials, a polynomial in $P$ defines a $\vb_k$-torsion element in $(\pi_\bigstar^{\Ctwo}BP\R)/\underline{\vb}^{\underline{l}}$ if and
only if each of its monomials define $\vb_k$-torsion elements. A
monomial $x_P$ in $P$ can only define a nonzero $\vb_k$-torsion element in $(\pi_\bigstar^{\Ctwo}BP\R)/\underline{\vb}^{\underline{l}}$ if it
is divisible by $a^{2^{k+1}-1}$ or $\vb_s^{l_s}$. In the latter case,
$x_P$ is of the form $\vb \vb_s^{l_s}u^m$, where $\vb$ is a polynomial
in the $\vb_i$. This is divisible by $\vb_s^{l_s}$ in
$\pi_\bigstar^{\Ctwo}BP\R$ if and only if  $m$ is divisible by $2^i$ for some $\vb_i$ in $\vb$. Thus, $x_P$ defines a nonzero element $x$ in $(\pi_\bigstar^{\Ctwo}BP\R)/\underline{\vb}^{\underline{l}}$ such that $\vb_kx$ defines $0$ only if $2^k|m$, which corresponds to the condition above.  

Let $x$ be a nonzero $\vb_k$-torsion element in $(\pi_\bigstar^{\Ctwo}BP\R)/\underline{\vb}^{\underline{l}}$, represented by a monomial in $P$. First assume that $x$ is divisible by $a^n$ with $n\geq 2^{k+1}-1$, but not by $a^{n+1}$. Then, $x$ is not divisible by any $\vb_i(j)$ with $i\leq k$ as $a^n\vb_i(j) = 0$. We demand that $x$ is in degree $*\rho-c$ with $c\geq 0$; in particular, $x\neq a^n$. Let $\vb_i(j)$ a divisor of $x$ with minimal $i$. Thus, the degree of $x$ must be of the form $*\rho + 2^{i+2}m +n$. We know that $n \leq 2^{i+1}-2$. The largest negative value the non-$\rho$-part can take is $-2^{i+2}+2^{i+1}-2=-2^{i+1}-2$. The statement about injectivity follows in this case as $i>k$. 

Now assume that $x$ is a $\vb_k$-torsion element not divisible by $a^n$ for $n\geq 2^{k+1}-1$. Then $x$ must be of the form $\vb_s^{l_s-1}\vb_s(j)<$ where $j$ is divisible by $2^{k-s}$ if $s<k$. Observe that 
\[\vb_s^{l_s-1}\vb_s(j)\vb_t(m) = \vb_s^{l_s}\vb_t(2^{s-t}j+m) = 0 \in (\pi_\bigstar^{\Ctwo}BP\R)/\underline{\vb}^{\underline{l}}\]
 for $t<s$, so that $y$ is not divisible by any $\vb_t(m)$ for $t<s$. Likewise observe that if $s\leq t\leq k$, then 
 \[\vb_s^{l_s-1}\vb_s(j)\vb_t(m) = \vb_s^{l_s}\vb_t(m+2^{k-t}j') = 0 \in (\pi_\bigstar^{\Ctwo}BP\R)/\underline{\vb}^{\underline{l}},\]
 where $j = 2^{k-s}j'$. Thus, $y$ is also not divisible by any $\vb_t(m)$ with $s\leq t\leq k$. As $|\vb_s(j)| = *\rho +d$, where $d$ is divisible by $2^{k+2}$, and the same is true for $|\vb_t(j)|$ with $t>k$, we see that if $|x|$ is of the form $*\rho - c$ with $c\geq 0$, then we have $$c \geq 2^{k+2}-(2^{k+1}-2) = 2^{k+1}+2.$$ The statement about injectivity follows also in this case. 

We still have to show the split injectivity. 
If $\vb_k y = 2z$, but $y$ not divisible by $2$, then $y$ must be of the form $2\vb u^{2^kj}$ in $P$, where $\vb$ is a polynomial in the $\vb_i$. Thus, $|y| = 2^{k+2}j +*\rho$, so we are fine in degree $*\rho -c$ for $0\leq c\leq 2^{k+1}+1 \leq 2^{k+2}-1$. 
\end{proof}

\begin{remark}
 The exact bounds in the preceding proposition are not very important. The only important part for later inductive arguments is that the bound grows with $k$. 
\end{remark}

\begin{cor}\label{Cor:QuotientBP}
Let $\underline{l} = (l_1,l_2, \dots)$ be a sequence with only finitely many nonzero entries and let $j$ be the smallest index such that $l_j \neq 0$. Then the map 
\[
(\pi_{*\rho-c}^{\Ctwo}BP\R)/\underline{\vb}^{\underline{l}} \to \pi_{*\rho-c}^{\Ctwo}(BP\R/\underline{\vb}^{\underline{l}})
\] 
is an isomorphism for $0\leq c\leq 2^{j+1}$.
\end{cor}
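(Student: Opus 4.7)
The plan is to proceed by induction on the number $r$ of nonzero entries of $\underline{l}$, killing the classes $\vb_{i_k}^{l_{i_k}}$ in \emph{descending} order of index. Writing $i_1<\cdots<i_r$ for the indices with $l_{i_k}\neq 0$ (so $i_1=j$), I form the tower $Y_0 = \BPR$ and
\[
Y_s \;=\; \BPR/(\vb_{i_r}^{l_{i_r}},\,\vb_{i_{r-1}}^{l_{i_{r-1}}},\ldots,\vb_{i_{r-s+1}}^{l_{i_{r-s+1}}}),
\]
terminating with $Y_r = \BPR/\underline{\vb}^{\underline{l}}$. The reason for killing the largest index first is that the injectivity range in Proposition~\ref{prop:BPBound} grows geometrically in $k$, so descending the indices provides ample slack at each step; the reverse order would cost one unit of range per inductive step and miss the endpoint $c=2^{j+1}$.

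I would prove by induction on $s$ the sharper statement that the comparison map
\[
(\pi_{*\rho-c}^{\Ctwo}\BPR)/(\vb_{i_{r-s+1}}^{l_{i_{r-s+1}}},\ldots,\vb_{i_r}^{l_{i_r}}) \;\longrightarrow\; \pi_{*\rho-c}^{\Ctwo}Y_s
\]
is an isomorphism for $0\leq c\leq 2^{i_{r-s+1}+1}$. The base $s=0$ is trivial, and the case $s=r$ gives the claim in the range $0\leq c\leq 2^{i_1+1}=2^{j+1}$. For the inductive step, apply $\pi^{\Ctwo}_{\rost}$ to the cofibre sequence
\[
\Sigma^{l_{i_{r-s+1}}(2^{i_{r-s+1}}-1)\rho}\,Y_{s-1} \;\xrightarrow{\ \vb_{i_{r-s+1}}^{l_{i_{r-s+1}}}\ }\; Y_{s-1} \;\longrightarrow\; Y_s.
\]
All shifts are by multiples of $\rho$, so every term of the long exact sequence has the form $\pi_{*\rho-c'}^{\Ctwo}(Y_{s-1})$. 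Extracting the relevant piece, the desired isomorphism in degree $*\rho-c$ for $Y_s$ reduces to two inputs: (a) the inductive isomorphism for $Y_{s-1}$ in degree $*\rho-(c+1)$, valid whenever $c+1\leq 2^{i_{r-s+2}+1}$ (with the convention $i_{r+1}=+\infty$); and (b) injectivity of $\vb_{i_{r-s+1}}^{l_{i_{r-s+1}}}$ on $\pi_{*\rho-(c+1)}^{\Ctwo}Y_{s-1}$.

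Granted (a), one transports (b) across the comparison isomorphism to a question about the algebraic quotient, where Proposition~\ref{prop:BPBound} applies with $k=i_{r-s+1}$ and the sequence of previously killed powers (which has entry $0$ at $i_{r-s+1}$ by construction, as the indices $i_{r-s+2},\ldots,i_r$ are strictly larger). This gives injectivity of $\vb_{i_{r-s+1}}$, hence of any power, for $c+1\leq 2^{i_{r-s+1}+1}+1$, i.e.\ $c\leq 2^{i_{r-s+1}+1}$. Combining with (a), the binding constraints are $c\leq 2^{i_{r-s+2}+1}-1$ and $c\leq 2^{i_{r-s+1}+1}$. Since $i_{r-s+2}>i_{r-s+1}\geq 1$, one has
\[
2^{i_{r-s+2}+1}-1 \;\geq\; 2\cdot 2^{i_{r-s+1}+1}-1 \;>\; 2^{i_{r-s+1}+1},
\]
so the binding constraint is exactly the target $c\leq 2^{i_{r-s+1}+1}$, closing the induction.

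The main obstacle is bookkeeping rather than structure. One must exploit the extra ``$+1$'' in the bound $2^{k+1}+1$ of Proposition~\ref{prop:BPBound} in order to reach the endpoint $c=2^{j+1}$, and one must quotient the indices in descending (rather than ascending) order so that the range inherited from the previous inductive stage is \emph{strictly} (in fact, at least by a factor of two) larger than the range being established at the current stage. Once these two choices are made, the LES manipulations from the cofibre sequences are routine.
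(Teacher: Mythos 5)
Your proof is correct and is essentially the same as the paper's: the paper inducts on the number of nonzero entries of $\underline{l}$, peels off the smallest index $j$ at each inductive step via the cofibre sequence for $\vb_j^{l_j}$ acting on $B/\underline{\vb}^{\underline{l}'}$, uses the resulting short exact sequence, and invokes Proposition~\ref{prop:BPBound} to kill the $\vb_j^{l_j}$-torsion term — exactly the structure of your tower with the smallest index killed last. Your explicit accounting of the two binding constraints and the observation that $2^{i_{r-s+2}+1}-1 > 2^{i_{r-s+1}+1}$ faithfully reproduces the paper's implicit bound $c+1 \leq 2^{j+2}$ versus $c+1 \leq 2^{j+1}+1$.
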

\begin{proof}
We use induction on the number $n$ of nonzero indices in $\underline{l}$. If $n=0$ (and $j=\infty$), the statement is clear.

For the step, define $\underline{l}'$ to be the sequence obtained from $\underline{l}$ by setting $l_j = 0$. Consider the short exact sequence
\[0 \to (\pi_{*\rho-c}^{\Ctwo}(B/\underline{\vb}^{\underline{l}'}))/\vb_j^{l_j} \to \pi_{*\rho-c}^{\Ctwo}(B/\underline{\vb}^{\underline{l}}) \to \left\{\pi_{*\rho-(c+1)}^{\Ctwo}(B/\underline{\vb}^{\underline{l}'})\right\}_{\vb_j^{l_j}} \to 0.\]
Here, the notion $\{X\}_z$ denotes the elements in $X$ killed by $z$. 

Assume $c\leq 2^{j+1}$. By the induction hypothesis, $\pi_{*\rho-c}^{\Ctwo}(B/\underline{\vb}^{\underline{l}'})\cong (\pi_{*\rho-c}^{\Ctwo}B)/\underline{\vb}^{\underline{l}'}$ as $c\leq 2^{j+2}$, so that $(\pi_{*\rho-c}^{\Ctwo}(B/\underline{\vb}^{\underline{l}'}))/\vb_j^{l_j} \cong (\pi_{*\rho-c}^{\Ctwo}B)/\underline{\vb}^{\underline{l}}$. Furthermore,
\begin{align*}
\left\{\pi_{*\rho-(c+1)}^{\Ctwo}(B/\underline{\vb}^{\underline{l}'})\right\}_{\vb_j^{l_j}} &\cong \left\{(\pi_{*\rho-(c+1)}^{\Ctwo}B)/\underline{\vb}^{\underline{l}'}\right\}_{\vb_j^{l_j}} \\
&\cong 0
\end{align*}
as follows from $c+1 \leq 2^{j+2}$ and $c+1 \leq 2^{j+1}+1$ by the induction hypothesis and Proposition \ref{prop:BPBound}. Thus, we see that $(\pi_{*\rho-c}^{\Ctwo}B)/\underline{\vb}^{\underline{l}} \to \pi_{*\rho-c}^{\Ctwo}(B/\underline{\vb}^{\underline{l}})$ is an isomorphism.
\end{proof}

The following corollary is crucial:

\begin{cor}\label{Cor:crucial}
 Let $I \subset \Z_{(2)}[\vb_1,\dots]$ be an ideal generated by powers of the $\vb_i$. Then $BP\R/I$ is strongly even. 
\end{cor}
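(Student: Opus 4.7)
My plan is to reduce to the case where $I$ is finitely generated and then apply Corollary~\ref{Cor:QuotientBP} at the Mackey functor level. First I would observe that any ideal $I$ of the stated form is the filtered union of its finitely generated subideals $I' = (\vb_{i_1}^{l_{i_1}}, \ldots, \vb_{i_m}^{l_{i_m}})$ obtained by taking finite subsets of the given generators, so that $BP\R/I$ is naturally a sequential homotopy colimit of the spectra $BP\R/I'$. Mackey-functor-valued homotopy groups commute with sequential homotopy colimits, and a filtered colimit of constant Mackey functors along maps induced from group homomorphisms is again constant; hence it suffices to handle finite $I$.

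Next, for finite $I$ with smallest index $j \geq 1$ (so $2^{j+1}\geq 4$), I would apply Corollary~\ref{Cor:QuotientBP} at $c = 0$ and $c = 1$, both of which lie in the allowed range $0 \leq c \leq 2^{j+1}$. The key point is that the isomorphism stated there for abelian groups upgrades to an isomorphism of Mackey functors: the proof of the corollary proceeds by building $BP\R/I$ from $BP\R$ through iterated cofibre sequences of $BP\R$-modules and extracting short exact sequences from the associated long exact sequences of Mackey-functor homotopy groups, and the key injectivity input from Proposition~\ref{prop:BPBound} holds both $C_2$-equivariantly and underlyingly (the latter being the regularity of $v_1, v_2, \ldots$ in $BP_*$). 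The conclusion is an isomorphism of Mackey functors
\[
\underline{\pi}^{C_2}_{*\rho - c}(BP\R/I) \;\cong\; \bigl(\underline{\pi}^{C_2}_{*\rho - c}BP\R\bigr)\big/I \qquad (c \in \{0,1\}).
\]

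The conclusion is then immediate. Evenness follows from the case $c = 1$: since $BP\R$ is strongly even, $\underline{\pi}^{C_2}_{*\rho - 1}BP\R = 0$, and so the quotient by $I$ vanishes. For constancy in regular-representation degrees, $\underline{\pi}^{C_2}_{*\rho}BP\R$ is constant by strong evenness of $BP\R$, and the quotient by $I$ is the cokernel of a sum of the multiplication maps by $\vb_{i_s}^{l_{i_s}}$, each a morphism between constant Mackey functors. Any such morphism is determined by a single group homomorphism at the underlying level, and its cokernel is the constant Mackey functor on the underlying cokernel; hence $\underline{\pi}^{C_2}_{*\rho}(BP\R/I)$ is constant. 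The only mildly delicate step I anticipate is the Mackey-functor upgrade of Corollary~\ref{Cor:QuotientBP}; this ought to be purely formal since everything in sight lives in $BP\R$-modules, but it is the one place I would spell things out most carefully in a complete write-up.
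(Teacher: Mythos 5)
Your proposal is correct and follows essentially the same route as the paper's proof: reduce to finitely generated $I$ using that strong evenness is preserved under filtered homotopy colimits, then apply Corollary~\ref{Cor:QuotientBP} at $c=0,1$ to transfer strong evenness of $BP\R$ to $BP\R/I$. The one difference worth noting is that where you cite strong evenness of $BP\R$ as a known fact, the paper supplies a short self-contained argument at this point (constancy in $*\rho$ degrees from Theorem~\ref{thm:BPR}, and vanishing in $*\rho-1$ degrees by a degree count in the $E_2$-page of the homotopy fixed point spectral sequence); the Mackey-functor upgrade of Corollary~\ref{Cor:QuotientBP} that you flag as the delicate point is left implicit in the paper but is exactly as routine as you describe.
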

\begin{proof}
 As being strongly even is a property closed under filtered homotopy colimits, we are reduced to the case that $I$ is finitely generated. By the last corollary, it suffices to show that $BP\R$ itself is strongly even. That the Mackey functor $\underline{\pi}_{*\rho}^{\Ctwo}(BP\R)$ is constant is clear from Theorem \ref{thm:BPR}. 
 
 Assume that $x$ is a nonzero element in $\pi_{*\rho-1}^{\Ctwo}BP\R$. We can assume that $x$ is represented by $a^ku^l\vb$ in the $E_2$-term of the homotopy fixed point spectral sequence for $BP\R$, where $\vb$ is a monomial in the $\vb_i$ (with $\vb_0 =2$), $k\geq 0$ and $l\in\Z$. The element $x$ is in degree $k+4l + *\rho$. Let $j\geq 0$ be the minimal number such that $\vb_j|\vb$. Then $2^j|l$ and $k\leq 2^{j+1}-2$. This is clearly in contradiction with $k+4l = -1$. 
\end{proof}

We recover the $C_2$-case of the reduction theorem of \cite[Prop 4.9]{HK} and \cite[Thm 6.5]{HHR}.
\begin{cor}\label{cor:reduction}
 There is an equivalence $\BPR/(\vb_1,\vb_2,\dots) \simeq H\Zu_{(2)}$.
\end{cor}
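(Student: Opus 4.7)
My plan is to bolt together the three preceding corollaries. Write $R := \BPR/(\vb_1,\vb_2,\dots)$, and present it as the filtered homotopy colimit $R \simeq \hocolim_n \BPR/(\vb_1,\dots,\vb_n)$ of $M\R$-modules, with transition maps obtained by coning off successive generators. The target is Corollary \ref{cor:characterizingZ}, so I need to check that $R$ is strongly even and to identify $\underline{\pi}^{C_2}_{k\rho}(R)$.

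The first step is to verify that $R$ is strongly even. By Corollary \ref{Cor:crucial} each finite quotient $\BPR/(\vb_1,\dots,\vb_n)$ is strongly even, i.e.\ the vanishing of $\pi^{C_2}_{k\rho-1}$ holds and the Mackey functors $\underline{\pi}^{C_2}_{k\rho}$ are constant. Both properties are preserved under filtered homotopy colimits, since $\underline{\pi}^{C_2}_\star$ commutes with filtered hocolims and a filtered colimit of constant Mackey functors is constant. So $R$ is strongly even.

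The second step is to compute $\pi^{C_2}_{k\rho}(R)$ as an abelian group. Applying Corollary \ref{Cor:QuotientBP} with $c=0$ and $\underline{l}=(1,\dots,1,0,0,\dots)$ (so that the smallest nonzero index is $j=1$, and the condition $0\leq c\leq 2^{j+1}=4$ is automatic) yields
$$(\pi^{C_2}_{k\rho}\BPR)/(\vb_1,\dots,\vb_n) \xrightarrow{\cong} \pi^{C_2}_{k\rho}\bigl(\BPR/(\vb_1,\dots,\vb_n)\bigr).$$
Since the restriction map identifies $\pi^{C_2}_{k\rho}\BPR$ with $\pi_{2k}BP \cong \Z_{(2)}[v_1,v_2,\dots]_{2k}$ (with $\vb_i$ lifting $v_i$), and homotopy groups commute with the filtered hocolim over $n$, I obtain $\pi^{C_2}_{k\rho}(R) \cong \Z_{(2)}$ for $k=0$ and $0$ otherwise.

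Combining strong evenness with this homotopy calculation, Corollary \ref{cor:characterizingZ} gives the desired equivalence $R \simeq H\underline{\Z}_{(2)}$. I do not anticipate a real obstacle here: all genuine work has been pushed into Corollaries \ref{Cor:QuotientBP} and \ref{Cor:crucial}, and the only mildly subtle point is the stability of strong evenness under the filtered colimit, which is essentially automatic.
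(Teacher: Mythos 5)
Your proof is correct and takes essentially the same route as the paper's one-line argument (which simply cites Corollary \ref{Cor:crucial} and Corollary \ref{cor:characterizingZ}); you have filled in the details the paper leaves implicit. Two minor redundancies: Corollary \ref{Cor:crucial} already covers the infinite ideal $(\vb_1,\vb_2,\dots)$ directly, so your colimit argument for strong evenness repeats a step already internal to its proof; and once $R$ is known strongly even, the constancy of $\underline{\pi}^{C_2}_{k\rho}R$ immediately identifies it with $\underline{\pi_{2k}BP/(v_1,v_2,\dots)}$, making the appeal to Corollary \ref{Cor:QuotientBP} unnecessary.
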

\begin{proof}
 This follows directly from the last corollary and Corollary \ref{cor:characterizingZ}. 
\end{proof}

\begin{cor}\label{cor:rho+}
   Let $I \subset \Z_{(2)}[\vb_1,\dots]$ be an ideal generated by powers of the $\vb_i$. Then 
   $$\pi_{*\rho+1}^{C_2}BP\R/I \cong \F_2\{a\}\tensor \Z_{(2)}[\vb_1, \vb_2,
   \dots]/I.$$
 \end{cor}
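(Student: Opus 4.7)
The plan is to apply the cofibre sequence $(C_2)_+ \to S^0 \xrightarrow{a} S^\sigma$, which after smashing with $BP\R/I$ yields a triangle
\[(C_2)_+ \tensor BP\R/I \lra BP\R/I \stacklra{a} \Sigma^\sigma BP\R/I.\]
Since $(k+1)\rho - \sigma = k\rho + 1$, applying $\pi^{C_2}_{(k+1)\rho}$ to this triangle produces a long exact sequence
\[\pi^e_{2k+2}(BP/I) \xrightarrow{\mathrm{tr}} \pi^{C_2}_{(k+1)\rho}(BP\R/I) \xrightarrow{a} \pi^{C_2}_{k\rho+1}(BP\R/I) \lra \pi^e_{2k+1}(BP/I),\]
using the Wirthm\"uller identification $\pi^{C_2}_V((C_2)_+ \tensor Y) \cong \pi^e_V(Y)$.

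I would then identify the four flanking terms as follows. First, reduce to the finitely generated case: both sides of the desired isomorphism commute with filtered colimits over finitely generated subideals, so we may assume $I$ is finitely generated. By Corollary \ref{Cor:crucial}, $BP\R/I$ is then strongly even, so $\Mpi_{(k+1)\rho}(BP\R/I)$ is a constant Mackey functor valued in the appropriate graded piece of $\Z_{(2)}[\vb_1,\vb_2,\dots]/I$. The constancy forces the restriction $\pi^{C_2}_{(k+1)\rho} \to \pi^e_{2k+2}$ to be the identity and hence the transfer to be multiplication by $2$. For the right-hand end, $BP/I$ is non-equivariantly concentrated in even degrees (inducting on the number of generators with the cofibre sequence $\Sigma^{|v_i|\ell}BP/J \xrightarrow{v_i^\ell} BP/J \to BP/(J,v_i^\ell)$), so $\pi^e_{2k+1}(BP/I) = 0$.

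The long exact sequence therefore degenerates to a short exact sequence exhibiting $\pi^{C_2}_{k\rho+1}(BP\R/I)$ as the cokernel of multiplication by $2$ on the $(k+1)\rho$ graded piece of $\Z_{(2)}[\vb_1,\vb_2,\dots]/I$, with the quotient map implemented by multiplication by $a$. Summing over $k$ yields the claimed identification $\pi^{C_2}_{*\rho+1}(BP\R/I) \cong \F_2\{a\} \tensor \Z_{(2)}[\vb_1,\vb_2,\dots]/I$.

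The only step requiring any real care is the identification of the transfer as multiplication by $2$; this is a formal consequence of the strongly even conclusion of Corollary \ref{Cor:crucial} together with the general Mackey-functor fact that for a constant Mackey functor $\underline{A}$ the composite $\mathrm{tr} \circ \mathrm{res}$ equals $|C_2| = 2$. Neither the infinite-generator reduction nor the underlying-even statement present any genuine difficulty.
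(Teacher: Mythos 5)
Your proof is correct and takes essentially the same approach as the paper. The paper's one-line proof cites Lemma~2.15 of~\cite{HM} together with Corollary~\ref{Cor:crucial}, and the long-exact-sequence argument you run (smashing $(C_2)_+\to S^0\xrightarrow{a}S^\sigma$ with $BP\R/I$, applying $\pi^{C_2}_{(k+1)\rho}$, and using that the Mackey functors $\Mpi_{(k+1)\rho}$ are constant while $\Mpi_{k\rho-1}$ vanish) is precisely the content and proof of that cited lemma. Two minor redundancies: Corollary~\ref{Cor:crucial} already treats arbitrary $I$ generated by powers of the $\vb_i$, so the preliminary reduction to finitely generated subideals is unnecessary; and the ``even'' half of strong evenness already gives $\Mpi_{k\rho-1}(BP\R/I)=0$, which in particular contains the assertion $\pi^e_{2k+1}(BP/I)=0$, so the separate underlying induction is also superfluous. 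One small point of logic at the end: for a constant Mackey functor the transfer is multiplication by $|C_2|$ \emph{by definition} (as recorded in the paper's introduction), and $\mathrm{tr}\circ\mathrm{res}=|C_2|$ is a consequence of that rather than an independent input; but the conclusion you draw is of course correct.
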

 \begin{proof}
  As $BP\R/I$ is strongly even, this follows from \cite[Lemma 2.15]{HM}.
 \end{proof}

This allows us to compute $\pi_\bigstar^{\Ctwo}\BPRn$.
 
 \begin{prop}\label{prop:BPRnFixedPoints}
  The spectrum $\BPRn$ is the connective cover of its Borel completion $\BPRn^{(E\Ctwo)_+}$. The cofibre $C$ of $\BPRn \to \BPRn^{(E\Ctwo)_+}$ has homotopy groups $$\pi_\bigstar^{\Ctwo}C \cong \F_2[a^{\pm 1}, u^{-2^n}]u^{-2^n},$$
  with the naming of the elements indicating the map $\pi_\bigstar^{\Ctwo}\BPRn^{(E\Ctwo)_+} \to \pi_\bigstar^{\Ctwo}C$.
 \end{prop}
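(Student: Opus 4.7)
First I would verify connectivity: since $\BPR$ is connective and each $\vb_i$ has positive degree $(2^i-1)\rho$, the iterated quotient $\BPRn = \BPR/(\vb_{n+1},\vb_{n+2},\dots)$ stays connective. As $(E\Ctwo)_+\to S^0$ is a nonequivariant equivalence, the Borel completion $\BPRn\to\BPRn^{(E\Ctwo)_+}$ is an underlying equivalence, so the cofibre $C$ has vanishing underlying homotopy and $C\simeq C\tensor\tilde{E}\Ctwo$. It therefore suffices to compute $C$ after smashing with $\tilde{E}\Ctwo$. Applying a standard three-by-three argument to the Tate square displayed before the proposition---whose left column is already an equivalence---identifies $C$ with the cofibre of $\BPRn\tensor\tilde{E}\Ctwo \to \BPRn^{(E\Ctwo)_+}\tensor\tilde{E}\Ctwo$.

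The homotopy of the target is $\F_2[u^{\pm 2^n},a^{\pm 1}]$ by the corollary preceding the proposition, which I abbreviate as $\F_2[a^{\pm 1},x^{\pm 1}]$ using the class $x = u^{2^n}a^{-2^{n+1}}$ in pure integer degree $2^{n+1}$. For the source I would compute the geometric fixed points. Starting from $\Phi^{\Ctwo}\BPR\simeq H\F_2$ and observing that each $\Phi^{\Ctwo}\vb_i$ is forced to vanish (it would land in $\pi_{2^i-1}H\F_2 = 0$ for $i\geq 1$), each quotient $\BPR/\vb_i$ applies $\Phi^{\Ctwo}$ to a null map, wedging in a copy of $\Sigma^{2^i}H\F_2$. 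Iterating over $i>n$ and assembling using binary expansions yields
\[ \Phi^{\Ctwo}\BPRn \simeq \bigvee_{m\geq 0}\Sigma^{2^{n+1}m}H\F_2. \]
Combined with the $a$-periodicity of $(-)\tensor\tilde{E}\Ctwo$, this gives $\pi_{\bigstar}^{\Ctwo}(\BPRn\tensor\tilde{E}\Ctwo) \cong \F_2[a^{\pm 1},x]$, the polynomial subring.

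The Borel completion then realises as the inclusion $\F_2[a^{\pm 1},x]\hookrightarrow\F_2[a^{\pm 1},x^{\pm 1}]$, with cokernel $\F_2[a^{\pm 1}]\{x^{-k}:k\geq 1\}$. Converting back via $x^{-k} = a^{2^{n+1}k}u^{-2^n k}$ produces
\[ \pi_{\bigstar}^{\Ctwo}C \cong \F_2[a^{\pm 1},u^{-2^n}]\,u^{-2^n}, \]
as claimed. A pure-integer-degree generator $a^r u^{-2^n k}$ requires $r = 2^{n+1}k$, placing it in degree $-2^{n+1}k<0$, so $\pi^{\Ctwo}_d C = 0$ for $d\geq 0$. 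Together with the underlying equivalence and the connectivity of $\BPRn$, this identifies $\BPRn$ as the connective cover of its Borel completion.

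The main obstacle is the iterative $\Phi^{\Ctwo}$ calculation---tracking the wedge decomposition through the homotopy colimit defining $\BPRn$---and then checking that the polynomial generator $x$ on the source really maps onto the generator $u^{2^n}a^{-2^{n+1}}$ on the target rather than to zero. The latter is a degree-$2^{n+1}$ comparison that should follow from naturality of the Tate comparison map together with the observation that $u^{2^n}$ is the unique permanent cycle in the HFPSS for $\BPRn^{(E\Ctwo)_+}$ at that position, surviving precisely because the differential $d_{2^{n+2}-1}(u^{2^n}) = a^{2^{n+2}-1}\vb_{n+1}$ has vanishing target in $\BPRn$.
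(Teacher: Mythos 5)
Your overall structure is sound and matches the paper's strategy (reduce to the claim on geometric fixed points via the Tate square, compute $\Phi^{\Ctwo}\BPRn$ additively as $\F_2[y]$ with $|y|=2^{n+1}$ using the iterated cofibre construction, and then compare $\Phi^{\Ctwo}\BPRn \to \BPRn^{t\Ctwo}$). But the crucial step — showing the map $\Phi^{\Ctwo}\BPRn \to \BPRn^{t\Ctwo}$ does not kill the polynomial part — is exactly where your argument has a gap, and it is the hard part of the proposition.

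You write that this ``should follow from naturality of the Tate comparison map together with the observation that $u^{2^n}$ is the unique permanent cycle in the HFPSS.'' This does not work as stated: knowing that $u^{2^n}a^{-2^{n+1}}$ survives to $\pi_*\BPRn^{t\Ctwo}$ tells you the target class is nonzero, but says nothing about whether the source class $y$ (let alone $y^k$ for all $k\geq 1$) maps to it rather than to zero. Naturality of the comparison map $\Phi^{\Ctwo}(-)\to(-)^{t\Ctwo}$ does not help either, because $y$ is \emph{not} in the image of $\Phi^{\Ctwo}\BPR\to\Phi^{\Ctwo}\BPRn$ (recall $\Phi^{\Ctwo}\BPR\simeq H\F_2$); $y$ arises from the new wedge summand created by quotienting out $\vb_{n+1}$, so there is no class upstairs to push forward along. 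Moreover, since $\BPRn$ is not known to be a ring spectrum, $\pi_*\Phi^{\Ctwo}\BPRn$ is not \emph{a priori} a ring under the obvious multiplication, so verifying the claim for a single ``generator'' $y$ would not suffice even if you could do it — you must show $y^k$ maps nonzero for every $k$.

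The paper's proof supplies exactly this missing ingredient: using the Tate square fibre sequence $\BPRn\tensor(E\Ctwo)_+\to\BPRn\to\BPRn\tensor\tilde{E}\Ctwo$, it suffices to show that $a^{-|y^k|-1}y^k$ is not in the image from $\pi^{\Ctwo}_\bigstar\BPRn$; but $a^{-|y^k|-1}y^k$ sits in degree $(|y^k|+1)\rho - 1$, and $\pi^{\Ctwo}_{*\rho-1}\BPRn = 0$ by Corollary~\ref{Cor:crucial} (strong evenness). This is the only place the hard computational input about $\pi^{\Ctwo}_{\rost}\BPRn$ enters, and your proposal never invokes it or any substitute. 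Without it the identification of $C$ is unproved, since you cannot rule out the cofibre having extra homotopy in nonnegative degrees coming from parts of $\Phi^{\Ctwo}\BPRn$ that die in $\BPRn^{t\Ctwo}$.
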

 \begin{proof}
  This is clear on underlying homotopy groups. Thus, we have only to show that $\BPRn^{\Ctwo} \to \BPRn^{h\Ctwo}$ is a connective cover. For that purpose, it is enough to show that $\BPRn^{\Phi \Ctwo}$ is connective and that the fibre of $\BPRn^{\Phi \Ctwo} \to \BPRn^{t\Ctwo}$ has homotopy groups only in negative degrees. 
  
  We have $\BPRn^{\Phi \Ctwo} \simeq \BPR^{\Phi \Ctwo}/(\vb_{n+1},\dots)$. As noted in the proof of Proposition \ref{prop:vbn}, the image of $\vb_i$ in $M\R^{\phi \Ctwo}$ is $0$. As the quotient $\BPR^{\Phi \Ctwo}/\vb_{n+1},\dots$ can be taken in the category of $M\R^{\Phi \Ctwo}$-modules, we are only quotiening out by $0$. It follows easily that $(BP\R/(\vb_{n+1},\dots, \vb_{n+m}))^{\Phi \Ctwo}$ has in the homotopy groups an $\F_2$ in all degrees of the form $\sum_{i=n+1}^{n+m}\varepsilon_i(|v_i|+1) = \sum_{i=n+1}^{n+m}\varepsilon_i2^i$ with $\varepsilon_i = 0$ or $1$. As geometric fixed point commute with homotopy colimits, we see that $\pi_*\BPRn^{\Phi \Ctwo} \cong \F_2[y]$ (additively) with $|y| = 2^{n+1}$. It remains to show that $y^k$ maps nonzero to $\pi_*\BPRn^{t\Ctwo}$ (and hence maps to $x^k)$. 
  
  It is enough to show that $a^{-|y^k|-1}y^k$ maps nonzero to $\pi_\bigstar^{\Ctwo} \Sigma \BPRn\tensor (E\Ctwo)_+$ in the sequence coming from the Tate square, i.e.\ that $a^{-|y^k|-1}y^k$ is not in the image from (the fixed points of) $\BPRn$. But $a^{-|y^k|-1}y^k$ is in degree $(|y^k|+1)\rho-1$ and $\pi_{(|y^k|+1)\rho-1}^{\Ctwo}\BPRn = 0$ by Corollary \ref{Cor:crucial}. 
 \end{proof}
 
 Let us describe the homotopy groups of $\BPRn$ in more detail. We set $\vb_0 = 2$ for convenience. Denote by $BB$ (for \emph{basic block}) the $\Z_{(2)}[a, \vb_1,\dots,\vb_n]/2a$-submodule of 
 $$\Z_{(2)}[\vb_1,\dots, \vb_n]/(a^{2^{k+1}-1}\vb_k)_{0\leq k \leq n}$$
 generated by $1$ and by $\vb_k(m) = u^{2^km}\vb_k$ for $0\leq k < n$ and $0< m <2^{n-k}$.
By Proposition \ref{prop:BPRnHomotopy}, we see that 
 $$\pi_{\rost}^{C_2}\BPRn^{(EC_2)_+} \cong BB[U^{\pm 1}]$$
 with $U = u^{2^n}.$ Note that this isomorphism is not  claimed to be multiplicative; in general, $\BPRn$ is not even known to have any kind of (homotopy unital) multiplication.
 
 Define $BB'$ to be the kernel of the map $BB \to \F_2[a]$ given by sending all $\vb_k$ and $\vb_k(m)$ to zero. Set $NB = \Sigma^{\sigma-1}\F_2[a]^\vee \oplus BB'$, where $NB$ stands for \emph{negative block}. Then it is easy to see from the last proposition that 
 $$\pi_{\rost}^{C_2}\BPRn \cong BB[U] \oplus U^{-1}NB[U^{-1}],$$ 
 where this isomorphism is again only meant additively. We will be a little bit more explicit about the homotopy groups of $\BPRn$ in the cases $n=1$ and $2$ in Part \ref{part:LocalCohomology}. 

\subsection{Forms of $BP\mathbb{R}\langle n\rangle$}Our next goal is
to identify certain spectra as forms of  $\BPRn$. We take the following definition from \cite{HM}:

\begin{defn}
Let $E$ be an even $2$-local commutative and associative $\Ctwo$-ring spectrum up to homotopy. By \cite[Lemma 3.3]{HM}, $E$ has a Real orientation and after choosing one, we obtain a formal group law on $\pi_{*\rho}^{\Ctwo}E$. The $2$-typification of this formal group law defines a map $\pi^e_{2*}BP \cong \pi_{*\rho}^{C_2}BP\R \to \pi_{*\rho}^{C_2}E$. We call $E$ a \emph{form of $BP\R\langle n\rangle$} if the map
\[\underline{\Z_{(2)}[\vb_1,\dots, \vb_n]} \subset \underline{\pi}_{*\rho}BP\R \to \underline{\pi}_{*\rho} E\]
is an isomorphism of constant Mackey functors. 

This depends neither on the choice of $\vb_i$ nor on the chosen Real orientation, as can be seen using that $\vb_i$ is well-defined modulo $(2, \vb_1,\dots, \vb_{i-1})$. 
\end{defn}

Equivalently, one can say that $E$ is a form of $\BPRn$ if and only if
$E$ is strongly even and its underlying spectrum is a form of
$BP\langle n\rangle$. We want to show that every form of $BP\R\langle
n\rangle$ is also of the form  $\BPR /\vb_{n+1}, \vb_{n+2}, \ldots $
for some choice of elements $\vb_i$. For this, we need the following lemma from \cite[Lemma 3.4]{HM}:
\begin{lemma}\label{lem:regrep}
Let $f\colon E\to F$ be a map of $\Ctwo$-spectra. Assume that f induces isomorphisms 
\[\pi^{\Ctwo}_{k\rho}E \to \pi^{\Ctwo}_{k\rho}E \quad \text{and} \quad \pi_kE \to \pi_kF\]
for all $k\in\Z$. Assume furthermore that $\pi^{\Ctwo}_{k\rho-1}E \to \pi^{\Ctwo}_{k\rho-1}F$ is an injection for all $k\in\Z$ (for example, if $\pi^{\Ctwo}_{k\rho-1}E =0$). Then $f$ is an equivalence of $\Ctwo$-spectra.
\end{lemma}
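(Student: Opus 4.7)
The strategy is to show that the cofibre $C$ of $f$ is contractible. Since a $\Ctwo$-spectrum is contractible if and only if both its underlying homotopy groups and its integer-graded $\Ctwo$-equivariant homotopy groups vanish, and since $\pi_k^e C = 0$ for all $k$ by the underlying hypothesis, the task reduces to showing $\pi_n^{\Ctwo} C = 0$ for every integer $n$. I would actually prove more, namely $\pi_V^{\Ctwo} C = 0$ for every $V \in RO(\Ctwo)$, since this is what comes out of the natural two-step argument.

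First, I would show $\pi_{k\rho}^{\Ctwo} C = 0$ for every $k \in \Z$. From the long exact sequence associated to $E \to F \to C$ we get
\[ \pi_{k\rho}^{\Ctwo} E \xrightarrow{f_*} \pi_{k\rho}^{\Ctwo} F \to \pi_{k\rho}^{\Ctwo} C \to \pi_{k\rho-1}^{\Ctwo} E \xrightarrow{f_*} \pi_{k\rho-1}^{\Ctwo} F. \]
The left $f_*$ is surjective (it is an iso) and the right $f_*$ is injective by hypothesis, so the middle term vanishes. This is precisely the role of the injectivity assumption on $\pi_{k\rho-1}^{\Ctwo}$.

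Next, I would propagate this vanishing to all of $RO(\Ctwo)$ using the cofibre sequence $(\Ctwo)_+ \to S^0 \xrightarrow{a_\sigma} S^\sigma$. Smashing with $C$ and taking $\pi_V^{\Ctwo}$ gives the long exact sequence
\[ \pi_{|V|}^e C \to \pi_V^{\Ctwo} C \xrightarrow{a_\sigma} \pi_{V-\sigma}^{\Ctwo} C \to \pi_{|V|-1}^e C, \]
using $\pi_V^{\Ctwo}((\Ctwo)_+ \wedge C) = \pi_{|V|}^e C$. Since $\pi_*^e C = 0$, multiplication by $a_\sigma$ is an isomorphism $\pi_V^{\Ctwo} C \cong \pi_{V-\sigma}^{\Ctwo} C$ for every $V$. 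Iterating (in either direction) starting from $V = k\rho$ gives $\pi_{k\rho - m\sigma}^{\Ctwo} C = 0$ for all $k, m \in \Z$. Since every element of $RO(\Ctwo)$ has the form $k + m\sigma = k\rho - (k-m)\sigma$, this covers all of $RO(\Ctwo)$. In particular $\pi_n^{\Ctwo} C = 0$ for every integer $n$, so $C$ is contractible and $f$ is an equivalence.

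There is no real obstacle here; the only point of mild care is the index bookkeeping in identifying $(\Ctwo)_+ \wedge S^V$ with $(\Ctwo)_+ \wedge S^{|V|}$ so that the correct underlying homotopy group appears in the long exact sequence. The argument explains exactly why both hypotheses on regular-representation degrees are needed: the iso on $\pi_{k\rho}^{\Ctwo}$ together with the injection on $\pi_{k\rho-1}^{\Ctwo}$ gives the initial vanishing, and then $a_\sigma$-periodicity (coming from the underlying iso) spreads it everywhere.
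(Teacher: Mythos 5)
The paper does not contain its own proof of this lemma; it is quoted verbatim from \cite[Lemma 3.4]{HM}. So there is no internal argument to compare against, and the question is simply whether your proof stands on its own.

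It does. You pass to the cofibre $C$, observe that $\pi_*^eC=0$ from the underlying hypothesis, and then the five-term exact sequence
\[
\pi^{\Ctwo}_{k\rho}E \to \pi^{\Ctwo}_{k\rho}F \to \pi^{\Ctwo}_{k\rho}C \to \pi^{\Ctwo}_{k\rho-1}E \to \pi^{\Ctwo}_{k\rho-1}F
\]
(surjective on the left, injective on the right) forces $\pi^{\Ctwo}_{k\rho}C=0$. The isotropy separation step is exactly right: smashing $(\Ctwo)_+\to S^0\xrightarrow{a_\sigma} S^\sigma$ with $C$ and using $\pi^{\Ctwo}_V\bigl((\Ctwo)_+\wedge C\bigr)\cong\pi^e_{|V|}C=0$ shows that $a_\sigma$ acts invertibly on $\pi^{\Ctwo}_\rost C$, and the $\sigma$-shifts of the line $\{k\rho\}$ sweep out all of $RO(\Ctwo)$, so $\pi^{\Ctwo}_\rost C=0$. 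Together with $\pi^e_*C=0$ this is the standard criterion for a genuine $\Ctwo$-spectrum to be contractible. This is the natural argument and is, to the best of my knowledge, the same one used in \cite{HM}; in fact your observation that $a_\sigma$ is inverted on $C$ is just saying $C$ is cofree-with-trivial-underlying, i.e.\ $C\simeq C\wedge\tilde{E}\Ctwo$ with trivial geometric fixed points, which is another way to phrase the conclusion.
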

\begin{prop}\label{prop:FormBPRn}
 Let $E$ be a form of $BP\R\langle n\rangle$. Then one can choose
 indecomposables $\vb_i\in \pi_{(2^i-1)\rho}^{\Ctwo}\BPR$ for $i\geq
 n+1$ such that $E \simeq \BPR /(\vb_{n+1},\vb_{n+2},\dots)$. 
\end{prop}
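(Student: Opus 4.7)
The plan is to choose indecomposables $\vb_i$ for $i \geq n+1$ lying in the kernel of a suitable map $\BPR \to E$, to induce a factorization $\BPR/(\vb_{n+1}, \vb_{n+2}, \dots) \to E$, and then to apply Lemma \ref{lem:regrep} to show it is an equivalence.

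First, I would use the Real orientation of $E$ (which exists because $E$ is even) together with $2$-typification to produce a map $\alpha \colon \BPR \to E$ of homotopy ring spectra, obtained from a Real orientation $M\R \to E$ after restriction along the summand inclusion $\BPR \hookrightarrow M\R_{(2)}$. Both $\BPR$ and $E$ are strongly even (the former by Corollary \ref{Cor:crucial}, the latter by definition of ``form''), so the restriction maps $\pi^{\Ctwo}_{k\rho} \to \pi^e_{2k}$ are isomorphisms on both sides, and $\alpha_{*\rho}$ is determined by the underlying ring map $BP_* \to E^e_*$. By the form hypothesis, this underlying map is the standard quotient $\Z_{(2)}[v_1, v_2, \dots] \twoheadrightarrow \Z_{(2)}[v_1, \dots, v_n]$ of $2$-typical coefficient rings.

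Next, for each $i \geq n+1$, I would choose an indecomposable $v_i \in \pi^e_{2(2^i-1)} BP$ lying in the kernel of $\alpha^e_*$; such a choice exists because the image in $E^e_*$ of any indecomposable in this degree is a decomposable polynomial in $v_1, \dots, v_n$, so subtracting a lift brings it into the kernel while preserving indecomposability. Lifting by strong evenness of $\BPR$ yields $\vb_i \in \pi^{\Ctwo}_{(2^i-1)\rho} \BPR$ mapping to zero under $\alpha$. Working in the homotopy category of $\BPR$-modules (where $\BPR/(\vb_{n+1}, \vb_{n+2}, \dots)$ is defined as the sequential homotopy colimit of finite quotients), these nulls allow an inductive extension of $\alpha$ across each cofiber sequence, producing a compatible tower and, in the colimit, a map $\beta \colon \BPR/(\vb_{n+1}, \vb_{n+2}, \dots) \to E$.

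Finally, I would verify the hypotheses of Lemma \ref{lem:regrep} for $\beta$. Both source and target are strongly even, so $\pi^{\Ctwo}_{k\rho-1}$ vanishes on both sides. The maps on $\pi^{\Ctwo}_{k\rho}$ and underlying $\pi_k$ reduce via strong evenness and even-concentration of the underlying homotopy to the assertion that both sides have underlying coefficient ring $\Z_{(2)}[v_1, \dots, v_n]$ with matching chosen generators, which is built into the construction. The main obstacle is the middle step: one must ensure that the cofiber extensions can be carried out coherently, that at each stage the next $\vb_{n+k+1}$ continues to map nullhomotopically through the partial quotient (not merely in $E$), and that the sequential homotopy colimit of compatible maps really assembles into a single map. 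These points are standard but require a little care about the appropriate module category in which to perform the construction.
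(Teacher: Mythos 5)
Your proof is correct and follows essentially the same route as the paper: choose a Real orientation $f\colon \BPR \to E$, modify the $\vb_i$ for $i\geq n+1$ by subtracting a lift of $f_*(\vb_i)$ to put them in the kernel while keeping them indecomposable (the paper packages this as a single multiplicative section $s\colon\pi_{*\rho}^{C_2}E\to\pi_{*\rho}^{C_2}\BPR$ and sets $\vb_i^{\mathrm{new}}=\vb_i-s(f_*(\vb_i))$), build the map on the quotient, and finish with Corollary \ref{Cor:crucial} and Lemma \ref{lem:regrep}. The one technical correction is that the quotients and the inductive factorization should be carried out in the homotopy category of $M\R$-modules rather than $\BPR$-modules, since $\BPR$ is not known to carry a structured ($A_\infty$ or $E_\infty$) multiplication; with that substitution the care points you flag are all handled by the fact that the $\vb_i^{\mathrm{new}}$ act as zero on $E$ through the $M\R$-module structure.
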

\begin{proof}
 First choose any system of $\vb_i$. Choose furthermore a Real orientation $f\colon BP\R \to E$ and denote $f(\vb_i)$ by $x_i$. Define a multiplicative section 
 $$s\colon \pi_{*\rho}^{\Ctwo}E \to \pi_{*\rho}^{\Ctwo}\BPR$$ by $s(x_i) = \vb_i$ for $1\leq i \leq n$. 
 
 Now define a new system of $\vb_i$ by 
 \[\vb_i^{\mathrm{new}} = \vb_i - s(f_*(\vb_i))\]
 for $i\geq n+1$. As these agree with $\vb_i$ mod $(\vb_1,\dots, \vb_n)$, they are still indecomposable. Furthermore, the $\vb_i^{\mathrm{new}}$ are for $i\geq n+1$ clearly in the kernel of $f_*$. Thus, we obtain a map $\BPRn/(\vb_{n+1}^{\mathrm{new}},\vb_{n+2}^{\mathrm{new}},\dots) \to E$ that is an isomorphism on $\pi_{*\rho}^{\Ctwo}$. By Corollary \ref{Cor:crucial}, the source is strongly even. By Lemma \ref{lem:regrep}, the map is an equivalence. 
\end{proof}

\begin{examples}\label{exa:forms}We consider Real versions of the classical examples $ku$ and $tmf_1(3)$.
 \begin{enumerate}
  \item The connective Real K-theory spectrum $k\R_{(2)}$ is a form of $BP\R\langle 1\rangle$. Indeed, the underlying spectrum $ku_{(2)}$ is well known to be a form of $BP\langle 1\rangle$ and $k\R_{(2)}$ is also strongly even (as can be seen by the results from \cite[3.7D]{B-G10} or from the computation in Section \ref{sec:kRlcss}). 
  \item Define $\overline{tmf_1(3)}$ as the equivariant connective cover of the spectrum $\overline{Tmf_1(3)}$, i.e.\ $Tmf_1(3)$ with the algebro-geometrically defined $\Ctwo$-action (see \cite[Section 4.1]{HM} for details). As shown in \cite[Corollary 4.17]{HM}, $\overline{tmf_1(3)}_{(2)}$ is a form of $BP\R\langle 2\rangle$. By Proposition \ref{prop:FormBPRn}, we can construct $\overline{tmf_1(3)}_{(2)}$ by killing a sequence $\vb_2, \vb_3,\dots$ in $BP\R$. This construction is used in \cite{L-OString} to define a $\Ctwo$-equivariant version of $tmf_1(3)_{(2)}$. In particular, we see (using the discussion before Proposition 4.23 in \cite{HM}) that $\overline{TMF_1(3)}_{(2)}$ (with the algebro-geometrically defined $\Ctwo$-action) agrees with the $\mathbb{TMF}_1(3)_{(2)}$ of \cite{L-OString}.
 \end{enumerate}
\end{examples}

\section{Results and consequences}\label{sec:results}
In this section, we want to discuss our main results in more detail than in the introduction and we will also derive some consequences and give some examples. Recall to that purpose the notation from Sections \ref{sec:Koszul} and \ref{sec:BPRBasics}. Furthermore, we will implicitly localize everything at $2$ so that $\Z$ means $\Z_{(2)}$ etc. Our main theorem is the following:
\begin{thm}\label{thm:main}
Let $(m_1,m_2,\dots)$ be a sequence of nonnegative integers with only finitely many entries bigger than $1$ and let $M$ be the quotient $\BPR/(\vb_1^{m_1},\vb_2^{m_2},\dots)$, where we only quotient by the positive powers of $\vb_i$. Denote by $\underline{\vb}$ the sequence of $\vb_i$ in $\pi^{\Ctwo}_{\rost}M\R$ such that $m_i = 0$, by $|\underline{\vb}|$ the sum of their degrees and by $m'$ the sum of all $(m_i-1)|\vb_i|$ for $m_i> 1$. Then 
$$\Z^M \simeq \Sigma^{-m'+4-2\rho}\kappa_{M\R}(\underline{\vb}; M).$$

The most important case is that $m_{n+1} = m_{n+2} = \cdots = 1$ so that 
$$M = \BPRn/(\vb_1^{m_1},\dots,\vb_n^{m_n}).$$
If $k$ is the number of elements in $\underline{\vb}$, we also get 
\begin{align*}
\Z^M \simeq \Sigma^{-m'+k+|\underline{\vb}| +4-2\rho}\Gamma_{\underline{\vb}}M,
\end{align*}
where we view $M$ as an $M\R$-module. 
\end{thm}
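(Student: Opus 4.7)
The plan is to reformulate the statement as a Gorenstein-duality assertion for $M$ and to reduce it to the Anderson self-duality $\Z^{\HZu} \simeq \Sigma^{2(1-\sigma)}\HZu$ of Lemma~\ref{lem:Zu} (noting $2(1-\sigma) = 4 - 2\rho$).

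By Lemma~\ref{lem:Koszul}, in the case of finite $\underline{\vb}$ the target equivalence reads
\[
\Z^M \simeq \Sigma^{-m' + |\underline{\vb}| + k + 4 - 2\rho}\,\Gamma_{\underline{\vb}}\,M.
\]
I would first identify $\Gamma_{\underline{\vb}}M$ with $\cell_{\HZu}M$: for each $i$ with $m_i \geq 2$ the class $\vb_i$ is nilpotent in $M$, so $\Gamma_{\vb_i}M \simeq M$, and the sequence $\underline{\vb}$ may be freely enlarged to include all $\vb_i$. Proposition~\ref{prop:cell} (extended to infinite sequences by a filtered colimit) then identifies the resulting object with the $\mathbb{R}$-cellularization of $M$ with respect to $\HZu$, using Corollary~\ref{cor:reduction}. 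The theorem thereby becomes equivalent to the Gorenstein duality $\cell_{\HZu}M \simeq \Sigma^b \Z^M$ with $b = m' - |\underline{\vb}| - k + 2\rho - 4$.

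To establish this, I would follow the framework of the introduction. First, show that $M$ is Gorenstein of shift $a = m' - |\underline{\vb}| - k$, i.e., $\Hom_M(\HZu, M) \simeq \Sigma^a \HZu$, by iteratively computing this $\Hom$ from the presentation $\HZu = M/(\vb_1, \vb_2, \ldots)$ via $\Hom_R(R/x, N) \simeq \Sigma^{-|x|-1}(N/x)$. Each $i$ contributes according to $m_i$: for $m_i = 0$ one gets a Koszul-complex-type contribution, while for $m_i \geq 1$ a top class $\vb_i^{m_i-1}$ survives, contributing $(m_i-1)|\vb_i|$ to the shift; summing yields $a$. Second, lift this $M$-module equivalence to a right $\cE$-module equivalence (with $\cE = \Hom_M(\HZu, \HZu)$); combined with the Matlis formula $\Hom_M(\HZu, \Z^M) \simeq \Sigma^{2(1-\sigma)}\HZu$ from Lemma~\ref{lem:Zu} and the $\HZu$-cellularity of $\Z^M$ from Corollary~\ref{cor:cellular}, this gives Gorenstein duality of shift $b = a - 2(1-\sigma)$, as required.

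The general case with possibly infinite $\underline{\vb}$ (where $M$ is not literally Gorenstein) is handled by writing both sides as filtered (co)limits of their finite-$\underline{\vb}'$ analogues and invoking the conversion of colimits to limits under Anderson duality. The main obstacle I anticipate is the $\cE$-module lifting in the Gorenstein step: as flagged in the introduction, this is substantially more delicate in the $C_2$-equivariant setting than non-equivariantly because of more subtle connectivity control -- this is precisely the technical core to be carried out in Section~\ref{sec:dishonest}.
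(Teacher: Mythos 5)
Your proposal follows the Gorenstein route of Part~2, whereas the paper proves this theorem by the hands-on route of Part~3 (Theorem~\ref{Thm:QuotientDuality}): it first computes $\Z^{BP\R} \simeq \Sigma^{4-2\rho}\kappa_{\MUR}(\underline{\vb};BP\R)$ directly (Theorem~\ref{Thm:BPDuality}) by constructing a candidate map and checking it on $\pi^{C_2}_{*\rho}$ and $\pi^{C_2}_{*\rho-1}$, and then passes to quotients via Lemmas~\ref{Lem:AndersonQuotient}, \ref{Lem:Nilpotence}, \ref{lem:smash} and a limit argument. Two substantive gaps prevent the Gorenstein route from working as you describe for general~$(m_i)$.

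The first is that the ``presentation $\HZu = M/(\vb_1,\vb_2,\ldots)$'' fails whenever some $m_i\geq 2$: killing $\vb_i$ in $BP\R/\vb_i^{m_i}$ does not recover $BP\R/\vb_i$. For example $\pi_*^e\bigl((BP\R\langle 1\rangle/\vb_1^2)/\vb_1\bigr)$ has a copy of $\Z$ in degrees $0$ \emph{and} $5$, so this is not $\HZu$. Hence iterating $\Hom_R(R/x,N)\simeq\Sigma^{-|x|-1}(N/x)$ does not land on a suspension of $\HZu$, and your step ``a top class $\vb_i^{m_i-1}$ survives'' has no home in that computation. (The shift $a = m'-|\underline{\vb}|-k$ you claim is in fact correct, but it would instead follow from a socle/$\Ext$ computation over $\pi_*M$, which moreover presupposes a ring structure.) The second gap is precisely that ring structure: $M$ is not known to be a homotopy-commutative ring spectrum (already an issue for $\BPRn$ with $n\geq 3$), so $\Hom_M(\HZu,M)$ and $\cE=\Hom_M(\HZu,\HZu)$ are unavailable. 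Section~\ref{sec:dishonest} circumvents this by working with $\Hom_S(S/\Jbn,-)$ and $\cEtn$ over $S=\MUR$, which in turn requires a small $S$-module $S/J$ with $S/J\tensor_S M\simeq\HZu$; such $S/J$ exists when all $m_i\in\{0,1\}$ (giving Corollary~\ref{cor:BPRnGorDdish}) but has no visible analogue when some $m_i\geq 2$. The paper therefore reaches the general quotients by applying $\Gamma_{\underline{\vb}}$ and Anderson duality to cofibre sequences, not by a second Gorenstein argument. Your identification $\Gamma_{\underline{\vb}}M\simeq\cell_{\HZu}M$ (via nilpotence, Proposition~\ref{prop:cell} and Corollary~\ref{cor:reduction}) and the cellularity/$\cE$-lifting mechanism are correct for the $\BPRn$ case, and you rightly flag the $\cE$-lift as the technical core there; but as written the proposal does not supply a proof for quotients with some $m_i\geq 2$.
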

The first form will be proved as Theorem \ref{Thm:QuotientDuality} and the second follows from it using Lemma \ref{lem:Koszul}. The second form also follows from Corollary \ref{cor:BPRnGorDdish} (using that $\Gamma_{\underline{\vb}}$ preserves cofibre sequences to pass to quotients of $\BPRn$). 

\begin{example}\label{ex:BPRn}
$\Z^{\BPRn} \simeq \Sigma^{n+D_n\rho +4-2\rho}\Gamma_{(\vb_1,\dots, \vb_n)}\BPRn$ for $D_n = |v_1|+\cdots +|v_n|$. This says that $\BPRn$ has Gorenstein duality with respect to $\HZu \simeq \BPRn/(\vb_1,\dots, \vb_n)$. (The last equivalence follows from Corollary \ref{cor:reduction}.)
\end{example}

\begin{example}
Set $k\R(n) = \BPRn/(\vb_1,\dots, \vb_{n-1})$ to be connective
integral Real Morava $K$-theory and $K\R(n) = k\R(n)[\vb_n^{-1}]$ its periodic version. Then 
\begin{align*}\Z^{k\R(n)} &\simeq \Sigma^{1+|\vb_n|+4-2\rho}\Gamma_{\vb_n}k\R(n)\\
&\simeq \Sigma^{(2^n-3)\rho+4}\cof(k\R(n) \to K\R(n))\end{align*}
This includes for $n=1$ the case of usual ($2$-local) connective Real K-theory. 
\end{example}
\begin{example}
 To have a slightly stranger example, take $M = \BPR\langle 3\rangle/(\vb_1^4, \vb_3^2)$. Then 
 $$\Z^M \simeq \Sigma^{5-9\rho}\Gamma_{\vb_2}M.$$
\end{example}

\vspace*{0.5cm}

So far, we have only talked about \emph{quotients} of $\BPR$. This does not include important Real spectra like the Real Johnson--Wilson theories $E\R(n) = \BPRn[\vb_n^{-1}]$ or the (integral) Real Morava K-theories $K\R(n)$. For this, we have to study the behaviour of our constructions under localizations. 

Let $M$ be an $RO(C_2)$-graded $\Z[v]$-module, where $v$ has some degree $|v| \in RO(C_2)$. We say that $M$ has \emph{bounded $v$-divisibility} if for every degree $a+b\sigma$, there is a $k$ such that 
$$v^k\colon M_{a+b\sigma-|v^k|} \to M_{a+b\sigma}$$
is zero. We will also apply the concept to modules that are just $\Z|v|$-graded.
\begin{lemma}The class of $RO(C_2)$-graded $\Z[v]$-modules of bounded $v$-divisibility is closed under submodules, quotients and extensions. 
\end{lemma}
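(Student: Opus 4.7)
The plan is to verify the three closure properties separately and directly from the definition; each amounts to a one-paragraph argument about a single degree $\alpha = a+b\sigma$.

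\textbf{Submodules.} If $N \subseteq M$ and $k$ witnesses bounded $v$-divisibility of $M$ at degree $\alpha$ (so $v^k \colon M_{\alpha - |v^k|} \to M_\alpha$ is the zero map), then the restriction $v^k \colon N_{\alpha - |v^k|} \to N_\alpha$ is automatically zero as well. So the same $k$ witnesses the property for $N$.

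\textbf{Quotients.} If $\pi\colon M \twoheadrightarrow Q$ is surjective and $v^k \colon M_{\alpha - |v^k|} \to M_\alpha$ vanishes, then any element of $Q_{\alpha - |v^k|}$ lifts to some $m \in M_{\alpha - |v^k|}$, and $v^k \pi(m) = \pi(v^k m) = 0$. Hence $v^k$ also kills $Q_{\alpha - |v^k|} \to Q_\alpha$.

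\textbf{Extensions.} Given a short exact sequence $0 \to N \to M \to Q \to 0$ with $N$ and $Q$ of bounded $v$-divisibility, fix $\alpha \in RO(C_2)$. First choose $l$ with $v^l \colon N_{\alpha - |v^l|} \to N_\alpha$ equal to zero. Then, applying bounded $v$-divisibility of $Q$ at the shifted degree $\alpha - |v^l|$, choose $k$ with $v^k \colon Q_{\alpha - |v^l| - |v^k|} \to Q_{\alpha - |v^l|}$ equal to zero. Now let $m \in M_{\alpha - |v^{k+l}|}$ and let $\bar m \in Q$ be its image. By the choice of $k$, $v^k \bar m = 0$, so $v^k m$ lies in $N_{\alpha - |v^l|}$; by the choice of $l$, $v^l(v^k m) = 0$ in $N$, hence in $M$. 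Thus $v^{k+l}$ annihilates $M_{\alpha - |v^{k+l}|} \to M_\alpha$, and $K = k+l$ witnesses bounded $v$-divisibility for $M$ at $\alpha$.

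The only point that requires attention is the order of the two choices in the extension step: one must kill the quotient first (so that the image lies in $N$) and then apply the $N$-bound at the target degree $\alpha$, not at the source. Everything else is formal, so no substantial obstacle is expected.
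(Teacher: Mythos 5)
Your proof is correct and follows essentially the same approach as the paper's: the sub and quotient cases are immediate, and for extensions you choose a bound for the sub at the target degree $\alpha$ and then a bound for the quotient at the shifted degree, composing the two. The degree bookkeeping and the order of the two choices match the paper's argument exactly.
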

\begin{proof}
 This is clear for submodules and quotients. Let 
 $$0 \to K \to M \to N \to 0$$
 be a short exact sequence of $\Z[v]$-modules where $K$ and $N$ are of bounded $v$-divisibility. For a given degree $\alpha \in RO(C_2)$, we know that there is a $k$ such that $v^k$ maps trivially into $K_\alpha$. Furthermore, there is an $n$ such that $v^n$ maps trivially into $N_{\alpha-k|v|}$. Thus, multiplication by $v^{n+k}$ is the zero map $M_{\alpha-(k+n)|v|} \to M_\alpha$.
\end{proof}

Let $M$ be an $M\R$-module. We say that $M$ is of \emph{bounded $\vb_n$-divisibility} if both $\pi^{\Ctwo}_{\rost}M$ and $\pi^e_*M$ are of bounded $\vb_n$-divisibility. This is, for example, true if $M$ is connective. 

\begin{lemma}\label{lem:boundedrho}We have the following two properties of $\vb_n$-divisiblity.
\begin{enumerate}
\item Being of bounded $\vb_n$-divisibility is closed under cofibres and suspensions. 
\item An $M\R$-module $M$ is of bounded $\vb_n$-divisibility if and only if $\pi^{\Ctwo}_{*\rho}M$ and $\pi^e_*M$ are of bounded $\vb_n$-divisibility. 
\end{enumerate}
\end{lemma}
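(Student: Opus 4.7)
My strategy in both parts is to exploit long exact sequences coming from cofibre sequences, together with the closure of bounded $\vb_n$-divisibility under submodules, quotients, and extensions established in the preceding lemma.

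For part (1), given a cofibre sequence $A \to B \to C$ of $M\R$-modules with $A$ and $C$ of bounded $\vb_n$-divisibility, I will split the induced long exact sequence on $\pi^{C_2}_\alpha$ (for each $\alpha \in RO(C_2)$) into a short exact sequence
\begin{equation*}
0 \to \im\bigl(\pi^{C_2}_\alpha A \to \pi^{C_2}_\alpha B\bigr) \to \pi^{C_2}_\alpha B \to \im\bigl(\pi^{C_2}_\alpha B \to \pi^{C_2}_\alpha C\bigr) \to 0.
\end{equation*}
The outer terms are a quotient of $\pi^{C_2}_\alpha A$ and a submodule of $\pi^{C_2}_\alpha C$, so they inherit bounded $\vb_n$-divisibility, and then the middle term does as well by the extension property. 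The same argument handles $\pi^e_*$. Suspensions only re-index homotopy groups and so trivially preserve the property.

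For part (2), only the implication ($\Leftarrow$) requires argument. I will smash the fundamental cofibre sequence $(C_2)_+ \to S^0 \to S^\sigma$ with $M$ and take $\pi^{C_2}_\alpha$ to obtain
\begin{equation*}
\cdots \to \pi^e_{|\alpha|} M \to \pi^{C_2}_\alpha M \to \pi^{C_2}_{\alpha-\sigma} M \to \pi^e_{|\alpha|-1} M \to \cdots,
\end{equation*}
where $|a+b\sigma| = a+b$ denotes the underlying dimension and I use the Wirthm\"uller isomorphism $\pi^{C_2}_\alpha\bigl((C_2)_+ \wedge M\bigr) \cong \pi^e_{|\alpha|} M$. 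The extension argument from part (1) now shows that if $\pi^{C_2}_{\alpha - \sigma} M$ and $\pi^e_* M$ are of bounded $\vb_n$-divisibility, then so is $\pi^{C_2}_\alpha M$. Starting from $\alpha = a + b\sigma$, I will iterate this relation exactly $|a - b|$ times, shifting by $\pm \sigma$ as appropriate, to reach $a\rho = a + a\sigma$, where the hypothesis applies.

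The only point to verify is compatibility of the $\vb_n$-action with the connecting maps in the long exact sequences. This is automatic since every cofibre sequence involved is a cofibre sequence of $M\R$-modules, and under the Wirthm\"uller isomorphism the equivariant action of $\vb_n$ corresponds to the underlying action of its restriction $v_n$. I therefore do not anticipate any genuine obstacle.
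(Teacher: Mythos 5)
Your proof is correct and follows the paper's approach: both rely on the preceding lemma (closure of bounded $\vb_n$-divisibility under submodules, quotients, and extensions of graded $\Z[\vb_n]$-modules) together with the long exact sequence induced by $(C_2)_+ \to S^0 \to S^\sigma$, and your iteration in part (2) works precisely because $|\vb_n|$ is a multiple of $\rho$, so $\vb_n$-multiplication preserves the difference $a-b$ and the reduction to the $\rho$-diagonal can be carried out diagonal by diagonal. One small remark on part (1): as written you prove closure under \emph{extensions} (middle term from the two outer ones), which is equivalent to closure under cofibres once suspension-closure is in hand (rotate the triangle), or you can prove the cofibre case directly by instead splitting off
$0 \to \cok\bigl(\pi^{C_2}_\alpha A \to \pi^{C_2}_\alpha B\bigr) \to \pi^{C_2}_\alpha C \to \ker\bigl(\pi^{C_2}_{\alpha-1}A \to \pi^{C_2}_{\alpha-1}B\bigr) \to 0$.
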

\begin{proof}
Both statements follow from the last lemma. For the second item, we additionally use the exact sequence
$$\pi^e_{a+b+1}M \to \pi^{C_2}_{a+(b+1)\sigma}M \to \pi^{C_2}_{a+b\sigma}M \to \pi^e_{a+b}M$$
induced by the cofibre sequence
$$(C_2)_+ \to S^0\to S^{\sigma}.\qedhere$$
\end{proof}

\begin{lemma}
If $M$ has bounded $\vb_n$-divisibility, then there is a natural equivalence 
$$M[\vb_n^{-1}] \simeq \Sigma \holim\left( \cdots \to \Sigma^{|\vb_n|}\Gamma_{\vb_n}M \xrightarrow{\vb_n} \Gamma_{\vb_n} M\right)$$
of $M\R$-modules.
\end{lemma}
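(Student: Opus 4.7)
The plan is to deduce the equivalence from the defining cofibre sequence of the derived $\vb_n$-power torsion,
$$\Gamma_{\vb_n}M \to M \to M[\vb_n^{-1}],$$
by running the tower of self-maps given by multiplication by $\vb_n$. Since multiplication by $\vb_n$ is a natural self-map of each term, we obtain a tower of cofibre sequences
$$\Sigma^{k|\vb_n|}\Gamma_{\vb_n}M \to \Sigma^{k|\vb_n|}M \to \Sigma^{k|\vb_n|}M[\vb_n^{-1}],$$
whose transition maps are $\vb_n$ in each column. Applying $\holim_k$ and using that $\holim$ preserves fibre sequences gives a fibre sequence of $M\R$-modules
$$\holim_k \Sigma^{k|\vb_n|}\Gamma_{\vb_n}M \longrightarrow \holim_k \Sigma^{k|\vb_n|}M \longrightarrow \holim_k \Sigma^{k|\vb_n|}M[\vb_n^{-1}].$$

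The right-hand term simplifies immediately: since $\vb_n$ acts invertibly on $M[\vb_n^{-1}]$, every transition map in the tower is an equivalence, so the homotopy limit is canonically identified with $M[\vb_n^{-1}]$ (via projection to the $k=0$ term). The key step is therefore to show the middle term is contractible. For this it suffices, by the cofibre sequence $(C_2)_+ \to S^0 \to S^\sigma$ and the analogue of Lemma \ref{lem:boundedrho}, to check that $\pi^{C_2}_\alpha$ and $\pi^e_n$ vanish on the middle homotopy limit for every $\alpha\in RO(C_2)$ and every $n\in\Z$.

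For a fixed $\alpha\in RO(C_2)$ the Milnor sequence reads
$$0\to {\lim}^1_k \pi^{C_2}_{\alpha+1-k|\vb_n|}M\longrightarrow \pi^{C_2}_\alpha\holim_k \Sigma^{k|\vb_n|}M \longrightarrow \lim_k \pi^{C_2}_{\alpha-k|\vb_n|}M\to 0,$$
with transition maps given by multiplication by $\vb_n$. The bounded $\vb_n$-divisibility hypothesis says precisely that for any fixed degree the composite $\vb_n^k$ eventually vanishes as $k\to\infty$. Hence the tower is pro-zero (its images stabilize at $0$ and it is trivially Mittag–Leffler), so both $\lim$ and $\lim^1$ vanish. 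The same argument applies to the underlying homotopy groups $\pi^e_*M$, so the middle homotopy limit is contractible.

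Rotating the resulting fibre sequence $\holim_k \Sigma^{k|\vb_n|}\Gamma_{\vb_n}M \to 0 \to M[\vb_n^{-1}]$ yields
$$\holim_k \Sigma^{k|\vb_n|}\Gamma_{\vb_n}M \simeq \Omega\, M[\vb_n^{-1}] = \Sigma^{-1}M[\vb_n^{-1}],$$
which is the claimed equivalence after a single suspension. The construction is manifestly natural in $M$. The only subtlety is the reduction of bounded $\vb_n$-divisibility on the pieces $\pi^{C_2}_{\rost}$ and $\pi^e_*$ to vanishing of $\lim$ and $\lim^1$ in every individual $RO(C_2)$-degree, but this is exactly what Lemma \ref{lem:boundedrho} is set up to provide.
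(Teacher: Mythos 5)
Your proof is correct and follows the same strategy as the paper's: apply the homotopy limit along $\vb_n$-multiplication to the cofibre sequence $\Gamma_{\vb_n}M \to M \to M[\vb_n^{-1}]$, identify the right-hand term with $M[\vb_n^{-1}]$ because $\vb_n$ acts invertibly there, and show the middle term vanishes via the Milnor $\lim^1$-sequence and the pro-zero condition supplied by bounded $\vb_n$-divisibility. The paper leaves the middle-term vanishing as a one-line remark; you have simply spelled out the details it omits, and the rotation of the resulting fibre sequence recovers the claimed suspension.
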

\begin{proof}
We apply the endofunctor $H\colon N\mapsto \holim(\cdots \to \Sigma^{|\vb_n|}N \xrightarrow{\vb_n} N)$ of $M\R$-modules to the cofibre sequence
$$\Gamma_{\vb_n}M \to M\to M[\vb_n^{-1}].$$
Clearly $H(M[\vb_n^{-1}])\simeq M[\vb_n^{-1}]$. Thus, we just have to show that $H(M)\simeq 0$. This follows by the $\lim^1$-sequence and bounded $\vb_n$-divisibility. 
\end{proof}

\begin{lemma}
Let $B$ be a quotient of $BP\R$ by powers of the $\vb_i$. Then
$B[\vb^{-1}]$ has bounded $\vb_n$-divisibility if $\vb$ is a product
of $\vb_i$ not containing $\vb_n$. Hence, the same is also true for
the stable Koszul complex $\Gamma_{\underline{\vb}}B$, where $\underline{\vb}$ is a sequence of $\vb_i$ not containing $\vb_n$.
\end{lemma}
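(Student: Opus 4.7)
The strategy is to use Lemma \ref{lem:boundedrho}(2) to reduce bounded $\vb_n$-divisibility of $B[\vb^{-1}]$ to the corresponding statements for $\pi^{C_2}_{*\rho} B[\vb^{-1}]$ and $\pi^e_* B[\vb^{-1}]$ separately, where the groups admit an explicit algebraic description. Strong evenness of $B$ (Corollary \ref{Cor:crucial}) combined with Corollary \ref{Cor:QuotientBP} identifies
\[\pi^{C_2}_{*\rho}B \;\cong\; \Z_{(2)}[\vb_1, \vb_2, \ldots]/(\vb_i^{m_i}),\]
and the analogous presentation holds on the underlying homotopy via the restriction isomorphism. Inverting $\vb$ (a product of $\vb_i$ with $i\neq n$) yields a Laurent-type localization in which the relation $\vb_n^{m_n}=0$ is preserved intact, since $\vb_n$ is not among the inverted generators. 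Hence $\vb_n^{m_n}$ acts as the zero map on $\pi^{C_2}_{*\rho}B[\vb^{-1}]$ and on $\pi^e_* B[\vb^{-1}]$, giving bounded $\vb_n$-divisibility with the uniform bound $k=m_n$ in every degree.

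For the second assertion, concerning the stable Koszul complex $\Gamma_{\underline{\vb}}B$ with $\underline{\vb}=(x_1,\ldots,x_r)$, I would iterate the defining cofibre sequence
\[\Gamma_x M \;\lra\; M \;\lra\; M[x^{-1}]\]
as $x$ ranges over $\underline{\vb}$, building up through the intermediate modules $\Gamma_{x_j}\cdots \Gamma_{x_1} B$. The module $B$ itself is connective and therefore has bounded $\vb_n$-divisibility automatically, while each intermediate localization $B[x_{i_1}^{-1} \cdots x_{i_j}^{-1}]$ satisfies the same property by the first half of the lemma (applied with $\vb = x_{i_1}\cdots x_{i_j}$, still not containing $\vb_n$). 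Closure of bounded $\vb_n$-divisibility under cofibres, provided by Lemma \ref{lem:boundedrho}(1), then propagates the bound through the iterated Koszul construction and delivers the conclusion for $\Gamma_{\underline{\vb}}B$.

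The main obstacle is the first step, namely verifying that the algebraic identification of $\pi^{C_2}_{*\rho}B$ via Corollary \ref{Cor:QuotientBP} remains faithful after localization and delivers the nilpotence of $\vb_n$ in all $*\rho$ degrees at once, rather than only in the bounded range $c\leq 2^{j+1}$ in which the corollary is stated. Here Lemma \ref{lem:boundedrho}(2) is exactly what bridges the gap, as it lets us avoid the technical wildness of non-representation gradings and work in the pure regular representation grading, where the multiplicative structure is a transparent polynomial quotient and the relation $\vb_n^{m_n}=0$ yields bounded divisibility uniformly.
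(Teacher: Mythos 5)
Your argument follows the paper's proof closely in both halves. For the first claim, both you and the authors invoke Lemma \ref{lem:boundedrho}(2) to reduce to $\pi_{*\rho}^{C_2}$ and $\pi^e_*$ and use strong evenness (Corollary \ref{Cor:crucial}) to identify the two. For the second, the paper filters the \v{C}ech complex $\check{C}(\underline{\vb};B)$ by localizations $\Sigma^{?}B[x^{-1}]$ (citing \cite{G-M95}) and quotes closure under cofibres and suspensions from Lemma \ref{lem:boundedrho}(1); your iteration through the nested Koszul complexes $\Gamma_{x_j}\cdots\Gamma_{x_1}B$ is a re-indexing of that same filtration, since after commuting $\Gamma$ past localization the building blocks that appear are exactly the $\Sigma^{?}B[x^{-1}]$ already handled by the first half. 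So the route is essentially the paper's.

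What your version adds is an explicit reason where the paper only writes that bounded $\vb_n$-divisibility of $\pi^e_*B[\vb^{-1}]$ ``is clear'': you locate the uniform bound in the surviving relation $\vb_n^{m_n}=0$. This makes a hypothesis visible that is worth pausing over, namely that $\vb_n$ must actually be killed by a \emph{positive} power in $B$. If $\vb_n$ survives to $B$ the conclusion genuinely fails: take $B=\BPRn$ and $\vb=\vb_1$ with $n\geq 2$, and note $\vb_1^{-(2^n-1)k}\vb_n^k$ is a nonzero class in $\pi^e_0 B[\vb_1^{-1}]$ that is $\vb_n^k$-divisible for every $k$, so no bound exists in degree $0$. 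Your proof is therefore correct under the reading that ``a quotient of $BP\R$ by powers of the $\vb_i$'' means positive powers of all the $\vb_i$ — the only reading the paper's ``clear'' supports as well — but it is worth flagging, since the theorem immediately after this lemma invokes it for $\Gamma_{\underline{\vb}\setminus\vb_n}M$ with $m_n=0$, which falls outside that reading.
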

\begin{proof}
By Lemma \ref{lem:boundedrho}, it is enough to check the first statement on $\pi_{*\rho}^{C_2}$ and on $\pi^e_*$. On the latter, it is clear and the former is isomorphic to it by Corollary \ref{Cor:crucial}. For the second statement we use that $\Gamma_{\underline{\vb}}B$ is the fibre of $B \to \check{C}(\underline{\vb};B)$, where $\check{C}(\underline{\vb};B)$ has a filtration with subquotients $M\R$-modules of the form $\Sigma^?B[x^{-1}]$ for some $x\in \pi_{\rost}^{C_2}M\R$ \cite[Lemma 3.7]{G-M95}. Thus, the second statement follows from Lemma \ref{lem:boundedrho}. 
\end{proof}

\begin{thm}
Let the notation be as in Theorem \ref{thm:main} and assume for simplicity that only finitely many $m_i$ are zero and that $m_n = 0$. Then 
$$\Z^{M[\vb_n^{-1}]} \simeq \Sigma^{-m'+|\underline{\vb}|+(k-1)+4-2\rho}\Gamma_{\underline{\vb}\setminus \vb_n} M.$$
Here $\underline{\vb} \setminus \vb_n$ denotes the sequence of all $\vb_i$ such that $m_i = 0$ and $i\neq n$.  
\end{thm}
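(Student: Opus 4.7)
The plan is to apply the lemma expressing $\vb_n$-inversion as a homotopy inverse limit of $\Gamma_{\vb_n}$'s to the Koszul complex $\Gamma_{\underline{\vb}'}M$ where $\underline{\vb}' = \underline{\vb}\setminus \vb_n$, and then read off the answer by comparing with Theorem \ref{thm:main}. For this lemma to be applicable, I first need bounded $\vb_n$-divisibility of $\Gamma_{\underline{\vb}'}M$. This follows from the preceding lemma on localizations of quotients of $BP\R$: the complex $\Gamma_{\underline{\vb}'}M$ is the fibre of the \v{C}ech map $M\to \check{C}(\underline{\vb}'; M)$, whose target has a finite filtration with subquotients of the form $\Sigma^{n_y}M[y^{-1}]$ for $y$ a product of elements of $\underline{\vb}'$; since none of these $y$'s involve $\vb_n$, each $M[y^{-1}]$ has bounded $\vb_n$-divisibility, and Lemma \ref{lem:boundedrho} propagates this through the finite filtration and fibre to $\Gamma_{\underline{\vb}'}M$ itself.

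Invoking the $\vb_n$-inversion lemma for $N = \Gamma_{\underline{\vb}'}M$ and using the identification $\Gamma_{\vb_n}\Gamma_{\underline{\vb}'}M \simeq \Gamma_{\underline{\vb}}M$, one obtains
$$(\Gamma_{\underline{\vb}'}M)[\vb_n^{-1}] \;\simeq\; \Sigma \holim_l \bigl(\, \Sigma^{l|\vb_n|}\Gamma_{\underline{\vb}}M \xrightarrow{\vb_n}\Gamma_{\underline{\vb}}M \,\bigr).$$
Theorem \ref{thm:main} identifies $\Gamma_{\underline{\vb}}M \simeq \Sigma^{-a}\Z^M$ with $a = -m'+|\underline{\vb}|+k+4-2\rho$, so the right-hand side rewrites as $\Sigma^{1-a}\holim_l(\Sigma^{l|\vb_n|}\Z^M, \vb_n)$. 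Separately, writing $M[\vb_n^{-1}]$ as the telescope $\hocolim_l \Sigma^{-l|\vb_n|}M$ and applying the contravariant functor $\Z^{(-)}$, which converts this sequential $\hocolim$ into the dual sequential $\holim$, gives the identification $\Z^{M[\vb_n^{-1}]} \simeq \holim_l(\Sigma^{l|\vb_n|}\Z^M, \vb_n)$. Combining these two displays yields $\Z^{M[\vb_n^{-1}]} \simeq \Sigma^{a-1}(\Gamma_{\underline{\vb}'}M)[\vb_n^{-1}]$, and since $\Gamma_{\underline{\vb}'}$ (defined as a smash product over $M\R$) commutes with $\vb_n$-localization this equals $\Sigma^{a-1}\Gamma_{\underline{\vb}'}(M[\vb_n^{-1}])$. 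The shift $a-1 = -m'+|\underline{\vb}|+(k-1)+4-2\rho$ is exactly the one claimed.

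The main point requiring care is the bounded $\vb_n$-divisibility step, since without it the homotopy-inverse-limit characterization of $\vb_n$-inversion breaks down and the identification of the holim with $\Z^{M[\vb_n^{-1}]}$ is lost. Once that is in hand, everything reduces to bookkeeping of $RO(C_2)$-graded suspensions together with a direct substitution from Theorem \ref{thm:main}.
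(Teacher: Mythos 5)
Your proposal is correct and takes essentially the same route as the paper's proof: express $\Z^{M[\vb_n^{-1}]}$ as a sequential homotopy inverse limit, substitute Theorem \ref{thm:main} to replace $\Z^M$ by a shifted $\Gamma_{\underline{\vb}}M$, factor $\Gamma_{\underline{\vb}} = \Gamma_{\vb_n}\Gamma_{\underline{\vb}\setminus\vb_n}$, invoke the $\vb_n$-inversion lemma (which requires bounded $\vb_n$-divisibility of $\Gamma_{\underline{\vb}\setminus\vb_n}M$, supplied by the preceding lemma), and finally commute $\Gamma_{\underline{\vb}\setminus\vb_n}$ past the localization. You organize the chain by computing both sides toward a common middle rather than running a single top-to-bottom chain, and you spell out the bounded-divisibility verification more explicitly, but the ingredients and logic are the same.
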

\begin{proof}
The preceding lemmas imply the following chain of equivalences:
\begin{align*}
\Z^{M[\vb_n^{-1}]} &\simeq \Z^{\hocolim (M \xrightarrow{\vb_n} \Sigma^{-|\vb_n|}M \xrightarrow{\vb_n} \cdots)} \\
&\simeq \holim (\cdots \xrightarrow{\vb_n} \Z^M) \\
&\simeq \Sigma^{-m'+|\underline{\vb}|+k+4-2\rho}\holim \left(\cdots \xrightarrow{\vb_n} \Gamma_{\underline{\vb}}M\right)\\
&\simeq \Sigma^{-m'+|\underline{\vb}|+k+4-2\rho}\holim \left(\cdots \xrightarrow{\vb_n} \Gamma_{\vb_n}(\Gamma_{\underline{\vb}\setminus \vb_n}M) \right) \\
&\simeq \Sigma^{-m'+|\underline{\vb}|+(k-1)+4-2\rho}(\Gamma_{\underline{\vb}\setminus \vb_n} M)[\vb_n^{-1}] \\
&\simeq \Sigma^{-m'+|\underline{\vb}|+(k-1)+4-2\rho}\Gamma_{\underline{\vb}\setminus \vb_n} (M[\vb_n^{-1}])
\end{align*}
\end{proof}

\begin{example}
We recover the following result by Ricka \cite{Ricka}: 
$$\Z^{K\R(n)} \simeq \Sigma^{4-2\rho}K\R(n).$$
Here, $K\R(n)$ denotes integral Morava K-theory $E\R(n)/(\vb_1,\dots, \vb_{n-1})$. 
\end{example}

\begin{example}
In the following, we will use that there are invertible classes $x,\vb_n\in\pi_\bigstar^{\Ctwo}E\R(n)$ of degree $-2^{2n+1}+2^{n+2}-\rho$ and $(2^n-1)\rho$ respectively, where $x = \vb_n^{1-2^n}u^{2^n(1-2^{n-1})}$.
\begin{align*}
\Z^{E\R(n)} &\simeq \Sigma^{D_{n-1}\rho + (n-1)+4-2\rho} \Gamma_{(\vb_1,\dots, \vb_{n-1})}E\R(n) \\
&\simeq \Sigma^{-(n+2)\rho+(n+3)} \Gamma_{(\vb_1,\dots, \vb_{n-1})}E\R(n) \\
&\simeq \Sigma^{(n+2)(2^{2n+1}-2^{n+2})+n+3} \Gamma_{(\vb_1,\dots, \vb_{n-1})}E\R(n).
\end{align*}
This says that $E\R(n)$ has Gorenstein duality with respect to $E\R(n)/(\vb_1,\dots, \vb_{n-1}) = K\R(n)$. Note that we can replace the ideal $(\vb_1,\dots, \vb_{n-1})$ by an ideal generated in integral degrees, namely $(\vb_1x, \dots, \vb_{n-1}x^{2^{n-1}-1})$. 
\end{example}

\begin{example}\label{exa:tmf}
Recall from \cite{HM} the spectra $tmf_1(3)$, $Tmf_1(3)$ and $TMF_1(3)$ and the corresponding $C_2$-spectra $\overline{tmf_1(3)}$, $\overline{Tmf_1(3)}$ and $\overline{TMF_1(3)}$. Recall that we have $\pi_*tmf_1(3) = \Z[a_1,a_3]$, where $a_1$ and $a_3$ can be identified with the images of the Hazewinkel generators $v_1$ and $v_2$, and that $\overline{tmf_1(3)}$ is a form of $BP\R\langle 2\rangle$ (as already discussed in Example \ref{exa:forms}). This gives the Anderson dual of $\overline{tmf_1(3)}$. Tweaking the last theorem a little bit, allows also to show that 
$$\Z^{\overline{TMF_1(3)}} \simeq \Sigma^{5+2\rho}\Gamma_{\vb_1} \overline{TMF_1(3)}.$$
We can also recover one of the main results of \cite{HM}, namely that $\Z^{\overline{Tmf_1(3)}} \simeq \Sigma^{5+2\rho} \overline{Tmf_1(3)}$. Indeed, $Tmf_1(3)$ is by \cite[Section 4.3]{HM} the cofibre of the map 
$$\Gamma_{\vb_1,\vb_2}\overline{tmf_1(3)} \to \overline{tmf_1(3)}.$$
As the source is equivalent to $\Sigma^{-6-2\rho}\Z^{\overline{tmf_1(3)}}$, applying Anderson duality shows that $\Z^{\overline{Tmf_1(3)}}$ is the fibre of
$$\Sigma^{6+2\rho}\overline{tmf_1(3)} \to \Sigma^{6+2\rho} \Gamma_{\vb_1,\vb_2} \overline{tmf_1(3)}.$$
This is equivalent to $\Sigma^{5+2\rho}\overline{Tmf_1(3)}$. 
This example does not require 2-localization, only that $3$ is inverted.
\end{example}

\begin{remark}By Proposition \ref{prop:AndersonFixed}, all the results in this section have direct implications for the Anderson duals of the fixed point spectra. These are easiest to understand in the case of $ER(n) = (E\R(n))^{C_2}$, where we get
$$\Z^{ER(n)} \simeq \Sigma^{(n+2)(2^{2n+1}-2^{n+2})+n+3} \Gamma_{(\vb_1x,\dots, \vb_{n-1}x^{2^n-1})}ER(n).$$
\end{remark}

\vspace{0.8cm}
\part{The Gorenstein approach}
In this part, we explain the Gorenstein approach to prove Gorenstein duality, first for $\kR$ and then for $\BPRn$.
\section{Connective $K$-theory with Reality}
\label{sec:kR}
The present section considers $K$-theory with reality, which is more
familiar than $\BPRn$ for general $n$, and no 2-localization is
necessary. The arguments are especially
simple, firstly because $\kR$ is a commutative ring spectrum, and
secondly becaue we only need to
consider principal ideals. Simple as the argument is, we see in
Section \ref{sec:kRlcss} that the consequences for coefficient rings
are interesting.  

\subsection{Gorenstein condition and Matlis lift}
It is well known that there is a cofibre sequence
$$\Sigma^{\eps}ku\stackrel{v}\lra ku \lra H\Z.   $$
If one knows the coefficient ring $ku_*=\Z [v]$, this is easy 
to construct, since we can identify $ku/v$ as the Eilenberg-MacLane
spectrum from its homotopy groups. 

There is a version with Reality \cite{Dugger}. Indeed, we may
construct  the cofibre sequence
$$\Sigma^{\rho} \kR \stackrel{\vb}\lra \kR \lra \HZu,  $$
where $\kR /\vb$ is identified using Corollary \ref{cor:characterizingZ}
 
Since the Dugger sequence is self dual we immediately deduce that
$\kR$ is Gorenstein. 

\begin{lemma}
\label{lem:kRGor}
$$\Hom_{\kR}(\HZu, \kR)=\Sigma^{-\rho-1}\HZu$$
and $\kR \lra \HZu$ is Gorenstein. 
\end{lemma}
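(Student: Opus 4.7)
The plan is to exploit the Dugger cofibre sequence $\Sigma^{\rho} k\R \xrightarrow{\bar{v}} k\R \to H\underline{\Z}$ (whose existence in the $C_2$-equivariant setting follows from Corollary~\ref{cor:characterizingZ} applied to $k\R/\bar{v}$, as the authors mention). The point is that this sequence simultaneously presents $H\underline{\Z}$ as a cyclic $k\R$-module and exhibits the self-duality we need.

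First I would apply the contravariant functor $\Hom_{k\R}(-,k\R)$ to the Dugger sequence. Since $k\R$ represents the identity in $k\R$-modules, this yields a fibre sequence
\begin{equation*}
\Hom_{k\R}(H\underline{\Z},k\R)\;\longrightarrow\; k\R \;\xrightarrow{\;\bar{v}\;}\; \Sigma^{-\rho} k\R.
\end{equation*}
Hence $\Hom_{k\R}(H\underline{\Z},k\R)$ is the fibre of multiplication by $\bar{v}$ on $k\R$, shifted appropriately.

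Next, I would identify this fibre. Rotating the Dugger cofibre sequence gives that the cofibre of $\bar{v}:k\R\to \Sigma^{-\rho}k\R$ is $\Sigma^{-\rho} H\underline{\Z}$, and so the fibre is $\Sigma^{-\rho-1}H\underline{\Z}$. Combining with the previous step yields
\begin{equation*}
\Hom_{k\R}(H\underline{\Z},k\R)\;\simeq\;\Sigma^{-\rho-1}H\underline{\Z},
\end{equation*}
which is exactly the claimed formula. By the definition of Gorenstein recalled in the introduction, this is precisely the statement that $k\R\to H\underline{\Z}$ is Gorenstein of shift $-\rho-1$.

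There is essentially no obstacle here: the argument is formal once the Dugger cofibre sequence is in hand. The only minor care needed is the sign/suspension bookkeeping in passing from cofibre to fibre, and the verification that the connecting map is indeed the canonical $\bar{v}$-multiplication (which is immediate from how the cofibre sequence was constructed). The real content lies upstream, in producing the cofibre sequence itself via Corollary~\ref{cor:characterizingZ}, and downstream, in promoting this Gorenstein statement to a Gorenstein duality statement by controlling the $\mathcal{E}$-module structure, which the paper addresses separately.
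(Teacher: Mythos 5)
Your proof is correct and is essentially the same argument as the paper's, which simply says "Apply $\Hom_{\kR}(\cdot , \kR)$ to the Dugger sequence"; you have spelled out the details, including the self-duality of multiplication by $\vb$ under $\Hom_{\kR}(-,\kR)$, which the paper flags with the remark that the Dugger sequence is self-dual.
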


\begin{proof}
Apply $\Hom_{\kR}(\cdot , \kR)$ to the Dugger sequence. 
\end{proof}

To actually get Gorenstein duality we need to construct a Matlis
lift (adapted from \cite[Section 6]{DGI}), which is a counterpart in topology of the injective hull of the residue
field. 
\begin{defn}
If $M$ is an $\HZu$-module, we say that a $\kR$-module $\tilde{M}$ is
a {\em Matlis lift} of $M$ if $\tilde{M}$ is $\HZu$-$\R$-cellular and 
$$\Hom_{\kR}(T, \tilde{M})\simeq \Hom_{\HZu}(T, M)$$
for all $\HZu$-modules $T$.
\end{defn}

The Anderson dual provides one such example. 

\begin{lemma}
\label{lem:ML}
The $\kR$-module $\Sigma^{-2(1-\sigma)}\Z^{\kR}$  is a Matlis lift of
$\HZu$. Indeed, 

(i)  $\Zu^{\kR}$ is $\HZu$-$\R$-cellular and 

(ii) There is an equivalence 
$$\Sigma^{2\pp} \HZu\simeq \HZu^*=\Hom_{\kR} (\HZu, \Z^{\kR}), $$
where $\delta =1-\sigma$. 
\end{lemma}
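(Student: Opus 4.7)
The plan is to establish (i) and (ii) directly from earlier results, and then to observe that the full Matlis lift property follows formally. The three key inputs are: (a) the Anderson duality adjunction $\Hom_R(M, A^R) \simeq A^M$ noted in Subsection \ref{subsec:Anderson}; (b) the computation $\Z^{\HZu} \simeq \Sigma^{2\delta}\HZu$ from Lemma \ref{lem:Zu}; and (c) the Dugger cofibre sequence $\Sigma^{\rho}\kR \xrightarrow{\vb} \kR \to \HZu$, which identifies $\HZu$ with the quotient $\kR/\vb$ (and underlies Lemma \ref{lem:kRGor}).

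For (i), I would invoke Corollary \ref{cor:cellular}. The $\kR$-module $\kR$ is connective, and the ideal $J = (\vb) \subset \pi^{C_2}_{\rost}\kR$ is generated by the single element $\vb$ of degree $\rho = 1+\sigma$, i.e.\ of the form $a + b\sigma$ with $a = b = 1$, satisfying both $a \geq 1$ and $a+b \geq 1$. The corollary then gives that $\Z^{\kR}$ is $\R$-cellular with respect to $\kR/\vb \simeq \HZu$. Since representation suspensions preserve $\R$-cellularity, the same conclusion holds for $\Sigma^{-2\delta}\Z^{\kR}$.

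For (ii), I would apply the Anderson duality adjunction with $R = \kR$, $M = \HZu$, $A = \Z$ to obtain
\[
\Hom_{\kR}(\HZu, \Z^{\kR}) \simeq \Z^{\HZu},
\]
and then use Lemma \ref{lem:Zu} to identify the right-hand side with $\Sigma^{2\delta}\HZu$. This is (ii).

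Finally, the general Matlis lift property follows by a double application of the same adjunction, using that any $\HZu$-module $T$ is in particular a $\kR$-module: applying the adjunction first with $R = \kR$ and then with $R = \HZu$, together with Lemma \ref{lem:Zu}, gives the chain of equivalences
\[
\Hom_{\kR}(T, \Sigma^{-2\delta}\Z^{\kR}) \simeq \Sigma^{-2\delta}\Z^T \simeq \Sigma^{-2\delta}\Hom_{\HZu}(T, \Sigma^{2\delta}\HZu) \simeq \Hom_{\HZu}(T, \HZu).
\]
The argument is largely formal once (i) and (ii) are in place. The only nontrivial point to verify is the degree hypothesis in Corollary \ref{cor:cellular}, which is immediate for the single generator $\vb$ in degree $\rho$; this is precisely the simplification that disappears when one moves from $\kR$ to $\BPRn$ for larger $n$, where multiple $\vb_i$ in various regular-representation-graded degrees must all be checked.
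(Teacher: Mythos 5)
Your proof is correct and follows the same route as the paper: (i) is deduced from Corollary \ref{cor:cellular} applied to the connective $\kR$ and the ideal $(\vb)$ whose generator sits in degree $\rho=1+\sigma$, and (ii) is the Anderson duality adjunction $\Hom_{\kR}(\HZu,\Z^{\kR})\simeq\Z^{\HZu}$ combined with Lemma \ref{lem:Zu}. The one thing you add is the explicit verification, via a second application of the adjunction with $R=\HZu$, that the Matlis lift equivalence holds for \emph{all} $\HZu$-modules $T$ and not just $T=\HZu$; the paper's written proof stops at (i) and (ii), leaving this last formal step implicit, so your elaboration is a welcome clarification rather than a different argument.
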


\begin{proof}
One could prove the first part from the slice tower, but it also follows directly from Corollary \ref{cor:cellular}. 

The second statement is immediate from Lemma \ref{lem:Zu}. 
\end{proof}

\subsection{Gorenstein duality}
 We next want to move on to Gorenstein duality, so we write
$$\cE=\Hom_{\kR}(\HZu, \HZu). $$

Combining Lemmas \ref{lem:kRGor} and \ref{lem:ML}, we have 
\begin{eqnarray}\label{eq:kReq}\Hom_{\kR}(\HZu, \kR)\simeq \Sigma^{-\rho-1}\HZu \simeq
\Hom_{\kR}(\HZu, \Sigma^{-4+\sigma}\Z^{\kR})\end{eqnarray}

We now want to remove the $\Hom_{\kR}(\HZu , \cdot )$ from this
equivalence. 

\begin{lemma} {\em (Effective constructibility)}
\label{lem:effective}
The evaluation map 
$$\Hom_{\kR}(\HZu, M)\tensor_{\cE}\HZu \lra M$$
is $\HZu$-$\R$-cellularization for every left $\kR$-module $M$.
\end{lemma}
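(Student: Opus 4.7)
The plan is to verify the two defining properties of an $\HZu$-$\R$-cellularization map for the evaluation $\alpha_M\colon \Hom_{\kR}(\HZu, M)\tensor_{\cE}\HZu \to M$: that the source is $\HZu$-$\R$-cellular, and that $\Hom_{\kR}(\HZu, \alpha_M)$ is an equivalence of $C_2$-spectra. This is a direct adaptation to our setting of the standard DGI template (cf.\ \cite[4.10]{DGI}).

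The first property is handled by observing that the derived tensor product $\Hom_{\kR}(\HZu, M)\tensor_{\cE}\HZu$ is computed as a bar realization $B(\HZu, \cE, \Hom_{\kR}(\HZu, M))$ whose simplicial terms are built from $\HZu$ by smashing with copies of $\cE$ (which appear as representation-graded shifts in the sense of $\rost$). Since $\HZu$-$\R$-cellular $\kR$-modules form a localizing subcategory stable under all integer and $\rho$-suspensions of $\HZu$, the realization lies in it.

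The crucial input for the second property is that $\HZu$ is small in the derived category of $\kR$-modules, witnessed by the one-cell Dugger presentation $\Sigma^{\rho}\kR\stackrel{\vb}{\to}\kR\to\HZu$; this ensures that $\Hom_{\kR}(\HZu,-)$ commutes with arbitrary homotopy colimits, and in particular with the bar construction used above. Given this, consider for any right $\cE$-module $N$ the adjunction unit
$$\eta_N\colon N \to \Hom_{\kR}(\HZu, N\tensor_{\cE}\HZu).$$
When $N=\cE$, this reduces to the identity $\cE\to\Hom_{\kR}(\HZu, \HZu)=\cE$. Both sides of $\eta_N$ preserve triangles and homotopy colimits in $N$ (using smallness of $\HZu$ for the right-hand side), so the class of $N$ for which $\eta_N$ is an equivalence is a localizing subcategory containing $\cE$, and hence is all of right $\cE$-modules.

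Applying this to $N=\Hom_{\kR}(\HZu, M)$, the triangle identities of the hom-tensor adjunction identify $\Hom_{\kR}(\HZu, \alpha_M)$ as a one-sided inverse of the equivalence $\eta_N$; it follows that $\Hom_{\kR}(\HZu, \alpha_M)$ is itself an equivalence, completing the proof. The main obstacle is the smallness/effective constructibility step: it is precisely the Dugger cofibre sequence that lets $\Hom_{\kR}(\HZu,-)$ pass through the bar construction, and without this the comparison $\eta_N$ cannot be propagated from $N=\cE$ to general $\cE$-modules.
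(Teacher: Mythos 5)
Your proof is correct in its essentials, but it takes a genuinely different route from the paper, and one step needs a word of care in the equivariant setting.

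The paper's argument keeps the localizing-subcategory induction entirely on the side of $\kR$-modules: it reduces (using uniqueness of cellularization) to showing the evaluation map is itself an equivalence for $\HZu$-$\R$-cellular $M$, checks this for $M=\HZu$ (where the map is the identity), and then closes up under triangles, coproducts (via smallness of $\HZu$), and \emph{representation} suspensions. You instead apply $\Hom_{\kR}(\HZu,-)$ and verify the definition of $\R$-cellularization head-on, by showing the adjunction unit $\eta_N\colon N\to\Hom_{\kR}(\HZu, N\tensor_{\cE}\HZu)$ is an equivalence for all right $\cE$-modules $N$, starting from $N=\cE$ and using the triangle identity. This is arguably cleaner: it avoids the implicit reduction the paper's ``it is enough to show the map is an equivalence for cellular $M$'' rests on, and isolates the role of smallness of $\HZu$ (commuting $\Hom_{\kR}(\HZu,-)$ through the bar construction). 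Both proofs ultimately hinge on the same two inputs: the Dugger one-cell presentation (giving smallness) and a localizing-subcategory induction from a trivial base case.

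One place your argument is too terse for the $C_2$-equivariant setting: when you conclude from ``localizing subcategory containing $\cE$'' that $\eta_N$ is an equivalence for \emph{all} right $\cE$-modules. In genuine $C_2$-spectra (and hence in $\cE$-modules) the localizing subcategory generated by $\cE$ under integer suspensions, triangles and coproducts alone is not everything, since $(C_2)_+\tensor\cE$ need not be reachable. You must also note that the class of $N$ for which $\eta_N$ is an equivalence is closed under suspension by representations --- which it is, because both $N\mapsto N$ and $N\mapsto\Hom_{\kR}(\HZu, N\tensor_{\cE}\HZu)$ commute with $S^V\tensor(-)$ for every real representation $V$ --- and that the $\R$-localizing subcategory (i.e.\ closed under such suspensions) generated by $\cE$ is indeed all of right $\cE$-modules (as $(C_2)_+\tensor\cE$ sits in the cofibre sequence $(C_2)_+\tensor\cE\to\cE\to S^{\sigma}\tensor\cE$). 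The paper's proof makes closure under representation suspensions explicit; you should too at this point. The same closure under representation spheres is also what makes your bar-realization argument for cellularity of the source go through, since the simplicial terms are of the form (some $C_2$-spectrum)$\,\tensor\HZu$, which is $\HZu$-$\R$-cellular but not $\HZu$-cellular in the naive sense.
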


\begin{proof}
Since the domain is clearly $\HZu$-$\R$-cellular, it is enough to show the map is an
equivalence for all cellular modules $M$.

This is clear for $M=\HZu$. The class of $M$ for which the statement is true is closed under (i) triangles, (ii)
coproducts (since $\HZu$ is small) and (iii) suspensions by
representations. This gives all $\R$-cellular modules.  
\end{proof}

Local cohomology gives an alternative approach to
cellularization. Recall that we define the $\vb$-power torsion of a $\kR$-module $M$
by the fibre sequence
$$\Gamma_{\vb}M \lra M \lra M[1/\vb]. $$

The following lemma is a special case of Proposition \ref{prop:cell}. 
\begin{lemma}
\label{lem:Gammacell}
The map 
$$\Gamma_{\vb}M\lra M$$ 
is $\HZu$-$\R$-cellularization.
\end{lemma}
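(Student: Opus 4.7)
The plan is to invoke Proposition \ref{prop:cell} in the principal case $R=\kR$, $J=(\vb)$. Two things need to be checked: that $\vb$ lies in a regular-representation grading (clear since $|\vb|=\rho$, as in Lemma \ref{lem:kRGor}), and that $\kR/\vb \simeq \HZu$ (which is the Dugger cofibre sequence, and matches the identification of $R/J$ in the proposition).

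If instead I wanted to redo the argument directly in this simpler principal setting, I would split into (a) $\Gamma_\vb M$ is $\HZu$-$\R$-cellular, and (b) the natural map
\[ \Hom_\kR(\HZu, \Gamma_\vb M) \lra \Hom_\kR(\HZu, M) \]
is an equivalence. For (a), use Lemma \ref{lem:Koszul} to rewrite $\Gamma_\vb M$, up to a $\rho$-suspension, as the Koszul object $\kappa_\kR(\vb;M) = \hocolim \Sigma^{(1-l)\rho} M/\vb^l$. Each $M/\vb^l$ is built inductively from $M/\vb = \HZu \wedge_\kR M$ via the octahedral cofibre sequences $\Sigma^{(l-1)\rho} M/\vb \to M/\vb^l \to M/\vb^{l-1}$, and $\HZu \wedge_\kR M$ is $\HZu$-$\R$-cellular because every $\kR$-module is $\kR$-$\R$-cellular (using the cofibre sequence $(\Ctwo)_+ \to S^0 \to S^\sigma$ recalled in Section \ref{sec:Cell} to pass between integer and regular-representation suspensions).

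For (b), it is equivalent to show $\Hom_\kR(\HZu, M[\vb^{-1}]) \simeq 0$, and this is immediate: multiplication by $\vb$ induces an equivalence on $M[\vb^{-1}]$ but is zero on $\HZu$, hence induces both an equivalence and zero on the mapping spectrum. There is no real obstacle here; the content of the lemma is entirely contained in Proposition \ref{prop:cell}, and one-generator case is simply cleaner because no induction on the number of generators is required.
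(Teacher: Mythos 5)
Your proposal is correct and matches the paper exactly: the paper's proof of this lemma is simply the observation that it is a special case of Proposition \ref{prop:cell} with $R=\kR$ and $J=(\vb)$, using $\kR/\vb\simeq\HZu$. Your supplementary direct argument is also sound, and in fact it just unwinds the paper's own proof of Proposition \ref{prop:cell} in the one-generator case.
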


It remains to check that the two $\cE$-actions on $\HZu$ coincide.

\begin{lemma}\label{lem:UniquenesskR}
There is a unique right $\cE$-module structure on $\HZu$.
\end{lemma}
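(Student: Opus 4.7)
The plan is to compute $\cE$ explicitly via the Dugger cofibre sequence, identify a preferred augmentation $\cE \to \HZu$ that induces a canonical right $\cE$-action on $\HZu$, and then verify uniqueness by a short obstruction calculation.

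First, I would compute $\cE$. Applying $\Hom_{\kR}(-,\HZu)$ to the Dugger cofibre sequence $\Sigma^{\rho}\kR \stacklra{\vb} \kR \to \HZu$ produces the triangle
\[\cE \longrightarrow \HZu \stacklra{\vb} \Sigma^{-\rho}\HZu.\]
Since $\vb$ acts as zero on $\HZu = \kR/\vb$, the triangle splits, yielding an equivalence of (left, in fact bi-) $\HZu$-modules
\[\cE \simeq \HZu \vee \Sigma^{-\rho-1}\HZu.\]
The projection $\epsilon \colon \cE \to \HZu$ onto the first summand is a ring map extending the unit (the augmentation), and restricting scalars along $\epsilon$ gives $\HZu$ a canonical right $\cE$-module structure.

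Second, I would argue that any right $\cE$-module structure on $\HZu$ factors through such an augmentation. Since $\HZu$ is a commutative ring spectrum whose self-maps as an $\HZu$-module coincide with $\Hom_{\HZu}(\HZu, \HZu) \simeq \HZu$, an action of $\cE$ by $\kR$-linear endomorphisms of $\HZu$ is forced to factor as $\cE \to \HZu \to \Hom_{\kR}(\HZu, \HZu)$, with the second arrow being the unit.

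Third, ring augmentations $\cE \to \HZu$ extending the unit are classified by their restriction to the second summand of $\cE$, i.e., by $\HZu$-module maps $\Sigma^{-\rho-1}\HZu \to \HZu$. These form the group $[\Sigma^{-\rho-1}\HZu, \HZu]_{\HZu} \cong \pi^{\Ctwo}_{-\rho-1}\HZu$, which vanishes because $\HZu$ is strongly even (Corollary \ref{Cor:crucial}): we have $\pi^{\Ctwo}_{k\rho-1}\HZu = 0$ for all $k\in\Z$, including $k=-1$. Hence the augmentation, and therefore the right $\cE$-module structure on $\HZu$, is unique.

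The main obstacle is step two: justifying that an arbitrary right $\cE$-action on $\HZu$ factors through a ring augmentation requires care with higher coherences. The key input is the essential uniqueness of $\HZu$'s module structure over itself, which forces the $\cE$-action to proceed through $\HZu$-module maps and hence through an augmentation; once this reduction is granted, the obstruction computation of step three is immediate.
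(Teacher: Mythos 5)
There is a genuine gap at your second step, and it is the load-bearing one. A right $\cE$-module structure on the $C_2$-spectrum $\HZu$ corresponds to an $A_\infty$-map $\cE^{\mathrm{op}}\to\mathrm{End}_{\bbS}(\HZu)$, where the endomorphisms are taken in $C_2$-spectra (equivalently $\bbS$-modules), and the paper's lemma concerns $\cE$-modules whose \emph{underlying $C_2$-spectrum} is $\HZu$ with no $\kR$- or $\HZu$-module structure assumed to be compatible with the action. The homotopy of $\mathrm{End}_{\bbS}(\HZu)$ is the $C_2$-equivariant Steenrod algebra, which is far larger than $\HZu_{\rost}\simeq\pi_{\rost}\Hom_{\HZu}(\HZu,\HZu)$. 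Nothing forces the action map to land in the $\kR$-linear endomorphisms, let alone the $\HZu$-linear ones, so the reduction to a ring map $\cE\to\HZu$ does not follow. Your step three compounds this: even granting the reduction, an $A_\infty$-ring map is not pinned down by its effect on homotopy classes alone; one would have to control the higher coherences, so the obstruction group $[\Sigma^{-\rho-1}\HZu,\HZu]_{\HZu}\cong\pi^{\Ctwo}_{-\rho-1}\HZu$ you compute is not (by itself) the right thing.

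The paper's proof takes a different and more robust route: it shows that any right $\cE$-module $\HZu'$ with underlying spectrum $\HZu$ admits an $\cE$-cell structure with a single $0$-cell and remaining cells in degrees $-k\rho$ for $k\geq 1$. The key point is that the $\cE$-map $\cE\to\HZu'$ representing the unit necessarily has fibre $\Sigma^{-\rho-1}\HZu$, which one sees by tracking the class $a\in\pi^{\Ctwo}_{-\sigma}$ (it comes from the sphere, hence its behaviour is fixed). Given two such $\cE$-modules, one then builds an $\cE$-equivalence cell by cell, with the obstruction at each stage living in $\pi^{\Ctwo}_{-k\rho-1}(\HZu'')=0$ by evenness. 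Your computation $\cE\simeq\HZu\vee\Sigma^{-\rho-1}\HZu$ as $\HZu$-modules is correct and does explain where the $(-\rho-1)$-shift in the cell structure comes from, but on its own it does not constrain an arbitrary $\cE$-action on the spectrum $\HZu$ to be $\kR$- or $\HZu$-linear.
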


\begin{proof}
Suppose that $\HZu'$ is a right $\cE$-module whose underlying
$C_2$-spectrum is equivalent to the
Eilenberg-MacLane spectrum $\HZu$. 
We first claim that $\HZu'$ can be constructed as an
$\cE$-module with cells in degrees $k\rho$ for $k\leq 0$:
$$\HZu'\simeq_{\cE} S^0_{\cE}\cup e^{-\rho}_{\cE}\cup e^{-2\rho}_{\cE} \cup \cdots $$

Once that is proved, we argue as follows.  If $\HZu''$ is another right
$\cE$-module with underlying $C_2$-spectrum $\HZu$, we may construct a
map $\HZu'\lra \HZu''$ skeleton by skeleton in the usual way. 
We start with the $\cE$-module map $\cE =(\HZu')^{(0)}\lra \HZu'$ giving
the unit, and successively extend the map over the
cells of $\HZu'$. At each stage the obstruction to the existence of an 
extension over $(\HZu')^{-k\rho}$ lies in $\pi^{\Ctwo}_{-k\rho-1}(\HZu'')$. 
These groups are zero. We end with a map which is an isomorphism on 
0th homotopy Mackey functors and therefore an equivalence.

For the cell-structure, it is enough to show that for every right
$\cE$-module $\HZu'$ of the homotopy type of the Eilenberg--MacLane
spectrum $\HZu$, there is a map $\cE \to \HZu'$ of right $\cE$-modules
whose fibre has the homotopy type of $\Sigma^{-\rho-1}\HZu$. Indeed, suppose we have already constructed a right $\cE$-module $(\HZu')^{(n)}$ with an $\cE$-map to $\HZu'$ with fibre of the homotopy type $\Sigma^{-(n+1)\rho-1}\HZu$. Then it is easy to see that the cofibre $(\HZu')^{(n+1)}$ of the map $\Sigma^{-(n+1)\rho -1}\cE \to \Sigma^{-(n+1)\rho-1}\HZu \to (\HZu')^{(n)}$ has the analogous property. Taking the homotopy colimit, we get a map $\hocolim (\HZu')^{(n)} \to \HZu'$ with fibre $\hocolim \Sigma^{-(n+1)\rho-1}\HZu$, which is clearly zero (e.g.\ by Lemma \ref{lem:regrep} and the fact that $\HZu$ is even; we refer to \cite[Section 3.4]{Ricka} for a table of $\underline{\pi}_{\rost}^{C_2}\HZu$). 

We choose the map $f\colon \cE \to \HZu'$ representing $1\in \pi_0^{\Ctwo}\HZu'$ and call the fibre $F$. We want to show that $f$ agrees with the canonical map $\cE \to \HZu$ on homotopy groups of the form $\pi_{k-\sigma}^{\Ctwo}$ for $k\in\Z$. Indeed, the only nonzero class in $\HZu'$ in these degrees is $a\in\pi_{-\sigma}^{\Ctwo}\HZu'$, which has to be hit by $a\in \pi_{-\sigma}^{\Ctwo}\cE$ as it comes from the sphere. Thus, $\pi_{k-\sigma}^{\Ctwo} F \cong \pi_{k-\sigma}^{\Ctwo} \Sigma^{-1-\rho}\HZu$ for all $k$ and hence $F\simeq \Sigma^{-1-\rho}\HZu$ as $C_2$-spectra, as we needed to show. 
\end{proof}

From this the required statement follows. 

\begin{cor} {\em (Gorenstein duality)} 
\label{cor:kRGorD}
There is an equivalence of $\kR$-modules
$$\Gamma_{\vb} \kR \simeq \Sigma^{-4+\sigma} \Z^{\kR}. \qqed$$
\end{cor}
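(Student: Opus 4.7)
The plan is to strip the $\Hom_{\kR}(\HZu,-)$ from the composite equivalence \eqref{eq:kReq} by tensoring with $\HZu$ over $\cE$, and then identify the two resulting sides with $\Gamma_{\vb}\kR$ and $\Sigma^{-4+\sigma}\Z^{\kR}$ respectively using effective constructibility and cellularity.

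First I note that for any left $\kR$-module $M$ the mapping spectrum $\Hom_{\kR}(\HZu,M)$ is naturally a right $\cE$-module via precomposition. In particular both outer terms in
$$\Hom_{\kR}(\HZu,\kR)\;\simeq\;\Sigma^{-\rho-1}\HZu\;\simeq\;\Hom_{\kR}(\HZu,\Sigma^{-4+\sigma}\Z^{\kR})$$
carry canonical right $\cE$-module structures whose underlying $C_2$-spectra are (up to the common shift by $-\rho-1$) the Eilenberg--MacLane spectrum $\HZu$. By Lemma \ref{lem:UniquenesskR} any two right $\cE$-module structures on $\HZu$ are equivalent, so the composite equivalence may be promoted to an equivalence of right $\cE$-modules.

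Next I apply the functor $(-)\tensor_{\cE}\HZu$. On the left, effective constructibility (Lemma \ref{lem:effective}) together with Lemma \ref{lem:Gammacell} gives
$$\Hom_{\kR}(\HZu,\kR)\tensor_{\cE}\HZu\;\simeq\;\cell_{\HZu}\kR\;\simeq\;\Gamma_{\vb}\kR.$$
On the right, since $\Z^{\kR}$ is already $\HZu$-$\R$-cellular (Lemma \ref{lem:ML}(i)), the same reasoning yields
$$\Hom_{\kR}(\HZu,\Sigma^{-4+\sigma}\Z^{\kR})\tensor_{\cE}\HZu\;\simeq\;\cell_{\HZu}(\Sigma^{-4+\sigma}\Z^{\kR})\;\simeq\;\Sigma^{-4+\sigma}\Z^{\kR}.$$
Combining the two gives the claimed equivalence of $\kR$-modules.

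The main obstacle is the first step, namely lifting the bare $C_2$-spectrum level equivalence in \eqref{eq:kReq} to a right $\cE$-module equivalence. This is a genuinely delicate point, because the Matlis and Gorenstein equivalences are constructed independently and there is no formal reason for them to be $\cE$-linear on the nose. Lemma \ref{lem:UniquenesskR} does the work here: its proof via an obstruction-theoretic cell decomposition of any $\cE$-module whose underlying $C_2$-spectrum is $\HZu$ forces the $\cE$-actions to agree up to equivalence, so that the shifts by $-\rho-1$ on the two sides of \eqref{eq:kReq} are intertwined by an $\cE$-map. Once this is in place the rest is a mechanical application of Lemmas \ref{lem:effective}, \ref{lem:Gammacell}, and \ref{lem:ML}.
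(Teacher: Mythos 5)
Your proposal is correct and takes essentially the same route as the paper: promote \eqref{eq:kReq} to an $\cE$-module equivalence via the uniqueness Lemma \ref{lem:UniquenesskR}, apply $(-)\tensor_{\cE}\HZu$, and identify the two sides with $\Gamma_{\vb}\kR$ and $\Sigma^{-4+\sigma}\Z^{\kR}$ using Lemmas \ref{lem:effective}, \ref{lem:Gammacell}, and \ref{lem:ML}. No issues.
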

\begin{proof}
 By \eqref{eq:kReq} and Lemma \ref{lem:UniquenesskR}, we know that 
 $$ \Hom_{\kR}(\HZu, \kR)\tensor_{\cE} \HZu\simeq 
\Hom_{\kR}(\HZu, \Sigma^{-4+\sigma}\Z^{\kR})\tensor_{\cE}\HZu.$$
By Lemma \ref{lem:effective}, the two sides are the cellularizations of $\kR$ and $\Sigma^{-4+\sigma}\Z^{\kR}$ respectively. By Lemmas \ref{lem:Gammacell} and \ref{lem:ML}, the former is $\Gamma_{\vb}\kR$ and the latter is $\Sigma^{-4+\sigma}\Z^{\kR}$ itself. 
\end{proof}

The implications of this equivalence for the coefficient ring are
investigated in Section \ref{sec:kRlcss}. 

\section{\texorpdfstring{$\protect BP\langle n \rangle$}{BP<n>} with Reality}
\label{sec:dishonest}

We now turn to the case of $\BPRn$ for a general $n$.  The counterpart
of the argument of Section \ref{sec:kR} is a little simpler when  $\BPRn$ is a commutative ring
spectrum.  For $n=1$ and $n=2$, the spectra $\kR$, and  $tmf_1(3)$,  are both known to be a
commutative ring spectra, and their 2-localizations give $\BPRn$
when $n=1$ and $n=2$ respectively.  However for higher
$n$  it is not known that $\BPRn$ is a commutative ring spectrum. 
This is a significant technical issue, but
one that is familiar when working with non-equivariant $BP$-related theories
since $BP$ is not known to be a commutative ring. The established
method for getting around this is to use the fact that 
$BP$ and $\BPn$ are modules over the commutative ring  $MU$.
We will adopt precisely the same method by working with $\MUR$-modules.
The only real complication is that 
we are forced to work with spectra whose homotopy groups are bigger
than we might like,  but if we focus on the relevant part, it causes
no real difficulties.

\subsection{Gorenstein condition and Matlis lift}
As mentioned in the introduction of this section, we will work in the setting of $\MUR$-modules. More precisely, we will always (implicitly) localize at $2$ and set $S = \MUR_{(2)}$.  As discussed in Section \ref{sec:BPRBasics}, we can define $S$-modules $\BPRn$, once we have chosen a sequence of $\vb_i$ (for example, the Hazewinkel or Araki generators). 

The ideal 
$$\Jb_n=(\vbn{1}, \ldots, \vbn{n})$$
plays a prominent role, and we will abuse notation by writing 
$$S/\Jbn:=\cof(S\stackrel{\vbn{1}}\lra S)\tensor_S
\cof(S\stackrel{\vbn{2}}\lra S)\tensor_S \cdots \tensor_S
\cof(S\stackrel{\vbn{n}}\lra S),  $$
and then 
$$M/\Jbn:=M\tensor_S S/\Jbn.$$
In particular, 
$$\BPRn/\Jbn=\BPRn/\vbn{n}/\vbn{n-1}/\cdots /\vbn{1}\simeq \HZu  $$
by the $C_2$-case of the reduction theorem, here proved as Corollary \ref{cor:reduction}. 

If $\BPRn$ is a ring spectrum
$$\Hom_{\BPRn}(\HZu, M)=\Hom_{\BPRn}(\BPRn\tensor_S S/\Jbn,
M)=\Hom_{S}(S/\Jbn, M), $$
so that the right hand side gives a way for us to express the
fact that certain $\BPRn$-modules (such as $\BPRn$ and $\Z^{\BPRn}$)
are Matlis lifts, using only  module structures over $S$.

Applying this when $M=\BPRn$, we obtain the Gorenstein condition. 

\begin{lemma}
\label{lem:BPRnGordish}
The map $\BPRn \lra \HZu$ is Gorenstein of shift $-D_n\rho -n$ 
in the sense that 
$$\Hom_S(S/\Jbn, \BPRn)\simeq \Sigma^{-D_n\rho -n}\HZu, $$
where
$$D_n\rho =|\vbn{n}|+|\vbn{n-1}|+\cdots
+|\vbn{1}|=\left[2^{n+1}-n-2 \right]\rho . $$
\end{lemma}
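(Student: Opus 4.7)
The plan is to use iterated Koszul (self-)duality for the cofibre sequences defining $S/\Jbn$. The key input is that each $S$-module cofibre sequence
$$\Sigma^{|\vb_i|} S \xrightarrow{\vb_i} S \lra S/\vb_i$$
is self-dual up to a shift: applying $\Hom_S(-,M)$ identifies $\Hom_S(S/\vb_i,M)$ with the fibre of $M \xrightarrow{\vb_i} \Sigma^{-|\vb_i|}M$, which after the usual identification of a fibre with a desuspended cofibre is
$$\Hom_S(S/\vb_i, M) \simeq \Sigma^{-|\vb_i|-1}(M/\vb_i)$$
for any $S$-module $M$. This is the only geometric input; the rest is bookkeeping.

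Next, since by construction
$$S/\Jbn \;=\; S/\vb_1 \tensor_S S/\vb_2 \tensor_S \cdots \tensor_S S/\vb_n,$$
iterated tensor-hom adjunction gives
$$\Hom_S(S/\Jbn,\BPRn) \simeq \Hom_S(S/\vb_1,\,\Hom_S(S/\vb_2,\,\cdots\,\Hom_S(S/\vb_n,\BPRn)\cdots)).$$
Applying the self-duality $n$ times, each step quotients by the next $\vb_i$ and adds a shift of $-|\vb_i|-1$, yielding
$$\Hom_S(S/\Jbn,\BPRn) \simeq \Sigma^{-|\vb_1|-\cdots-|\vb_n|-n}\,\BPRn/(\vb_1,\ldots,\vb_n).$$

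To finish, recall $\BPRn = \BPR/(\vb_{n+1},\vb_{n+2},\ldots)$, so that
$$\BPRn/(\vb_1,\ldots,\vb_n) \simeq \BPR/(\vb_1,\vb_2,\ldots) \simeq \HZu$$
by the reduction theorem (Corollary \ref{cor:reduction}). Combined with the identity $|\vb_1|+\cdots+|\vb_n| = (2^{n+1}-n-2)\rho = D_n\rho$, this gives exactly the claimed shift $-D_n\rho - n$.

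The argument has no real obstacle once the reduction theorem is in hand: the whole proof is formal manipulation of cofibre sequences in $S$-modules together with the tensor-hom adjunction. The only mild subtlety worth checking is that the iterated quotients are genuinely symmetric (so the order of applying the adjunctions does not affect the final identification), which holds because the $\vb_i$ are elements of $\pi^{C_2}_\rhost(S)$ acting centrally on every $S$-module, so all the cofibre sequences involved commute up to canonical equivalence under $\tensor_S$.
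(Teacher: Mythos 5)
Your argument is correct and follows the same approach as the paper: the key input in both cases is the self-duality of each cofibre sequence $\Sigma^{|\vb_i|}S \to S \to S/\vb_i$, iterated to give $\Hom_S(S/\Jbn, M) \simeq \Sigma^{-D_n\rho - n}S/\Jbn\tensor_S M$, followed by the reduction theorem. You have merely unpacked the iteration via tensor-hom adjunction rather than stating it directly, but the mathematics is identical.
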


\begin{proof}
Since each of the maps $\vbn{i}: \Sigma^{|\vbn{i}|}S\lra S$ is self-dual,  
for any $S$-module $M$, we have 
$$\Hom_S(S/\Jbn, M)\simeq \Sigma^{-D_m\rho-n}S/\Jbn \tensor_S M. $$
\end{proof}

Applying this when $M=\Z^{\BPRn}$, we obtain the Anderson Matlis lift.

\begin{lemma}
\label{lem:MLB}
The Anderson dual of $\BPRn$ is a Matlis lift of $\HZu^*$ in the sense that 

(i)  $\Z^{\BPRn}$ is $\HZu$-$\R$-cellular and 

(ii) There is an equivalence 
$$\Sigma^{2-2\sigma}\HZu \simeq \HZu^* \simeq \Hom_{S} (S/\Jbn, \Z^{\BPRn}). $$
\end{lemma}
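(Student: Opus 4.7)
The strategy is to establish (ii) first via a formal adjunction and then to deduce (i) from (ii) via the Koszul filtration. I work throughout in the category of $S$-modules, where $S=\MUR_{(2)}$ is a genuine commutative $C_2$-ring spectrum, thereby sidestepping the difficulty that $\BPRn$ is not known to be a ring spectrum for $n\geq 3$. The key formal input is the adjunction
\[
\Hom_S(X, \Z^Y) \simeq \Z^{X\wedge_S Y}
\]
for $S$-modules $X,Y$, which follows by combining the identity $\Hom_S(Y,\Z^S)\simeq \Z^Y$ (recorded at the end of Section \ref{subsec:Anderson}) with the tensor-hom adjunction in $S$-modules.

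For (ii), I apply this adjunction with $X=S/\Jbn$ and $Y=\BPRn$, obtaining
\[
\Hom_S(S/\Jbn,\Z^{\BPRn}) \simeq \Z^{S/\Jbn \wedge_S \BPRn} \simeq \Z^{\BPRn/\Jbn} \simeq \Z^{\HZu}.
\]
The middle equivalence uses the reduction theorem (Corollary \ref{cor:reduction}), which identifies $\BPRn/\Jbn$ with $\HZu$, and the last one is Lemma \ref{lem:Zu}, giving $\Z^{\HZu}\simeq \HZu^*\simeq \Sigma^{2\pp}\HZu$.

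For (i), I invoke Corollary \ref{cor:cellular} with $R=S$, $M=\BPRn$ (connective), $A=\Z$, and $J=\Jbn$; the generators $\vb_i$ lie in degrees $(2^i-1)\rho=(2^i-1)+(2^i-1)\sigma$, which easily satisfy the hypothesis $a\geq 1$, $a+b\geq 1$. The corollary yields $\R$-cellularity of $\Z^{\BPRn}$ with respect to $S/\Jbn$, that is, $\Gamma_{\Jbn}\Z^{\BPRn}\simeq \Z^{\BPRn}$. I then upgrade this to $\HZu$-$\R$-cellularity via the Koszul filtration of $\Gamma_{\Jbn}\Z^{\BPRn}$ (Section \ref{sec:Koszul}), whose successive cofibres are wedges of representation suspensions of $S/\Jbn \wedge_S \Z^{\BPRn}$. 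The self-duality of the Koszul generators exploited in Lemma \ref{lem:BPRnGordish} gives
\[
S/\Jbn \wedge_S \Z^{\BPRn} \simeq \Sigma^{D_n\rho+n}\Hom_S(S/\Jbn, \Z^{\BPRn}) \simeq \Sigma^{D_n\rho+n+2\pp}\HZu,
\]
using (ii) for the second step. Hence $\Gamma_{\Jbn}\Z^{\BPRn}$ is built from representation suspensions of $\HZu$ by iterated cofibres, yielding the required $\HZu$-$\R$-cellularity.

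The main obstacle is essentially bookkeeping: Corollary \ref{cor:cellular} naturally delivers cellularity with respect to $S/\Jbn$ rather than $\HZu$ directly, and the Koszul filtration together with the identification of $S/\Jbn\wedge_S \Z^{\BPRn}$ as a representation suspension of $\HZu$ is what allows the upgrade. The distinctive structural input about $\BPRn$—that it is a quotient of $\BPR$ by a sequence of $\vb_i$'s in representation degrees—enters the argument only through the reduction theorem, which ensures that the Koszul associated graded consists of shifts of $\HZu$ rather than of something larger.
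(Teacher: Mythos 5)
Your argument for part (ii) is essentially the same as the paper's: you unwind $\Hom_S(S/\Jbn, \Z^{\BPRn})$ via the adjunction $\Hom_S(X, \Z^Y) \simeq \Z^{X\wedge_S Y}$, invoke the reduction theorem (Corollary~\ref{cor:reduction}) to get $\Z^{\HZu}$, and finish with Lemma~\ref{lem:Zu} --- exactly the paper's chain of equivalences.

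For part (i) you do something the paper does not spell out, and this is actually a genuine point. The paper simply says (i) ``follows directly from Corollary~\ref{cor:cellular},'' but with $R=S$ and $J=\Jbn$ that corollary produces $S/\Jbn$-$\R$-cellularity of $\Z^{\BPRn}$ in $S$-modules, not $\HZu$-$\R$-cellularity as literally stated in the lemma. Since $\pi_*^e(S/\Jbn) = MU_{(2),*}/(v_1,\dots,v_n)$ is much larger than $\Z$, one cannot simply identify the two notions. You close this gap by observing that $\Gamma_{\Jbn}\Z^{\BPRn}$ carries the Koszul filtration with subquotients wedges of representation suspensions of $S/\Jbn\wedge_S\Z^{\BPRn}$, and that the self-duality of the Koszul complex together with part (ii) identifies $S/\Jbn\wedge_S\Z^{\BPRn}\simeq\Sigma^{D_n\rho+n+2\pp}\HZu$; hence the whole thing is built from representation suspensions of $\HZu$. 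This is correct and gives the stated $\HZu$-$\R$-cellularity. It is worth noting that in the downstream argument (Corollary~\ref{cor:BPRnGorDdish}), what is actually invoked is $S/\Jbn$-$\R$-cellularity via Lemma~\ref{lem:effective2}, so the paper's terser version suffices for its purposes; but your upgrade is needed if one wants the statement of (i) taken at face value, and it is a clean use of the structural ingredients already in play.
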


\begin{proof}
One could prove the first part from the slice tower, but it also follows directly from Corollary \ref{cor:cellular}. 

For the second statement observe that 
$$\Hom_{S} (S/\Jbn, \Z^{\BPRn}) \simeq \Hom_S(S/\Jbn\tensor_S \BPRn, \Z^S)\simeq \Z^{\HZu}.$$ 
Thus, Lemma \ref{lem:Zu} implies the statement. 
\end{proof}

\subsection{Gorenstein duality}
Throughout this section, we will write $R = \BPRn$ for brevity. 
Combining Lemmas \ref{lem:BPRnGordish} and \ref{lem:MLB}, we have an
equivalence of $S$-modules
$$\Hom_{S}(S/\Jbn, R)\simeq \Sigma^{-D_n \rho-n}\HZu \simeq
\Hom_{S}(S/\Jbn, \Sigma^{-(D_n+n+2)-(D_n-2)\sigma}\Z^R)$$

We now want to remove the $\Hom_{S }(S/\Jbn , \cdot )$ from this
equivalence. The endomorphism ring 
$$\cEtn =\Hom_S(S/\Jbn, S/\Jbn)$$
of the small $S$-module $S/\Jbn$, replaces $\cE_n=\Hom_{R}(\HZu,
  \HZu)$ from the case that $R =\BPRn$ is a ring spectrum.  We note that 
$$\cEtn\tensor_S R=\Hom_S(S/\Jbn, S/\Jbn )\tensor_S R\simeq 
\Hom_S(S/\Jbn, S/\Jbn)\tensor_S R). $$
If $R = \BPRn$ were a commutative ring, this would be a ring equivalent to
$\Hom_R(\HZu, \HZu)$.

In any case, the following is proved exactly like Lemma \ref{lem:effective}

\begin{lemma} {\em (Effective constructibility)}\label{lem:effective2}
The evaluation map 
$$\Hom_{S }(S/\Jbn, M)\tensor_{\cEtn}S/\Jbn \lra M$$
is $S/\Jbn$-$\R$-cellularization.\qqed 
\end{lemma}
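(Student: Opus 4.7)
The plan is to mimic the proof of Lemma~\ref{lem:effective} verbatim, replacing the small $R$-module $\HZu$ by the small $S$-module $S/\Jbn$ and the endomorphism ring $\cE$ by $\cEtn$. Write $\varepsilon_M\colon \Hom_S(S/\Jbn, M)\tensor_{\cEtn} S/\Jbn \to M$ for the evaluation map.

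First I would check that the source is $S/\Jbn$-$\R$-cellular: any object of the form $N\tensor_{\cEtn} S/\Jbn$ is built from suspensions of $S/\Jbn$ via triangles and coproducts, using the standard bar resolution. Having done this, to conclude that $\varepsilon_M$ is an $S/\Jbn$-$\R$-cellularization of $M$ it suffices to show that $\varepsilon_M$ is an equivalence whenever $M$ is itself $S/\Jbn$-$\R$-cellular. The reason is that both $M$ and its $S/\Jbn$-$\R$-cellularization $\cell M \to M$ induce the same source (since $\Hom_S(S/\Jbn, \cell M) \to \Hom_S(S/\Jbn, M)$ is an equivalence by definition of cellularization), and then $\varepsilon_M$ factors as the composite of the equivalence $\varepsilon_{\cell M}\colon \mathrm{source}(\cell M) \xrightarrow{\simeq} \cell M$ with $\cell M \to M$, which is precisely the cellularization.

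The verification on cellular $M$ proceeds by the standard cell-induction. The base case $M = S/\Jbn$ is immediate: $\Hom_S(S/\Jbn, S/\Jbn)\tensor_{\cEtn} S/\Jbn = \cEtn\tensor_{\cEtn}S/\Jbn \simeq S/\Jbn$, and $\varepsilon_{S/\Jbn}$ is (up to this identification) the identity. The class of $M$ for which $\varepsilon_M$ is an equivalence is then closed under (i) triangles, since both sides send triangles in $M$ to triangles; (ii) arbitrary coproducts, since $\Hom_S(S/\Jbn, -)$ commutes with coproducts; and (iii) suspensions by $S^{k\rho}$, since both constructions do. By the definition of $S/\Jbn$-$\R$-cellularity from Section~\ref{subsec:Rcell}, this exhausts all cellular $M$.

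The only substantive input beyond formal manipulation is the smallness of $S/\Jbn$ as an $S$-module, which is needed in step (ii); but this is immediate from the construction
\[ S/\Jbn = \cof(\Sigma^{|\vbn{1}|}S \xrightarrow{\vbn{1}} S)\tensor_S \cdots \tensor_S \cof(\Sigma^{|\vbn{n}|}S \xrightarrow{\vbn{n}} S) \]
as a finite iterated cofibre of maps between copies of $S$. Consequently there is really no obstacle: the entire proof is the formal translation of Lemma~\ref{lem:effective} to the $S$-module context, made possible because $\BPRn$-module structure was never actually used there beyond the smallness of $\HZu$ over $\BPRn$, and smallness over $S$ is what is genuinely needed.
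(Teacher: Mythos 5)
Your proof is correct and takes the same approach as the paper, which simply states that the lemma is ``proved exactly like Lemma \ref{lem:effective}'' and whose proof of that earlier lemma is precisely the cell-induction you carry out (domain is cellular; reduce to cellular $M$; base case; closure under triangles, coproducts via smallness, and $S^{k\rho}$-suspensions). You supply slightly more detail than the paper does, in particular the factorization-through-$\cell M$ argument justifying the reduction step and the observation that $S/\Jbn$ is small over $S$ because it is a finite iterated cofibre, both of which are correct and left implicit in the paper.
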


Of course local cohomology gives an alternative approach to
cellularization. Recall that we define
$$\Gamma_{\Jb_n}M =\Gamma_{\vbn{1} }S
\tensor_{S}\Gamma_{\vbn{2}}S \tensor_{S}\cdots \tensor_{S}
\Gamma_{\vbn{n} }S  \tensor_{S}M. $$
Then Proposition \ref{prop:cell} gives the following lemma. 

\begin{lemma}
$$\Gamma_{\Jb_n}M\lra M$$ 
is $\HZu$-$\R$-cellularization.
\end{lemma}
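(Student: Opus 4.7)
The plan is to invoke Proposition \ref{prop:cell}, exactly as was done for the analogous Lemma \ref{lem:Gammacell} in the $\kR$ section. Since $S = \MUR_{(2)}$ is a genuine commutative ring spectrum and the generators $\vbn{1}, \ldots, \vbn{n} \in \pi^{\Ctwo}_{*\rho}S$ all lie in multiples of $\rho$, Proposition \ref{prop:cell} applies and produces the assertion that $\Gamma_{\Jbn} M \to M$ is $\R$-cellularization with respect to $S/\Jbn$ in the category of $S$-modules. The only work is to upgrade the test object from $S/\Jbn$ to $\HZu$.

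For this upgrade, I would use the reduction theorem (Corollary \ref{cor:reduction}), which gives $\HZu \simeq \BPRn/\Jbn \simeq (S/\Jbn) \tensor_S \BPRn$. Since $\BPRn$ is $S$-$\R$-cellular (it is a homotopy limit of cofibres of $\vb_i$-multiplications on $S$), this shows $\HZu$ lies in the localizing subcategory of $S$-modules generated by $\R$-suspensions of $S/\Jbn$. For the $\HZu$-$\R$-cellularity of $\Gamma_{\Jbn} M$ itself, the stable Koszul filtration (see Section \ref{sec:Koszul}) exhibits $\Gamma_{\Jbn} M$ as built from subquotients of the form $\Sigma^? M/\Jbn$; when $M$ is a $\BPRn$-module, each such subquotient is a suspension of $\HZu \tensor_{\BPRn} M$ and hence manifestly $\HZu$-$\R$-cellular.

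The equivalence $\Hom_S(\HZu, \Gamma_{\Jbn} M) \lraequiv \Hom_S(\HZu, M)$ then follows by the standard iterated argument: because each $\vbn{i}$ acts as zero on $\HZu$, multiplication by $\vbn{i}$ induces both the zero map and an equivalence on $\Hom_S(\HZu, M[\vbn{i}^{-1}])$, forcing it to vanish; hence $\Hom_S(\HZu, \Gamma_{\vbn{i}} N) \lraequiv \Hom_S(\HZu, N)$ for any $S$-module $N$, and applying this $n$ times (using $\Gamma_{\Jbn} = \Gamma_{\vbn{1}} \cdots \Gamma_{\vbn{n}}$) yields the claim.

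The only mild obstacle is the book-keeping around the fact that $\BPRn$ need not be a commutative ring spectrum, so that one is obliged to work in $S$-modules rather than directly in $\BPRn$-modules. This is harmless in practice because the objects that will matter (namely $\BPRn$ and $\Z^{\BPRn}$) are $\BPRn$-modules, and on such modules $(S/\Jbn) \tensor_S -$ agrees with $\HZu \tensor_{\BPRn} -$ so the two notions of cellularization coincide.
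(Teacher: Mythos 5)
Your proof follows the same route as the paper's one-line citation of Proposition \ref{prop:cell}, but correctly identifies a gap that the paper elides: Proposition \ref{prop:cell} (with $R=S$, $J=\Jbn$) literally gives $S/\Jbn$-$\R$-cellularization, and passing from the test object $S/\Jbn$ to $\HZu$ is a genuine extra step. Indeed the lemma \emph{cannot} hold for an arbitrary $S$-module $M$: $S/\Jbn$ is not itself $\HZu$-$\R$-cellular (already on underlying spectra, $MU/(v_1,\dots,v_n)$ is not $H\Z$-cellular over $MU$ since its $v_{n+1}$-localization is nonzero), so for $M=S$ the conclusion fails. Some hypothesis like ``$M$ admits a unital left $\BPRn$-action in the homotopy category of $S$-modules'' is needed, and it is satisfied by the two modules ($\BPRn$ and suspensions of $\Z^{\BPRn}$) for which the lemma is actually used.

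The one flaw in your bridge is the use of $\tensor_{\BPRn}$: since $\BPRn$ is not known to be a ring spectrum for general $n$ (the reason the whole section works over $S$), the expression $\HZu \tensor_{\BPRn} M$ is not a priori defined, and your closing remark that this is ``harmless in practice'' passes over the point the paper is most careful about. The repair is cheap: if $M$ has a unital $S$-module map $\mu\colon \BPRn \tensor_S M \to M$, then $M/\Jbn = S/\Jbn \tensor_S M$ is a retract of $\HZu \tensor_S M \simeq S/\Jbn \tensor_S \BPRn \tensor_S M$ via $1\tensor\mu$, and $\HZu \tensor_S M$ is $\HZu$-$\R$-cellular since $M$ is $S$-$\R$-cellular; the Koszul filtration then exhibits $\Gamma_{\Jbn}M$ as $\HZu$-$\R$-cellular without invoking any relative tensor over $\BPRn$. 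Your verification that $\Hom_S(\HZu,\Gamma_{\Jbn}M) \to \Hom_S(\HZu,M)$ is an equivalence is fine and in fact works for all $M$ (either via your ``zero and equivalence'' observation on $\Hom_S(\HZu, M[\vbn{i}^{-1}])$, or simply by rerunning the last lines of the proof of Proposition \ref{prop:cell} with $\HZu$ in place of $R/J$). A small slip: $\BPRn$ is a homotopy \emph{colimit} of retracts and quotients of $S$, not a homotopy limit.
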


It remains to check that the two $\cEtn$ actions on $\HZu$ coincide. For
$\kR$ (i.e., $n=1$) we showed there was a unique right $\cE_n$-module
structure on $\HZu$. This may be true for $\cEtn$-module structures, but we will instead
just prove in the next subsection that the two particular $\cEtn$-modules that
arose from the left and right hand ends of the first display of this subsection are equivalent. 

The required Gorenstein duality statement follows. Its implications  for the coefficient ring for
$n=2$ are investigated explicitly in Section \ref{sec:tmfotlcss}. 

\begin{cor} 
\label{cor:BPRnGorDdish}
{\em (Gorenstein duality)} There is an equivalence of $\MUR$-modules
$$\Gamma_{\Jb_n} R \simeq \Sigma^{-(D_n+n+2)-(D_n-2)\sigma} \Z^R$$
with $R = \BPRn$.
\end{cor}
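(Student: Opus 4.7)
The plan is to bootstrap the chain of $S$-module equivalences already set up from the Gorenstein condition and the Matlis lift into a $\cEtn$-module equivalence, and then apply effective constructibility to identify both sides with the cellularization.

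First I would assemble what is in hand. Lemma \ref{lem:BPRnGordish} and Lemma \ref{lem:MLB} (ii) together produce an $S$-module equivalence
\[
\Hom_S(S/\Jbn,R)\;\simeq\;\Sigma^{-D_n\rho-n}\HZu
\;\simeq\;\Sigma^{-D_n\rho-n-2\pp}\HZu^{*}
\;\simeq\;\Hom_S\bigl(S/\Jbn,\Sigma^{-(D_n+n+2)-(D_n-2)\sigma}\Z^{R}\bigr),
\]
using $\pp=1-\sigma$ and the identity $-D_n\rho-n-2\pp=-(D_n+n+2)-(D_n-2)\sigma$. Both sides are modules over $\cEtn=\Hom_S(S/\Jbn,S/\Jbn)$ acting on the right via the source $S/\Jbn$; the goal is to promote the composite equivalence to this richer level.

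Next I would tensor with $-\tensor_{\cEtn} S/\Jbn$. By Lemma \ref{lem:effective2} the left-hand result is the $S/\Jbn$-$\R$-cellularization of $R$, and by Proposition \ref{prop:cell} this identifies with $\Gamma_{\Jbn}R$. On the right, tensoring produces the cellularization of $\Sigma^{-(D_n+n+2)-(D_n-2)\sigma}\Z^{R}$; but by Lemma \ref{lem:MLB}(i) the spectrum $\Z^{R}$ is already $\HZu$-$\R$-cellular (hence $S/\Jbn$-$\R$-cellular, since $S/\Jbn \simeq \HZu$ as $S$-modules by the reduction theorem Corollary \ref{cor:reduction} after smashing with $\BPRn$), so the cellularization is the module itself. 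This yields the desired equivalence $\Gamma_{\Jbn}R\simeq\Sigma^{-(D_n+n+2)-(D_n-2)\sigma}\Z^{R}$.

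The main obstacle is that the previous step implicitly required the composite equivalence to be an equivalence of right $\cEtn$-modules, not merely of underlying $S$-modules. Unlike the $\kR$ case (Lemma \ref{lem:UniquenesskR}), we cannot expect to classify all $\cEtn$-module structures on $\HZu$ with underlying type, because $\cEtn$ has larger homotopy than $\cE$ (reflecting that $\BPRn$ need not be a commutative ring). Instead I would mimic the cell-by-cell argument of Lemma \ref{lem:UniquenesskR} for the two specific modules at hand: build a skeleton of each by attaching cells in degrees $k\rho$ (for $k\le 0$) along classes detected on $\pi_{\rost}^{C_2}\HZu$, showing inductively that the relevant obstruction groups $\pi^{C_2}_{-k\rho-1}\HZu$ vanish (even-ness of $\HZu$ plus Lemma \ref{lem:regrep}), and then constructing a compatible map between the two cellular models. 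Because both $\cEtn$-modules in question arise by applying $\Hom_S(S/\Jbn,-)$ to $\HZu$-$\R$-cellular $S$-modules whose only nonzero slice is in weight zero, the relevant classes are rigid enough to force compatibility of the two actions skeleton by skeleton, which is the statement promised for ``the next subsection''. Once this is in place the corollary follows formally.
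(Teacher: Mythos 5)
Your overall strategy is correct, and the first half of your proposal (assembling the chain of $S$-module equivalences, tensoring with $-\tensor_{\cEtn} S/\Jbn$, invoking Lemma \ref{lem:effective2} and Lemma \ref{lem:MLB}(i) to identify the cellularizations) matches the paper exactly. The gap is in how you propose to establish the $\cEtn$-linearity of the composite equivalence.

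You propose to ``build a skeleton of each [of the two $\cEtn$-modules] by attaching cells in degrees $k\rho$'' and then run obstruction theory; you suggest the only input needed is the vanishing of $\pi^{C_2}_{-k\rho-1}\HZu$. But this would require exhibiting a $\cEtn$-cell structure on $Y_1 = \Hom_S(S/\Jbn, \Sigma^{D_n\rho+n}R)$ itself, and it is not clear how to do this. The paper constructs a $\cEtn$-cell structure only on $X_1 = \Hom_S(S/\Jbn, \Sigma^{D_n\rho+n}S)$, and it gets this structure not from obstruction theory but from the explicit Koszul filtration on $\kappa_S(\vb_1,\ldots,\vb_n) \simeq \Sigma^{D_n\rho+n}\Gamma_{\Jbn}S$, whose subquotients are sums of negative-$\rho$ suspensions of $S/\Jbn$. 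Tensoring that filtration with $R$ over $S$ does not hand you a $\cEtn$-cell structure on $Y_1 \simeq X_1 \tensor_S R$: the cells of $X_1$ are copies of $\cEtn$, and $\cEtn\tensor_S R$ is no longer a free $\cEtn$-module. (This is precisely the place where the non-commutativity of $\BPRn$ bites; in the $\kR$ case one can avoid the issue entirely by Lemma \ref{lem:UniquenesskR}.)

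Consequently the paper splits the task in two. It first uses obstruction theory along the $X_1$-cell structure to build a $\cEtn$-map $\alpha\colon X_1 \to Y_2$ hitting the unit; the obstruction groups are $\pi^{C_2}_{-d\rho-1}(\HZu)=0$, which is the step your proposal correctly anticipates. But then it must still extend $\alpha$ along $X_1 = X_1\tensor_S S \to X_1\tensor_S R = Y_1$, and this extension problem is \emph{not} handled by the same obstruction theory. The paper converts it (using the adjunction $-\tensor_{\cEtn} S/\Jbn\tensor_S R$) into an extension problem in $S$-modules valued in $\Z^S$; by the universal property of the Anderson dual this reduces to a splitting question on $\pi^{C_2}_0$ and $\pi^{C_2}_{-1}$, which is solved by observing that $T\to T\tensor_S R$ is a split inclusion on underlying $MU$-modules, using that $BP\langle n\rangle$ is a homotopy-unital $MU$-algebra. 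Your phrase ``the relevant classes are rigid enough to force compatibility of the two actions skeleton by skeleton'' glosses over this genuinely separate argument, which is the content of Subsections \ref{subsec:alpha} and \ref{subsec:alphat} and is the real technical heart of the proof.
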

\begin{proof} 
We will argue in Subsection \ref{subsec:Eequiv} that the equivalence
$$\Hom_S(S/\Jbn, R) \simeq \Hom_S(S/\Jbn, \Sigma^{-D_n\rho
-n-2\delta} \Z^R), $$
is in fact an equivalence of right
modules over $\cEt_n$. By Lemma \ref{lem:effective2}, $R$ and 
$\Sigma^{-(D_n+n+2)-(D_n-2)\sigma}
\Z^R$ have equivalent $S/\Jbn$ cellularizations. We have seen above that the cellularization of $R$ is $\Gamma_{\Jb_n} \BPRn $ and that $\Sigma^{-D_n\rho -n-2\delta}
\Z^R$ itself is cellular. 
\end{proof}

\subsection{The equivalence of induced and coinduced Matlis lifts of
 $\protect \HZu$}
\label{subsec:Eequiv}
For brevity we will still write $R=\BPRn$, and note that we have a map
$S=\MUR \lra \BPRn=R$. The two $S$-modules that concern us are of a
very special sort, one looks as if it is obtained from an $S$-module
by  `extension of scalars from $S$ to $R$' and one
looks as if it is obtained by  `coextension of scalars from $S$ to $R$'.

\begin{lemma}
\label{lem:resEtnEn}
We have equivalences of right $\cEt_n$-modules
$$\Hom_S(S/\Jbn, R)\simeq  \Hom_S(S/\Jbn, S)\tensor_SR.$$
$$\Hom_S(S/\Jbn, \Z^R)=\Hom_S(R, \Hom_S(S/\Jbn, \Z^S))$$
\end{lemma}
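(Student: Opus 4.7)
The plan is to derive both equivalences formally, from the fact that $S/\Jbn$ is a small (perfect) $S$-module—built from $S$ by a finite iterated cofibre construction on the self-maps $\vbn{i}$—together with the tensor–hom adjunction in the symmetric monoidal category of $S$-modules. The only real work is to verify that the right $\cEt_n$-action is carried along.

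For the first equivalence, the natural evaluation map
$$\Hom_S(S/\Jbn, S) \tensor_S R \lra \Hom_S(S/\Jbn, R), \qquad \varphi \tensor r \mapsto \bigl(w \mapsto \varphi(w)\cdot r\bigr),$$
is an equivalence because $S/\Jbn$ is small: the statement is tautological if $S/\Jbn$ is replaced by $S$, and both sides commute with finite homotopy colimits in the first variable, so the conclusion extends inductively through the finite cell structure of $S/\Jbn$. Both sides carry right $\cEt_n$-actions by precomposition on the displayed copy of $S/\Jbn$, and the formula above visibly intertwines them.

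For the second equivalence, first identify the Anderson dual. By definition $\Z^R = F(R, \Z^{\mathbb{S}})$, and since $R$ is an $S$-module the standard adjunction $F(M, Y)\simeq \Hom_S(M, F(S, Y))$ (with $M = R$, $Y = \Z^{\mathbb{S}}$) gives
$$\Z^R \simeq \Hom_S(R, F(S, \Z^{\mathbb{S}})) = \Hom_S(R, \Z^S).$$
Now the tensor–hom adjunction in $S$-modules, combined with the symmetry $S/\Jbn \tensor_S R \simeq R \tensor_S S/\Jbn$, yields
\begin{align*}
\Hom_S(S/\Jbn, \Z^R) &\simeq \Hom_S(S/\Jbn, \Hom_S(R, \Z^S)) \\
&\simeq \Hom_S(S/\Jbn \tensor_S R, \Z^S) \\
&\simeq \Hom_S(R, \Hom_S(S/\Jbn, \Z^S)).
\end{align*}

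The main obstacle is bookkeeping rather than mathematics: one must verify that the right $\cEt_n$-actions on the first and last lines above—in both cases precomposition on the visible copy of $S/\Jbn$—correspond under the displayed equivalences to precomposition on the $S/\Jbn$ tensor factor of $S/\Jbn \tensor_S R$ in the middle line. This is a formal consequence of the symmetric monoidal compatibility of the hom–tensor adjunction, but in the present slightly awkward setting (where $R$ is merely an $S$-module, not a commutative $S$-algebra) it is worth spelling out element-wise so that no ambiguity remains.
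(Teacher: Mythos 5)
Your proposal is correct and takes essentially the same route as the paper: the first equivalence is argued from the smallness of $S/\Jbn$ (finite cell filtration in the first variable), and the second follows from $\Z^R\simeq\Hom_S(R,\Z^S)$ combined with tensor--hom adjunction and symmetry. You have simply made explicit the two adjunction steps that the paper leaves to the reader, together with the (correct) observation that one should confirm the right $\cEt_n$-actions by precomposition match under these identifications.
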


\begin{proof}
The first equivalence is immediate from the smallness of
$S/\Jbn$. 

The second equivalence follows from the equivalence 
$$\Z^R\simeq \Hom_S(R, \Z^S)$$
of $S$-modules. 
\end{proof}

Suspending the equivalences from Lemma \ref{lem:resEtnEn} so that we are comparing two $\cEt_n$-modules
equivalent to $\HZu$ (see Lemma \ref{lem:MLB}) we have
$$Y_1=\Hom_S(S/\Jbn, \Sigma^{D_n\rho+n}R)\simeq \Hom_S(S/\Jbn,\Sigma^{D_n\rho+n}S)\tensor_SR=X_1\tensor_SR$$
and 
$$Y_2=\Hom_S(S/\Jbn, \Sigma^{2\pp} \Z^R)\simeq \Hom_S(S,
\Hom_S(S/\Jbn, \Sigma^{2\pp} \Z^S)) =\Hom_S(R,X_2). $$

In Subsection \ref{subsec:alpha} we will construct an $\cEt_n$-map $\alpha: X_1\lra Y_2$ and then argue
in Subsection \ref{subsec:alphat} that this extends along $X_1=X_1\tensor_SS\lra X_1\tensor_SR =Y_1$ to
give a map $\alphat: Y_1\lra Y_2$ which is easily seen to be an
equivalence: it is clearly a $\rhost$ isomorphism
and hence an equivalence by Lemma \ref{lem:regrep}. 

To see our strategy, note that  the extension problem 
$$\diagram
X_1\dto \rto^-{\alpha} & \Hom_S(S/\Jbn, \Hom_S(R,\Z^S))\\
X_1\tensor_SR\ar@{-->}[ur]_-{\alphat}&
\enddiagram$$
in the category of $\cEt_n$-modules is equivalent to the extension problem
$$\diagram
X_1 \tensor_{\cEt_n} S/\Jbn \tensor_S R
\dto \rto^-{\alpha'} & \Z^S\\
X_1 \tensor_{\cEt_n} S/\Jbn \tensor_S R\tensor_SR\ar@{-->}[ur]_-{\alphat'}&
\enddiagram$$
in the category of $S$-modules. The point is that by the defining property of the Anderson dual, this
latter extension problem can be tackled by looking in
$\pi^{\Ctwo}_0$. The 0th homotopy groups of the spectra on the left are
easily calculated from the known ring $\pi^{\Ctwo}_{\rost}(\HZu)$. 

\subsection{Construction of the map $\alpha$}
\label{subsec:alpha}

We construct the map $\alpha$ using a similar method as in the proof of Lemma \ref{lem:UniquenesskR}.

\begin{lemma}
There is a map 
$$\alpha : X_1 \lra Y_2$$
of right $\cEt_n$-modules that takes the image of $1\in
\pi^{\Ctwo}_0(S)$ to a generator of $\pi^{\Ctwo}_0(\HZu)=\Z$. 
\end{lemma}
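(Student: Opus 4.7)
The plan is to mirror the cell-by-cell construction in the proof of Lemma \ref{lem:UniquenesskR}: give $X_1$ a right $\cEt_n$-CW presentation with cells in controlled $\rho$-degrees, then build $\alpha$ into $Y_2$ skeleton by skeleton, arranging that each extension obstruction lies in a group that vanishes because $Y_2$ is even.

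First, I would identify the relevant underlying $\Ctwo$-homotopy. By Lemma \ref{lem:BPRnGordish} applied with $M = S$ (the self-duality argument used there works equally well for $S$), $X_1 \simeq S/\Jbn$ as $\Ctwo$-spectra, and this is strongly even by Corollary \ref{Cor:crucial}. By Lemma \ref{lem:MLB}, $Y_2$ is, up to a representation-sphere suspension, an Eilenberg--MacLane spectrum of the form $\HZu$, hence strongly even; in particular $\pi^{\Ctwo}_{k\rho - 1}(Y_2) = 0$ for every $k \in \Z$.

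Next I would build a filtration
\[
\cEt_n = (X_1)^{(0)} \hookrightarrow (X_1)^{(1)} \hookrightarrow \cdots \hookrightarrow X_1
\]
of right $\cEt_n$-modules. The zero-skeleton is the $\cEt_n$-module map sending $\id \in \pi^{\Ctwo}_0(\cEt_n)$ to the image of $1 \in \pi^{\Ctwo}_0(S)$ in $\pi^{\Ctwo}_0(X_1)$, which corresponds, under the Spanier--Whitehead duality identifying $X_1$ with $S/\Jbn$, to the bottom cell of $S/\Jbn$. Inductively, the underlying $\Ctwo$-cofibre of $(X_1)^{(k-1)} \to X_1$ is strongly even with its bottom Mackey functor constant at $\Z$ (by Corollaries \ref{Cor:crucial} and \ref{cor:rho+}), so we may attach a further cell of shape $\Sigma^{-k\rho}\cEt_n$ killing this bottom class; the colimit recovers $X_1$ because the successive fibres become arbitrarily $\rho$-connective, which can be checked via Lemma \ref{lem:regrep}. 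The map $\alpha$ is then built by induction on this skeletal filtration: set $\alpha^{(0)}\colon \cEt_n \to Y_2$ to be the right $\cEt_n$-module map representing a fixed generator of $\pi^{\Ctwo}_0(Y_2) = \Z$, and given $\alpha^{(k-1)}$, the obstruction to extending over the next cell $\Sigma^{-k\rho}\cEt_n$ lies in $\pi^{\Ctwo}_{-k\rho - 1}(Y_2) = 0$, so the extension exists. The colimit of the $\alpha^{(k)}$ yields $\alpha$, normalized as required on the image of $1$.

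The main obstacle is establishing the cellular filtration on $X_1$ rigorously: because its underlying $\Ctwo$-spectrum $S/\Jbn$ is not $\HZu$ but a larger strongly-even spectrum with richer $RO(\Ctwo)$-graded homotopy, identifying each stagewise cofibre and controlling its bottom Mackey functor is more intricate than in Lemma \ref{lem:UniquenesskR} and relies on the detailed information afforded by Corollaries \ref{Cor:crucial} and \ref{cor:rho+}. Once the filtration is in hand, the obstruction-theoretic construction of $\alpha$ is directly parallel to the argument of Lemma \ref{lem:UniquenesskR}.
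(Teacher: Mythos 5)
Your obstruction-theoretic framework for building $\alpha$ skeleton by skeleton matches the paper's, and your analysis of the target — that $Y_2 \simeq \HZu$ is even so the obstructions in $\pi^{\Ctwo}_{-k\rho-1}(Y_2)$ vanish — is the same as in the paper. The substantive difference is how the $\cEt_n$-cell filtration on $X_1$ is produced, and this is where your proposal has a genuine gap. You correctly identify $X_1 \simeq S/\Jbn$ as a $\Ctwo$-spectrum and observe it is strongly even, but your plan then tries to build the filtration \emph{by hand}, attaching one cell $\Sigma^{-k\rho}\cEt_n$ at a time and controlling the successive cofibres via their bottom Mackey functors. You flag yourself that "identifying each stagewise cofibre and controlling its bottom Mackey functor" is the main obstacle and leave it unresolved; in fact this is harder than you suggest, because $\cEt_n$ as a $\Ctwo$-spectrum is not $\HZu$ but $\Sigma^{-D_n\rho-n}(S/\Jbn)\tensor_S(S/\Jbn)$, so the underlying homotopy types of the $(X_1)^{(k)}$ and the relevant cofibres are not at all transparent, and the filtration actually needs \emph{wedges} of copies of $\Sigma^{-k\rho}\cEt_n$ (not a single cell) at each stage.

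The paper sidesteps all of this with one structural observation you miss: by Proposition \ref{prop:cell} and Lemma \ref{lem:Koszul},
\[
X_1 = \Hom_S\bigl(S/\Jbn,\Sigma^{D_n\rho+n}S\bigr)\simeq \Hom_S\bigl(S/\Jbn, \kappa_S(\vb_1,\dots,\vb_n)\bigr),
\]
and the stable Koszul complex $\kappa_S(\vb_1,\dots,\vb_n)$ comes with a canonical filtration (recalled in Section \ref{sec:Koszul}) whose subquotients are wedges of $(-k\rho)$-fold suspensions of $S/\Jbn$. Applying $\Hom_S(S/\Jbn,-)$ transports this \emph{automatically} to the desired $\cEt_n$-cell filtration of $X_1$ with subquotients $\bigvee\Sigma^{-k\rho}\cEt_n$, with no case-by-case analysis of Mackey functors of cofibres and no appeal to Corollaries \ref{Cor:crucial} or \ref{cor:rho+}. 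So while your overall strategy is the right one, the crucial input — the explicit Koszul-complex description of $X_1$ and its filtration — is what makes the construction of the cell structure immediate rather than "intricate", and your proposal does not supply a substitute for it.
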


\begin{proof}
First we claim that $X_1$ has a $\cEt_n$-cell structures 
with one 0-cell and other cells in dimensions which are negative
multiples of $\rho$. More precisely, there is 
 a filtration 
$$\cEt_n\simeq X_1^{[0]}\to X_1^{[1]}\to
X_1^{[2]}\to \cdots \to X_1$$
so that $X_1\simeq \hocolim_dX_1^{[d]}$ and there are cofibre sequences
$$X_1^{[d-1]}\lra X_1^{[d]}\lra \bigvee \Sigma^{-d\rho} \cEt_n. $$

By definition $X_1 = \Hom_S(S/\Jbn,\Sigma^{D_n\rho+n}S)$. By Proposition \ref{prop:cell} and Lemma \ref{lem:Koszul}, this is equivalent to 
$$Hom_S(S/\Jbn,\Sigma^{D_n\rho+n}\Gamma_{\Jbn}S) \simeq \Hom_S(S/\Jbn, \kappa_S(\vb_1,\dots, \vb_n))$$
because $\Gamma_{\Jbn}S \to S$ is $S/\Jbn$-$\R$-cellularization. The usual construction of the stable Koszul complex from the unstable
Koszul complex recalled in Subsection \ref{sec:Koszul}, shows that 
$$\kappa_S(\vb_1,\dots, \vb_n)$$ 
has a filtration with subquotients sums of $(-k\rho)$-fold suspensions of $S/\Jbn$. This induces a corresponding filtration on $X_1$.

As in Lemma \ref{lem:UniquenesskR} we may construct $\alpha$ by obstruction
theory.  Indeed, we start
by choosing a map $\cEt_n=X_1^{[0]}\lra Y_2^{[0]}$ taking the unit to
a generator.  At the $d$th stage we have a
problem 
$$\diagram
X_1^{[d-1]} \rto \dto & Y_2\\
X_1^{[d]} \ar@{-->}[ur]&
\enddiagram$$
The obstruction to extension is in a finite product of groups
$$[\Sigma^{-d\rho-1}\cEt_n, Y_2]^{\cEt_n}=\pi^{\Ctwo}_{-d\rho -1}(\HZu)=0 $$
where the vanishing is from the known value of $\pi^{\Ctwo}_{\rost}(\HZu)$. 
\end{proof}

\subsection{The map $\alphat$}
\label{subsec:alphat}
Referring to the second extension problem  diagram above, we note $S/\Jbn\tensor_SR\simeq
\HZu$ as $S$-modules. Thus, we have to solve the lifting problem
$$\diagram
X_1 \tensor_{\cEt_n} \HZu \tensor_S S \dto_{1\tensor1\tensor \pi} \rto^-{\alpha'} & \Z^S\\
X_1 \tensor_{\cEt_n} \HZu \tensor_SR\ar@{-->}[ur]_-{\alphat'}&
\enddiagram$$
where $\HZu$ is equipped with some $\cEt_n$-module structure. Denote the upper left corner by $T$. The map $T \to T\tensor_S R$ is a split inclusion on underlying $MU$-modules. Indeed, 
$$T\simeq X_1 \tensor_{\cEt_n} S/\Jbn \tensor_S R$$
 and the map $R \to R\tensor_S R$ is a split inclusion on underlying spectra because $BP\langle n\rangle$ has the structure of a homotopy unital $MU$-algebra \cite[V.2.6]{EKMM}. 

By the definition of Anderson duals, we have a diagram of short exact sequences:
\[\xymatrix{
 0 \ar[r]& \Ext_\Z^1(\pi^{\Ctwo}_{-1}(T\tensor_SR),\Z)\ar[d] \ar[r]& [T \tensor_SR, \Z^S]^S \ar[d]\ar[r]& \Hom_\Z(\pi^{\Ctwo}_0(T \tensor_SR),\Z)\ar[d] \ar[r]& 0 \\
 0\ar[r]&\Ext_\Z^1(\pi^{\Ctwo}_{-1}(T),\Z)\ar[r]& [T, \Z^S]^S \ar[r]& \Hom_\Z(\pi^{\Ctwo}_0(T),\Z) \ar[r]& 0
 }
\]

We want to show that the maps $\pi_k^{C_2}T \to \pi_k^{C_2}T\tensor_S R$ are split injections for $k=0,-1$, which solves the problem. For the computation of $\pi_*^{C_2}T$ recall from the last section that $X_1$ has a filtration starting with $X_1^{[x]} = \cEt_n$ and with subquotients sums of terms of the form $\Sigma^{-d\rho}\cEt_n$. Thus, $T$ obtains a filtration starting with $T^{[1]} = \HZu$ and with subquotients sums of terms of the form $\Sigma^{-d\rho}\HZu$. The map $\HZu = T^{[1]} \to T$ clearly induces isomorphisms on $\underline{\pi}_k^{\Ctwo}$ for $k = 0,-1$ by the known homotopy groups of $\HZu$ (see e.g.\ \cite[Section 3.4]{Ricka} for a table). Thus, $\underline{\pi}^{C_2}_{-1}T = 0$ and $\underline{\pi}^{C_2}_0T = \Zu$. 

If we have a map $\Zu \to M$ from the constant Mackey functor, it is a split injection on $(C_2/C_2)$ if it is one on $(C_2/e)$. But we have already seen above that on underlying spectra $T\to T\tensor_SR$ is a split inclusion. Thus, we have shown that $\pi_k^{C_2}T \to \pi_k^{C_2}(T\tensor_S R)$ is split injective, which provides the map $\tilde{\alpha}'$.

\vspace{1cm}
\part{The hands-on approach}
In this part, we give a different way to compute the Anderson dual of $\BPRn$ by first computing the Anderson dual of $BP\R$ itself. Again, we will first do the case of $\kR$. 

\section{The case of $k\mathbb{R}$ again}\label{sec:kRagain}
To illustrate our strategy, we give an alternative calculation of the
Anderson dual of $k\R$. This can also be deduced from our main theorem
below, but it might be helpful to see the proof in this  simpler case
first. General references for the $RO(C_2)$-graded homotopy groups of $k\R$ are \cite[Section 3.7]{B-G10} or Section \ref{sec:kRgroups}.

We want to show the following proposition:

\begin{prop}
There is an equivalence $\kappa_{k\R}(\vb) \to \Sigma^{2\rho -4}\Z^{k\R}.$
\end{prop}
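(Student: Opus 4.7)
The plan is to reduce via Lemma \ref{lem:Koszul} to showing $\Gamma_\vb \kR \simeq \Sigma^{\sigma - 4}\Z^{\kR}$, to compute $\pi^{C_2}_\rhost$ on both sides using detailed knowledge of $\pi^{C_2}_\rhost \kR$, and finally to build an explicit map realizing the isomorphism and conclude via Lemma \ref{lem:regrep}.

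First, since $\kappa_{\kR}(\vb) \simeq \Sigma^{\rho+1}\Gamma_\vb \kR$ by Lemma \ref{lem:Koszul} and $(\rho + 1) + (\sigma - 4) = 2\rho - 4$ (using $\rho = 1 + \sigma$), the proposition is equivalent to constructing an equivalence $\Gamma_\vb \kR \simeq \Sigma^{\sigma - 4}\Z^{\kR}$. On the source side, the fibre sequence $\Gamma_\vb \kR \to \kR \to K\R$ combined with the strong evenness of $\kR$ and $K\R$ (so that $\pi^{C_2}_{*\rho}\kR = \Z[\vb]$, $\pi^{C_2}_{*\rho}K\R = \Z[\vb^{\pm 1}]$, and both $\pi^{C_2}_{*\rho - 1}$ vanish) yields $\pi^{C_2}_{*\rho}\Gamma_\vb \kR = 0$ and $\pi^{C_2}_{k\rho - 1}\Gamma_\vb \kR = \Z$ for $k \leq -1$. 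On the target side, the Anderson universal coefficient sequence combined with the description of $\pi^{C_2}_\rhost \kR$ recalled in Section \ref{sec:kRgroups} gives a matching calculation of $\pi^{C_2}_\rhost \Sigma^{\sigma - 4}\Z^{\kR}$, grading by grading.

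To produce the map, Anderson-dualize the quotient $\kR \to \kR/\vb \simeq \HZu$: this yields a $\kR$-module map $\Z^{\HZu} \to \Z^{\kR}$. Lemma \ref{lem:Zu} identifies $\Z^{\HZu} \simeq \Sigma^{2\pp}\HZu$, and $-2\pp = 2\rho - 4$, so after reindexing this is a $\kR$-module map $\HZu \to \Sigma^{2\rho - 4}\Z^{\kR}$. Extend inductively along the cofibre sequences $\kR/\vb^{l-1} \to \kR/\vb^l \to \Sigma^{l\rho}\kR/\vb$ defining the Koszul filtration $\kappa_{\kR}(\vb) = \hocolim_l \Sigma^{(1-l)\rho}\kR/\vb^l$; the obstructions vanish by the $\pi^{C_2}_\rhost$ computation above. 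Taking the hocolim gives the map $\kappa_{\kR}(\vb) \to \Sigma^{2\rho - 4}\Z^{\kR}$. This map is an isomorphism on $\pi^{C_2}_\rhost$ by construction and on $\pi^e_*$ by the corresponding non-equivariant argument, so Lemma \ref{lem:regrep} promotes it to an equivalence.

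The main obstacle is matching the dual generators on $\pi^{C_2}_{k\rho - 1}$ for $k \leq -1$: verifying that the inductively constructed map sends the source generators to the intended dual generators on the target (rather than to nonzero multiples thereof) requires tracing through the Anderson-duality and Matlis-lift constructions, and rests on the canonicity of the identification $\Z^{\HZu} \simeq \Sigma^{2\pp}\HZu$ at the level of $\pi^{C_2}_0$.
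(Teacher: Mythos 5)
Your overall strategy matches the paper's: construct a map out of $\kappa_{\kR}(\vb) = \hcolim_l \Sigma^{(1-l)\rho}\kR/\vb^l$ term-by-term, verify it is an isomorphism on appropriate homotopy groups, and invoke Lemma~\ref{lem:regrep}. However, there are substantive gaps. First, the claim ``$\pi^{C_2}_{*\rho}\Gamma_\vb\kR = 0$'' is false: the long exact sequence of $\Gamma_\vb\kR \to \kR \to K\R$ gives $\pi^{C_2}_{k\rho}\Gamma_\vb\kR$ as the cokernel of $\pi^{C_2}_{k\rho+1}\kR \to \pi^{C_2}_{k\rho+1}K\R$, and for $k\leq -2$ this cokernel is $\F_2$ (the class $a\vb^{k+1}$ exists in $K\R$ since $\vb$ is invertible, but not in $\kR$). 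Strong evenness of $\kR$ and $K\R$ controls the $*\rho$ and $*\rho-1$ lines but not the $*\rho+1$ line that enters here, so the deduction you attempt does not go through. Moreover, this is not even the degree slice that Lemma~\ref{lem:regrep} needs when applied to $\kappa_{\kR}(\vb)\to\Sigma^{2\rho-4}\Z^{\kR}$: because $\kappa_{\kR}(\vb)\simeq \Sigma^{\rho+1}\Gamma_\vb\kR$, the injectivity hypothesis on $\pi^{C_2}_{*\rho-1}\kappa_{\kR}(\vb)$ corresponds to the groups $\pi^{C_2}_{*\rho-2}\Gamma_\vb\kR$, which you never address. (The paper establishes these vanish by showing directly that each $\kR/\vb^n$, hence the colimit $\kappa_{\kR}(\vb)$, is strongly even.)

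Second, ``the obstructions vanish by the $\pi^{C_2}_\rhost$ computation above'' is left as an assertion, and the cofibre sequence you write to run the induction, $\kR/\vb^{l-1}\to\kR/\vb^l\to\Sigma^{l\rho}\kR/\vb$, has the arrows and suspensions wrong (the correct Koszul filtration quotients are $\Sigma^{-l\rho}\kR/\vb$). You correctly flag at the end that matching dual generators is the crux; this is exactly what needs an argument, and the paper supplies one: it computes $\underline{[\Sigma^{-(n-1)\rho}\kR/\vb^n, \Sigma^{2\rho-4}\Z^{\kR}]}^{C_2}_{\kR}\cong\Zu$ by a short exact sequence (the $\Ext$-type term $(\underline{\pi}^{C_2}_{\rho+1}\Sigma^{2\rho-4}\Z^{\kR})/\vb^n$ vanishes by connectivity of $\kR$), and observes that precomposition with the transition maps acts as the identity on this $\Zu$, which both produces a compatible system and pins down each map as the unique one hitting $1\in\Z$. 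Your proposal would need a comparable argument to close the gap you identified.
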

Recall here that $\vb \in \pi^{C_2}_{\rho}\kR$ is the Bott element for Real K-theory and 
$$\kappa_{k\R}(\vb) = \hcolim_n \Sigma^{-(n-1)\rho}k\R/\vb^n.$$
 Our idea is simple: To obtain a map from the homotopy colimit, we have just to give maps 
 $$\Sigma^{-(n-1)\rho}k\R/\vb^n \to \Sigma^{2\rho -4}\Z^{k\R}$$
  that are compatible in the homotopy category (see Remark \ref{rmk:hocolim}). We will show in the next lemma that these maps are essentially unique: The Mackey functor of homotopy classes of $k\R$-linear maps $\Sigma^{-(n-1)\rho}k\R/\vb^n \to \Sigma^{2\rho -4}\Z^{k\R}$ is isomorphic to $\Zu$ and the precomposition with the map $\Sigma^{-(n-1)\rho}k\R/\vb^n  \to \Sigma^{-n\rho}k\R/\vb^{n+1}$ induces the identity on $\Zu$. 

Choosing the $C_2$-equivariant map $\kappa_{k\R}(\vb) \to \Sigma^{2\rho -4}\Z^{k\R}$ that corresponds to $1\in \Z$ for every $n$ induces an equivalence on underlying homotopy groups. By Lemma \ref{lem:regrep} the result follows as soon as we have established that $\kappa_{k\R}(\vb)$ is strongly even and that the Mackey functor $\underline{\pi}_{*\rho}\Sigma^{2\rho-4}\Z^{k\R}$ is constant. These two facts will also be shown in the following lemma, finishing the proof of the proposition.

\begin{lemma}Denote for a $\underline{\Z}[\vb]$ module $M$ by $\{M\}_{\vb^n}$ the $\vb^n$-torsion in it. Then we  have:
 \begin{enumerate}
  \item $k\R/\vb^n$ is strongly even and hence the same is true for $\kappa_{k\R}(\vb)$.
  \item $\underline{\pi}_{n\rho}^{\Ctwo}\Sigma^{2\rho-4}\Z^{k\R} \cong \underline{\pi}^{\Ctwo}_{(n-2)\rho+4}\Z^{k\R}$ is constant for all $n\in\Z$.
  \item $\underline{[\Sigma^{-(n-1)\rho}k\R/\vb^n, \Sigma^{2\rho-4}\Z^{k\R}]}^{C_2}_{k\R} \cong \left\{\underline{\pi}^{\Ctwo}_{-(n-1)\rho}\Sigma^{2\rho-4}\Z^{k\R}\right\}_{\vb^n} \cong \underline{\Z}$
 \end{enumerate}
\end{lemma}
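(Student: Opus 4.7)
Proof plan. My strategy is to treat each part in turn, relying throughout on the Dugger cofibre sequence $\Sigma^{\rho}k\R \xrightarrow{\vb} k\R \to H\Zu$ and its Anderson dual.

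For Part (1), I invoke Example \ref{exa:forms} to identify $k\R$ as a form of $BP\R\langle 1\rangle$ and then use Proposition \ref{prop:FormBPRn} to choose indecomposables $\vb_2, \vb_3, \ldots$ with $\vb_1 = \vb$ so that $k\R \simeq BP\R/(\vb_2, \vb_3, \ldots)$. Then $k\R/\vb^n \simeq BP\R/(\vb^n, \vb_2, \vb_3, \ldots)$ is a quotient of $BP\R$ by an ideal generated by powers of the $\vb_i$, so Corollary \ref{Cor:crucial} gives that $k\R/\vb^n$ is strongly even. For $\kappa_{k\R}(\vb) = \hocolim_n \Sigma^{-(n-1)\rho}k\R/\vb^n$ I then use that suspensions by multiples of $\rho$ preserve the $\rhost$-grading and that filtered homotopy colimits commute with $\underline{\pi}^{C_2}_\alpha$ for each fixed $\alpha$.

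For Part (2), the first isomorphism is simply a reindexing of suspensions. To establish constantness, I apply $\Z^{(-)}$ to the Dugger sequence, use Lemma \ref{lem:Zu} to identify $\Z^{H\Zu} \simeq \Sigma^{2\delta}H\Zu$, and after rotation and suspension by $3\rho-4$ obtain the cofibre sequence
\begin{equation*}
 \Sigma^{\rho}D \xrightarrow{\vb} D \to \Sigma^{2+\sigma}H\Zu, \qquad D := \Sigma^{2\rho-4}\Z^{k\R}.
\end{equation*}
The associated long exact sequence in $\underline{\pi}^{C_2}_{n\rho}$ relates successive Mackey functors via explicit terms from $\underline{\pi}^{C_2}_{\rhost}H\Zu$ tabulated in \cite[Section 3.4]{Ricka}. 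For $n \geq 1$ a direct application of the Anderson short exact sequence suffices: both $\underline{\pi}^{C_2}_{-(n-2)\rho-4}k\R$ and $\underline{\pi}^{C_2}_{-(n-2)\rho-5}k\R$ vanish because the relevant underlying dimensions ($-2n$ and $-2n-1$) are negative and $k\R$ is connective, making $\underline{\pi}^{C_2}_{(n-2)\rho+4}\Z^{k\R}$ the zero Mackey functor, which is trivially constant. For $n \leq 0$ one iterates the cofibre-sequence LES together with the explicit $RO(C_2)$-graded homotopy of $k\R$ from Section \ref{sec:kRgroups} to verify constantness in the remaining degrees.

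For Part (3), I apply $\underline{[-, D]}^{C_2}_{k\R}$ to the cofibre sequence
\begin{equation*}
 \Sigma^{\rho}k\R \xrightarrow{\vb^n} \Sigma^{-(n-1)\rho}k\R \to \Sigma^{-(n-1)\rho}k\R/\vb^n
\end{equation*}
obtained by suspending the defining cofibre sequence $\Sigma^{n\rho}k\R \xrightarrow{\vb^n} k\R \to k\R/\vb^n$ by $\Sigma^{-(n-1)\rho}$. The resulting long exact sequence is
\begin{equation*}
 \underline{\pi}^{C_2}_{-(n-1)\rho+1}D \xrightarrow{\vb^n} \underline{\pi}^{C_2}_{\rho+1}D \to \underline{[\Sigma^{-(n-1)\rho}k\R/\vb^n, D]}^{C_2}_{k\R} \to \underline{\pi}^{C_2}_{-(n-1)\rho}D \xrightarrow{\vb^n} \underline{\pi}^{C_2}_{\rho}D.
\end{equation*}
Both framing groups $\underline{\pi}^{C_2}_{\rho+1}D \cong \underline{\pi}^{C_2}_{4-\sigma}\Z^{k\R}$ and $\underline{\pi}^{C_2}_{\rho}D \cong \underline{\pi}^{C_2}_{3-\sigma}\Z^{k\R}$ vanish: the Anderson SES reduces each to Mackey functors $\underline{\pi}^{C_2}_{-k+\sigma}k\R$ for $k \in \{3,4,5\}$, all of which vanish by connectivity of $k\R$ (their underlying dimensions $-k+1$ are negative). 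The extension thus collapses to identify $\underline{[\Sigma^{-(n-1)\rho}k\R/\vb^n, D]}^{C_2}_{k\R}$ with $\underline{\pi}^{C_2}_{-(n-1)\rho}D$, and since the rightmost $\vb^n$-map lands in a zero group, this agrees with the $\vb^n$-torsion subfunctor, giving the first displayed isomorphism. By Part (2) the resulting Mackey functor is constant, and a direct computation via the Anderson SES on underlying homotopy yields $\pi^e_{-(n-1)\rho}D \cong \Hom_\Z(\pi^e_{2(n-1)}k\R, \Z) \cong \Z$, identifying it as $\underline{\Z}$.

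The main technical obstacle is the case analysis in Part (2) for $n \leq 0$, where constantness is not automatic from connectivity and requires direct inspection of the $RO(C_2)$-graded homotopy of $k\R$ in combination with the LES of the Part-(2) cofibre sequence; all other steps follow from bookkeeping with the Anderson short exact sequence and the structure results recalled earlier.
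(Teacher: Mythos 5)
Your overall structure for Parts (2) and (3) — Anderson short exact sequence plus the cofibre sequence obtained from the Dugger sequence — matches the paper's, but the vanishing principle you repeatedly invoke is false as stated. You claim that for a connective $C_2$-spectrum, $\underline{\pi}^{C_2}_{\alpha}$ vanishes whenever the underlying dimension of $\alpha$ is negative; this is already contradicted by the Euler class $a \in \pi^{C_2}_{-\sigma}$ of any nontrivial connective $C_2$-ring spectrum, which is nonzero even though $\dim(-\sigma) = -1$. The criterion actually used in the paper (compare Corollary \ref{cor:cellular}, and the phrase ``because of connectivity, $\underline{\pi}_{\rho-c}^{\Ctwo}k\R = 0$ for $c\geq 3$'' in the paper's proof) is that $\pi^{C_2}_{a+b\sigma}M = 0$ when \emph{both} $a < 0$ and $a+b < 0$. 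In each degree you cite, this stronger pair of inequalities also happens to hold — e.g.\ $-(n-2)\rho-4 = (-n-2) + (2-n)\sigma$ satisfies $-n-2 < 0$ and $-2n < 0$ for $n\geq 1$, and $-k+\sigma$ with $k\in\{3,4,5\}$ satisfies $-k<0$ and $1-k<0$ — so your conclusions are correct, but the stated justification would fail in other degrees and needs to be replaced.

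A second, more substantive gap is in Part (2) for $n\leq 0$, where you defer to ``iterating the cofibre-sequence LES together with the explicit $RO(C_2)$-graded homotopy of $k\R$'' without carrying it out. The paper instead reads off directly from the $BB/NB$ description of $\kR^{\Ctwo}_{\rost}$ that $\underline{\pi}^{C_2}_{k\rho-5}k\R = 0$ always and $\underline{\pi}^{C_2}_{k\rho-4}k\R \cong \underline{\Z}^*$ for $k\geq 2$ (generated by $v^{k-2}$ underlying and $2\vb^{k-2}u$ equivariantly), so that the Anderson SES collapses to $\Hom(\underline{\Z}^*,\Z) \cong \underline{\Z}$. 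Your $\vb$-multiplication LES on $D = \Sigma^{2\rho-4}\Z^{k\R}$ can be made rigorous — the cofibre $\Sigma^{2+\sigma}\HZu$ of $\vb$ contributes $\underline{\pi}^{C_2}_{(n-1)\rho}\HZu$ and $\underline{\pi}^{C_2}_{(n-1)\rho-1}\HZu$ to the relevant degrees, so you get isomorphisms $\underline{\pi}^{C_2}_{(n-1)\rho}D \cong \underline{\pi}^{C_2}_{n\rho}D$ for $n\neq 1$ — but it still needs a base-case computation that you do not supply. Finally, your Part (1) takes a genuinely different route via Proposition \ref{prop:FormBPRn} and Corollary \ref{Cor:crucial}; this works, but forces a $2$-localization, whereas the paper's one-line observation that $\pi^{C_2}_{k\rho-1}k\R = 0 = \pi^{C_2}_{k\rho-2}k\R$ (hence $\underline{\pi}^{C_2}_{k\rho-i}(k\R/\vb^n) = \underline{\pi}^{C_2}_{k\rho-i}(k\R)/\vb^n$ for $i=0,1$) handles $k\R$ integrally.
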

\begin{proof}
 The first part follows as 
 $$\underline{\pi}^{\Ctwo}_{k\rho -i}(k\R/\vb^n) =\underline{\pi}^{\Ctwo}_{k\rho -i}(k\R)/\vb^n$$
  for $i=0,1$ because $\pi_{k\rho-i}^{C_2}k\R = 0$ for $i=1,2$. 
 
 For the second part consider the short exact sequence
 \[0\to \Ext(\underline{\pi}^{C_2}_{k\rho-5}k\R, \Z) \to \underline{\pi}^{C_2}_{-k\rho +4}\Z^{k\R} \to \Hom(\underline{\pi}^{C_2}_{k\rho -4}k\R,\Z) \to 0.\]
  We have $\underline{\pi}^{\Ctwo}_{k\rho-5}k\R = 0$ for all $k\in\Z$. For $k<2$, the Mackey functor $\underline{\pi}^{\Ctwo}_{k\rho-4}k\R$ vanishes as well and for $k\geq 2$, we have $\underline{\pi}^{\Ctwo}_{k\rho-4}k\R \cong \Zu^*$, generated by $v^{k-2}$ and $2\vb^{k-2}u$. Thus, 
  $$\underline{\pi}^{\Ctwo}_{-k\rho+4}\Z^{k\R}  \cong \begin{cases} 0 & \text{ if }k<2 \\
   \underline{\Z} & \text{ if }k\leq 2 \end{cases}$$
   This shows part (2).  As multiplication by $\vb^n$ does not hit $\underline{\pi}^{\Ctwo}_{(n+1)\rho-4}k\R$, the whole Mackey functor $\underline{\pi}^{\Ctwo}_{-(n+1)\rho+4}\Z^{k\R}$ is $\vb^n$-torsion. This gives the second isomorphism of the third part.

 For the remaining isomorphism, note that the cofibre sequence
 \[
  \Sigma^\rho k\R \xrightarrow{\vb^n} \Sigma^{-(n-1)\rho}k\R \to \Sigma^{-(n-1)\rho}k\R/\vb^n \to \Sigma^{\rho+1} k\R
 \]
induces a short exact sequence
 \[0 \to (\underline{\pi}^{\Ctwo}_{\rho +1}\Sigma^{2\rho-4}\Z^{k\R} )/ \vb_n \to \underline{[\Sigma^{-(n-1)\rho}k\R/\vb^n, \Sigma^{2\rho-4}\Z^{k\R}]}^{C_2}_{k\R} \to \left\{\underline{\pi}^{\Ctwo}_{-(n-1)\rho}\Sigma^{2\rho-4}\Z^{k\R}\right\}_{\vb^n} \to 0\]
 
 We have $\underline{\pi}_{\rho +1}^{\Ctwo}\Sigma^{2\rho-4}\Z^{k\R}  \cong \underline{\pi}_{5 -\rho}^{\Ctwo}\Z^{k\R}$, which sits in a short exact sequence
 \[ 0 \to \Ext_\Z(\underline{\pi}_{\rho-6}^{\Ctwo}k\R,\Z) \to \underline{\pi}_{5 -\rho}^{\Ctwo}\Z^{k\R} \to \Hom_\Z(\underline{\pi}_{\rho-5}^{\Ctwo}k\R,\Z)\to 0.\]
 But because of connectivity, $\underline{\pi}_{\rho-c}^{\Ctwo}k\R = 0$ for $c\geq 3$. 
 \end{proof}

\section{Duality for $BP\mathbb{R}$}
We will use throughout the abbreviation $B = BP\R$ and will furthermore implicitly localize everything at $2$ so that $\Z = \Z_{(2)}$ etc.\ and all $\Hom$ and $\Ext$ groups are over $\Z = \Z_{(2)}$ unless marked otherwise. Denote by $\underline{\vb}$ a sequence of indecomposable elements $\vb_i \in \pi_{(2^i-1)\rho}^{C_2}B$. The aim of this section is to show that $\Sigma^{2\rho-4}\Z^{B} \simeq \kappa_{M\R}(\underline{\vb}; B)$. 

Recall that $\kappa_{M\R}(\underline{\vb}; B)$ is defined as follows: Given a sequence $\underline{l} = (l_1,l_2,\dots)$ with $l_i\geq 0$, we denote by $B/\underline{\vb}^{\underline{l}}$ the spectrum $B/(\vb^{l_{i_1}}_{i_1}, \vb^{l_{i_2}}_{i_2},\dots)$, where $i_j$ runs over all indices such that $l_{i_j}> 0$. Set 
$$|\underline{l}| = l_1|\vb_1|+l_2|\vb_2| + \cdots $$
Then 
$$\kappa_{M\R}(\underline{\vb}; B) = \hcolim_{\underline{l}} \Sigma^{-|\underline{l}-\underline{1}|}B/\underline{\vb}^{\underline{l}},$$
where $\underline{l}$ runs over all sequences $\underline{l}$ where all but finitely many $l_i$ are zero and $\underline{1}$ denotes the constant sequence of ones. Furthermore, the $i$-th entry of $\underline{l}-\underline{1}$ is defined to be the maximum of $0$ and $l_i-1$. 

Thus, to get a map $\kappa_{M\R}(\underline{\vb}; B) \to \Sigma^{2\rho-4}\Z^{B}$, we have to understand the homotopy classes of maps $B/\underline{\vb}^{\underline{l}} \to \Sigma^{2\rho-4}\Z^{B}$. This will be the content of the next subsection.

\subsection{Preparation}
Recall the Mackey functor $\Zu^*$ defined by
$$\underline{\Z}^*(\Ctwo/\Ctwo) \cong \underline{\Z}^*(\Ctwo/e)\cong \Z$$
with transfer equalling $1$ while restriction is multiplication by $2$. 
\begin{lemma}\label{lem:computation}
 As $\Zu[\vb_1,\vb_2,\dots]$-modules, we have the following isomorphisms.
 \begin{enumerate}
  \item $\underline{\pi}^{C_2}_{*\rho -4}B \cong \Zu^*\tensor_\Z \Z[\vb_1,\vb_2,\dots]$ where $\Zu^*$ is generated by $1$ on underlying and by $2u^{-1}$ on $C_2$-equivariant homotopy groups.
  \item $\underline{\pi}^{C_2}_{*\rho -5}B = 0$
  \item $\pi^{C_2}_{*\rho -6}B \cong \F_2\{a^2\vb_1(-1)\} \tensor_\Z \Z[\vb_1,\vb_2,\dots]$. 
 \end{enumerate}
\end{lemma}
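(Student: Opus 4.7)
The plan is to deduce all three statements directly from the explicit presentation of $\pi^{\Ctwo}_{\rost}\BPR$ recalled in Appendix \ref{Appendix}.  That appendix realises $\pi^{\Ctwo}_{\rost}\BPR$ as the subalgebra $R$ of $P/(2a,\vb_ia^{2^{i+1}-1})$ generated by $a$ and the classes $\vb_i(j)=u^{2^ij}\vb_i$, where $P=\Z[a,\vb_i,u^{\pm 1}]$, $i\geq 0$, and $\vb_0=2$.  A monomial $a^k u^l\vb_1^{e_1}\vb_2^{e_2}\cdots$ in $P$ has $RO(\Ctwo)$-degree $m\rho+n$ with $n=k+4l$.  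If $k\geq 1$, the relation $2a=0$ forbids any $\vb_0$-factor, so the $u$-exponent $l$ must be a $\Z$-linear combination $\sum_m 2^{i_m}j_m$ with each $i_m\geq 1$ the index of a $\vb_{i_m}$-factor actually appearing; the relation $a^{2^{i+1}-1}\vb_i=0$ in addition forces every $i$ with $e_i\geq 1$ to satisfy $2^{i+1}-1>k$, so that $2^{i_0}\mid l$ where $i_0:=\lceil\log_2(k+2)\rceil -1$.  The elementary estimate $2^{i_0}\geq (k+2)/2$ will drive the rest of the argument.

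For part (1) the equation $n=k+4l=-4$ forces $k\in 4\Z$.  If $k\geq 1$ then $|l|=(k+4)/4<(k+2)/2\leq 2^{i_0}$ while $l\neq 0$, so $2^{i_0}\nmid l$ and no monomial survives.  The only remaining case is $(k,l)=(0,-1)$, realised by $\vb_0(-1)=2u^{-1}$; thus $\pi^{\Ctwo}_{m\rho-4}\BPR$ has a $\Z$-basis consisting of the classes $2u^{-1}\cdot\vb$ for $\vb$ a monomial in $\vb_1,\vb_2,\dots$ of the correct $\rho$-degree.  Normalising so that $u$ restricts to $1\in\pi^e_0\BPR$, the restriction sends $2u^{-1}\vb$ to $2v$, and the Frobenius relation $\res\circ\mathrm{tr}=2$ together with the fact that $\pi^{\Ctwo}_{m\rho-4}\BPR$ is torsion-free forces $\mathrm{tr}(v)=2u^{-1}\vb$.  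This completes the identification $\underline{\pi}^{\Ctwo}_{\ast\rho-4}\BPR\cong\Zu^*\tensor_\Z\Z[\vb_1,\vb_2,\dots]$.  Part (2) is the same calculation with $n=-5$, which forces $k\equiv 3\pmod 4$; the same inequality $|l|<2^{i_0}$ rules out every admissible $(k,l)$, and the underlying groups vanish automatically since $\pi^e_{\mathrm{odd}}BP=0$.

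For part (3), $n=-6$ forces $k\equiv 2\pmod 4$.  When $k\geq 6$ one has $|l|=(k+6)/4<(k+2)/2\leq 2^{i_0}$ and $l\neq 0$, so no monomial survives.  For $k=2$, however, $i_0=1$ and $2^{i_0}=2$, so the value $l=-2$ is admissible; it must then be carried by a single factor $\vb_1(-1)$, because higher $\vb_i(j)$'s contribute $u$-exponents in $4\Z$ and any $\vb_0$-contribution is killed by $2a=0$.  A $\Z$-basis of $\pi^{\Ctwo}_{\ast\rho -6}\BPR$ is therefore $\{a^2\vb_1(-1)\cdot\vb:\vb\text{ a monomial in }\Z[\vb_1,\vb_2,\dots]\}$, and since $2a=0$ every such class is $2$-torsion, giving $\F_2\{a^2\vb_1(-1)\}\tensor_\Z\Z[\vb_1,\vb_2,\dots]$.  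The main obstacle throughout is the bookkeeping of the divisibility obstruction: once the bound $2^{i_0}\geq (k+2)/2$ is in place, the infinitely many candidate pairs $(k,l)$ are all eliminated by the same strict inequality, and the remaining analysis is a finite case-check.
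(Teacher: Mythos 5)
Your proposal is correct and takes essentially the same approach as the paper: both arguments enumerate monomials in the Hu--Kriz presentation of $\pi^{\Ctwo}_{\rost}\BPR$ from Theorem \ref{thm:BPR} and use the relations $2a=0$ and $a^{2^{i+1}-1}\vb_i=0$ to pin down which monomials can land in degree $*\rho-c$. The only real difference is bookkeeping: the paper reparametrizes to the normal form $a^l\vb_i(j)\vb'$ with $i$ minimal and then bounds the $a$-exponent by $0\leq l<2^{i+1}-1$, whereas you keep the raw form $a^ku^l\vb$ and instead exploit the divisibility constraint $2^{i_0}\mid l$ together with the estimate $2^{i_0}\geq(k+2)/2$ -- two equivalent ways of packaging the same case analysis.
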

\begin{proof}
 By Theorem \ref{thm:BPR}, the groups $\pi_{*\rho -c}^{C_2}B$ are additively generated by nonzero elements of the form $x = a^l\vb$ with $\vb$ a monomial in the $\vb_i(j)$. Let $\vb_i(j)$ be the one occuring with minimal $i$, where $j$ is chosen such that $\vb = \vb_i(j)\vb'$ with $\vb'$ a monomial in the $\vb_k$ (this is possible by the third relation in Theorem \ref{thm:BPR}). Then $|x| = *\rho +j2^{i+2}+l$ and $0 \leq l< 2^{i+1}-1$. 
 
 For $c=4$, this implies $j=-1$, $i=0$ and $l=0$. Thus, $x$ is of the form $\vb_0(-1)\vb'$. As the restriction of $\vb_0(-1)$ to $\pi_0^eB$ equals $2$, the result follows. 
 
 For $c=5$, we must have $l \geq 2^{i+2}-5$, which implies $l\geq 2^{i+1}-1$ or $i=0$; in the latter case $l$ must be zero, which is not possible.
 
 For $c=6$, we must have $l = -j2^{i+2}-6$, which implies $l\geq 2^{i+1}-1$ or $i\leq 1$ and $j=-1$. As $i=0$ is again not possible, $x = a^2\vb_1(-1)\vb'$ with $\vb' \in \pi_{*\rho}^{C_2}$.
\end{proof}

\begin{lemma}\label{lem:AndersonQuot}
 For a sequence $\ul = (l_1,l_2, \dots)$, the map
 $$\underline{\pi}^{C_2}_{*\rho+4} \Z^{B/\uvb^{\ul}} \to \Hom(\Mpi_{-*\rho-4}B/\uvb^{\ul},\Z) \cong \Zu \tensor_{\Z} (\Z[\vb_1,\vb_2,\dots]/\uvb^{\ul})^*$$
 is an isomorphism, where $\Z[\vb_1,\vb_2,\dots]^{*} = \Hom_\Z(\Z[\vb_1,\vb_2,\dots], \Z)$ (so that the gradings become nonpositive). Here, the second map is the dual of the map 
 $$\Zu^* \tensor_{\Z} \Z[\vb_1,\vb_2,\dots]/\uvb^{\ul} \to \Mpi_{-*\rho-4}B/\uvb^{\ul}$$
 sending $1 \in \Zu^*(C_2/C_2)$ to the image of $u^{-1}$ under the map $B\to B/\uvb^{\ul}$ and $1\in\Zu^*(C_2/e)$ to $1$. 
\end{lemma}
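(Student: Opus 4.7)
The plan is to apply the $RO(C_2)$-graded Mackey functor Anderson duality short exact sequence from Section \ref{subsec:Anderson}:
$$0 \to \Ext^1_{\Z}(\Mpi_{-*\rho-5}B/\underline{\vb}^{\ul}, \Z) \to \Mpi_{*\rho+4}\Z^{B/\underline{\vb}^{\ul}} \to \Hom_{\Z}(\Mpi_{-*\rho-4}B/\underline{\vb}^{\ul}, \Z) \to 0,$$
and to reduce the lemma to two computations: (1) $\Mpi_{*\rho-4}B/\underline{\vb}^{\ul} \cong \Zu^* \tensor_\Z \Z[\vb_1,\vb_2,\dots]/\underline{\vb}^{\ul}$ via the map described in the statement, and (2) $\Mpi_{*\rho-5}B/\underline{\vb}^{\ul} = 0$. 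Granting these, the Ext term vanishes and Pontryagin-dualising the Hom term produces $\Zu \tensor_\Z (\Z[\vb_1,\dots]/\underline{\vb}^{\ul})^*$ because $(\Zu^*)^{\vee}\cong \Zu$ (dualising a Mackey functor swaps restrictions and transfers), while the polynomial factor dualises degreewise since each graded piece is free of finite rank.

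To establish (1) and (2), I postpone the kill of $\vb_1^{l_1}$ to the end. Let $\ul_{\geq 2}=(0,l_2,l_3,\dots)$; its smallest nonzero index is at least $2$, so Corollary \ref{Cor:QuotientBP} applies with bound $2^{j+1}\geq 8$. Combined with Lemma \ref{lem:computation}, this gives
\begin{align*}
\Mpi_{*\rho-4}B/\underline{\vb}^{\ul_{\geq 2}} &\cong \Zu^* \tensor_\Z \Z[\vb_1,\dots]/\underline{\vb}^{\ul_{\geq 2}}, \\
\Mpi_{*\rho-5}B/\underline{\vb}^{\ul_{\geq 2}} &= 0, \\
\pi^{C_2}_{*\rho-6}B/\underline{\vb}^{\ul_{\geq 2}} &\cong \F_2\{a^2\vb_1(-1)\}\tensor_\Z \Z[\vb_1,\dots]/\underline{\vb}^{\ul_{\geq 2}}.
\end{align*}
The cofibre sequence $\Sigma^{l_1\rho}B/\underline{\vb}^{\ul_{\geq 2}} \xrightarrow{\vb_1^{l_1}} B/\underline{\vb}^{\ul_{\geq 2}} \to B/\underline{\vb}^{\ul}$ produces the relevant long exact sequences. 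Vanishing of $\Mpi_{*\rho-5}B/\underline{\vb}^{\ul_{\geq 2}}$ collapses the $*\rho-4$ sequence to $\Mpi_{*\rho-4}B/\underline{\vb}^{\ul} \cong (\Mpi_{*\rho-4}B/\underline{\vb}^{\ul_{\geq 2}})/\vb_1^{l_1}$, which is (1). The $*\rho-5$ sequence identifies $\Mpi_{*\rho-5}B/\underline{\vb}^{\ul}$ with the $\vb_1^{l_1}$-kernel in $\Mpi_{*\rho-6}B/\underline{\vb}^{\ul_{\geq 2}}$; on the underlying level this kernel vanishes for parity reasons, while on the $C_2$-fixed level it vanishes because $\vb_1$ is a polynomial variable in the $\F_2$-algebra displayed above.

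The main obstacle is that Corollary \ref{Cor:QuotientBP} controls $\pi^{C_2}_{*\rho-c}$ only up to $c=2^{j+1}$, which drops to $c=4$ once $l_1\neq 0$; this forces the separated treatment of $\vb_1^{l_1}$ and makes the auxiliary $\pi^{C_2}_{*\rho-6}$ computation essential for proving (2). To finish the explicit identification of the map in the statement, I trace through the long exact sequences: the $C_2/e$-generator of $\Zu^*$ must hit the image of $1\in\pi^e_{-4}B$, and Mackey compatibility (restriction acts as multiplication by $2$ on $\Zu^*$) forces the $C_2/C_2$-generator to hit the image of $\vb_0(-1)=2u^{-1}$, which matches the description ``image of $u^{-1}$'' under the identification of this class with the abstract generator of $\Zu^*(C_2/C_2)$.
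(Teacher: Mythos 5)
Your proof follows the same route as the paper's: set up the $RO(C_2)$-graded Anderson duality short exact sequence, reduce the lemma to computing $\Mpi_{*\rho-4}$ and showing $\Mpi_{*\rho-5}$ vanishes, and handle the case $l_1\neq 0$ by first passing to $\ul_{\geq 2}$ (where Corollary~\ref{Cor:QuotientBP} reaches down to $c\leq 8$) and then running the long exact sequence of the cofibre sequence for $\vb_1^{l_1}$. The only cosmetic difference is that you route the $*\rho-4$ computation through $\ul_{\geq 2}$ as well, whereas the paper applies Corollary~\ref{Cor:QuotientBP} directly in that degree since $c=4\leq 2^{j+1}$ even when $j=1$; these are equivalent.

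One small imprecision: you say the $\vb_1^{l_1}$-kernel in $\Mpi_{*\rho-6}B/\uvb^{\ul_{\geq 2}}$ ``vanishes for parity reasons'' on the underlying level. The underlying degree there is $2*-6$, which is even, so the group itself is nonzero; what actually makes the kernel vanish is that $v_1$ is a nonzerodivisor on $\pi_*BP/\uvb^{\ul_{\geq 2}}$ since $\ul_{\geq 2}$ has $l_1=0$. (Alternatively, what vanishes for parity is $\pi^e_{2*-5}B/\uvb^{\ul}$ directly, which is what the kernel is being identified with --- perhaps that is what you meant.) Either way the conclusion is correct and the argument matches the paper's.
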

\begin{proof}
 We have a short exact sequence
 $$0 \to \Ext(\Mpi_{-*\rho-5}B/\uvb^{\ul}, \Z) \to \underline{\pi}^{C_2}_{*\rho-4} \Z^{B/\uvb^{\ul}} \to \Hom(\Mpi_{-*\rho-4}B/\uvb^{\ul}, \Z) \to 0.$$
 If $l_1=0$, then Corollary \ref{Cor:QuotientBP} and Lemma \ref{lem:computation} directly imply the statement. If $l_1\neq 0$, Corollary \ref{Cor:QuotientBP} only allows us to identify the homotopy Mackey functor in degree $-*\rho-4$, but not the one in degree $-*\rho-5$. We give a separate argument in this case.
 
 If $l_1\neq 0$, consider the sequence $\ul' = (0,l_2,l_3,\dots)$ and the corresponding cofibre sequence
 $$ \Sigma^{l_1\rho}B/\uvb^{\ul'} \xrightarrow{\vb_1^{l_1}} B/\uvb^{\ul'} \to B/\uvb^{\ul} \to \Sigma^{l_1\rho+1}B/\uvb^{\ul'}.$$
 This induces a short exact sequence
 $$ 0 \to (\Mpi_{*\rho-5}B/\uvb^{\ul'})/\vb_1^{l_1} \to \Mpi_{*\rho-5}B/\uvb^{\ul} \to \{\Mpi_{*\rho-6}B/\uvb^{\ul'}\}_{\vb_1^{l_1}} \to 0.$$
 Here the last term denotes the sub Mackey functor of $\Mpi_{*\rho-6}B/\uvb^{\ul'}$ killed by $\vb_1^{l_1}$.  By Corollary \ref{Cor:QuotientBP} and Lemma \ref{lem:computation}, we see that $\Mpi_{*\rho-5}B/\uvb^{\ul} = 0$. 
 \end{proof}

As $B = BP\R$ is not known to have an $E_\infty$-structure, we have to work with $M\R$-linear maps instead, for which the following lemma is useful:

\begin{lemma}
The map 
\[\Z^B \simeq \Hom_{M\R}(M\R, \Z^B) \to \Hom_{M\R}(B, \Z^B)\]
is an equivalence.
\end{lemma}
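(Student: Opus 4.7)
My plan is to reduce the claim to an equivalence of the underlying spectra by using the hom-tensor adjunction. The first equivalence in the display is the standard identification $\Hom_{M\R}(M\R, X) \simeq X$ for any $M\R$-module $X$ (applied here to $X = \Z^B$, which is an $M\R$-module via the $M\R$-action on $B$ and $\Z^B \simeq F(B, \Z^{\mathbb{S}})$). The second map is precomposition with the inclusion $\iota: B \to M\R_{(2)}$ coming from the Quillen splitting, implicitly composed with $M\R \to M\R_{(2)}$ (which is harmless since $\Z^B$ is $2$-local).

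For $\Hom_{M\R}(B, \Z^B)$, I would apply the $\wedge$--$\Hom$ adjunction in $M\R$-modules to obtain
\[
\Hom_{M\R}(B, \Z^B) \simeq \Hom_{M\R}(B, F(B, \Z^{\mathbb{S}})) \simeq F(B \wedge_{M\R} B, \Z^{\mathbb{S}}) \simeq \Z^{B \wedge_{M\R} B}.
\]
Tracing through the identifications, the displayed map becomes the Anderson dual of the natural map
\[
\phi\colon B \wedge_{M\R} B \xrightarrow{\iota \wedge 1} M\R_{(2)} \wedge_{M\R} B \xrightarrow{\simeq} B,
\]
where the last equivalence comes from the $2$-locality of $B$ (so that smashing with $M\R_{(2)}$ over $M\R$ is the identity on $B$). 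It therefore suffices to show that $\phi$ is itself a weak equivalence, since Anderson duality is functorial.

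The key input is that $B = BP\R$ appears as a Quillen summand of $M\R_{(2)}$ and that $M\R \to M\R_{(2)}$ is smashing, so that $M\R_{(2)} \wedge_{M\R} M\R_{(2)} \simeq M\R_{(2)}$. Using the splitting from Hu--Kriz, smashing with $B$ over $M\R$ gives $B \simeq B \wedge_{M\R} M\R_{(2)} \simeq \bigoplus_i \Sigma^{m_i\rho}(B \wedge_{M\R} B)$; degree bookkeeping (comparing with the known homotopy of $B$) forces the map $\phi$ to be the projection to the $m_0=0$ summand composed with an equivalence, in fact an equivalence $B\wedge_{M\R}B \simeq B$. I expect the main obstacle to be precisely this compatibility step: because $B$ is not known to be an $E_\infty$-$M\R$-algebra, the Quillen projection and its interaction with $\wedge_{M\R}$ must be handled only at the level of $M\R$-modules, not of $M\R$-algebras. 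If this direct approach proves too delicate, an alternative is to bypass the algebra subtleties by computing $\pi_{\rost}^{C_2}(B \wedge_{M\R} B)$ using the splitting and checking that $\phi_*$ is an isomorphism on the Anderson-dual side via the short exact sequence for $\pi_{\rost}^{C_2}(\Z^{-})$.
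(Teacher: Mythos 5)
Your reduction via the $\wedge$--$\Hom$ adjunction, identifying $\Hom_{M\R}(B,\Z^B)\simeq\Z^{B\wedge_{M\R}B}$ and the displayed map with the Anderson dual of $\phi\colon B\wedge_{M\R}B\to B$, is a legitimate reformulation of the lemma and differs from the paper's proof, which never passes through $B\wedge_{M\R}B$. There the argument writes $B$ as the telescope of the Quillen--Araki idempotent $e$, recognizes both $\Z^B$ and $\Hom_{M\R}(B,\Z^B)$ as iterated homotopy limits of $\Z^{M\R}$ along $e^*$, and finishes with a cofinality argument identifying the double limit with a single one. The statement $B\wedge_{M\R}B\simeq B$ you reduce to is exactly what the paper proves in Lemma \ref{lem:smash}, by the same telescope device -- so your route reverses the logical order of the two lemmas rather than giving an independent shortcut.

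The genuine gap is in the step $B\simeq B\wedge_{M\R}M\R_{(2)}\simeq\bigoplus_i\Sigma^{m_i\rho}(B\wedge_{M\R}B)$. For this you need the Hu--Kriz splitting $M\R_{(2)}\simeq\bigoplus_i\Sigma^{m_i\rho}BP\R$ to be a splitting of $M\R$-modules, but it cannot be: the Quillen--Araki idempotent is a ring self-map of $M\R_{(2)}$, not an $M\R$-module map, and any $M\R$-linear idempotent of $M\R_{(2)}$ lies in $\pi_0^{C_2}M\R_{(2)}\cong\Z_{(2)}$, which has no nontrivial idempotents. The Hu--Kriz splitting is one of $BP\R$-modules (through the ring section $BP\R\to M\R_{(2)}$), and the $M\R$-module structure on $M\R_{(2)}$ that the splitting respects is the one twisted through $e$, which does not interact simply with $-\wedge_{M\R}B$. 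You do flag a compatibility concern, but the proposed fallback -- computing $\pi_\rost^{C_2}(B\wedge_{M\R}B)$ ``using the splitting'' -- runs into the same obstacle. To make the adjunction reduction bear fruit you would still need to establish $B\wedge_{M\R}B\simeq B$ by some other means, and the paper's telescope argument (used both for the present lemma and for Lemma \ref{lem:smash}) is that means.
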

\begin{proof}
Let $e\colon M\R \to M\R$ be the Quillen--Araki idempotent. Recall that 
\[B = \hcolim \left(M\R \xrightarrow{e} M\R \xrightarrow{e} \cdots\right).\]
Thus, 
\[\Z^B \simeq \holim \left(\cdots \xrightarrow{e^*} \Z^{M\R} \xrightarrow{e^*} \Z^{M\R}\right).\]
Hence,
\[
\Hom_{M\R}(B, \Z^B) \simeq \holim \left(\cdots \xrightarrow{e^*} \Hom_{M\R}(B,\Z^{M\R}) \xrightarrow{e^*} \Hom_{M\R}(B,\Z^{M\R})\right).
\]
As every $\Hom_{M\R}(B,\Z^{M\R})$ is equivalent to a holim over $\Hom_{M\R}(M\R, \Z^{M\R})\simeq \Z^{M\R}$, connected by $e^*$, we get that 
$\Hom_{M\R}(B,\Z^B)$ is the homotopy limit $\hlim_{\Z^-\times \Z^-}\Z^{M\R}$, where $\Z^-$ denotes the poset of negative numbers and all connecting maps are $e^*$. This is equivalent to the homotopy limit indexed over the diagonal, which in turn is equivalent to the homotopy limit indexed over a vertical.
\end{proof}

Recall that we want to show that $X = \Sigma^{2\rho-4}\Z^B$ is equivalent to $\kappa_{M\R}(\underline{\vb},B)$. The reason for the choice of suspension is essentially (as before) that $H\underline{\Z} \simeq \Sigma^{2\rho-4}H\underline{\Z}^*$.

\begin{prop}\label{Cor:QuotientX}
For a sequence $\ul = (l_1,l_2, \dots)$, we have an isomorphism
\[
 \underline{[\Sigma^{*\rho}B/\uvb^{\ul}, X]}^{C_2}_{M\R} \cong \Zu \tensor_{\Z} (\Z[\vb_1,\vb_2,\dots]/\uvb^{\ul})^*,
\] 
natural with respect to the maps $B/\uvb^{\ul} \to \Sigma^{-|\ul'-\ul|\rho}B/\uvb^{\ul'}$ in the defining homotopy colimit for $\kappa_{M\R}(\uvb; B)$ for $\ul' = (l_1',l_2',\dots)$ a sequence with $l_i' \geq l_i$ for all $i\geq 1$. 
\end{prop}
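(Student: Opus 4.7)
The plan is to reduce the $M\R$-linear $\Hom$-computation to a non-$M\R$-linear Anderson-dual computation, which is handled directly by Lemma \ref{lem:AndersonQuot}. The bridge between the two is supplied by the preceding lemma, which allows us to ``absorb'' one copy of $B$ into $\Z^B$.

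The key intermediate claim is that for every finitely supported sequence $\ul$ there is a canonical equivalence of $M\R$-modules
$$\Hom_{M\R}(B/\uvb^{\ul}, \Z^B) \simeq \Z^{B/\uvb^{\ul}},$$
natural in $\ul$. The base case $\ul=(0,0,\dots)$ is exactly the preceding lemma. For the inductive step I would pick any index $i$ with $l_i>0$, let $\ul'$ be the sequence obtained from $\ul$ by replacing $l_i$ by $0$, and apply the contravariant functor $\Hom_{M\R}(-,\Z^B)$ to the cofibre sequence
$$\Sigma^{l_i|\vb_i|}B/\uvb^{\ul'} \xrightarrow{\vb_i^{l_i}} B/\uvb^{\ul'} \to B/\uvb^{\ul}.$$
The inductive hypothesis identifies the resulting fibre sequence with
$$\Hom_{M\R}(B/\uvb^{\ul},\Z^B) \to \Z^{B/\uvb^{\ul'}} \xrightarrow{\vb_i^{l_i}} \Sigma^{-l_i|\vb_i|}\Z^{B/\uvb^{\ul'}},$$
and because $\Z^{(-)}$ is contravariant and turns cofibre sequences into fibre sequences, the leftmost term is identified with $\Z^{B/\uvb^{\ul}}$, as required.

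Given this equivalence, the rest is reading off homotopy groups. Since $X=\Sigma^{2\rho-4}\Z^B$ we obtain
$$[\Sigma^{k\rho}B/\uvb^{\ul}, X]^{C_2}_{M\R} \cong \pi^{C_2}_{(k-2)\rho+4}\Z^{B/\uvb^{\ul}},$$
and Lemma \ref{lem:AndersonQuot} applied with its index $*=k-2$ identifies this with the degree-$k\rho$ piece of $\Zu\tensor_{\Z}(\Z[\vb_1,\vb_2,\dots]/\uvb^{\ul})^*$. The Mackey-functor refinement follows by the same argument, since the equivalence above is one of $M\R$-modules and therefore of $C_2$-spectra. Naturality with respect to the maps $B/\uvb^{\ul}\to\Sigma^{-|\ul'-\ul|\rho}B/\uvb^{\ul'}$ in the defining homotopy colimit is inherited from the naturality of Anderson duality and of the inductive construction above.

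There is no serious obstacle in this plan. The only point requiring genuine care is matching the gradings between Lemma \ref{lem:AndersonQuot} and the proposition: the shift $|u^{-1}|=2\rho-4$ in that lemma is exactly the suspension appearing in the definition of $X$, which is precisely why $X$ was twisted by $\Sigma^{2\rho-4}$ in the first place. Morally, the whole proof is the slogan that an $M\R$-linear map out of $B/\uvb^{\ul}$ into $\Z^B$ is the same as an Anderson-dualising functional on $B/\uvb^{\ul}$ itself, because the spare copy of $B$ in $\Z^B$ absorbs the extra $B$-structure.
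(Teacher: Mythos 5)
Your argument is essentially the one the paper gives: the paper likewise establishes $\Hom_{M\R}(B/\uvb^{\ul},\Z^B)\simeq \Z^{B/\uvb^{\ul}}$ from the preceding lemma by noting that the functors $\Z^{?}$ and $\Hom_{M\R}(?,\Z^B)$ behave identically on cofibre sequences (and filtered colimits), and then invokes Lemma \ref{lem:AndersonQuot}. Your explicit one-generator-at-a-time induction is just a spelled-out version of that same cofibre-sequence argument, so this is the same proof in slightly more detail.
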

\begin{proof}
The last lemma implies that we also have
$$\Z^{B/\uvb^{\ul}} \simeq \Hom_{M\R}(B/\uvb^{\ul}, \Z^B)$$
as the functors $\Z^{?}$ and $\Hom_{M\R}(?, \Z^B)$ behave the same way with respect to cofibre sequences and (filtered) homotopy colimits. Then we just have to apply Lemma \ref{lem:AndersonQuot}. 
\end{proof}

\subsection{The theorem}
We first describe the homotopy groups of $X = \Sigma^{2\rho-4}\Z^B$ with $B=BP\R$ as before. 
By Lemma \ref{lem:AndersonQuot}, we get
$$\underline{\pi}^{C_2}_{*\rho}X \cong \Hom(\underline{\pi}_{(*+2)\rho-4}^{C_2}B, \Z) \cong \underline{\Z} \tensor_\Z \Z[\vb_1,\vb_2,\dots]^{*}.$$ 

Let $\underline{l}$ be a sequence with only finitely many nonzero entries. By Proposition \ref{Cor:QuotientX}, the element $(\underline{\vb}^{\underline{l}-\underline{1}})^*$ induces a corresponding $M\R$-linear map $\Sigma^{-|\underline{l}-\underline{1}|}B/\vb^{\underline{l}} \to X$, which is unique up to homotopy. By this uniqueness, these maps are also compatible for comparable $\underline{l}$. By Remark \ref{rmk:hocolim}, this induces a map
\[\kappa_{M\R}(\underline{\vb},B) = \hcolim_{\underline{l}}\left(\Sigma^{-|\underline{l}-\underline{1}|}B/\vb^{\underline{l}}\right) \;\xrightarrow{h}\, X,\]
where $\underline{l}$ ranges over all sequences where only finitely many $l_i$ are nonzero.

\begin{thm}\label{Thm:BPDuality}
This map $h\colon \kappa_{M\R}(\underline{\vb}; B) \to X$ is an equivalence of $\Ctwo$-spectra. 
\end{thm}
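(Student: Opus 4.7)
The plan is to apply Lemma \ref{lem:regrep}, which reduces proving $h$ is an equivalence of $C_2$-spectra to three conditions: an isomorphism on $\underline{\pi}_{k\rho}^{C_2}$, an isomorphism on $\pi_k^e$, and injectivity on $\pi_{k\rho-1}^{C_2}$, for every $k \in \Z$. The first step is to show that the source $\kappa_{M\R}(\underline{\vb}; B)$ is strongly even, which trivializes the third condition.

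Strong evenness of the source follows from Corollary \ref{Cor:crucial}: each finite quotient $B/\underline{\vb}^{\underline{l}}$ is strongly even, suspension by multiples of $\rho$ preserves this property, and filtered colimits of Mackey functors preserve both the vanishing in gradings of the form $k\rho-1$ and the constancy of $\underline{\pi}_{k\rho}^{C_2}$. For the isomorphism on $\underline{\pi}_{k\rho}^{C_2}$, recall from the discussion preceding the theorem that $\underline{\pi}_{k\rho}^{C_2} X \cong \underline{\Z} \otimes_{\Z} \Z[\vb_1,\vb_2,\ldots]^{*}$ in grading $k\rho$. On the other hand, since $B/\underline{\vb}^{\underline{l}}$ is strongly even, Corollary \ref{Cor:QuotientBP} gives
\[
\pi_{k\rho}^{C_2}\bigl(\Sigma^{-|\underline{l}-\underline{1}|}B/\underline{\vb}^{\underline{l}}\bigr) \;\cong\; \Z[\vb_1,\vb_2,\ldots]/\underline{\vb}^{\underline{l}}
\]
in polynomial degree $(k+|\underline{l}-\underline{1}|)\rho$, and the transition maps in the colimit act as multiplication by $\underline{\vb}^{\underline{l}'-\underline{l}}$. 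By the construction of $h$ together with Proposition \ref{Cor:QuotientX}, the generator $1$ at stage $\underline{l}$ maps to the dual basis element $(\underline{\vb}^{\underline{l}-\underline{1}})^{*}$ in $\Z[\vb_1,\vb_2,\ldots]^{*}$, and these identifications are compatible with the transition maps. Since every dual basis element $(\underline{\vb}^{\underline{j}})^{*}$ is realized at stage $\underline{l} = \underline{j}+\underline{1}$, the induced map on $\underline{\pi}_{k\rho}^{C_2}$ is an isomorphism.

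The isomorphism on $\pi_k^e$ is by the same argument, restricted to the trivial subgroup: $h$ restricts to the non-equivariant map $\kappa_{MU}(v_1,v_2,\ldots;BP) \to \Sigma^{-4}\Z^{BP}$, and the parallel colimit computation identifies $\pi_*\Sigma^{-4}\Z^{BP}$ with the continuous dual $\Z[v_1,v_2,\ldots]^{*}$ in the analogous way. The main technical obstacle lies in the second paragraph: one must verify carefully that the shift $|\underline{l}-\underline{1}|$, the multiplication-by-$\underline{\vb}^{\underline{l}'-\underline{l}}$ transition maps, and the dual-basis identification furnished by Proposition \ref{Cor:QuotientX} all fit together coherently, so that the colimit over the directed poset of finite sequences $\underline{l}$ really produces $\Z[\vb_1,\vb_2,\ldots]^{*}$ and matches the description of $\underline{\pi}_{*\rho}^{C_2}X$ term-by-term. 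Once this is established, Lemma \ref{lem:regrep} completes the proof.
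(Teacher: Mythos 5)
Your proposal follows essentially the same route as the paper: both rely on Lemma \ref{lem:regrep}, using Corollary \ref{Cor:crucial} (and the preceding corollaries and Proposition \ref{Cor:QuotientX}) to identify the map on $\underline{\pi}_{*\rho}^{C_2}$ as the colimit-to-dual isomorphism, to get the underlying even isomorphism from the Mackey functor, and to ensure the vanishing of $\pi^{C_2}_{k\rho-1}$ of the source (equivalently, its strong evenness, which you make explicit where the paper leaves it implicit).
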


\begin{proof}
By Corollary \ref{Cor:crucial}, we get on $\underline{\pi}_{*\rho}$-level
\[\clim_{\underline{l}} \Sigma^{-|\underline{l}-\underline{1}|}\underline{\Z}[\vb_1,\vb_2,\dots]/(\vb_1^{l_1},\dots) \to \underline{\Z} \tensor_\Z \Z[\vb_1,\dots]^*,\]
which is an isomorphism. The odd underlying homotopy groups of both sides are zero. To apply Lemma \ref{lem:regrep}, it is left to show that $\pi^{\Ctwo}_{k\rho-1}\kappa_{M\R}(\underline{\vb}; B) = 0$ for all $k\in\Z$. Again by Corollary \ref{Cor:crucial}, it is even true that $\pi^{\Ctwo}_{k\rho-1}(B/\vb^{\underline{l}})$ is zero for all $k\in\Z$ and all sequences $\underline{l}$.
\end{proof}

\section{Duality for regular quotients}
The goal of this section is to prove our main result Theorem \ref{thm:main}:
\begin{thm}\label{Thm:QuotientDuality}
Let $(m_1,m_2,\dots)$ be a sequence of nonnegative integers with only finitely many entries bigger than $1$. Denote by $\underline{\vb}'$ the sequence of $\vb_i$ in $\pi^{\Ctwo}_{\rost}M\R$ such that $m_i = 0$ and by $m'$ the sum of all $(m_i-1)|\vb_i|$ for $m_i> 1$. Then there is an equivalence
$$\Z^{B/\underline{\vb}^{\underline{m}}} \simeq \Sigma^{-m'+4-2\rho}\kappa_{M\R}(\underline{\vb}'; B/\underline{\vb}^{\underline{m}}).$$
\end{thm}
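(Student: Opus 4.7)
The plan is to mirror the proof of Theorem \ref{Thm:BPDuality}, replacing $B$ by $M = B/\underline{\vb}^{\underline{m}}$ and the full sequence $\underline{\vb}$ by the subsequence $\underline{\vb}'$ consisting of those $\vb_i$ with $m_i=0$. Set $X' := \Sigma^{m'-4+2\rho}\Z^M$; the goal is to construct an equivalence $h\colon \kappa_{M\R}(\underline{\vb}'; M) \stackrel{\simeq}{\longrightarrow} X'$ of $C_2$-spectra.

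First I would promote the identification of Proposition \ref{Cor:QuotientX} to the quotient setting. For any finite sequence $\underline{l}$ supported on indices with $m_i=0$, the quotient $M/\underline{\vb}'^{\underline{l}}$ is itself a regular quotient of $B$, namely $B/\underline{\vb}^{\underline{n}}$ where $\underline{n}$ combines $\underline{m}$ and $\underline{l}$ on disjoint supports. Hence Lemma \ref{lem:AndersonQuot} computes $\underline{\pi}^{C_2}_{*\rho+4}\Z^{M/\underline{\vb}'^{\underline{l}}}$ directly, and the homotopy-limit argument preceding Proposition \ref{Cor:QuotientX} (expressing $B$, and hence each $M/\underline{\vb}'^{\underline{l}}$, as a retract of a filtered homotopy colimit of copies of $M\R$ via the Quillen idempotent) identifies $\Z^{M/\underline{\vb}'^{\underline{l}}}$ with $\Hom_{M\R}(M/\underline{\vb}'^{\underline{l}}, \Z^M)$. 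Combining these yields
\[
 \underline{[\Sigma^{-m'-|\underline{l}-\underline{1}|}M/\underline{\vb}'^{\underline{l}},\; X']}^{C_2}_{M\R}\;\cong\;\underline{\Z}\otimes_{\Z}\bigl(\Z[\vb_1,\vb_2,\dots]/(\underline{\vb}^{\underline{m}},\underline{\vb}'^{\underline{l}})\bigr)^{*},
\]
natural with respect to the bonding maps of the Koszul colimit. The shift $m'-4+2\rho$ is tuned precisely so that the distinguished top dual class sits in a consistent Mackey-functor grading for every $\underline{l}$.

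With these identifications in hand, I would choose for each $\underline{l}$ the unique homotopy class of $M\R$-linear maps realizing this top class. Uniqueness forces these to be compatible under the bonding maps up to homotopy, so as in Remark \ref{rmk:hocolim} and the proof of Theorem \ref{Thm:BPDuality} they assemble into a map $h\colon \kappa_{M\R}(\underline{\vb}'; M)\to X'$. To conclude that $h$ is an equivalence via Lemma \ref{lem:regrep}, Corollary \ref{Cor:crucial} shows that each $M/\underline{\vb}'^{\underline{l}}$ is strongly even, whence the source of $h$ is even and $\pi^{C_2}_{k\rho-1}$ vanishes there for all integers $k$. On $\underline{\pi}^{C_2}_{*\rho}$, Corollary \ref{Cor:crucial} together with Corollary \ref{Cor:QuotientBP} computes the source as
\[
 \colim_{\underline{l}}\Sigma^{-|\underline{l}-\underline{1}|}\underline{\Z}[\vb_1,\vb_2,\dots]/(\underline{\vb}^{\underline{m}},\underline{\vb}'^{\underline{l}})\;\cong\;\underline{\Z}\otimes_{\Z}\bigl(\Z[\vb_1,\vb_2,\dots]/\underline{\vb}^{\underline{m}}\bigr)^{*},
\]
which by construction of $h$ matches the corresponding computation of $\underline{\pi}^{C_2}_{*\rho}X'$; on underlying homotopy groups the statement reduces to the classical duality for non-equivariant regular quotients of $BP$.

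The main obstacle is the representation-graded bookkeeping: one must verify that the shift $-m'+4-2\rho$ correctly places the top dual class and that, although several monomials of a given $\rho$-degree may contribute to the Hom Mackey functor once we work in a proper quotient, the canonical choice at each finite stage is the one compatible with the colimit's bonding maps. In particular, the interplay between killing the fixed powers $\vb_i^{m_i}$ (which contributes the $m'$ shift) and the standard Anderson-duality shift $-2\rho+4$ inherited from Lemma \ref{lem:Zu} must be tracked carefully throughout the argument.
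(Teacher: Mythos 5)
Your plan diverges genuinely from the paper: rather than bootstrapping from Theorem~\ref{Thm:BPDuality}, you try to run the same Hom-group computation directly with $M=B/\underline{\vb}^{\underline{m}}$ in place of $B$. Unfortunately, the central identification you invoke does not hold. You claim that the argument preceding Proposition~\ref{Cor:QuotientX} ``identifies $\Z^{M/\underline{\vb}'^{\underline{l}}}$ with $\Hom_{M\R}(M/\underline{\vb}'^{\underline{l}}, \Z^M)$.'' But that argument depends in an essential way on the fact that $B$ is a retract of $M\R$ via the Quillen--Araki idempotent, so that $B\otimes_{M\R}B\simeq B$ and hence $\Hom_{M\R}(N,\Z^B)\simeq \Z^{N\otimes_{M\R}B}\simeq\Z^N$ for $N$ built from $B$. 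For a quotient $M=B/\underline{\vb}^{\underline{m}}$ with some $m_i>0$ the analogue fails: $M\otimes_{M\R}M\not\simeq M$, because each factor $B/\vb_i^{m_i}\otimes_{M\R}B/\vb_i^{m_i}$ has extra cells coming from the self-tensor, and the relevant cofibre sequence does not split at exponent $m_i$ (the nilpotence of Lemma~\ref{Lem:Nilpotence} only gives triviality of $\vb_i^{3m_i}$, so splitting appears only for $l\geq 3m_i$ as in Lemma~\ref{lem:smash}, not for $l=m_i$). Consequently $\Hom_{M\R}(M/\underline{\vb}'^{\underline{l}},\Z^M)\simeq\Z^{(M/\underline{\vb}'^{\underline{l}})\otimes_{M\R}M}$, which is the Anderson dual of a non-regular quotient (with the $\vb_i^{m_i}$ killed twice), and its $\underline{\pi}^{C_2}_{*\rho+4}$ is not what Lemma~\ref{lem:AndersonQuot} computes for the regular quotient $M/\underline{\vb}'^{\underline{l}}$.

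Because of this, the claimed Mackey functor $\underline{\Z}\otimes_\Z\bigl(\Z[\vb_1,\vb_2,\dots]/(\underline{\vb}^{\underline{m}},\underline{\vb}'^{\underline{l}})\bigr)^*$ is not the correct answer for your Hom groups, and the rest of the construction of $h$ loses its footing. The paper avoids this entirely: it first establishes Theorem~\ref{Thm:BPDuality} for $B$ itself, where the idempotent argument is available, then transports the result to quotients using Lemma~\ref{Lem:AndersonQuotient} (which gives $\Z^{Y/v}\simeq\Sigma^{-|v|-1}(\Z^Y)/v$ by a plain triangle argument, with no appeal to idempotence) together with Lemma~\ref{lem:smash}, whose homotopy colimit over $l\to\infty$ is precisely what absorbs the extra cells in $B/\vb_i^l\otimes_{M\R}B/\vb_i^{m_i}$. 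If you want a self-contained route mimicking Theorem~\ref{Thm:BPDuality} for quotients, you would need a replacement for that idempotence step, which I do not see.
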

Here and for the rest of the section we will implicitly localize everything at $2$ again. Before we prove the theorem, we need some preparation.

\begin{lemma}\label{Lem:AndersonQuotient}
Let $\underline{m} = (m_1,\dots)$ be a sequence of nonnegative integers with a finite number $n$ of nonzero entries. Then 
\[\Z^{B/\underline{\vb}^{\underline{m}}} \simeq \Sigma^{-|\underline{m}|-n}(\Z^B)/\underline{\vb}^{\underline{m}}.\]
\end{lemma}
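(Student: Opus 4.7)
The strategy is to proceed by induction on $n$, the number of nonzero entries of $\ul{m}$, repeatedly applying Anderson duality $\Z^{(-)}$ to the defining cofibre sequence for each successive quotient. Since $\Z^{(-)}$ is a contravariant exact functor on $M\R$-modules, it converts the iterated cofibre describing $B/\uvb^{\ul{m}}$ into an iterated fibre, and the only work is to track suspensions carefully.

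List the nonzero indices of $\ul{m}$ as $i_1 < \cdots < i_n$, and set $M_0=B$, $M_j=M_{j-1}/\vb_{i_j}^{m_{i_j}}$ (in $M\R$-modules), so $M_n = B/\uvb^{\ul{m}}$. Similarly set $N_0 = \Z^B$ and $N_j = N_{j-1}/\vb_{i_j}^{m_{i_j}}$, so that $N_n = (\Z^B)/\uvb^{\ul{m}}$. I claim, by induction on $j$, that
\[ \Z^{M_j} \simeq \Sigma^{-\sum_{k\leq j}|\vb_{i_k}|m_{i_k} - j}\,N_j \]
as $M\R$-modules. For $j=0$ this is trivial. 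For the inductive step, apply $\Z^{(-)}$ to the cofibre sequence
\[ \Sigma^{|\vb_{i_j}|m_{i_j}} M_{j-1} \xrightarrow{\vb_{i_j}^{m_{i_j}}} M_{j-1} \to M_j \]
to obtain a fibre sequence of $M\R$-modules
\[ \Z^{M_j} \to \Z^{M_{j-1}} \xrightarrow{\vb_{i_j}^{m_{i_j}}} \Sigma^{-|\vb_{i_j}|m_{i_j}} \Z^{M_{j-1}}. \]
The cofibre of the right-hand map is, by a suspension shift, $\Sigma^{-|\vb_{i_j}|m_{i_j}}\bigl(\Z^{M_{j-1}}/\vb_{i_j}^{m_{i_j}}\bigr)$, so $\Z^{M_j} \simeq \Sigma^{-|\vb_{i_j}|m_{i_j}-1}\bigl(\Z^{M_{j-1}}/\vb_{i_j}^{m_{i_j}}\bigr)$. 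Feeding in the inductive hypothesis (and using that the equivalence is $M\R$-linear, so commutes with $\vb_{i_j}^{m_{i_j}}$-multiplication and hence with forming the cofibre) yields the claim. Taking $j=n$ gives $\sum_k |\vb_{i_k}|m_{i_k} = |\ul{m}|$ and the desired shift $-|\ul{m}| - n$.

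The only mildly delicate point is compatibility of module structures: one must check that the action of $\vb_{i_j}^{m_{i_j}}$ appearing in the Anderson-dualized fibre sequence agrees, under the inductive equivalence, with the $\vb_{i_j}^{m_{i_j}}$-action used to form $N_j$ from $N_{j-1}$. This is automatic because every spectrum in sight is an $M\R$-module, $\Z^{(-)}$ is a functor on $M\R$-modules, and all equivalences produced by the induction are $M\R$-linear; thus both actions agree with the unique $M\R$-module action of $\vb_{i_j}^{m_{i_j}}$.
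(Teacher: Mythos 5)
Your proof is correct and takes essentially the same approach as the paper: the paper likewise handles the one-variable case by dualizing the cofibre sequence $\Sigma^{|v|}Y \xrightarrow{v} Y \to Y/v$ to get $\Z^{Y/v} \simeq \Sigma^{-|v|-1}(\Z^Y)/v$, then iterates by induction. Your remarks on $M\R$-linearity make explicit a point the paper leaves implicit, but the argument is the same.
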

\begin{proof}
Let $Y$ be an arbitrary ($\Ctwo$-)spectrum and $\Sigma^{|v|} Y \xrightarrow{v} Y \to Y/v$ be a cofibre sequence. Then we have an induced cofibre sequence
\[\Z^{Y/v} \to \Z^Y \xrightarrow{v} \Sigma^{-|v|} \Z^Y \to \Sigma \Z^{Y/v} \simeq \Sigma^{-|v|}(\Z^{Y})/v.\]
Thus, $\Z^{Y/v} \simeq \Sigma^{-|v|-1}(\Z^{Y})/v$. The claim follows by induction. 
\end{proof}

\begin{lemma}\label{Lem:Nilpotence}
The element $\vb_i^{3k}$ acts trivially on $B/\vb_i^k$ for every $i\geq 1, k\geq 1$.
\end{lemma}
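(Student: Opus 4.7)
The plan is to reduce to the case $k=1$ by a filtration argument and then to verify that $\vb_i^3$ is nullhomotopic as a self-map of $B/\vb_i$.

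For the reduction, the octahedral axiom applied to the factorisation $\vb_i^k=\vb_i\cdot\vb_i^{k-1}$ produces a cofibre sequence of $M\R$-modules
\[\Sigma^{(k-1)|\vb_i|}\,B/\vb_i \longrightarrow B/\vb_i^k \longrightarrow B/\vb_i^{k-1},\]
and iterating yields a filtration $0=F_0\to F_1\to\cdots\to F_k=B/\vb_i^k$ with subquotients shifted copies of $B/\vb_i$. Assume inductively that $\vb_i^{3(k-1)}$ is null on $F_{k-1}$. Since $\vb_i^3$ vanishes on the top quotient $F_k/F_{k-1}$ (a shift of $B/\vb_i$), the self-map $\vb_i^3\colon F_k\to F_k$ lifts to some $\tilde f\colon\Sigma^{3|\vb_i|}F_k\to F_{k-1}$. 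Using the $M\R$-linearity of $\tilde f$ (so that multiplication by the central class $\vb_i$ commutes through $\tilde f$), one rewrites $\vb_i^{3k}=\vb_i^{3(k-1)}\cdot\vb_i^3$ as $\mathrm{incl}\circ(\vb_i^{3(k-1)}|_{F_{k-1}})\circ(\text{suspension of }\tilde f)$, which vanishes by the inductive hypothesis.

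For the base case, I analyse $\vb_i^3\colon\Sigma^{3|\vb_i|}B/\vb_i\to B/\vb_i$ via the defining cofibre sequence $\Sigma^{|\vb_i|}B\xrightarrow{\vb_i}B\xrightarrow{\pi}B/\vb_i\xrightarrow{\partial}\Sigma^{|\vb_i|+1}B$. Applying $\Hom_{M\R}(-,B/\vb_i)$ yields the exact sequence
\[\pi^{C_2}_{3|\vb_i|+1}(B/\vb_i)\xrightarrow{\vb_i}\pi^{C_2}_{4|\vb_i|+1}(B/\vb_i)\xrightarrow{\partial^*}[\Sigma^{3|\vb_i|}B/\vb_i,B/\vb_i]_{M\R}\xrightarrow{\pi^*}\pi^{C_2}_{3|\vb_i|}(B/\vb_i).\]
The image of $\vb_i^3\cdot\mathrm{id}_{B/\vb_i}$ under $\pi^*$ is $\pi(\vb_i^3)=0$ because $\vb_i^3\in(\vb_i)\subset\pi^{C_2}_\bigstar B$, so $\vb_i^3$ lifts to a class $\alpha\in\pi^{C_2}_{4|\vb_i|+1}(B/\vb_i)$. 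Proposition \ref{prop:BPBound} together with Corollary \ref{Cor:QuotientBP} force $\pi^{C_2}_{*\rho+1}(B/\vb_i)=\pi^{C_2}_{*\rho+1}(B)/\vb_i$, on which $\vb_i$-multiplication is zero, so $\alpha$ is uniquely determined. By Corollary \ref{cor:rho+}, this target is $\F_2\{a\}\tensor\Z[\vb_1,\vb_2,\dots]/\vb_i$ in regular-representation degree $4|\vb_i|$, a finite-dimensional $\F_2$-vector space.

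The main obstacle is showing $\alpha=0$. The strategy is to interpret $\alpha$ as a Toda-bracket-type element built from the relations $\pi\cdot\vb_i=0$ and $\vb_i\cdot\partial=0$, and then compute it explicitly using the ring structure of $\pi^{C_2}_\bigstar BP\R$ catalogued in Appendix \ref{Appendix}. Given how restricted the classes in $\F_2\{a\}\tensor\Z[\vb_1,\vb_2,\dots]/\vb_i$ are in this specific degree, one expects that no non-trivial candidate survives the constraints imposed by naturality of the bracket against multiplication by $\vb_i$, forcing $\alpha=0$ and therefore $\vb_i^3$ to be nullhomotopic.
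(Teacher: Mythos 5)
The filtration-plus-base-case strategy is a legitimate alternative to the paper's argument, but as written it has a genuine gap: the base case is not actually proved. You reduce the problem to showing $\alpha=0$, observe that $\partial^*$ is injective, and then say that ``one expects'' $\alpha$ to vanish. But notice that the injectivity of $\partial^*$ makes this circular: since $\partial^*(\alpha)=\vb_i^3\cdot\mathrm{id}_{B/\vb_i}$ and $\partial^*$ is injective, $\alpha=0$ is \emph{equivalent} to the claim $\vb_i^3\cdot\mathrm{id}_{B/\vb_i}=0$ you are trying to establish. Knowing that $\alpha$ lives in a finite $\F_2$-vector space does not rule out nonzero candidates — the graded piece of $\F_2\{a\}\tensor\Z[\vb_1,\vb_2,\dots]/\vb_i$ in degree $4|\vb_i|+1$ is in general not zero. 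You need some structural reason that identifies $\alpha$ with something you can compute, and the Toda bracket heuristic is never cashed out.

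The missing step can be supplied cheaply, but you should not lift $\vb_i^3$ directly. Instead, lift $\vb_i\cdot\mathrm{id}_{B/\vb_i}$: since $\pi^*(\vb_i\cdot\mathrm{id})=0$ (by the same argument you gave for $\vb_i^3$), there is $\beta\in\pi^{C_2}_{2|\vb_i|+1}(B/\vb_i)$ with $\partial^*\beta=\vb_i\cdot\mathrm{id}$. Because $\partial^*$ is a map of $\pi^{C_2}_\bigstar(B)$-modules, $\vb_i^3\cdot\mathrm{id}=\vb_i^2\cdot\partial^*\beta=\partial^*(\vb_i^2\beta)$, so your $\alpha$ is identified as $\vb_i^2\beta$. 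Now $\beta$ lies in a degree of the form $\ast\rho+1$, where by Corollary \ref{cor:rho+} the homotopy is $\F_2\{a\}\tensor\Z[\vb_1,\dots]/\vb_i$ and $\vb_i$ acts as zero; hence $\vb_i\beta=0$ and a fortiori $\alpha=\vb_i^2\beta=0$. This is precisely the paper's argument specialized to $k=1$ (the paper factors the map $\vb_i^k\cdot\mathrm{id}_{B/\vb_i^k}$ through $\Sigma^{2k|\vb_i|+1}B$, which is the same lift, and then kills the resulting homotopy class by $\vb_i^{2k}$ using the short exact sequence for $\pi^{C_2}_\bigstar(B/\vb_i^k)$ — for all $k$ at once, with no induction).

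Two smaller remarks. First, the cofibre sequence you wrote, $\Sigma^{(k-1)|\vb_i|}B/\vb_i\to B/\vb_i^k\to B/\vb_i^{k-1}$, gives a \emph{descending} tower, not an ascending filtration $F_0\to\cdots\to F_k$ in which $F_{k-1}$ supports the inductive hypothesis. You want the other octahedral sequence $\Sigma^{|\vb_i|}B/\vb_i^{k-1}\to B/\vb_i^k\to B/\vb_i$, which sets $F_{k-1}=\Sigma^{|\vb_i|}B/\vb_i^{k-1}$ so that the inductive hypothesis applies and the top quotient is $B/\vb_i$; the $\tilde f$ argument is then correct. Second, once the base case is done correctly, your induction proves the lemma, but the paper's direct argument is more economical: it avoids the reduction entirely and requires no information about $\pi^{C_2}_{\ast\rho+1}$, only the elementary short exact sequence computing the homotopy of a cofibre.
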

\begin{proof}
By the commutativity of the diagram
\[\xymatrix{
\Sigma^{k|\vb_i|}B \ar[d]^{\vb_i^k}\ar[r]& \Sigma^{k|\vb_i|}B/\vb_i^k \ar[d]^{\vb_i^k} \ar[d]\\
B \ar[r] & B/\vb_i^k }
\]
we see that the composite $\Sigma^{k|\vb_i|} B \to \Sigma^{k|\vb_i|} B/\vb_i^k \xrightarrow{\vb_i^k} B/\vb_i^k$ is zero, so that the latter map factors over an $M\R$-linear map $\Sigma^{2k|\vb_i|+1}B \to B/\vb_i^k$. As $[\Sigma^{2k|\vb_i|+1}B, B/\vb_i^k]_{M\R}$ is a retract of $[\Sigma^{2k|\vb_i|+1}M\R, B/\vb_i^k]_{M\R} \cong \pi_{2k|\vb_i|+1}^{C_2}B/\vb_i^k$, we just have to show that $\vb_i^{2k}x = 0$ for every $x\in \pi_{2k|\vb_i|+1}B/\vb_i^k$. 

We have a short exact sequence
\[0 \to (\pi_\bigstar^{\Ctwo}B)/\vb_i^k \to \pi_\bigstar^{\Ctwo}(B/\vb_i^k)\to \left\{\pi^{\Ctwo}_{\bigstar -k|\vb_i|-1} B\right\}_{\vb_i^k} \to 0.\]
As $\vb_i^k x$ clearly maps to zero, it is the image of a $y\in (\pi_\bigstar^{\Ctwo}B)/\vb_i^k$. But $\vb_i^ky = 0$.
\end{proof}

\begin{lemma}\label{lem:smash}
 We have $$B/\vb_i^l \tensor_{M\R} B/\vb_j^m \simeq B/(\vb_i^l,\vb_j^m).$$
 Furthermore, there is an equivalence
 \[\hcolim_l \Sigma^{-(l-1)|\vb_i|}B/\vb_i^l\tensor_{M\R} B/\vb_i^m \simeq \Sigma^{|\vb_i| +1}B/\vb_i^m\]
 of $M\R$-modules if $m\geq 1$.
\end{lemma}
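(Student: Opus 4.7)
The overall plan is straightforward once Lemma \ref{Lem:Nilpotence} and Lemma \ref{lem:Koszul} are in hand. The lemma factors into a formal bookkeeping step (part 1) and a nilpotence–Koszul argument (part 2).

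For part 1, the plan is to iterate the defining cofibre sequences. Writing $B/\vb_i^l$ as the $M\R$-module cofibre of multiplication by $\vb_i^l$ on $B$, and analogously for $\vb_j^m$, the symmetric monoidal structure on $M\R$-modules lets one perform the two quotients in either order and yields the iterated $M\R$-cofibre on $B$. Both $B/\vb_i^l \tensor_{M\R} B/\vb_j^m$ (read so that only one copy of $B$ enters the product, which is forced by the definitions) and $B/(\vb_i^l,\vb_j^m)$ are identified with this iterated cofibre, giving the asserted equivalence.

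For part 2, the plan is to reduce to Koszul complexes and then invoke nilpotence. Substituting part 1 into the colimit and pulling the constant factor $B/\vb_i^m$ out (since $- \tensor_{M\R} B/\vb_i^m$ commutes with filtered homotopy colimits) gives
\[\hcolim_l \Sigma^{-(l-1)|\vb_i|} B/\vb_i^l \tensor_{M\R} B/\vb_i^m \simeq B/\vb_i^m \tensor_{M\R} \hcolim_l \Sigma^{-(l-1)|\vb_i|} M\R/\vb_i^l = B/\vb_i^m \tensor_{M\R} \kappa_{M\R}(\vb_i).\]
Applying Lemma \ref{lem:Koszul} to recognise $\kappa_{M\R}(\vb_i) \simeq \Sigma^{|\vb_i|+1}\Gamma_{\vb_i} M\R$, this rewrites as $\Sigma^{|\vb_i|+1}\Gamma_{\vb_i}(B/\vb_i^m)$. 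It therefore suffices to show $\Gamma_{\vb_i}(B/\vb_i^m) \simeq B/\vb_i^m$, equivalently that $(B/\vb_i^m)[\vb_i^{-1}] \simeq 0$. This follows directly from Lemma \ref{Lem:Nilpotence}, which asserts that $\vb_i^{3m}$ acts trivially on $B/\vb_i^m$, so that $\vb_i$ is nilpotent and its localisation vanishes.

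The principal obstacle I anticipate is the notational precision in part 1 rather than any substantive computation. Since $BP\R$ is not known to be an $E_\infty$ $M\R$-algebra, one must be disciplined about whether ``$B/\vb_i^l \tensor_{M\R} B/\vb_j^m$'' is parsed as the iterated $M\R$-module cofibre of two self-maps on a single copy of $B$, rather than as the derived smash of two independent copies (which would introduce a spurious $B \tensor_{M\R} B$ factor). The content of the lemma is the former reading, and once this convention is fixed part 2 is purely formal, using only Lemma \ref{lem:Koszul} and the nilpotence of $\vb_i$ on $B/\vb_i^m$ from Lemma \ref{Lem:Nilpotence}.
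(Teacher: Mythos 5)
Your argument for the first equivalence has a genuine gap: the statement is not a bookkeeping formality but precisely the collapse $B \tensor_{M\R} B \simeq B$, which you neither prove nor cite. By definition $B/\vb_i^l \tensor_{M\R} B/\vb_j^m$ is the derived smash of two independent $M\R$-modules; unwinding the cofibres it is $(B \tensor_{M\R} B) \tensor_{M\R} M\R/\vb_i^l \tensor_{M\R} M\R/\vb_j^m$, whereas the right-hand side $B/(\vb_i^l,\vb_j^m)$ is, by the paper's conventions, the iterated cofibre $B \tensor_{M\R} M\R/\vb_i^l \tensor_{M\R} M\R/\vb_j^m$. Your parenthetical ``read so that only one copy of $B$ enters the product, which is forced by the definitions'' and the concluding paragraph's insistence on ``the former reading'' mistake the notation: nothing forces one copy of $B$, and if the left-hand side were simply an iterated cofibre on a single $B$, the first statement would be a tautology with no use. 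The genuine content is that $B \simeq \hcolim(M\R \xrightarrow{e} M\R \xrightarrow{e} \cdots)$ for the Quillen--Araki idempotent $e$, whence $B \tensor_{M\R} B \simeq B$; this is what the paper's proof supplies and what yours is missing. The application in the proof of Theorem \ref{Thm:QuotientDuality} makes this clear: there the equivalence is used exactly to trade an iterated quotient for a smash of independent factors, so that a multi-indexed homotopy colimit can be taken one tensor factor at a time --- the ``spurious'' $B \tensor_{M\R} B$ factor is the whole point.

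Once $B \tensor_{M\R} B \simeq B$ is in hand, your treatment of the second equivalence is correct and is a genuinely different route from the paper's. You rewrite the colimit as $\kappa_{M\R}(\vb_i) \tensor_{M\R} B/\vb_i^m \simeq \Sigma^{|\vb_i|+1} \Gamma_{\vb_i}(B/\vb_i^m)$ via Lemma \ref{lem:Koszul}, then identify $\Gamma_{\vb_i}(B/\vb_i^m)$ with $B/\vb_i^m$ by observing, via Lemma \ref{Lem:Nilpotence}, that $\vb_i$ acts nilpotently so $(B/\vb_i^m)[\vb_i^{-1}]$ is contractible. The paper instead uses the same nilpotence to split, for $l \geq 3m$, the cofibre sequence defining $B/\vb_i^l \tensor_{M\R} B/\vb_i^m$ from $B/\vb_i^m$, and then tracks the splittings through the colimit by hand. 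Your route is cleaner and makes the role of derived $\vb_i$-power torsion transparent; the paper's is more elementary in that it avoids invoking Lemma \ref{lem:Koszul}. Both reduce to the same two inputs --- nilpotence and the idempotent collapse $B \tensor_{M\R} B \simeq B$ --- and the latter is where your proof of the first equivalence needs repair.
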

\begin{proof}
We have  $$B\tensor_{M\R} B \simeq \hcolim (B\xrightarrow{e} B \xrightarrow{e} \cdots) \simeq B,$$
where $e$ denotes again the Quillen--Araki idempotent, 
and thus also
$$B/\vb_i^l \tensor_{M\R} B/\vb_j^m \simeq B/(\vb_i^l,\vb_j^m).$$

Thus, the maps in the homotopy colimit in the lemma are induced by the following diagram of cofibre sequences:
\[\xymatrix{
\Sigma^{|\vb_i|}B/\vb_i^m \ar[r]^-{\vb_i^l}\ar[d]^{\mathrm{id}} & \Sigma^{-(l-1)|\vb_i|}B/\vb_i^m \ar[r]\ar[d]^{\vb_i} & \Sigma^{-(l-1)|\vb_i|}B/\vb_i^l\tensor_{M\R} B/\vb_i^m \ar[d] 
\\
\Sigma^{|\vb_i|}B/\vb_i^m \ar[r]^-{\vb_i^{l+1}} & \Sigma^{-l|\vb_i|}B/\vb_i^m \ar[r]& \Sigma^{-l|\vb_i|}B/\vb_i^{l+1}\tensor_{M\R} B/\vb_i^m 
}
\]
We can assume that the homotopy colimit only runs over $l\geq 3m$ so that by the last lemma the two cofibre sequences split and we get 
\[\Sigma^{-(l-1)|\vb_i|}B/\vb_i^l\tensor_{M\R} B/\vb_i^m \simeq \Sigma^{-(l-1)|\vb_i|}B/\vb_i^m \oplus \Sigma^{|\vb_i|+1}B/\vb_i^m.\]
The corresponding map 
\[\Sigma^{-(l-1)|\vb_i|}B/\vb_i^m \oplus \Sigma^{|\vb_i|+1}B/\vb_i^m \to \Sigma^{-l|\vb_i|}B/\vb_i^m \oplus \Sigma^{|\vb_i|+1}B/\vb_i^m\]
induces multiplication by $\vb_i$ on the first summand, the identity on the second plus possibly a map from the second summand to the first. 

Using this decomposition, it is easy to show that
\[\hcolim_{l} \Sigma^{-(l-1)|\vb_i|}B/\vb_i^l\tensor_{M\R} B/\vb_i^m\to \Sigma^{|\vb_i|+1}B/\vb_i^m\]
(defined by the projection on the second summand for $l\geq 3m$) is an equivalence. Indeed, on homotopy groups the map is clearly surjective. And if 
$$(x,y) \in \pi_\bigstar^{C_2}\Sigma^{-l|\vb_i|}B/\vb_i^m \oplus \pi_{\bigstar}^{C_2} \Sigma^{|\vb_i|+1}B/\vb_i^m $$
maps to $0 \in \pi_{\bigstar}^{C_2} \Sigma^{|\vb_i|+1}B/\vb_i^m$, then $y = 0$ and $(x,0)$ represents $0$ in the colimit because $\vb_i$ acts nilpotently. 
\end{proof}

\begin{proof}[of theorem]
As in the theorem, let $\underline{\vb}'$ be the sequence of $\vb_i$ such that $m_i = 0$ and also denote by $\underline{\vb}'' = (\vb_{i_1},\vb_{i_2},\dots)$ the sequence of $\vb_i$ such that $m_i \neq 0$. 

We begin with the case that $\underline{m}$ has only finitely many
nonzero entries (say $n$). By Lemma \ref{Lem:AndersonQuotient} we see that 
\[\Z^{B/\underline{\vb}^{\underline{m}}} \simeq \Sigma^{-|\underline{m}|-n}(\Z^B)/\underline{\vb}^{\underline{m}}.\]
Combining this with Theorem \ref{Thm:BPDuality}, we obtain
\begin{align*}
 \Z^{B/\underline{\vb}^{\underline{m}}} &\simeq \Sigma^{-|\underline{m}|-n+4-2\rho}\kappa_{M\R}(\underline{\vb}, B)/\underline{\vb}^{\underline{m}} \\
					 &\simeq \Sigma^{-|\underline{m}|-n+4-2\rho}\kappa_{M\R}(\underline{\vb}',\kappa_{M\R}(\underline{\vb}'', B/\underline{\vb}^{\underline{m}}))
\end{align*}
Thus, we have to show that $\kappa_{M\R}(\underline{\vb}'', B/\underline{\vb}^{\underline{m}}) \simeq \Sigma^{|\vb_{i_1}|+\cdots |\vb_{i_n}|+n}B/\underline{\vb}^{\underline{m}}$.

By Lemma \ref{lem:smash}, we have an equivalence
\[(B/\underline{\vb}^{\underline{m}})/(\vb_{i_1}^{l_{i_1}}, \dots, \vb_{i_n}^{l_{i_n}}) \simeq (B/\vb_1^{l_{i_1}}\tensor_{M\R} B/\vb_1^{m_{i_1}}) \tensor_{M\R}\dots\tensor_{M\R} (B/\vb_n^{l_{i_n}}\tensor_{M\R} B/\vb_n^{m_{i_n}}).\]
If we let now the homotopy colimit run over the sequences $(l_{i_1},\dots, l_{i_n})$, we can do it separately for each tensor factor. Hence, we obtain again by Lemma \ref{lem:smash} an equivalence
\[\kappa_{M\R}(\underline{\vb}'', B/\underline{\vb}^{\underline{m}}) \simeq \Sigma^{|\vb_{i_1}|+\cdots |\vb_{i_n}| +n} B/\underline{\vb}^{\underline{m}}.\]
Thus, we have shown the theorem in the case that $\underline{m}$ has only finitely many nonzero entries. 

We prove the case that $\underline{m}$ has possibly infinitely many nonzero entries by a colimit argument. Define $\underline{m}_{\leq k}$ to be the sequence obtained from $\underline{m}$ by setting $m_{k+1}, m_{k+2}, \dots$ to zero. Then $B/\underline{m} \simeq \hcolim_k B/\underline{m}_{\leq k}$ and thus $\Z^{B/\underline{m}} \simeq \hlim_k \Z^{B/\underline{m}_{\leq k}}$. Denote by $\underline{\vb}'_{\leq k}$ the sequence of $\vb_i$ such that $m_i = 0$ or $i>k$ and by $m'_k$ the quantity $|\underline{m}_{\leq k}-\underline{1}|$; note that $m'_k = m'$ for $k$ large. 

We have to show that the map
\[h\colon \Sigma^{-m'}\kappa_{M\R}(\underline{\vb}', B/\underline{\vb}^{\underline{m}}) \to \hlim_k \Sigma^{-m'_k}\kappa_{M\R}(\underline{\vb}'_{\leq k}, B/\underline{\vb}^{\underline{m}_{\leq k}})\]
is an equivalence. This map is defined as follows: We know that 
$$\kappa_{M\R}(\underline{\vb}', B/\underline{\vb}^{\underline{m}}) \simeq \hcolim_k \kappa_{M\R}(\underline{\vb}', B/\underline{\vb}^{\underline{m}_{\leq k}}).$$
Using this, we get a map induced from the maps $\kappa_{M\R}(\underline{\vb}', B/\underline{\vb}^{\underline{m}_{\leq k}}) \to \kappa_{M\R}(\underline{\vb}'_{\leq k}, B/\underline{\vb}^{\underline{m}_{\leq k}})$ for $k$ large.

By Corollary \ref{Cor:crucial}, we can describe what happens on $\pi_{*\rho}^{C_2}$: The left hand side has as $\Z$-basis monomials of the form $\underline{\vb}^{\underline{n}}$ with only finitely many $n_i$ nonzero, $n_i \leq 0$ and $n_i\geq -m_i+1$ if $m_i\neq 0$. Likewise, 
$$\pi_{*\rho}^{C_2}\left(\Sigma^{m'_k}\kappa_{M\R}(\underline{\vb}'_{\leq k}, B/\underline{\vb}^{\underline{m}_{\leq k}})\right)$$
has as $\Z$-basis monomials of the form $\uvb^{\underline{n}}$ with only finitely many $n_i$ nonzero, $n_i \leq 0$ and $n_i\geq -m_i+1$ if $m_i\neq 0$ and $i\leq k$. The maps in the homotopy limit induce the obvious inclusion maps. Thus, clearly the map 
$$\pi_{*\rho}^{C_2}\left(\Sigma^{m'}\kappa_{M\R}(\underline{\vb}', B/\underline{\vb}^{\underline{m}})\right) \to \lim_k \pi_{*\rho}^{C_2}\left(\Sigma^{m'_k}\kappa_{M\R}(\underline{\vb}'_{\leq k}, B/\underline{\vb}^{\underline{m}_{\leq k}})\right)$$
is an isomorphism. 

It remains to show that $\lim^1_k\pi_{*\rho+1}^{C_2}\left(\Sigma^{m'_k}\kappa_{M\R}(\underline{\vb}'_{\leq k}, B/\underline{\vb}^{\underline{m}_{\leq k}})\right)$ vanishes. By Corollary \ref{cor:rho+}, every term has as $\F_2$-basis monomials of the form $a\uvb^{\underline{n}}$ with only finitely many $n_i$ nonzero, $n_i \leq 0$ and $n_i\geq -m_i+1$ if $m_i\neq 0$ and $i\leq k$. The system becomes stationary in every degree, more precisely if $\ast > -2^{k+1}$. Thus, the $\lim^1$-term vanishes. A similar $\lim^1$-argument also shows that the odd underlying homotopy groups of $\hlim_k \Sigma^{-m'_k}\kappa_{M\R}(\underline{\vb}'_{\leq k}, B/\underline{\vb}^{\underline{m}_{\leq k}})$ vanish.

As the source of $h$ is strongly even by Corollary \ref{Cor:crucial} and by the arguments we just gave the morphism $h$ induces an isomorphism on $\underline{\pi}_{*\rho}^{C_2}$ and on (odd) underlying homotopy groups, Lemma \ref{lem:regrep} implies that $h$ is an equivalence. 
\end{proof}

\vspace{1cm} 
\part{Local cohomology computations}\label{part:LocalCohomology}

In Part 4, we will describe the local cohomology spectral sequence in
some detail, and use it to understand the structure of the
$\HZu$-cellularization of $\BPRn$. The calculation is not difficult,
but on the other hand it is quite hard to follow because it is made up
of a large number of easy calculations which interact a little, and
because one needs to find a helpful way to follow the $RO(\Ctwo )$-graded
calculations. 

In contrast the case of $\kR$ is simple enough to be explained fully without
further scaffolding, and it introduces many of the structures that we will want to
highlight. Since it may also be of wider interest than the general
case of $\BPRn$ we devote Section \ref{sec:kRlcss} to it before
returning to the general case in Section \ref{sec:BPRnlcss}. Section \ref{sec:tmfotlcss} will then give a more detailed account in the interesting case $n=2$. 

Let us also recall some notation used throughout this part. As in the
rest of the paper we work 2-locally, except when speaking about  $\kR$
or $tmf_1(3)$ when fewer primes need be inverted. We often
write $\delta = 1-\sigma \in RO(C_2)$. We also recall the duality
conventions from Section \ref{sec:DAb}; in particular, for an
$\F_2$-vector space $V^{\vee}$ equals the dual vector space
$\Hom_{\F_2}(V,\F_2)$ and for a torsionfree $\Z$-module $M$, we set
$M^*=\Hom(M, \Z)$.

If $R$ is a $C_2$-spectrum, we will use the notation $R^{C_2}_\rost$ for its $RO(C_2)$-graded homotopy groups. We will also write $R^{hC_2}_{\rost}=\pi_{\bigstar}^{C_2}(R^{(EC_2)_+})$ and similarly for geometric fixed points and the Tate construction. 

\section{The local cohomology spectral sequence for $\protect \kR$}
\label{sec:kRlcss}

This section focuses entirely on the classical case of $\kR$, where
there are  already a number of features of interest. This gives a
chance to introduce some of the structures we will use for the general
case. 

\subsection{The local cohomology spectral sequence}

Gorenstein duality for $\kR$ (Corollary \ref{cor:kRGorD}) 
has  interesting implications for the coefficient ring, both
computationally and structurally. 
Writing  $\rost$ for $RO(\Ctwo)$-grading as usual, the local cohomology spectral
sequence \cite[Section 3]{G-M95} takes the following form. 

\begin{prop} 
\label{prop:kRlcss}
 There is a spectral sequence of $\kR^{\Ctwo}_{\rost}$-modules
$$H^*_{(\vb)}(\kR^{\Ctwo}_{\rost}) \Rightarrow \Sigma^{-4+\sigma} \pi^{\Ctwo}_{\rost}(\Z^{\kR}).$$
The homotopy of the Anderson dual in an arbitrary degree $\alpha \in
RO(C_2)$ lies in an exact sequence
$$0\lra \Ext_{\Z}(\kR^{\Ctwo}_{-\alpha -1}, \Z)\lra 
\pi^{\Ctwo}_{\alpha}(\Z^{\kR}) \lra  \Hom_{\Z}(\kR^{\Ctwo}_{-\alpha}, \Z)
\lra 0. $$
Since local cohomology is entirely in cohomological degrees 0 and 1,
the spectral sequence collapses to a short exact sequence 
$$0\lra \Sigma^{-1} H^1_{(\vb)}(\kR^{\Ctwo}_{\rost}) \lra \Sigma^{-4+\sigma}
\pi^{\Ctwo}_{\rost}(\Z^{\kR}) \lra H^0_{(\vb)}(\kR^{\Ctwo}_{\rost}) \lra
0. $$
This sequence is not split, even as abelian groups.  
\end{prop}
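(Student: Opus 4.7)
The plan is to establish the three parts of the proposition in sequence: existence of the spectral sequence with the stated abutment and Anderson-dual description, collapse, and the (substantive) non-splitness claim.

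Existence and abutment are formal. Applying the local cohomology spectral sequence \eqref{eqn:localcohomology} to the $\kR$-module $\kR$ with the principal ideal $J=(\vb)$ yields a spectral sequence of $RO(C_2)$-graded $\kR^{C_2}_\rost$-modules with $E_2$-term $H^*_{(\vb)}(\kR^{C_2}_\rost)$ converging to $\pi^{C_2}_\rost\Gamma_\vb\kR$. By Gorenstein duality (Corollary \ref{cor:kRGorD}) this target agrees with $\Sigma^{-4+\sigma}\pi^{C_2}_\rost\Z^{\kR}$, and the displayed exact sequence computing $\pi^{C_2}_\alpha(\Z^{\kR})$ is the standard universal-coefficient sequence recalled in Subsection \ref{subsec:Anderson}. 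Since $(\vb)$ is principal, $H^s_{(\vb)}(-)=0$ for all $s\geq 2$, so only the columns $s=0,1$ of the $E_2$-page survive; all differentials must vanish for degree reasons, and the spectral sequence collapses to the displayed short exact sequence.

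For the non-splitness I would reinterpret the sequence topologically. The $\kR$-module cofibre sequence $\Gamma_\vb\kR \to \kR \to \kR[\vb^{-1}]=K\R$ gives, in each grading $\alpha$, a four-term exact sequence whose middle piece is
\begin{equation*}
0 \to \coker\bigl(\kR^{C_2}_{\alpha+1}\to K\R^{C_2}_{\alpha+1}\bigr) \to \pi^{C_2}_\alpha\Gamma_\vb\kR \to \ker\bigl(\kR^{C_2}_\alpha\to K\R^{C_2}_\alpha\bigr) \to 0,
\end{equation*}
whose outer terms are exactly $H^1_{(\vb)}(\kR^{C_2}_\rost)_{\alpha+1}$ and $H^0_{(\vb)}(\kR^{C_2}_\rost)_\alpha$; this recovers the SES of the statement after the Gorenstein shift. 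To prove non-splitness it suffices to exhibit a single grading $\alpha$ in which both outer groups are copies of $\Z/2$ while the middle group is $\Z/4$. The natural source of such extensions is the $2$-torsion Euler class $a\in\pi^{C_2}_{-\sigma}\kR$ together with its divisions by powers of the Bott element in $K\R$: classes built from $a$ lie in $H^0_{(\vb)}$, while classes of the form $a^j\vb^{-k}$ sit in $H^1_{(\vb)}$, and the hidden doubling relations between them obstruct any splitting.

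The main obstacle is therefore not the homological algebra but pinning down the correct $\alpha$ and verifying the hidden multiplication-by-$2$ extension. This will be read off from the explicit description of $\kR^{C_2}_\rost$ recalled in Section \ref{sec:kRgroups} and from the $RO(C_2)$-graded charts drawn in the remainder of this section; these charts will make the witnessing extension visible in a concrete grading.
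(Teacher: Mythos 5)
Your treatment of the spectral sequence's existence and collapse is correct and matches the paper: the local cohomology spectral sequence from \eqref{eqn:localcohomology} applied to $J=(\vb)$ has $E_2$-page supported in cohomological degrees $0$ and $1$ because the ideal is principal, there are no differentials, and the identification of the abutment with $\Sigma^{-4+\sigma}\pi^{C_2}_{\rost}\Z^{k\R}$ follows from Gorenstein duality (Corollary \ref{cor:kRGorD}) and Lemma \ref{lem:Gammacell}. Likewise the four-term exact sequence you write down from the fibre sequence $\Gamma_{\vb}k\R\to k\R\to K\R$ is the right way to see the SES explicitly.

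The gap is in the non-splitness. You propose to look for a grading $\alpha$ where the outer terms are both $\Z/2$ and the middle is $\Z/4$. No such degree exists: every entry of $\kR^{C_2}_{\rost}$, and hence (via the universal coefficient sequence) of $\pi^{C_2}_{\rost}\Z^{k\R}$, is a direct sum of copies of $\Z$ and $\Z/2$ — there is no $\Z/4$ anywhere, so that strategy cannot succeed. The extension the paper actually uses is of a different shape. Looking at the description of $GBB$ in the proof: in the degree $-3\sigma$ (arising from $u\cdot N$ inside $\Sigma^{-2-\sigma}[\cdots]$), the $H^0$-contribution is $\F_2$ (the class $a^3$ in $a^3\F_2[a]$) and the $\Sigma^{-1}H^1$-contribution is $\Z$ (coming from the $\Z[\vb]^*$ associated to the $(2u)\Z[\vb]$ block). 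The short exact sequence in that degree is
$$0\lra \Z \lra \pi^{C_2}_{-3\sigma}\Gamma_{\vb}k\R \lra \F_2\lra 0,$$
and the paper reads off $\pi^{C_2}_{-3\sigma}\Gamma_{\vb}k\R\cong\Z$ from the Anderson-dual side, so the map $\Z\to\Z$ is multiplication by $2$ and the sequence is non-split. In other words, the hidden extension is a torsion-free $\Z\xrightarrow{\times 2}\Z\to\Z/2$ phenomenon, not a $\Z/2\to\Z/4\to\Z/2$ one. Your closing remark about a ``hidden multiplication-by-$2$ extension'' points in the right direction, but the concrete search criterion you give would miss it.
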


One should not view Proposition \ref{prop:kRlcss} as an algebraic
formality: it embodies the fact that $\kR^{\Ctwo}_{\rost}$ is a very special
ring. To illustrate this, we recall the calculation of 
$\kR^{\Ctwo}_{\rost}$ in Subsection \ref{sec:kRgroups}. In Subsection
\ref{subsec:kRloccoh} we  calculate its local cohomology, and how
the Gorenstein duality isomorphism with the known homotopy of the Anderson 
dual works. 

\subsection{The ring $\protect \kR^{\Ctwo}_{\rost}$}\label{sec:kRgroups}

One may easily calculate  $\kR^{\Ctwo}_{\rost}$. This has already been done in \cite{B-G10}, but we sketch a slightly different method. We will first calculate $\kR^{h\Ctwo}_{\rost} $ and then use the Tate square \cite{GMTate}. 

In the homotopy fixed point spectral sequence
$$\Z[\vb, a, u^{\pm 1}]/2a \Rightarrow \kR^{hC_2}_{\rost}$$
all differentials are generated by $d_3(u)=\vb a^3$. Indeed, this differential is forced by $\eta^4 = 0$ and there is no room for further ones. 
It follows that $U=u^2$ is an infinite cycle, and so the
whole  ring is $U$-periodic:  
$$\kR^{h\Ctwo}_{\rost}=BB [U,U^{-1}], $$
where $BB$ is a certain `basic block'. This basic block is a sum
$$BB=BR\oplus (2u)\cdot  \Z [\vb]$$
as $BR$-modules, where 
$$ BR=\Z [\vb,a]/(2a, \vb a^3).$$

It is worth illustrating $BB$ in the plane (with $BB_{a+b\sigma}$
placed at the point $(a,b)$). The squares and circles represent copies of $\Z$, and
the dots represent copies of $\Ftwo$. The left hand vertical column
consists of 1 (at the origin, $(0,0)$) and the powers of $a$, but the feature to concentrate on
is the diagonal lines representing $\Z [\vb]$ submodules. These are
either copies of $\Z [\vb]$ or of $\Ftwo [\vb]$ or simply copies of $\Ftwo$. 

$$\begin{tikzpicture}[scale =1]
\draw[step=0.5, gray, very thin] (-3,-3) grid (3, 3);
\draw (-1.5,1.5 ) node[anchor=east, draw=orange]{\Large{BB}};
\foreach \y in {1,2,3,4,5,6}
\draw (0,-\y/2) node[anchor=east] {$a^{\y}$};
\foreach \y in {1,2,3,4,5,6}
\node at (0,-\y/2) [fill=red, inner sep=1pt, shape=circle, draw] {};

\foreach \y in {0, 1,2,3,4,5}
\node at (\y/2+1/2,\y/2) [fill=red, inner sep=1pt, shape=circle, draw]
{};
\foreach \y in {0, 1,2,3,4,5}
\node at (\y/2+1/2,\y/2-1/2) [fill=red, inner sep=1pt, shape=circle, draw] {};

\draw [->] (0,0)-- (3,3);
\node at (0,0)  [shape = rectangle, draw]{};
\draw (0,0) node[anchor=east]{1};
\foreach \y in {1,2,3,4,5,6}
\draw (\y/2,\y/2) node[anchor=east] {$\vb^{\y}$};
\foreach \y in {1,2,3,4,5,6}
\node at (\y/2,\y/2) [shape=rectangle, draw] {};

\draw[red] (0,-0.5)--(3,2.5);
\draw[red] (0,-1.0)--(3,2.0);

\draw [->](1,-1)-- (3,1);
\node at (1,-1) [shape=circle, draw] {};
\draw (1,-1) node[anchor=east]{$2u$};

\foreach \y in {1,2,3,4}
\node at (1+\y/2,-1+\y/2) [shape=circle, draw] {};
\end{tikzpicture}$$

Proceeding with the calculation, we may invert $a$ to find the
homotopy of the Tate spectrum $\kR^t=F(E(C_2)_+, \kR ) \sm S^{\infty
  \sigma}$: 
$$\kR^{t\Ctwo}_{\rost}=\Ftwo [a,a^{-1}][U,U^{-1}]. $$
One also sees that the homotopy of the geometric fixed points (the
equivariant homotopy of $\kR^{\Phi}=\kR \sm S^{\infty\sigma}$) is 
$$\kR^{\Phi \Ctwo}_{\rost}=\Ftwo [a,a^{-1}][U]  $$
using the following lemma:
\begin{lemma}
\label{lem:Phiconn}
 Let $X$ be a $\Ctwo$-spectrum which is non-equivariantly connective
 and  such that $X^{\Ctwo} \to X^{h\Ctwo}$ is a connective cover. Then $X^{\Phi \Ctwo} \to X^{t\Ctwo}$ is a connective cover as well.
\end{lemma}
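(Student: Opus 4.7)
The plan is to analyze the fibre of $X^{\Phi\Ctwo}\to X^{t\Ctwo}$ via the isotropy separation (Tate) square, and then use connectivity of both $X_{h\Ctwo}$ and $X^{\Ctwo}$ separately to control $X^{\Phi\Ctwo}$.

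First I would set up the comparison of cofibre sequences coming from smashing the cofibre sequence $(E\Ctwo)_+\to S^0\to \widetilde{E}\Ctwo$ with $X$ and with $F((E\Ctwo)_+,X)$, then taking $\Ctwo$-fixed points. The left vertical map $X\wedge (E\Ctwo)_+\to F((E\Ctwo)_+,X)\wedge (E\Ctwo)_+$ is an equivalence, so after fixed points we obtain a commutative diagram
\begin{equation*}
\xymatrix{
X_{h\Ctwo} \ar[r]\ar@{=}[d] & X^{\Ctwo} \ar[r]\ar[d] & X^{\Phi\Ctwo} \ar[d] \\
X_{h\Ctwo} \ar[r] & X^{h\Ctwo} \ar[r] & X^{t\Ctwo}
}
\end{equation*}
of horizontal cofibre sequences. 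Taking vertical fibres, the fibre $F$ of $X^{\Phi\Ctwo}\to X^{t\Ctwo}$ is identified with the fibre of $X^{\Ctwo}\to X^{h\Ctwo}$.

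Now by hypothesis $X^{\Ctwo}\to X^{h\Ctwo}$ is a connective cover, so $\pi_n F=0$ for $n\geq -1$. The long exact sequence of the fibre sequence $F\to X^{\Phi\Ctwo}\to X^{t\Ctwo}$ then forces $\pi_n X^{\Phi\Ctwo}\to \pi_n X^{t\Ctwo}$ to be an isomorphism for all $n\geq 0$.

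It remains only to check that $X^{\Phi\Ctwo}$ is itself connective, and for this I would read off the top horizontal cofibre sequence $X_{h\Ctwo}\to X^{\Ctwo}\to X^{\Phi\Ctwo}$. Since $X$ is non-equivariantly connective, $X_{h\Ctwo}$ is connective (homotopy orbits preserve connectivity). Since $X^{\Ctwo}$ is a connective cover of $X^{h\Ctwo}$ it is itself connective. Hence the cofibre $X^{\Phi\Ctwo}$ is connective. Combined with the previous step this says exactly that $X^{\Phi\Ctwo}\to X^{t\Ctwo}$ is a connective cover. The only mild subtlety — really the one place to be careful — is the bookkeeping in identifying the two fibres via the Tate square, but this is standard once one notices that the leftmost vertical arrow is the equivalence obtained by smashing the Borel-completion map with $(E\Ctwo)_+$.
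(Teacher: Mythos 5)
Your proof is correct and follows essentially the same route as the paper: both rest on the Tate square cofibre-sequence diagram. The paper phrases it as a diagram of long exact sequences on homotopy groups, applies the five lemma for $k\geq 0$, and uses connectivity of $X_{h\Ctwo}$ and $X^{\Ctwo}$ to handle negative degrees; you instead observe directly that the fibres of the two vertical maps $X^{\Phi\Ctwo}\to X^{t\Ctwo}$ and $X^{\Ctwo}\to X^{h\Ctwo}$ agree, which is exactly the fact that makes the five-lemma argument go through, so the two presentations are just repackagings of the same argument.
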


\begin{proof}
 This follows from the diagram of long exact sequences
 \[\xymatrix{
  \pi_kX_{h\Ctwo} \ar[r]\ar[d] & \pi_kX^{\Ctwo} \ar[r]\ar[d]& \pi_kX^{\Phi \Ctwo} \ar[r]\ar[d] & \pi_{k-1}X_{h\Ctwo} \ar[r]\ar[d] & \pi_{k-1}X^{\Ctwo}\ar[d] \\
  \pi_kX_{h\Ctwo} \ar[r] & \pi_kX^{h\Ctwo} \ar[r]& \pi_kX^{t\Ctwo} \ar[r] & \pi_{k-1}X_{h\Ctwo} \ar[r] & \pi_{k-1}X^{h\Ctwo},
  }
 \]
 the fact that $X_{h\Ctwo}$ is connective and the $5$-lemma. 
\end{proof}

Now the Tate square 
$$\diagram 
\kR \rto \dto & \kR \sm \siftys \dto\\
\kR^{(E\Ctwo)_+} \rto  & \kR^{(E\Ctwo)_+} \sm \siftys 
\enddiagram$$
gives $\kR^{\Ctwo}_{\rost}$.

It is convenient to observe that the two
rows are of the form $M\lra M[1/a]$, so that the fibre is $\Gamma_a
M$. Since the two rows have equivalent fibres, we calculate the
homotopy of the second and obtain 
$$\kR^{\rost}_{hC_2}=NB [U, U^{-1}], $$
where $NB$ is quickly calculated as the $(a)$-local cohomology
$H^*_{(a)}(BB)$  (and named $NB$ for `negative block'). The element
$a$ acts vertically and we can immediately read  off the answer: the
tower $\Z [a]/(2a)$ gives some $H^1$, and the rest is $a$-power torsion:
$$NB=BB'\oplus \Sigma^{-\pp} \Ftwo [a]^{\vee}, $$
where $BB' \subset BB$ is the sub-$BR$-module
generated by $2, \vb, 2u$ (Informally,  we may say that $BB'$  omits from $BB$ all monomials $a^k$ for
$k\geq 1$ and the generator 1).  
Note that $NB$ is placed so that its element $2$ is in degree 0 for
ease of comparison to $BB$;  all occurrences of $NB$ in $\kR^{\Ctwo}_{\rost}$
involve nontrivial  suspensions. 


Again, it is helpful to display the negative block. This differs from
$BB$ in that the powers of $a$ have been deleted, and replaced by a
new left hand column $\Sigma^{-\pp}\Ftwo [a]^{\vee}$. The other new
feature is that the copy of $\Z [\vb]$ generated by $1$ has been
replaced by the kernel $(2,\vb)$  of $\Z
[\vb]\lra \Ftwo$, as indicated by the circle at the origin, labelled by its generator 2.

$$\begin{tikzpicture}[scale =1]
\draw[step=0.5, gray, very thin] (-3,-3) grid (3, 3);
\draw (-1.5,1.5 ) node[anchor=east, draw=orange]{\Large{NB}};
\foreach \y in {1,2,3,4,5,6}
\node at (-0.5,\y/2) [fill=red, inner sep=1pt, shape=circle, draw] {};
\draw[red] (-0.5,0.5)--(-0.5,3);

\foreach \y in {0, 1,2,3,4,5}
\node at (\y/2+1/2,\y/2) [fill=red, inner sep=1pt, shape=circle, draw]
{};
\foreach \y in {0, 1,2,3,4,5}
\node at (\y/2+1/2,\y/2-1/2) [fill=red, inner sep=1pt, shape=circle, draw] {};

\draw [->] (0,0)-- (3,3);
\node at (0,0)  [shape = circle, draw]{};
\draw (0,0) node[anchor=east]{2};
\foreach \y in {1,2,3,4,5,6}
\draw (\y/2,\y/2) node[anchor=east] {$\vb^{\y}$};
\foreach \y in {1,2,3,4,5,6}
\node at (\y/2,\y/2) [shape=rectangle, draw] {};
\draw[red] (0.5,0)--(3,2.5);
\node at (0.5,0) [fill=red, inner sep=1pt, shape=circle, draw] {};
\draw[red] (0.5,-0.5)--(3,2.0);
\node at (0.5,-0.5) [fill=red, inner sep=1pt, shape=circle, draw] {};

\draw [->](1,-1)-- (3,1);
\node at (1,-1) [shape=circle, draw] {};
\draw (1,-1) node[anchor=east]{$2u$};

\foreach \y in {1,2,3,4}
\node at (1+\y/2,-1+\y/2) [shape=circle, draw] {};

\foreach \y in {1,2,3,4,5,6}
\draw[red] (\y/2,\y/2) -- (\y/2, \y/2-1);
\end{tikzpicture}$$
\vspace{0.2cm}

The Tate square then lets us read off 
$$\kR^{\Ctwo}_{\rost}=\bigoplus_{k\leq -1} NB\cdot \{ U^{k}\} \oplus
\bigoplus_{k\geq 0} BB\cdot \{U^{k}\} =(U^{-1} \cdot NB [U^{-1}] )\oplus BB[U] $$
The $\Z [U]$ module structure is given by letting $U$ act in the
obvious way on the $NB$ and $BB$ parts, and by  the maps
$$NB \lra BB'\lra BB $$
in passage from the $U^{-1}$ factor of $NB$ to the $U^0$ factor of $BB$.

Perhaps it is helpful to note that with the exception of the towers
$U^{-k}\Sigma^{-\pp}\Ftwo [a]^{\vee}$, we have a subring of
$BB[U,U^{-1}]$, which consists of blocks $BB\cdot U^i$ for 
$i \geq 0$ and blocks $BB'\cdot U^i$ for $i<0$.

\subsection{Local cohomology}
\label{subsec:kRloccoh}
Recall that we are calculating local cohomology with respect to the principal
ideal $(\vb)$ so that we only need to consider $\kR^{\Ctwo}_{\rost}$ as a
$\Z [\vb]$-module. As such it is a sum of suspensions of the blocks
$BB$ and $NB$, so we just need to calculate the local cohomology of
these. 

More significantly, $\Z [\vb]$  is graded over multiples of the regular
representation, so local cohomology calculations may be performed on one diagonal
at a time (i.e., we fix $n$ and consider gradings $n+*\rho$). The only modules that occur are
$$\Z [\vb], \Ftwo [\vb], \Ftwo \mbox{ and the ideal } (2,
\vb)\subseteq \Z[\vb], $$
each of which has local cohomology that is very easily calculated.

\begin{lemma}
The local cohomology of the basic block $BB$ is as follows. 
$$H^0_{(\vb)}(BB)=a^3 \Ftwo [a]$$
$$H^1_{(\vb)}(BB)=\Sigma^{-\rho} \Z [\vb]^{*}\oplus \Sigma^{-\rho+2\delta} \Z [\vb]^{*}\oplus
\Sigma^{-\rho-\sigma} \Ftwo [\vb]^{\vee}\oplus \Sigma^{-\rho-2\sigma}
\Ftwo [\vb]^{\vee}. $$
\end{lemma}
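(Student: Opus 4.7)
The plan is to decompose $BB$ as a direct sum of elementary $\Z[\vb]$-modules and then compute local cohomology summand by summand. Reading off the picture (or directly from the presentation $BB = BR \oplus (2u)\cdot \Z[\vb]$ with $BR = \Z[\vb,a]/(2a, \vb a^3)$), one obtains an isomorphism of $\Z[\vb]$-modules
\begin{equation*}
BB \;\cong\; \Z[\vb]\{1\} \;\oplus\; \Z[\vb]\{2u\} \;\oplus\; \F_2[\vb]\{a\} \;\oplus\; \F_2[\vb]\{a^2\} \;\oplus\; \bigoplus_{k\geq 3}\F_2\{a^k\},
\end{equation*}
where the five generators sit in degrees $0$, $2\delta$, $-\sigma$, $-2\sigma$ and $-k\sigma$ respectively. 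The three free diagonals are $\vb$-torsionfree because the only multiplicative relation involving $\vb$ is $\vb a^3 = 0$: this kills $a^k$ for $k\geq 3$ but leaves $\vb$ acting freely on $1$, $a$, $a^2$ and $2u$. The relations $2a^k = 0$ for $k \geq 1$ are what account for the $\F_2[\vb]$ (rather than $\Z[\vb]$) summands on the $a$- and $a^2$-diagonals, while $\vb a^3 = 0$ accounts for the isolated $\F_2$-summands at the bottom of the left-hand column.

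Next one recalls the elementary local cohomology computation for each building block. For a free cyclic module $M = R\{g\}$ with $R = \Z[\vb]$ or $\F_2[\vb]$ and generator $g$ in degree $d$, the stable Koszul sequence $0 \to R\{g\} \to R[\vb^{-1}]\{g\} \to (R[\vb^{-1}]/R)\{g\} \to 0$ shows $H^0_{(\vb)}(M) = 0$ and $H^1_{(\vb)}(M) \cong \Sigma^{-\rho+d}\Z[\vb]^*$ (respectively $\Sigma^{-\rho+d}\F_2[\vb]^\vee$), the $-\rho$ shift reflecting that the cokernel $R[\vb^{-1}]/R$ is spanned by $\vb^{-k}$ for $k\geq 1$ and so starts in degree $-\rho$ rather than $0$. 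For a module $M$ on which $\vb$ acts as zero the Koszul complex degenerates to $M \to 0$, giving $H^0_{(\vb)}(M) = M$ and $H^1_{(\vb)}(M) = 0$.

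Finally one sums the five contributions. The $\F_2\{a^k\}$ for $k\geq 3$ combine to give $a^3\F_2[a]$ in $H^0_{(\vb)}$, while the four free diagonals with generator degrees $0, 2\delta, -\sigma, -2\sigma$ produce the four displayed factors of $H^1_{(\vb)}$ with the correct suspension shifts. The only real effort is book-keeping the $RO(\Ctwo)$-grading---in particular carrying through the shift $2\delta$ coming from $|2u| = 2\delta$ and the shifts $-\sigma$ and $-2\sigma$ coming from the generators $a$ and $a^2$---but this is purely mechanical once the decomposition above is in hand.
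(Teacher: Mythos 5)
Your proof is correct and takes essentially the same approach as the paper: the paper's one-line proof simply records $BB[1/\vb]$ explicitly (which is exactly your direct-sum decomposition after inverting $\vb$) and leaves the reader to take the cohomology of the Koszul complex $BB \to BB[1/\vb]$. Your version makes the bookkeeping more explicit by decomposing $BB$ as a $\Z[\vb]$-module before localizing, and is slightly more careful in writing the generator of the second free summand as $2u$ (which actually lies in $BB$) rather than $u$.
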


\begin{proof}
The local cohomology is the cohomology of the complex
$$BB\lra BB[1/\vb]. $$ 
It is clear that 
$$BB[1/\vb]=\Z [\vb,\vb^{-1}]\oplus u\cdot \Z [\vb,\vb^{-1}]\oplus 
a\cdot \Ftwo  [\vb,\vb^{-1}]\oplus a^2\cdot \Ftwo  [\vb,\vb^{-1}]\qedhere$$
\end{proof}

Turning to $NB$, we recall that $NB=BB'\oplus \Sigma^{-\delta} \Ftwo
[a]^{\vee}$, and we have  a short exact sequence
$$0\lra BB'\lra BB\lra \Ftwo [a]\lra 0. $$ 
The local cohomology is thus easily deduced from that of $BB$.

\begin{lemma}
The local cohomology of the negative block $NB$ is as follows. 
$$H^0_{(\vb)}(NB)=\Sigma^{-\pp}\Ftwo [a]^{\vee}$$
$$H^1_{(\vb)}(NB)=\Sigma^{-\rho} \Z [\vb]^{*}\oplus \Ftwo  \oplus \Sigma^{-\rho+2\delta} \Z [\vb]^{*}\oplus
\Sigma^{-\sigma} \Ftwo [\vb]^{\vee}\oplus \Sigma^{-2\sigma} \Ftwo [\vb]^{\vee}$$
More properly, the $\Z [\vb]$-module structure of the sum of the first
two terms is 
$$\Sigma^{-\rho} \Z [\vb]^{*}\oplus \Ftwo  \cong \Z [\vb]^*/(2\cdot 1^*).$$
\end{lemma}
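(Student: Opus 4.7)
The plan is to combine the splitting $NB = BB' \oplus \Sigma^{-\pp}\Ftwo[a]^{\vee}$ with the short exact sequence $0 \to BB' \to BB \to \Ftwo[a] \to 0$ and the previous lemma. The first observation is that $\vb$ annihilates both $\Ftwo[a]$ and $\Ftwo[a]^{\vee}$, so these summands are $\vb$-power torsion; their local cohomology is therefore concentrated in degree $0$ and equals the module itself, while $H^1_{(\vb)}$ vanishes. This immediately handles $H^0_{(\vb)}(NB) = \Sigma^{-\pp}\Ftwo[a]^{\vee}$.

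For $H^1$, I would feed the short exact sequence into the long exact sequence of $(\vb)$-local cohomology. The map $H^0_{(\vb)}(BB) = a^3\Ftwo[a] \to \Ftwo[a]$ is the natural inclusion, so $H^0_{(\vb)}(BB') = 0$ and one obtains a four-term exact sequence
\[ 0 \to \Ftwo\{1, a, a^2\} \to H^1_{(\vb)}(BB') \to H^1_{(\vb)}(BB) \to 0, \]
with the three classes on the left lying in degrees $0, -\sigma, -2\sigma$. To identify the extension I would work in the explicit model $H^1_{(\vb)}(M) = M[1/\vb]/M$; since $\vb \in BB'$ one has $BB'[1/\vb] = BB[1/\vb]$, so every class has a representative in $BB[1/\vb]/BB'$ and $\partial(a^k) = [a^k]$ is killed by both $2$ and $\vb$.

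The main task is then to analyze each summand of $H^1_{(\vb)}(BB)$ under the lift to $H^1_{(\vb)}(BB')$, using the decomposition $BB' = (2,\vb) \oplus \vb a\Ftwo[\vb] \oplus \vb a^2\Ftwo[\vb] \oplus 2u\Z[\vb]$ as $\Z[\vb]$-submodules of $BB$. The crux is the $\Sigma^{-\rho}\Z[\vb]^*$ summand, which in $BB'$ arises from the ideal $(2,\vb) \subset \Z[\vb]$: its local cohomology is $\Z[\vb,\vb^{-1}]/(2,\vb)$, and the relation $\vb \cdot [\vb^{-1}] = [1]$ together with $2\cdot [\vb^{-1}] = [2\vb^{-1}] \neq 0$ forces the $\Z[\vb]$-module structure $\Z[\vb]^*/(2 \cdot 1^*)$, absorbing the connecting class $[1]$ on top. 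The $\Ftwo[\vb]$-summands generated by $a$ and $a^2$ in $BB$ are generated by $\vb a$ and $\vb a^2$ in $BB'$, so their local cohomology acquires the new top classes $[a]$ and $[a^2]$, shifting $\Sigma^{-\rho - k\sigma}\Ftwo[\vb]^{\vee}$ to $\Sigma^{-k\sigma}\Ftwo[\vb]^{\vee}$ and absorbing $\partial(a^k)$. The $2u\Z[\vb]$-summand already lies in $BB'$, so contributes an unchanged $\Sigma^{-\rho + 2\pp}\Z[\vb]^*$.

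The hard part will be verifying the nontrivial extension for the first summand; once the identity $\vb \cdot [\vb^{-1}] = [1]$ is established in the quotient $BB[1/\vb]/BB'$, everything else is bookkeeping. A cleaner alternative, which avoids the long exact sequence entirely, would be to compute $H^*_{(\vb)}$ directly on each summand of the $\Z[\vb]$-decomposition of $BB'$ above; I would present the argument via the extension to emphasize where the nontrivial $\Z[\vb]$-module structure $\Z[\vb]^*/(2\cdot 1^*)$ comes from.
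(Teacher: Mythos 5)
Your proposal is correct and follows essentially the same route as the paper's own (much terser) proof: both compute $H^*_{(\vb)}(NB)$ from the complex $NB \to NB[1/\vb]=BB[1/\vb]$, and both use a short exact sequence built on the ideal $(2,\vb)\subset\Z[\vb]$ (yours packaged as $0\to BB'\to BB\to\Ftwo[a]\to 0$, of which the paper's $0\to(2,\vb)\to\Z[\vb]\to\Ftwo\to 0$ is the relevant $\Z[\vb]$-free summand) to pin down the non-split $\Z[\vb]$-module $\Z[\vb]^*/(2\cdot 1^*)$. The one small wording point is that $BB'[1/\vb]=BB[1/\vb]$ holds not merely because $\vb\in BB'$ but because $\vb\cdot BB\subseteq BB'$ (equivalently $BB/BB'=\Ftwo[a]$ is $\vb$-torsion), which your long-exact-sequence analysis implicitly uses anyway.
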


\begin{proof}
The local cohomology is the cohomology of the complex
$$NB\lra NB[1/\vb]. $$

It is clear that $NB[1/\vb]=BB[1/\vb]$, which makes the part coming
from the $2$-torsion clear. For the $\Z$-torsion free part, it is
helpful to  consider the exact sequence
$$0\lra (2, \vb) \lra \Z [\vb]\lra \Ftwo \lra 0$$
and then consider the long exact sequence in local cohomology.
\end{proof}

Immediately from the defining cofibre sequence $\Gamma_{\vb}\kR \lra
\kR \lra \kR [1/\vb]$ we see that there is a short exact sequence
$$0\lra H^1_{(\vb)}(\Sigma^{-1}\kR^{\Ctwo}_{\rost}) \lra
\pi^{\Ctwo}_{\rost}(\Gamma_{(\vb)}\kR)\lra
H^0_{(\vb)}(\kR^{\Ctwo}_{\rost}) \lra 0. $$
This gives $\pi^{\Ctwo}_{\rost}(\Gamma_{(\vb)}\kR)$ up to
extension. The Gorenstein duality isomorphism can be used to resolve the remaining extension
issues, and the answer is recorded in the proposition below. 

The diagram Figure \ref{fig:GBBkR} should help the reader interpret the statement and proof of
the calculation of the homotopy of $\Gamma_{(\vb)}\kR$. We have
omitted dots, circles and boxes except at the ends of diagonals or
where an additional generator is required. The vertical lines denote
multiplication by $a$ and the dashed vertical line is an exotic
multiplication by $a$ that is not visible on the level of local
cohomology. The green diamond does not denote a class, but marks the
point one has to reflect (non-torsion classes) at to see Anderson
duality. Torsion classes are shifted by $-1$ after reflection (i.e., shifted
one step horizontally to the left). 

\begin{center}
\begin{figure}\includegraphics{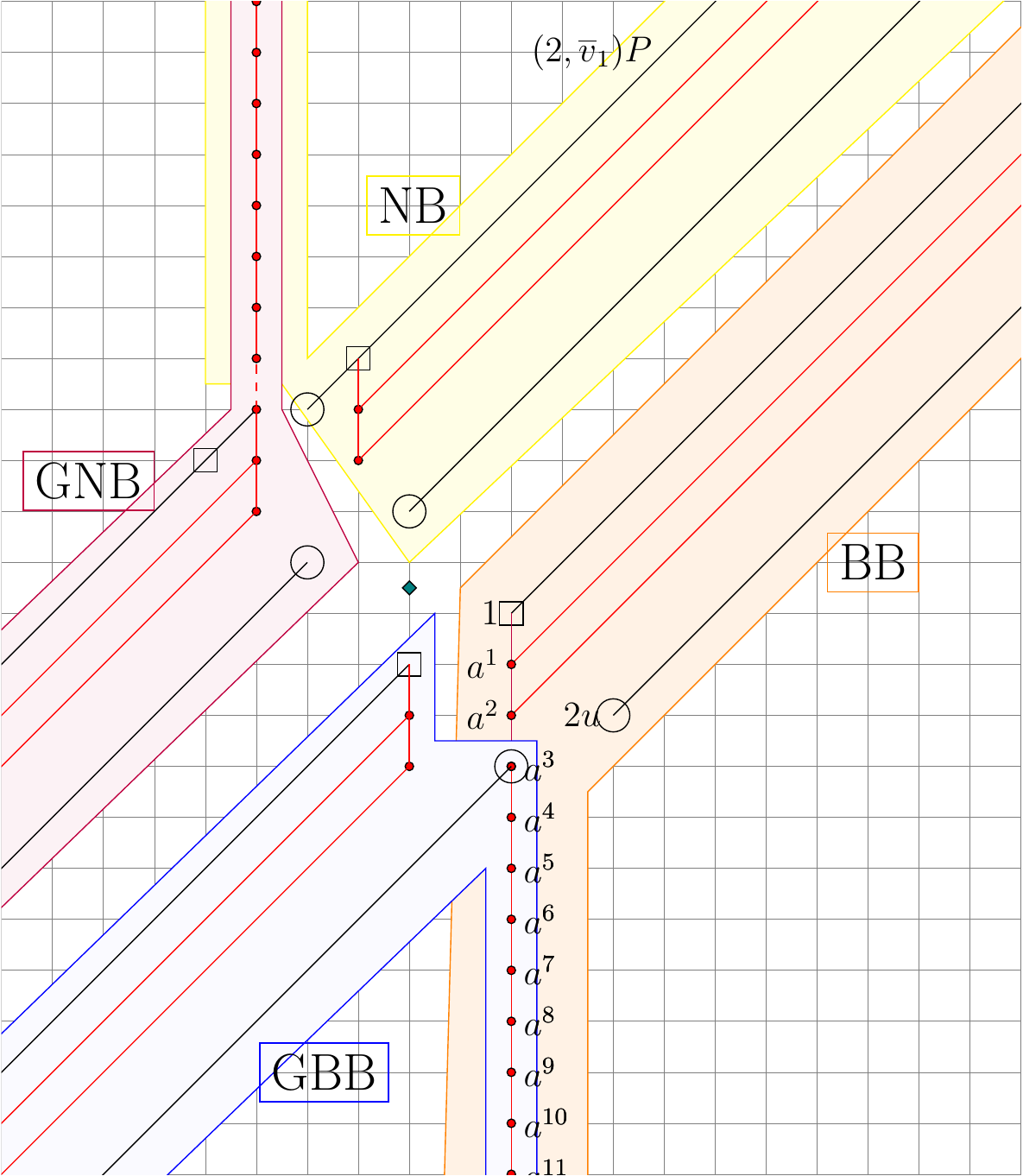}
\caption{Gorenstein duality for $\kR$ \label{fig:GBBkR}}
\end{figure}
\end{center}

\begin{prop}
The homotopy of the derived $\vb$-power torsion is given by 
$$\pi_{\rost}^{\Ctwo} (\Gamma_{(\vb)}\kR)\cong (U^{-1}\cdot GNB [U^{-1}]) \oplus GBB [U]$$
where $GBB$ and $GNB$ are based on the local cohomology of $BB$ and
$NB$ respectively, and described as follows.  We have 
$$GBB= \Sigma^{-2-\sigma} \left[ 
\Z [\vb]^{*}\oplus a\cdot \Ftwo [\vb]^{\vee}
\oplus  a^2\cdot \Ftwo [\vb]^{\vee}
\oplus u\cdot N \right] $$
where $N$ (with top in degree 0) is given by an exact sequence
$$0\lra \Z [\vb]^* \lra N \lra \Ftwo [a]\lra 0, $$
non-split in degree 0.

Similarly, 
$$GNB= \Sigma^{-1} 
\left[  \Z [\vb]^{*}/(2 \cdot (1^*) ) \oplus 
a\cdot \Ftwo [\vb]^{\vee}
\oplus a^2\cdot \Ftwo [\vb]^{\vee}
\oplus  \Sigma^{1-3\sigma}\Z [\vb]^*  \oplus \Sigma^{\sigma}\Ftwo
[a]^{\vee} \right] $$
where the action of $a$ is as suggested by the sum decomposition
except that multiplication by $a$ is non-trivial wherever possible
(i.e., when one dot is vertically above another, or where a box is
vertically above a dot). 
\end{prop}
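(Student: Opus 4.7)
The plan is to run the local cohomology spectral sequence block by block, then pin down the remaining extensions using Gorenstein duality.

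First, the defining fibre sequence $\Gamma_\vb\kR \to \kR \to \kR[\vb^{-1}]$ gives on $\pi^{\Ctwo}_\rost$ the displayed short exact sequence; this is the local cohomology spectral sequence, collapsed because $(\vb)$ is principal. Combining this with the splitting $\kR^{\Ctwo}_\rost = (U^{-1}\cdot NB[U^{-1}]) \oplus BB[U]$ and the two preceding lemmas, $\pi^{\Ctwo}_\rost(\Gamma_\vb\kR)$ is determined summand by summand up to extensions. For each $BB\cdot U^k$-block, the four $H^1$-summands after the $\Sigma^{-1}$ shift give the three obvious pieces of $GBB$ plus a $\Z[\vb]^*$ that will enter $N$; the $H^0$-summand $a^3\F_2[a]$ becomes the $\F_2[a]$-quotient of $N$. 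For each $NB\cdot U^{-k}$-block, the four $H^1$-summands of $NB$ contribute to $GNB$ (with the module $\Z[\vb]^*/(2\cdot 1^*)$ already produced in the $NB$-lemma from the merger of $\Sigma^{-\rho}\Z[\vb]^*$ and $\F_2$), and $H^0_{(\vb)}(NB)=\Sigma^{-\delta}\F_2[a]^\vee$ contributes the $\Sigma^\sigma\F_2[a]^\vee$-summand.

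Second, to resolve the extensions --- notably the non-split $N$ and the exotic $a$-multiplications indicated by the dashed line in Figure \ref{fig:GBBkR} --- invoke Gorenstein duality (Corollary \ref{cor:kRGorD}) to rewrite $\pi^{\Ctwo}_\alpha(\Gamma_\vb\kR) \cong \pi^{\Ctwo}_{\alpha+4-\sigma}(\Z^\kR)$ and apply the Anderson universal coefficient sequence of Subsection \ref{subsec:Anderson}. This expresses each homotopy group as an extension of $\Hom_\Z(\kR^{\Ctwo}_{-\alpha-4+\sigma},\Z)$ by $\Ext_\Z(\kR^{\Ctwo}_{-\alpha-5+\sigma},\Z)$, and naturality of Anderson duality transports the $a$-action on $\kR^{\Ctwo}_\rost$ to the dual $a$-action on $\pi^{\Ctwo}_\rost(\Gamma_\vb\kR)$. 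Wherever an $\Ext$ contribution from an $a$-tower of $BB$ sits vertically above a $\Hom$ contribution from the free $u\cdot\Z[\vb]$-diagonal, this dual $a$-action forces the extension to be non-trivial; this is precisely the non-split extension defining $N$, and the same mechanism produces the remaining exotic $a$-multiplications invisible to the local cohomology associated graded.

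The main obstacle is the bookkeeping of $RO(\Ctwo)$-grading shifts --- tracking every summand through the $-1$ shift from the spectral sequence, the $-\rho$ shifts from the generators of $H^1_{(\vb)}$, the overall $-4+\sigma$ shift from Gorenstein duality, and the $U$-periodicity simultaneously --- and then verifying that both sides match in each degree. Figure \ref{fig:GBBkR} is essentially this verification made visual: the green diamond marks the centre of Anderson-dual reflection, and the vertical segments (including the dashed one) encode the correct $a$-action supplied by the second step.
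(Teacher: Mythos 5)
Your strategy matches the paper's: compute the local cohomology of $\kR^{\Ctwo}_{\rost}$ block by block, note that contributions from different blocks do not interact, and then resolve the remaining extension questions in the collapsed spectral sequence via Gorenstein duality $\Gamma_\vb\kR \simeq \Sigma^{-4+\sigma}\Z^{\kR}$ and the Anderson universal coefficient sequence. The paper's proof is precisely this.

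One imprecision in your second step is worth flagging. There are two separate loose ends, and only one of them is resolved by the ``dual $a$-action'' mechanism you describe. The exotic $a$-multiplication $GNB_{-1+\sigma}\to GNB_{-1}$ is indeed detected by transporting the $a$-action on $\kR^{\Ctwo}_{\rost}$ through the $\Ext$-terms of the UCT (it is dual to $a\cdot\colon \F_2\{a^3\}\to\F_2\{a^4\}$). But the non-split extension in $N$ at degree $-3\sigma$ of $GBB$ is \emph{not} forced by the $a$-action; it is resolved simply by computing the target group. Under Gorenstein duality $\pi^{\Ctwo}_{-3\sigma}(\Gamma_\vb\kR)\cong\pi^{\Ctwo}_{4-4\sigma}(\Z^{\kR})$, and in that degree the $\Ext$-term of the UCT vanishes (since $\kR^{\Ctwo}_{-5+4\sigma}=0$), so the group is $\Hom(\kR^{\Ctwo}_{-4+4\sigma},\Z)\cong\Z$. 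Comparing with the associated graded $\Z\oplus\F_2$ from $H^1\oplus H^0$ shows the local cohomology extension is non-split. So the two issues have genuinely different resolutions, both flowing from the same Gorenstein duality + UCT input, and you've collapsed them into a single mechanism.
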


\begin{proof}
We first note that the contributions from the different blocks do not
interact. Indeed, the only time that  different blocks give
contributions in the same degree come from the $\Ftwo [a]$ towers of
$BB$: one class in that degree is $\vb$-divisible (and not killed
by $\vb$) and the other class is annihilated by $\vb$. We may therefore 
consider the blocks entirely separately.

The block $GBB$ comes from the local cohomology of $BB$ and therefore
lives in a short exact sequence 
$$0\lra H^1_{(\vb)}(\Sigma^{-1}BB) \lra
GBB \lra H^0_{(\vb)}(BB) \lra 0$$

The block $GNB$ comes from the local cohomology of $NB$
and therefore lives in a short exact sequence 
$$0\lra H^1_{(\vb)}(\Sigma^{-1}NB) \lra
GNB \lra H^0_{(\vb)}(NB) \lra 0$$

Most questions about module structure over $BB[U]$ are resolved by
degree, but there are two which remain. These can be resolved Gorenstein duality \ref{cor:kRGorD} and the known module structure in $\Z^{\kR}$. 

In $GBB$, the additive
extension in $\pi^{\Ctwo}_{-3\sigma}$ is non-trivial: 
$$\pi_{-3\sigma}^{\Ctwo} (\Gamma_{(\vb)}\kR)\cong \Z. $$
Also the multiplication by $a$ 
$$\Ftwo \cong GNB_{-1+\sigma} \to GNB_{-1}\cong \Ftwo$$
is nonzero (where $GNB_{-1+\sigma}$ corresponds to $\pi^{C_2}_{-5+5\sigma}(\Gamma_{(\vb)}k\R)$ in the $U^{-1}$-shift). 
\end{proof}

\begin{remark}
 It is striking that the duality relates the top $BB$ to the bottom
$NB$ (i.e., Anderson duality takes the part of $\Gamma_{\vb}\kR$
coming from the local cohomology of $BB$ to $NB$), and it takes the
bottom $NB$ to the top $BB$ (i.e., Anderson duality takes the part of $\Gamma_{\vb}\kR$
coming from the local cohomology of $NB$ to $BB$). 
Indeed, as commented after Lemma \ref{lem:Phiconn}, since $NB=\Gamma_{(a)}BB$, we have
$$\Sigma^{2+\sigma}\Gamma_{(\vb)}BB\simeq (\Gamma_{(a)} BB)^*$$
and 
$$\Gamma_{(\vb, a)} BB\simeq  \Sigma^{-2-\sigma}  BB^*,  $$
with the second stating that $BB$ is Gorenstein of shift $-2-\sigma$
for the ideal $(a, \vb )$.

By extension, Anderson duality takes the part of $\Gamma_{\vb}\kR$
coming from the local cohomology of all copies of $BB$ to all copies
of $NB$ and vice versa. This might suggest separating $\kR$ into a
part with homotopy $BB[U]$, giving a cofibre sequence 
$$\langle BB[U]\rangle\lra \kR \lra \langle U^{-1}NB[U^{-1}]\rangle , $$
where the angle brackets refer to a spectrum with the indicated
homotopy.  However one may see that there is no $C_2$-spectrum with homotopy the Mackey functor corresponding to $BB[U]$
(considering the $b\sigma$ and $(b+1)\sigma$ rows one sees that the
non-equivariant  homotopy of the spectrum would be zero up to about 
degree $2b$; taking all rows together it would have to be
non-equivariantly contractible and hence $a$-periodic). Similarly,
there is no spectrum with homotopy $U^{-1}NB[U^{-1}]$, so these dualities are purely
algebraic. 
\end{remark}

\section{The local cohomology spectral sequence for $\protect \BPRn$}
\label{sec:BPRnlcss}

Gorenstein duality for $\BPRn$ (Example \ref{ex:BPRn}) 
has  interesting implications for the coefficient ring, both
computationally and structurally. 
Writing  $\rost$ for $RO(\Ctwo)$-grading as usual, the local cohomology spectral
sequence \cite[Section 3]{G-M95} takes the form described in the
following proposition.
 We now revert to our standard assumption of
working 2-locally,  so that $\Z$ means the 2-local integers.

\begin{prop} 
\label{prop:BPRnlcss}
There is a spectral sequence of $\BPRn^{\Ctwo}_{\rost}$-modules
$$H^*_{\Jb_n}(\BPRn^{\Ctwo}_{\rost}) \Rightarrow \Sigma^{-(D_n+n+2)-(D_n-2)\sigma} \pi^{\Ctwo}_{\rost}(\Z^{\BPRn})$$
for $\Jb_n = (\vb_1,\dots, \vb_n)$. 
The homotopy of the Anderson dual in an arbitrary degree $\alpha \in
RO(C_2)$ is easily calculated
$$0\lra \Ext_{\Z}(\BPRn^{\Ctwo}_{-\alpha -1}, \Z)\lra 
\pi^{\Ctwo}_{\alpha}\Z^{\BPRn} \lra  \Hom_{\Z}(\BPRn^{\Ctwo}_{-\alpha}, \Z) \lra 0. $$
For $n\geq 2$ the local cohomology spectral sequence has some non-trivial
differentials. 
\end{prop}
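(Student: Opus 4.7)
The first two statements follow directly from earlier results. The spectral sequence is the local cohomology spectral sequence \eqref{eqn:localcohomology} for the ideal $\Jb_n$ applied to the $\BPRn$-module $\BPRn$, with target identified via the Gorenstein duality of Example \ref{ex:BPRn} (equivalently Corollary \ref{cor:BPRnGorDdish}). The universal coefficient short exact sequence is the standard Anderson duality sequence recalled in Section \ref{subsec:Anderson}, specialized to $X = \BPRn$. These portions require no further work.

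To establish non-trivial differentials for $n \geq 2$, my plan is a counting argument. The universal coefficient sequence shows that in every $RO(\Ctwo)$-degree $\alpha$, the group $\pi^{\Ctwo}_\alpha \Z^{\BPRn}$ is a two-step extension of $\Hom_{\Z}(\BPRn^{\Ctwo}_{-\alpha},\Z)$ by $\Ext^1_{\Z}(\BPRn^{\Ctwo}_{-\alpha-1},\Z)$; hence any convergent filtration on this homotopy group has at most two nonzero associated graded pieces. On the other hand, the $E_2$-page of the local cohomology spectral sequence is supported in cohomological degrees $0, 1, \ldots, n$, and for fixed target degree the groups $E_2^{s, s+\alpha}$ with varying $s$ sit in different filtration layers of the same homotopy group. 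Therefore, if one can exhibit an $RO(\Ctwo)$-degree $\alpha$ at which $E_2^{s, s+\alpha}$ is nonzero for at least three values of $s$, some differential between them must be non-trivial to cut the associated graded down to at most two pieces.

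The key step is then to produce such an $\alpha$ for every $n \geq 2$. For this I would use the block description of $\BPRn^{\Ctwo}_{\rost}$ from Section \ref{sec:BPRnC2}: the $\Ftwo[a]$-towers in $BB$ and $NB$ are annihilated by all $\vb_i$ and contribute to $H^0_{\Jb_n}$; classes divisible by a proper subset of the $\vb_i$'s contribute to intermediate $H^s_{\Jb_n}$; and classes in the image of a full product $\vb_1 \cdots \vb_n$ contribute to $H^n_{\Jb_n}$. Choosing a representation grading in the region where all three of these types of classes coexist produces the required $\alpha$. For $n=2$, this will be made completely explicit in Section \ref{sec:tmfotlcss}, where a specific non-trivial $d_2$ is identified; for $n \geq 3$ the same block analysis, applied inductively in the $\vb_i$-directions, supplies analogous $\alpha$'s.

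The hardest part will be the bookkeeping: for each $n$, isolating a single $RO(\Ctwo)$-degree in which three cohomological degrees of local cohomology simultaneously survive. The $n=2$ instance in Section \ref{sec:tmfotlcss} serves as the base case and template; the general case is then routine but notationally heavy, so I would invoke the worked $n=2$ example to justify the claim and only sketch the $n\geq 3$ extension.
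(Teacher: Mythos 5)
The first two statements are indeed immediate from earlier results, and your identification of the sources — the local cohomology spectral sequence \eqref{eqn:localcohomology} applied to $\Gamma_{\Jb_n}\BPRn$ together with the Gorenstein duality of Example~\ref{ex:BPRn}/Corollary~\ref{cor:BPRnGorDdish} for the spectral sequence, and the standard Anderson duality sequence of Section~\ref{subsec:Anderson} for the short exact sequence — is exactly what the paper relies on.

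For the non-trivial differentials, however, your counting argument contains a genuine gap. You assert that because $\pi^{\Ctwo}_{\alpha}\Z^{\BPRn}$ fits into a two-step extension of $\Hom$ by $\Ext$, ``any convergent filtration on this homotopy group has at most two nonzero associated graded pieces.'' This is false as stated: a short exact sequence of abelian groups puts no upper bound on the length of an arbitrary filtration. For instance $\Z$ sits in the trivial two-step extension $0\to 0\to\Z\to\Z\to 0$, yet the filtration $0\subset 4\Z\subset 2\Z\subset\Z$ has three nonzero associated graded pieces ($\Ftwo$, $\Ftwo$, $\Z$). The spectral sequence filtration on $\pi^{\Ctwo}_{\alpha}\Z^{\BPRn}$ is not the Hom--Ext filtration and is under no obligation to have only two steps, so exhibiting three nonvanishing $E_2^{s,\,s+\alpha}$ groups does not by itself force a differential. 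A correct size comparison would have to control free ranks (which \emph{are} preserved under filtration) or otherwise bound the torsion, and that requires a more delicate argument than the one you give.

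The paper's actual mechanism for the $n=2$ case, worked out in Section~\ref{sec:tmfotlcss}, is different: the specific $d_2$ differentials on the $7$-, $8$- and $9$-diagonals are forced by \emph{connectivity} of the target, using the fact that the Anderson dual of the connective spectrum $\BPRn$ vanishes in a known range of $RO(\Ctwo)$-degrees (cf.\ the vanishing used in the proof of Corollary~\ref{cor:cellular}). If the relevant $E_2$ class survived to $E_\infty$, it would contribute a nonzero homotopy group in a degree where $\Z^{\BPRn}$ must vanish. Your deferral to Section~\ref{sec:tmfotlcss} for the base case is appropriate, and your sketch of how the block structure should propagate this phenomenon to $n\geq 3$ is in the right spirit (and the paper is no more explicit there), but you should replace the filtration-length argument with a connectivity or rank argument for the counting to actually bite.
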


One should not view Proposition \ref{prop:BPRnlcss} as an algebraic
formality: it embodies the fact that $\BPRn^{\Ctwo}_{\rost}$ is a very special
ring. 

In the present section we will discuss the implications of this for
the coefficient ring for general $n$. The perspective is a bit distant
so the reader is encouraged to refer back to $\kR$ (i.e., the case
$n=1$) in Section \ref{sec:kRlcss} to anchor the generalities. 

However the case $n=1$ is too simple to show some of what happens, so 
we will also illustrate the case $\tmfot$ (i.e.,
the case $n=2$) in Section \ref{sec:tmfotlcss}.

\subsection{Reduction to diagonals}
For brevity we write $R_{\rost}=\BPRn_{\rost}^{\Ctwo}$. Because the ideal
$\Jb_n=(\vbn{1}, \ldots , \vbn{n})$
 is generated by elements whose degrees are a multiple of
$\rho$, we can
do $\Jb_n$-local cohomology calculations over the subring $R_{*\rho}$
of elements in degrees which are  multiples of $\rho$. 

Thus, for an $R_{\bigstar}$-module $M_{\bigstar}$ we have a direct sum decomposition
$$M_{\bigstar}=\bigoplus_{d} M_{d+*\rho}$$
as $R_{*\rho}$-modules, where we refer to the gradings $d+*\rho$ as the
{\em $d$-diagonal}. Hence, we also have
$$H^i_{\Jb_n}(M_{\bigstar})=\bigoplus_d H^i_{\Jb_n}(M_{d+*\rho}). $$
(We have abused notation by also writing $\Jbn$ for the ideal of
$R_{*\rho}$ generated by $\vbn{1}, \ldots , \vbn{n}$.)

\subsection{The general shape of $\BPRn^{\Ctwo}_{\rost}$}
By the description at the end of Section \ref{sec:BPRnC2}, we have an isomorphism
$$R_{\rost}= U^{-1}\cdot NB[U^{-1}] \oplus BB [U]$$
with $BB$ and $NB$ as described there. It is easy to see that $BB$ and $NB$ decompose as $R_{*\rho}$-modules into modules of a certain form we will describe now. We will implicitly $2$-localize everywhere.  

The modules $BB$ and $NB$ decompose into are
$$P=R_{*\rho}=\Z[\vbn{1}, \ldots, \vbn{n}] \mbox{ and } \Pb{s}=P/(\vbn{0}, \ldots,
\vbn{s})=\Ftwo [\vbn{s+1},\ldots , \vbn{n}] $$
for $s\geq 0$
and the ideals expressed by the exact sequences 
\begin{align*} 0\lra (2, \vbn{1}, \ldots , \vbn{t})\lra P \lra
  \Pb{t}\lra 0  \end{align*}
or 
\begin{align*}0\lra (\vbn{s+1}, \ldots , \vbn{t})\lra \Pb{s}\lra \Pb{t}\lra 0\end{align*}
with $s\geq 0$.

Their local cohomology is easily calculated. In the first two cases, the modules only have local cohomology in a
single degree
\begin{align*}H_{\Jb_n}^*(P)&=H_{\Jb_n}^n(P)=P^*(-D_n\rho) \\
H_{\Jb_n}^*(\Pb{s})&=H_{\Jb_n}^{n-s}(\Pb{s})=\Pb{s}^{\vee}((D_s-D_n)\rho). \end{align*}
The  top non-zero degree of $P^*$ is zero, so that  $1^* \in
P^*(-D_n\rho)$ is in degree $-D_n\rho = -|\vb_1|-\cdots -
|\vb_n|$. We alert the reader to the fact that star is used in two ways: occasionally in
$H^*$ to mean cohomological grading  and rather frequently here in
$P^*$ to mean the $\Z$-dual of $P$. 

Now we turn to the ideal $(\vbn{s+1}, \ldots , \vbn{t})$. If $t=s+1$ the ideal is principal
and $(\vbn{s+1})\cong \Pb{s}((s+1)\rho)$; thus we get a single local cohomology group
$$H^{n-s}_{\Jb_n} ((\vbn{s+1}) \Pb{s})=\Pb{s}^{\vee}((D_s-D_n+s+1)\rho)$$
as can be seen from the long exact sequence of local cohomology. 

Otherwise we get two local cohomology groups
$$H^{n-s}_{\Jb_n} ((\vbn{s+1}, \ldots , \vbn{t})\Pb{s})=\Pb{s}^{\vee}((D_n-D_s)\rho)
\mbox{ and } H^{n-t+1}_{\Jb_n} ((\vbn{s+1}, \ldots , \vbn{t})
\Pb{s})=\Pb{t}^{\vee}((D_n-D_t)\rho).$$

The case of $(2, \vbn{1}, \ldots , \vbn{t})$ is similar but with an extra case.  The case $t=0$ is easy since then
$(2)\cong P$ so the local cohomology is all in cohomological degree $n$ where it is
$P^*(-D_n\rho)$.  If $t=1$ we again get a single local cohomology group
$$H^{n}_{\Jb_n} ((2,\vbn{1}) P)=P^*(-D_n\rho)\oplus \Pb{1}^{\vee}((D_{1}-D_n)\rho).$$
Otherwise we get two local cohomology groups
$$H^{n}_{\Jb_n} ((2, \ldots , \vbn{t})P)=P^*(-D_n\rho)
\mbox{ and } H^{n-t+1}_{\Jb_n} ((2, \ldots , \vbn{t})P) =\Pb{t}^{\vee}((D_t-D_n)\rho).$$

\subsection{The special case $n=1$}
The best way to make the patterns apparent is to look at the simplest
cases.  In this section we begin with $\kR^{C_2}_{\bigstar}$ as treated in
Section \ref{sec:kRlcss} above, and we encourage the reader to relate
the calculations here to the diagrams in Section \ref{sec:kRlcss}. In that case, 
$$P=\kR^{C_2}_{*\rho}=\Z [\vbn{1}], \Pb{0}=\Ftwo [\vbn{1}] \mbox{ and } \Pb{1}=\Ftwo.$$

Displaying $BB$ by $d$-diagonal, we have

$$\begin{array}{c|cc}
&BB&(n=1)\\
\hline
d&1&u\\
\hline
0&P&\\
1&\Pb{0}&\\
2&\Pb{0}&\\
3&\Pb{1}&\\
4&\Pb{1}&(2)P\\
5&\Pb{1}&\\
6&\Pb{1}&\\
7&\Pb{1}&\\
8&\Pb{1}&\\
\end{array}$$
\vspace{0.2cm}

The position of the modules along the $d$-diagonal can be inferred
from the label at the top of the column. Thus the first column  has
generators in degree $-d\sigma$, and the second column similarly, but
 in the column of $u$ (namely the 2-column). Noting that $u$ is on the
 4-diagonal, the $d$th row has generators in $|u| -(d-4)\sigma = 2-(d-2)\sigma$. For example, along the 4-diagonal we have $a^4\Pb{1}
\oplus (2u)P$.

Taking local cohomology, and shifting $H^s_{\Jb_n}$ down by $s$ (as in the local cohomology spectral sequence), we have
$$\begin{array}{c|cc}
&H^*_{(\vb_1)}(BB)&(n=1)\\
\hline
d&1&u\\
\hline
-1&{\color{brown}P^*(-2\rho)}&\\
0&{\color{brown} \Pb{0}^{\vee}(-2\rho)}&\\
1&{\color{brown}\Pb{0}^{\vee}(-2\rho)}&\\
2&&\\
3&\Pb{1}&{\color{brown}P^*(-2\rho)}\\
4&\Pb{1}&\\
5&\Pb{1}&\\
6&\Pb{1}&\\
7&\Pb{1}&\\
8&\Pb{1}&\\
\end{array}$$
\vspace{0.2cm}

Here, we colored $H^1$-groups brown. Note that shifting down by $s$ both lowers $d$ by $s$ and adds a shift by $-s\rho$. For example, considering the 3-diagonal of this table, the $\Pb{1}$ comes directly
from the 3-diagonal of $BB$, whilst the  $P^*(-2\rho)$ comes from the
$(2)P$ on the 4-diagonal of $BB$; the local cohomology is
$P^*(-\rho)$, but its diagonal is shifted  by $-1$ since it is a first
local cohomology, and because it is by reference to the 2-column the
shift is $-\rho$. The top of this module is calculated by
reference to the column of $|u|$ (i.e., the 2-column), and has top in degree
 $2-(3-2)\sigma-2\rho=-3\sigma$.

We saw in Section \ref{sec:kRlcss} that the two modules on the
3-diagonal give a non-trivial additive extension (in degree
$-3\sigma$) after running the spectral sequence.

\subsection{The special case $n=2$}
\label{subsec:tmfotloccoh}
Continuing our effort to make patterns visible,  we consider
$tmf_1(3)^{C_2}_{\bigstar}$ in this subsection (i.e., the case $n=2$). With
$\Z$ denoting the integers with 3 inverted here, this has
$$P=tmf_1(3)^{C_2}_{*\rho}=\Z [\vbn{1}, \vbn{2}], \Pb{0}=\Ftwo [\vbn{1},
\vbn{2}], \Pb{1}=\Ftwo [\vbn{2}] \mbox{ and } \Pb{2}=\Ftwo.$$

Thus for $n=2$ we have
$$\begin{array}{c|cccc}
&BB&(n=2)&&\\
\hline
d&1&u&u^2&u^3\\
\hline
0&P&&&\\
1&\Pb{0}&&&\\
2&\Pb{0}&&&\\
3&\Pb{1}&&&\\
4&\Pb{1}&(2)&&\\
5&\Pb{1}&&&\\
6&\Pb{1}&&&\\
7&\Pb{2}&&&\\
8&\Pb{2}&&(2,\vbn{1})P&\\
9&\Pb{2}&&(\vbn{1})\Pb{0}&\\
10&\Pb{2}&&(\vbn{1})\Pb{0}&\\
11&\Pb{2}&&&\\
12&\Pb{2}&&&(2)\\
13&\Pb{2}&&&\\
\end{array}$$
Once again, the column labelled $u^i$ is the $2i$th column, and shifts
along the diagonal have as reference point where this column meets the
relevant diagonal. 

We take local cohomology, again remembering that $H^s_{\Jb_n}$ is
shifted down by $s$, which changes the diagonal by $s$. For example, on the 7-diagonal, $\Pb{2}$ comes from the 7-diagonal in
$BB$, whereas the $\Pb{0}^{\vee}(-5\rho)$ comes from the 2nd local
cohomology of the entry $(\vbn{1})\Pb{0}$ on the $9$-diagonal; the local
cohomology of $\Pb{0}$ is  $\Pb{0}^{\vee}(-4\rho)$, this is shifted by
a further $-2\rho$ from the change of diagonal, and $+\rho$ because of
the $\vbn{1}$.  
$$\begin{array}{c|cccc}
&H^*_{(\vb_1,\vb_2)}(BB)&(n=2)&&\\
\hline
d&1&u&u^2&u^3\\
\hline
-2&{\color{teal}P^*(-6\rho)}&&&\\
-1&{\color{teal}\Pb{0}^{\vee}(-6\rho)}&&&\\
0&{\color{teal}\Pb{0}^{\vee}(-6\rho)}&&&\\
1&&&&\\
2&{\color{brown}\Pb{1}^{\vee}(-4\rho)}&{\color{teal}P^*(-6\rho)}&&\\
3&{\color{brown}\Pb{1}^{\vee}(-4\rho)}&&&\\
4&{\color{brown}\Pb{1}^{\vee}(-4\rho)}&&&\\
5&{\color{brown}\Pb{1}^{\vee}(-4\rho)}&&&\\
6&&& {\color{brown}\Pb{1}^{\vee}(-5\rho)}\,\oplus\,  {\color{teal}P^*(-6\rho)}&\\
7&\Pb{2}&&{\color{teal}\Pb{0}^{\vee}(-5\rho)}&\\
8&\Pb{2}&&{\color{teal}\Pb{0}^{\vee}(-5\rho)}&\\
9&\Pb{2}&&&\\
10&\Pb{2}&&&{\color{teal}P^*(-6\rho)}\\
11&\Pb{2}&&&\\
12&\Pb{2}&&&\\
13&\Pb{2}&&&\\
\end{array}$$

We have colored again $H^1$-groups in brown and now also $H^2$-groups in teal. We will see below that there are non-trivial extensions on the 2- and 10-diagonals, and that
there are differentials in the local cohomology spectral sequence from
the 7-, 8- and 9-diagonals (differentials go from the $d$-diagonal to
the $(d-1)$-diagonal). 

\subsection{Moving from the basic  block $BB$ to the negative block $NB$}
Moving from $BB$ to $NB$ only affects the $0$ column, where in each
case $M$ is replaced by $\ker (M \lra \Ftwo)=(2)M$. In effect this
replaces $\Pb{n}$ by $0$. It also adds on a new $(-1)$-column 
of $\Pb{n}=\Ftwo$ going up from the $\sigma$ row. We resist the temptation
to display a table for $NB$ explicitly, but note that
$NB=\Gamma_{(a)}BB$ as for $\kR$. 

\subsection{Gorenstein duality}
\label{subsec:GorDBPRn}
With the above data in mind, we may consider the $d$-diagonal $BB_d$,
where the lowest value of $d$ is 0 and the highest is $N=4(2^n-1)$. If
we ignore  the difference
between $BB$ and $NB$ (which is at most $\Ftwo$ in any degree)  we find
approximately that $BB_d$ has a relationship to $BB_{N-d}$, namely 
something like an equality
$$H^n_{\Jb_n}(BB_d)^*=BB_{N-d}. $$
There are various ways in which this is inaccurate and needs to be modified. 
Firstly, if the local cohomology of $BB_d$ is entirely in cohomological 
degree $n-\eps $ with $\eps \neq 0$, there will be a shift of $\eps$
(if it is in several degrees there is a further
complication). Secondly, Anderson duality introduces a shift of 1 diagonal if
applied to torsion modules. Thirdly, we have seen that there may be
extensions between these local cohomology groups, sometimes removing
$\Z$-torsion. Finally, there may be differentials. 

In fact all of these effects are `small' in the sense that the growth
rate along a diagonal is bounded by a polynomial of degree $n-1$. 
Encouraged by this, if we ignore all of these effects we see 
that $BB$ is a Gorenstein module in the sense that the reverse-graded 
version is equivalent to 
the dual of its local cohomology.
$$H^n_{\Jb_n}(BB)^*=\mathrm{rev}(BB). $$ 

This is rather as if there is a cofibre sequence 
$$S\lra \BPRn\lra Q$$
with $S$ Gorenstein and $Q$ a Poincar\'e duality algebra of formal dimension
$N=2(1-\sigma)(2^n-1)$.

\section{The local cohmology spectral sequence for $\tmfot$}
\label{sec:tmfotlcss}
We examine the local cohomology spectral sequence and Gorenstein duality
in more detail for $\tmfot$. Actually, our calculations are equally valid for all forms of $BP\R\langle 2\rangle$, but we prefer the more evocative name $\tmfot$ of the most prominent example. More of the general features are visible for $\tmfot$
than for $\kR$.

 As usual we will implicitly localize everywhere at $2$ (although for  $\tmfot$ itself it would actually suffice to just invert $3$).

\subsection{The local cohomology spectral sequence}
We make explicit the  implications for the coefficient ring, both
computationally and structurally. Writing $\rost$  for
$RO(\Ctwo)$-grading as usual, the spectral sequence takes the
following form. 

\begin{prop} 
\label{prop:tmfotlcss}
There is a spectral sequence of $\tmfot^{\Ctwo}_{\rost}$-modules
$$H^*_{\Jb_n}(\tmfot^{\Ctwo}_{\rost}) \Rightarrow \Sigma^{-8-2\sigma} \pi^{\Ctwo}_{\rost}(\Z^{\tmfot}).$$
The homotopy of the Anderson dual is easily calculated
$$0\lra \Ext_{\Z}(\tmfot^{\Ctwo}_{-\alpha -1}, \Z)\lra 
\pi^{\Ctwo}_{\alpha}\Z^{\tmfot} \lra  \Hom_{\Z}(\tmfot^{\Ctwo}_{-\alpha}, \Z) \lra 0. $$
The local cohomology spectral sequence has some non-trivial differentials. 
\end{prop}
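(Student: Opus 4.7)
The plan is to deduce the spectral sequence from Gorenstein duality plus the general local cohomology spectral sequence, and then establish the non-triviality of the differentials by a direct comparison of the $E_2$-page with the known Anderson dual.

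First, Example \ref{exa:forms} identifies $\tmfot$ (after $2$-localization) as a form of $BP\R\langle 2\rangle$, so Corollary \ref{cor:BPRnGorDdish} applies. With $n=2$ we have $D_2 = 2^{3}-2-2 = 4$ and hence a shift $-(D_2+n+2)-(D_2-2)\sigma = -8-2\sigma$, yielding an equivalence of $M\R$-modules
$$\Gamma_{\Jb_2}\tmfot \simeq \Sigma^{-8-2\sigma}\Z^{\tmfot}.$$
Feeding this into the local cohomology spectral sequence \eqref{eqn:localcohomology} for the ideal $\Jb_2 = (\vb_1,\vb_2) \subset \tmfot^{\Ctwo}_{\rost}$ produces the claimed spectral sequence of $\tmfot^{\Ctwo}_{\rost}$-modules. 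The short exact sequence computing $\pi^{\Ctwo}_{\alpha}\Z^{\tmfot}$ in terms of the coefficient ring (which is known explicitly from Section \ref{sec:BPRnC2}) is simply the $\Ctwo$-equivariant Anderson duality short exact sequence recalled in Subsection \ref{subsec:Anderson}.

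For the non-triviality of the differentials, the strategy is a size count diagonal by diagonal. The $E_2$-page is exactly what was tabulated in Subsection \ref{subsec:tmfotloccoh}: along the $d$-diagonal it is built from shifted copies of $P^*$ and $\Pb{s}^{\vee}$ (with specific $\rho$-shifts determined by the contributing summand of $BB$ and $NB$) together with a $\Pb{s}$-tower coming from $H^0$. The abutment $\pi^{\Ctwo}_{\rost}\Z^{\tmfot}$ is pinned down via the Anderson duality short exact sequence from the known group $\tmfot^{\Ctwo}_{\rost}$. Comparing the two sides on the diagonals highlighted in Subsection \ref{subsec:tmfotloccoh}, namely the 7-, 8- and 9-diagonals, one finds that the $E_2$-page carries strictly more classes in those degrees than the target group; the surplus must die through spectral sequence differentials, and tracing degrees shows that they necessarily land on the $(d-1)$-diagonal.

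The main obstacle is the bookkeeping required to align three separate kinds of shifts: the filtration shift $-s$ applied to $H^s_{\Jb_2}$, the overall suspension $\Sigma^{-8-2\sigma}$ relating Gorenstein duality to the Anderson dual, and the internal $\rho$- and $\sigma$-shifts recorded in the contributions from $P^*$ and $\Pb{s}^{\vee}$. Once one fixes a particular diagonal and writes down both the $E_2$-contribution and the corresponding piece of the Anderson dual side-by-side, the existence, source and target of each differential is forced; exhibiting a single such non-zero differential suffices for the statement.
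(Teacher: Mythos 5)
Your proposal is correct and follows essentially the same route as the paper: the spectral sequence and its shift come from Gorenstein duality (Corollary \ref{cor:BPRnGorDdish} with $n=2$, $D_2=4$, giving the equivalence $\Gamma_{\Jb_2}\tmfot \simeq \Sigma^{-8-2\sigma}\Z^{\tmfot}$) fed into \eqref{eqn:localcohomology}, the short exact sequence is the standard Anderson duality sequence, and the non-triviality of differentials is established by comparing the $E_2$-page with the known abutment on the 7--9 diagonals. Your ``size count'' is precisely the ``necessary for connectivity'' argument used in the paper's later, more detailed Proposition (the target, being a shift of the Anderson dual of a connective spectrum, vanishes in degrees where $E_2$ does not), though like the paper's statement of Proposition~\ref{prop:tmfotlcss} you defer the actual degree-by-degree bookkeeping to the surrounding discussion.
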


\subsection{The ring $\protect \tmfot^{\Ctwo}_{\rost}$}\label{sec:tmfgroups}

The ring  $\tmfot^{\Ctwo}_{\rost}$ is approximately calculated in \cite{HM} and is more precisely desribed as 
$$BB[U] \oplus U^{-1}NB[U^{-1}]$$
as at the end of Section \ref{sec:BPRnC2} with $n=2$. We already tabulated $BB$ in Section \ref{subsec:tmfotloccoh}, but we want also want to display a bigger chart of $\pi_{\rost}^{C_2}tmf_1(3)$ as Figure \ref{fig:tmf13} to give the reader a feeling of how the blocks piece together.  

A black diagonal line means a copy of $P$ when it starts in a box, a
copy of $(2)P$ when it starts in a small circle, a copy of
$(2,\vb_1)P$ when it starts in a dot and a copy of $(2,\vb_1,\vb_2)$
when it starts in a big circle. A red diagonal line means a copy of
$\overline{P}_0$ and a green diagonal line a copy of $\overline{P}_1$. A red dot is a copy of $\F_2 = \overline{P}_2$.

\begin{center}
\begin{figure}\includegraphics{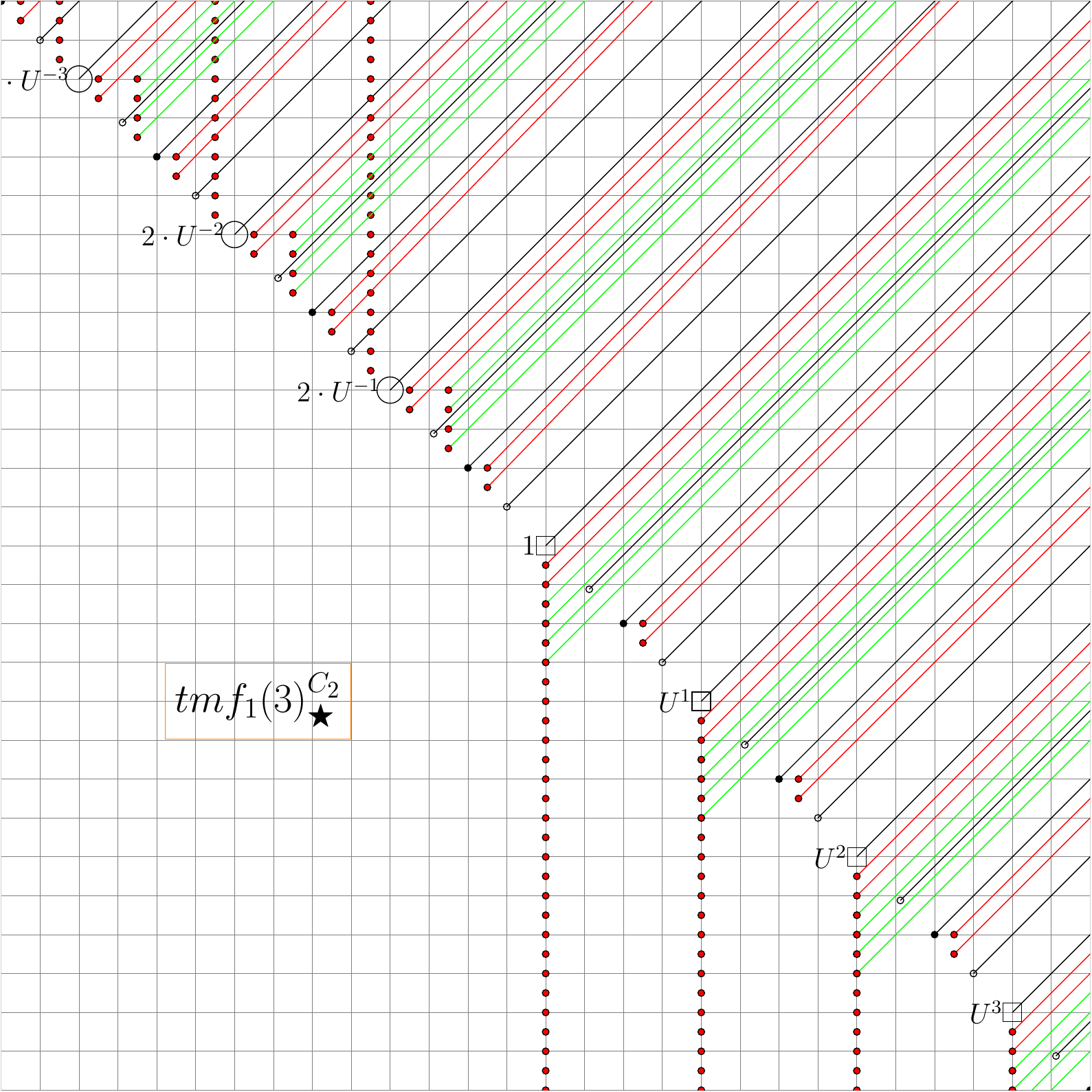}
\caption{The homotopy of $tmf_1(3)$ \label{fig:tmf13}}
\end{figure}
\end{center}

\subsection{Local cohomology}
We are calculating local cohomology with respect to the 
ideal $\Jb_2=(\vob, \vtb)$ so that we only need to consider $\tmfot^{\Ctwo}_{\rost}$ as a
$\Z [\vob, \vtb]$-module. As such it is a sum of suspensions of the blocks
$BB$ and $NB$, so we just need to calculate the local cohomology of
these. This was described in Section \ref{sec:BPRnlcss} above. Here we will
simply describe the extensions and the behaviour of the local
cohomology spectral sequence. 

The basis of this discussion are the tables of $BB$ and $GBB$ from Subsection
\ref{subsec:tmfotloccoh} together with the analogues for $NB$ and
$GNB$. Although these are organized by diagonal, Figure \ref{fig:GBBtmf13}
displaying $BB, GBB, U^{-1} NB$ and $U^{-1}GNB$ may help visualize the 
way the modules are distributed along each diagonal. The vertical lines denote
multiplication by $a$ and the dashed vertical line is an exotic
multiplication by $a$ that is not visible on the level of local
cohomology. The green diamond does not denote a class, but marks the
point one has to reflect (non-torsion classes) at to see Anderson
duality. Torsion classes are shifted after reflection by $-1$ (i.e.,
one step horizontally to the left). 

\begin{center}
\begin{figure}\includegraphics{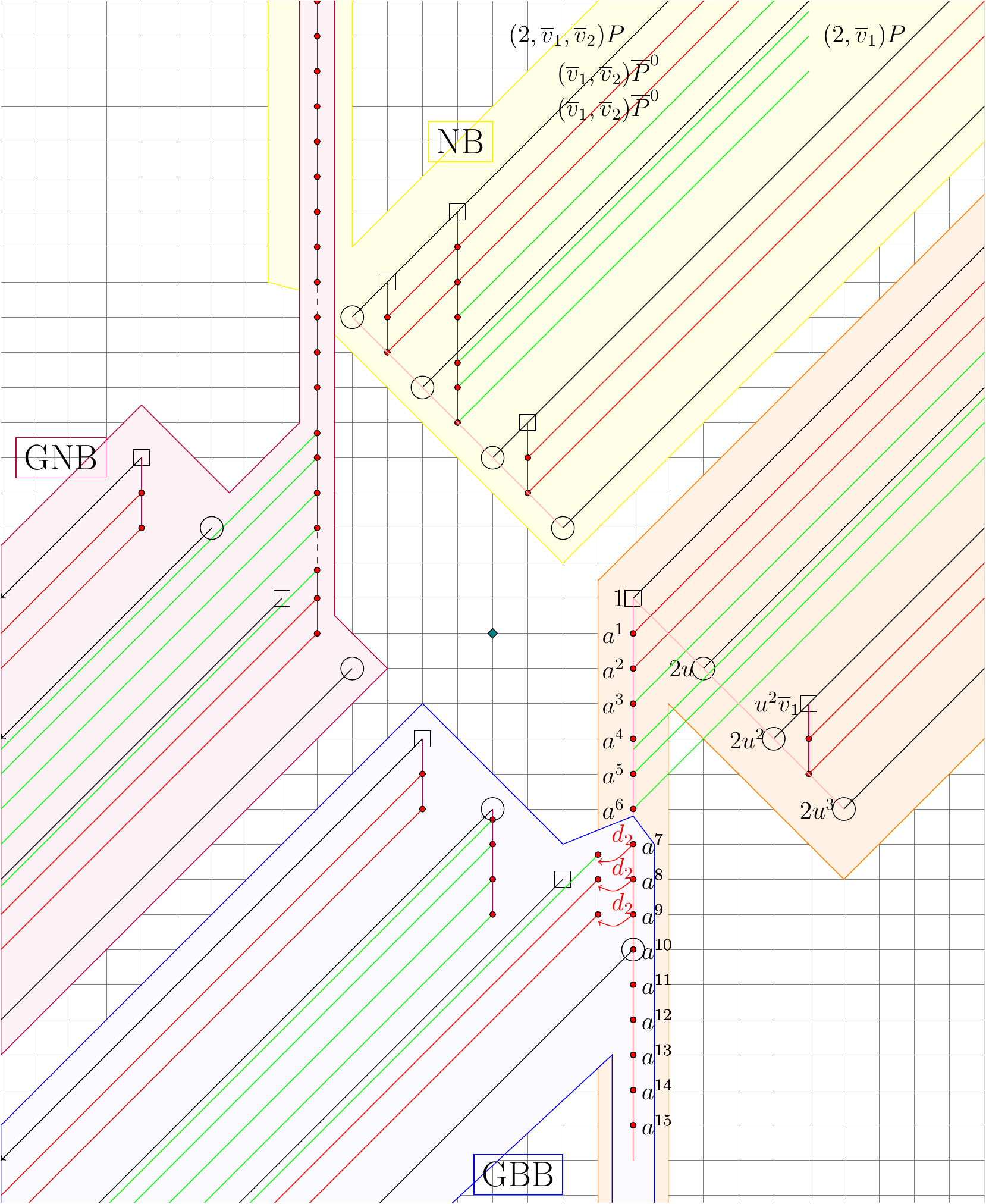}
\caption{Gorenstein duality for $tmf_1(3)$ \label{fig:GBBtmf13}}
\end{figure}
\end{center}

The strategy is to take the known subquotients from the local
cohomology  calculation, and resolve the extension problems using Gorenstein
duality.

\begin{prop}
We have an isomorphism 
$$\pi_{\rost}^{C_2}\Gamma_{\Jb_2}tmf_1(3) \cong GBB[U]\oplus U^{-1}GNB[U^{-1}],$$
where $GBB$ and $GNB$ are described in the following. We will simultaneously describe what differentials and extensions in the local cohomology spectral sequence caused the passage from $H^*_{\Jb_2}(BB)$ and $H^*_{\Jb_2}(NB)$ to $GBB$ and $GNB$ respectively. 

(i) The $\Z [\vob, \vtb]$-modules along the diagonals in $GBB$ are  as
follows. 
$$\begin{array}{c|cl}
&GBB&(n=2)\\
\hline
i&\mbox{Module}&\mbox{Top degree}\\
\hline
-2&P^*&-6-4\sigma\\
-1&\Pb{0}^{\vee}&-6-5\sigma\\
0&\Pb{0}^{\vee}&-6-6\sigma\\
1&0&\\
2&[(2,\vob)P]^*&-4-6\sigma\\
3&\Pb{1}^{\vee}&-4-7\sigma\\
4&\Pb{1}^{\vee}&-4-8\sigma\\
5&\Pb{1}^{\vee}&-4-9\sigma\\
6&[(2,\vob)P]^*&-2-8\sigma\\
7&(\vob, \vtb)\Pb{0}&-2-9\sigma\\
8&(\vob, \vtb)\Pb{0}&-2-10\sigma\\
9&0&\\
10&[(2, \vob, \vtb)P]^*&0-10\sigma\\
10+k\geq 11&\Ftwo&0-(10+k)\sigma\\
\end{array}$$
There are three non-trivial differentials
$$d_2: H^0_{\Jb_2}(BB)\lra  H^2_{\Jb_2}(BB)$$
from the groups at $-7\sigma, -8\sigma, -9\sigma$ to the groups at  
$-7\sigma-1, -8\sigma -1, -9\sigma-1$, which have affected the values
on the 6-, 7-, 8- and 9-diagonals in the table. 

The extensions 
$$0\lra P^* \lra [(2,\vob)P]^* \lra \Ftwo [\vb_2]^{\vee}\lra 0$$
on the 2-diagonal and the 6-diagonal are Anderson dual to the defining short exact sequence
$$0\lra (2,\vob)P\lra P \lra \Ftwo [\vb_2]\lra 0$$
in the following sense: The Anderson dual of the latter exact sequence is a triangle
$$\Ftwo[\vb_2]^* \to P^*\to [(2,\vob)P]^* \to \Sigma\Ftwo[\vb_2]^* \cong \Ftwo[\vb_2]^{\vee},$$
which induces (on homology) the extensions above. 
The extension 
$$0\lra P^* \lra [(2,\vob, \vtb )P]^* \lra \Ftwo \lra 0$$
on the 10-diagonal is Anderson dual to  the short exact sequence
$$0\lra (2,\vob, \vtb)P\lra P \lra \Ftwo \lra 0.$$

(ii) The $\Z [\vob, \vtb]$-modules along the diagonals in $GNB$ are  as
follows (take the direct sum of the two entries for the $(-2)$-,
$(-1)$-, $0$- $1$- and $2$-diagonals) 
$$\begin{array}{c|cl}
&GNB&(n=2)\\
\hline
i&\mbox{Module}&\mbox{Top degree}\\
\hline
-k\leq -3&\Ftwo&-1-k\sigma\\
-2&P^*, \Ftwo &-6-4\sigma, -1+\sigma\\
-1&\Pb{0}^{\vee}, \Ftwo&-6-5\sigma, -1+0\sigma\\
0&\Pb{0}^{\vee}, \Ftwo &-6-6\sigma, -1-\sigma\\
1&\Ftwo &-1-2\sigma\\
2&P^*, \Pb{1}^{\vee}&-4-6\sigma, -1-3\sigma\\
3&\Pb{1}^{\vee}&-1-4\sigma\\
4&\Pb{1}^{\vee}&-1-5\sigma\\
5&\Pb{1}^{\vee}&-1-6\sigma\\
6&[(2,\vob)P]^*&-1-7\sigma\\
7&\Pb{0}^{\vee}&-1-8\sigma\\
8&\Pb{0}^{\vee}&-1-9\sigma\\
9&0&\\
10&P^*&0-10\sigma\\
\end{array}$$

The extension
$$0\lra P^* \lra [(2,\vob)P]^* \lra \Ftwo [v_2]^{\vee}\lra 0$$
on  the 6-diagonal is Anderson dual to the short exact sequence
$$0\lra (2,\vob)P\lra P \lra \Ftwo [v_2]\lra 0.$$
\end{prop}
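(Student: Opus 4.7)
The plan is to read off the $E_2$-page of the local cohomology spectral sequence from the explicit tables in Subsection~\ref{subsec:tmfotloccoh}, and then to use Gorenstein duality (Corollary \ref{cor:BPRnGorDdish}) to pin down the remaining differentials and extensions. Gorenstein duality asserts that
\[
\Gamma_{\Jb_2}tmf_1(3)\simeq \Sigma^{-8-2\sigma}\Z^{tmf_1(3)},
\]
so the abutment of the spectral sequence is determined in every bidegree by the universal coefficient sequence
\[
0\to \Ext_{\Z}(tmf_1(3)^{C_2}_{-\alpha-1},\Z)\to \pi^{C_2}_{\alpha}\Z^{tmf_1(3)}\to \Hom_{\Z}(tmf_1(3)^{C_2}_{-\alpha},\Z)\to 0
\]
together with the description of $tmf_1(3)^{C_2}_{\rost}$ recalled in Subsection~\ref{sec:tmfgroups}.

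First, the decomposition $tmf_1(3)^{C_2}_{\rost}\cong BB[U]\oplus U^{-1}NB[U^{-1}]$ is a splitting of $\Z[\vob,\vtb]$-modules, since multiplication by $\vob$ and $\vtb$ preserves each summand. As $\Gamma_{\Jb_2}$ commutes with direct sums, this induces the decomposition $\pi_{\rost}^{C_2}\Gamma_{\Jb_2}tmf_1(3)\cong GBB[U]\oplus U^{-1}GNB[U^{-1}]$. Moreover, since $\Jb_2$ is generated in pure $\rho$-degrees, local cohomology is computed one diagonal at a time, and the tables in Subsection~\ref{subsec:tmfotloccoh} already give the subquotients of $GBB$; the analogous computation for $NB=\Gamma_{(a)}BB$ gives the subquotients of $GNB$, including the new $\Sigma^{-\delta}\F_2[a]^{\vee}$-towers.

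Second, to upgrade these subquotients to the claimed answer I would compute $\pi^{C_2}_{\alpha}\Z^{tmf_1(3)}$ diagonal by diagonal from the universal coefficient sequence and compare with the $E_2$-page. On the 6-, 7-, 8-, 9-diagonals of $GBB$ the sizes of the $E_2$-columns exceed what the known abutment allows, forcing three $d_2$-differentials from the $H^0_{\Jb_2}$-column to the $H^2_{\Jb_2}$-column as stated; for degree reasons no $d_r$ with $r\geq 3$ can occur. The non-trivial extensions on the 2-, 6-, and 10-diagonals of $GBB$ (and the 6-diagonal of $GNB$) are similarly forced: the abutment contains a $\Z$-summand that cannot exist from two $\Z[\vob,\vtb]$-modules without a nontrivial extension, and the identification with Anderson duals of short exact sequences like $0\to (2,\vob)P\to P\to \Pb{1}\to 0$ comes from applying $\Z^{(-)}$ (as an exact functor on bounded-below modules, up to shift) to these short exact sequences, which is how $[(2,\vob)P]^*$ arises inside the local cohomology.

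The hard part will be bookkeeping: one must simultaneously track the $RO(C_2)$-degrees of generators of $GBB$ and $GNB$ coming from several local cohomology columns on each diagonal, verify that the apparent $d_2$-targets line up with the sources in the right $\sigma$-degrees, and ensure that the copies of $\F_2$ arising from the short exact sequence $0\to(2,\vob)P\to P\to\Pb{1}\to 0$ match those appearing from higher-diagonal local cohomology. Gorenstein duality is the organizing principle that narrows each ambiguity to at most one choice, but the verification is diagonal-by-diagonal and relies on the precise form of $\pi^{C_2}_{\rost}tmf_1(3)$ pictured in Figure~\ref{fig:tmf13}.
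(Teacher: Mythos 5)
Your overall strategy is the same as the paper's: read off the $E_2$-page from the tables, then use Gorenstein duality (Corollary \ref{cor:BPRnGorDdish}) together with the universal coefficient sequence and the known ring $\pi^{C_2}_{\rost}tmf_1(3)$ to pin down all differentials and extensions. However, there is a genuine gap in your opening reduction.

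You deduce $\pi^{C_2}_{\rost}\Gamma_{\Jb_2}tmf_1(3)\cong GBB[U]\oplus U^{-1}GNB[U^{-1}]$ from the algebraic decomposition of the coefficient ring plus the claim that ``$\Gamma_{\Jb_2}$ commutes with direct sums.'' But there is no direct-sum decomposition of the \emph{spectrum} to which you could apply $\Gamma_{\Jb_2}$: as the remark at the end of Section \ref{sec:kRlcss} observes, there is no $C_2$-spectrum with homotopy $BB[U]$ (it would be non-equivariantly contractible, hence $a$-periodic), and the same argument applies verbatim to $tmf_1(3)$. What commuting algebraic local cohomology with direct sums of $\Z[\vob,\vtb]$-modules buys you is only a decomposition of the $E_2$-page $H^{*}_{\Jb_2}(\pi^{C_2}_{\rost}tmf_1(3))\cong H^{*}_{\Jb_2}(BB[U])\oplus H^{*}_{\Jb_2}(U^{-1}NB[U^{-1}])$. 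The abutment does not automatically inherit this: a differential or an additive extension could in principle mix classes from the two blocks, and this must be ruled out. The paper does this by noting that the only degrees where both blocks contribute come from the $\F_2[a]$-towers of $BB$, and in those degrees the two contributions are distinguished by the $\pi^{C_2}_{\rost}tmf_1(3)$-module structure (one class is divisible by $\vob$ or $\vtb$ and not killed by both, the other is not), which makes cross-block differentials and extensions impossible since the spectral sequence is one of $\pi^{C_2}_{\rost}tmf_1(3)$-modules. You should supply this ``no interaction'' argument before separating the blocks; without it the claimed splitting of the abutment is unjustified. The remaining parts of your argument (forcing the three $d_2$'s by comparing sizes with $\pi^{C_2}_{\rost}\Z^{tmf_1(3)}$, forcing the $\Z$-extensions on the $2$-, $6$- and $10$-diagonals, identifying the extensions as duals of the short exact sequences) are in the spirit of the paper's proof and would work once the decomposition is properly established, though the phrase ``as an exact functor on bounded-below modules'' understates the role of the $\Ext$ term: the Anderson dual of a short exact sequence is a triangle whose long exact sequence in homotopy produces the extension, not an exact functor.
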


\begin{proof}
We first note that the contributions from the different blocks do not
interact. Indeed, the only time that  different blocks give
contributions in the same degree comes from the $\Ftwo [a]$ towers of
$BB$, and one class in that degree is divisible by $\vob$ or $\vtb$  and not killed
by both $\vob$ and $\vtb$. We may therefore 
consider the blocks entirely separately.
     
The block $GBB$ comes from the local cohomology of $BB$ in the sense
that there is a spectral sequence
$$H^*_{\Jb_2}(BB)\Rightarrow GBB .$$
Thus there is a filtration 
$$GBB=GBB^0\supseteq GBB^1\supseteq GBB^2\supseteq GBB^3=0$$
with 
$$0\lra GBB^0/GBB^1 \lra H^0_{\Jb_2}(BB) \stackrel{d_2}\lra
\Sigma^{-1}H^2_{\Jb_2}(BB) \lra \Sigma^1 GBB^2\lra  0$$
and 
$$GBB^1/GBB^2\cong \Sigma^{-1} H^1_{\Jb_2}(BB). $$ 

The block $GNB$ comes from the local cohomology of $NB$ in a precisely
analogous way.

Most questions about module structure over $BB[U]$ are resolved by
degree. The remaining issues are resolved by
using Gorenstein duality.

Referring to the table for $H^*_{\Jb_2}(BB)$  in Subsection
\ref{subsec:tmfotloccoh},  the first potential extension is on the
2-diagonal. Using Gorenstein duality to compare with  $NB_{\delta=8}$ 
we see  that the actual  extension on the 2-diagonal of $GBB$ is 
$$0\lra P^*\lra [(2,\vb_1)P]^*\lra \Pb{1}^{\vee} \lra
0,  $$
where we have shifted the modules so they all have top degree 0. 
There is an additive extension on the 10-diagonal by
reference to the Anderson dual. 
Finally the three non-zero $d_2$ differentials from $-1-k\sigma$ for
$k=7,8$ and $9$ are necessary for connectivity (this removes the 
need to discuss the possible extensions on the 7- and 8-diagonals).

The situation is rather similar for $GNB$. We will not explicitly
display $NB$ since the only effect (apart from the addition of 
$\Ftwo [a]^{\vee}$) is on the first column, where a
module is replaced by the kernel of a surjection to $\Ftwo$. It is
perhaps worth displaying $H^2_{\Jb_2}(NB)$, where we leave out the big $\Ftwo[a]^{\vee}$-tower in $H^0_{\Jb_2}NB$. We will color again $H^1$-groups in brown and $H^2$-groups in teal.
$$\begin{array}{c|cccc}
&H^*_{\Jb_2}(NB)&(n=2)&&\\
\hline
i&1&u^2&u^4&u^6\\
\hline
-2&{\color{teal}P^*(-6\rho)}&&&\\
-1&{\color{teal}\Pb{0}^{\vee}(-6\rho)}\oplus {\color{brown}\Pb{2}}&&&\\
0&{\color{teal}\Pb{0}^{\vee}(-6\rho)}\oplus {\color{brown}\Pb{2}}&&&\\
1&{\color{brown}\Pb{2}}&&&\\
2&{\color{brown}\Pb{1}^{\vee}(-4\rho)}&{\color{teal}P^*(-6\rho)}&&\\
3&{\color{brown}\Pb{1}^{\vee}(-4\rho)}&&&\\
4&{\color{brown}\Pb{1}^{\vee}(-4\rho)}&&&\\
5&{\color{brown}\Pb{1}^{\vee}(-4\rho)}&&&\\
6&&& {\color{brown}\Pb{1}^{\vee}(-5\rho)}\oplus  {\color{teal}P^*(-6\rho)}&\\
7&&&{\color{teal}\Pb{0}^{\vee}(-5\rho)}&\\
8&&&{\color{teal}\Pb{0}^{\vee}(-5\rho)}&\\
9&&&&\\
10&&&&{\color{teal}P^*(-6\rho)}\\
11&&&&\\
12&&&&\\
13&&&&\\
\end{array}$$
 In this case all extensions are split, except for the one on the
6-diagonal and there are no differentials. The $a$ multiplications
in the $\Ftwo [a]^{\vee}$ tower are clear from Gorenstein duality and
the $a$-tower $\Ftwo [a]$ in $BB$.
\end{proof}

\begin{remark}
(i) Summarizing the way a diagonal $BB_{\delta}$ contributes to $NB$ as in 
$$H^*_{\Jb_2}(BB_{\delta})^*\sim NB_{\delta'}$$ 
as sketched in Subsection \ref{subsec:GorDBPRn}. We have 

$$\begin{array}{|cc||cc|}
\delta &\delta' s.t. H^*_{\Jb_2}(BB_{\delta})^*\sim
NB_{\delta'}&\delta &\delta' s.t. H^*_{\Jb_2}(NB_{\delta})^*\sim BB_{\delta'}\\
\hline
0&12&0&12\\
1&10&1&10\\
2&9&2&9\\
3&8&3&8\\
4&8,6&4&8,6\\
5&5&5&5\\
6&4&6&4\\
7&2&7&.\\
8&4,3&8&4\\
9&2&9&2\\
10&1,0&10&1\\
11&0&11&.\\
12&0&12&0\\
\hline
\end{array}$$

Because most of the modules are $2$-torsion the most common pairing is
between $\delta$ and $11-\delta$ rather than between 
$\delta$ and $12-\delta$ as happens for the main $U$-power diagonals. 

(ii) We also note as before that since $NB=\Gamma_{(a)}BB$, we have
$$\Sigma^{6+4\sigma}\Gamma_{(\vb_1, \vb_2)}BB\sim (\Gamma_{(a)} BB)^*$$
(where we have written $\sim$ rather than $\simeq$  in recognition of
the differentials) and 
$$\Sigma^{6+4\sigma}\Gamma_{(\vb_1, \vb_2, a)} BB\simeq  BB^*,  $$
with the second stating that $BB$ is Gorenstein of shift $-6-4\sigma$
for the ideal $(\vb_1, \vb_2, a )$. 
\end{remark}

\vspace{1cm}
\addtocontents{toc}{\vspace{\normalbaselineskip}}
\appendix  
\section{The computation of $\pi_\bigstar^{\Ctwo}BP\mathbb{R}$}\label{Appendix}
Our main goal in this appendix is to compute the homotopy fixed point spectral sequence for $BP\R$ and hence for $M\R$. All the results in this appendix and the essential idea of the argument for Proposition \ref{prop:Differentials} are contained in \cite{HK} (see especially Formula 4.16). We just rearranged their arguments and added some details. Our argument for the multiplicative extensions might be considered new though. We have strived for elementary and short proofs though they retain some computational complexity. We hope this is helpful for the reader to understand this crucial computation. Note that even before Hu and Kriz, the computation of $\pi_\bigstar^{\Ctwo}BP\R$ was announced in \cite{A-M}.

We will work throughout $2$-locally. As before, we denote by $\rho$ the regular real $C_2$-representation and by $\sigma$ the sign representation. We need a few facts, first proven by Araki:

\begin{enumerate}
\item If $E$ is a Real-oriented spectrum, then $E^\bigstar_{\Ctwo}(\CP^\infty) \cong E^\bigstar_{\Ctwo}\llbracket u \rrbracket$ with $|u| = -\rho$ and $E^\bigstar_{\Ctwo}(\CP^\infty\times \CP^\infty) \cong E^\bigstar_{\Ctwo}\llbracket 1\tensor u, u\tensor 1 \rrbracket$. This induces a formal group law on $\pi^{\Ctwo}_{*\rho}E$ and the forgetful map $\pi^{\Ctwo}_{*\rho}E \to \pi_{2*}^eE$ maps it to the usual formal group law from the complex orientation of $E$. \cite[Theorem 2.10]{HK}
\item Thus, we get a ring map $\pi_{2*}^e MU \to \pi_{*\rho}^{\Ctwo}M\R$ from the Lazard ring so that $\pi_{2*}^e MU$ is a retract of $\pi_{*\rho}^{\Ctwo}M\R$. For every class in $x\in \pi_{2*}MU$, we have thus a corresponding class $\overline{x}\in\pi_{*\rho}^{\Ctwo}M\R$. \cite[Proposition 2.27]{HK}
\item There is a splitting $M\R_{(2)} \simeq \bigoplus_{m_i}\Sigma^{m_i\rho}BP\R$, where the underlying spectrum of $BP\R$ agrees with $BP$. This splitting corresponds on geometric fixed points to the splitting $MO \simeq \bigoplus_{m_i}\Sigma^{m_i}H\F_2$. \cite[Theorem 2.33]{HK}
\end{enumerate}

Define $a\colon S^0\to S^\sigma$ as before to be the inclusion of the points $0$ and $\infty$; we will denote the image of $a$ in $\pi_\bigstar M\R$ and $\pi_\bigstar BP\R$ by the same symbol. The class $a$ has degree $-\sigma = 1-\rho$. 

\begin{prop}\label{prop:vbn}
We have $a^{2^{n+1}-1}\vb_n = 0$ in $\pi_\bigstar^{\Ctwo}M\R$.
\end{prop}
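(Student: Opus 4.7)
The plan is to use the equivariant formal group law on $\pi^{C_2}_{*\rho} M\R$ associated with the Real orientation $u \in M\R^{-\rho}_{C_2}(\CP^\infty)$. By the choice of generators (fact (ii) from the preamble), $\vb_n$ may be taken to be the coefficient of $u^{2^n}$ in the 2-series $[2]_F(u)$ modulo the decomposable ideal $(2,\vb_1,\ldots,\vb_{n-1})$; that is,
\[
[2]_F(u) \equiv \vb_n u^{2^n} \pmod{(2,\vb_1,\ldots,\vb_{n-1},\,u^{2^n+1})}.
\]
Since $BP\R$ is a summand of $M\R_{(2)}$ compatibly with the choice of the classes $\vb_n$, it suffices to establish the relation in $\pi_\bigstar^{C_2}BP\R$, and since our $\vb_n$ lift from $M\R$ the relation for $BP\R$ also yields the one claimed for $M\R$.

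The key additional input is an Araki-type identity relating the Euler class $a = a_\sigma$ to the Real orientation $u$. One obtains such an identity by considering the complex representation $\sigma_\C = \sigma \otimes_\R \C$: on the geometric side the equivariant Euler class involves $a$, while on the orientation side it is extracted from $u$ via the formal group law, and the two must agree. Together with the identity $[2]_F(\phi^*u) = 0$ arising from the fact that $\sigma_\C^{\otimes 2}$ is trivializable, this produces a universal relation in $\pi_\bigstar^{C_2} M\R$ of the form
\[
a^{2^{n+1}-1}\vb_n \equiv 0 \pmod{(2,\vb_1,\ldots,\vb_{n-1})\text{ and their }a\text{-torsion multiples}}.
\]

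The proof is then completed by induction on $n$. The base case $n = 0$ is the classical identity $2a = 0$ in $\pi^{C_2}_{-\sigma} S$. For the inductive step, the hypotheses $a^{2^{i+1}-1}\vb_i = 0$ for $i < n$ annihilate all the lower-order contributions appearing when the Araki identity is expanded and the coefficient of $u^{2^n}$ is read off, leaving only the desired relation $a^{2^{n+1}-1}\vb_n = 0$. The main obstacle is establishing Araki's identity in the precise form needed to extract the sharp exponent $2^{n+1}-1$ of $a$: a weaker version of the relation only yields a higher power of $a$, so one must track the combinatorics of the 2-series at $p=2$ carefully and keep the bookkeeping of decomposables clean at each inductive stage — which is the content of Hu--Kriz's Formula 4.16.
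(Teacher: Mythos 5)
Your argument is a formal group law / $2$-series approach in the spirit of Araki and Hu--Kriz, but the central claim is not actually established: the ``Araki-type identity'' relating $a$ to the $2$-series of the Real orientation $u$, with the \emph{sharp} exponent $2^{n+1}-1$, is asserted rather than derived, and you acknowledge this explicitly (``the main obstacle is establishing Araki's identity in the precise form needed\ldots which is the content of Hu--Kriz's Formula 4.16''). As written this is a plan for a proof, not a proof. The step ``the two must agree'' after considering $\sigma_\C$ on the geometric and orientation sides is hand-waving, and the reduction of the lower-order terms to the inductive hypothesis $a^{2^{i+1}-1}\vb_i=0$ for $i<n$ is precisely the nontrivial bookkeeping you leave unverified. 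Getting the exponent right is the entire content of the proposition; a weaker relation (higher power of $a$) does not suffice for the rest of the paper.

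The paper's proof avoids all of this by a completely different route. It uses the isotropy separation cofibre sequence $(EC_2)_+ \tensor M\R \to M\R \to \tilde{E}C_2 \tensor M\R$. Since $a$ is inverted on $\tilde{E}C_2 \tensor M\R$, the formal group law on $\pi^{C_2}_{*\rho}(\tilde{E}C_2 \tensor M\R)$ coincides with that on $\pi_*MO$, which is additive; hence $\vb_n$ (being the image of $v_n$ under the ring map from $MU_{2*}$) maps to zero there, so $\vb_n$ and therefore $a^{2^{n+1}-1}\vb_n$ lift to $\pi^{C_2}_\bigstar((EC_2)_+\tensor M\R)$. The relevant degree is $2^n-1-2^n\sigma$, and by the Adams isomorphism plus the homotopy orbit spectral sequence, $\pi^{C_2}_{2^n-1-2^n\sigma}((EC_2)_+\tensor M\R)\cong\pi_{2^n-1}\bigl((\Sigma^{2^n\sigma}M\R)_{hC_2}\bigr)=0$ because $\Sigma^{2^n\sigma}M\R$ is $(2^n-1)$-connected. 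No $2$-series combinatorics, no induction on $n$, and no appeal to a precise Araki identity are needed --- it is a connectivity argument. If you want to pursue your approach, you would need to actually prove the key identity; as it stands the gap is exactly at the place you flagged.
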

\begin{proof}
We have a fibre sequence 
$$(E\Ctwo)_+ \tensor M\R \to M\R \to \tilde{E}\Ctwo \tensor M\R.$$
First, we claim that the image of $\vb_n$ under $M\R \to \tilde{E}\Ctwo \tensor M\R$ is zero. Indeed, as $a$ is invertible on $\tilde{E}\Ctwo \tensor M\R$, the formal group law on $\pi_{*\rho}^{\Ctwo}(\tilde{E}\Ctwo \tensor M\R)$ agrees with that on $\pi_*^{\Ctwo}(\tilde{E}\Ctwo \tensor M\R) = \pi_*MO$, which is additive. Therefore, the map 
$$MU_{2*} \to \pi_{*\rho}^{C_2}M\R \to \pi_{*\rho}^{\Ctwo} \tilde{E}\Ctwo\tensor M\R$$
sends all $v_n$ to zero. Thus, $\vb_n$ and hence also $a^{2^{n+1}-1}\vb_n$ are in the image of the map 
$$(E\Ctwo)_+ \tensor M\R \to M\R.$$

Observe that 
$$|a^{2^{n+1}-1}\vb_n| = -(2^{n+1}-1)\sigma + (2^n-1)(1+\sigma) = 2^n-1 - 2^n\sigma.$$
We claim that $\pi_{2^n-1 - 2^n\sigma}^{\Ctwo}\left((E\Ctwo)_+ \tensor M\R\right)$ is zero. Indeed, we have 
$$\pi_{2^n-1 - 2^n\sigma}^{\Ctwo}\left((E\Ctwo)_+ \tensor M\R\right) \cong \pi_{2^n-1}(\Sigma^{2^n\sigma}M\R)_{h\Ctwo}.$$
This can be computed by the homotopy orbit spectral sequence 
\[H_p(\Ctwo; \pi_q\Sigma^{2^n\sigma}M\R) \Rightarrow \pi_{p+q}(\Sigma^{2^n\sigma}M\R)_{h\Ctwo}.\]
But $\pi_q\Sigma^{2^n\sigma}M\R=0$ for $q<2^n$ so that $\pi_{2^n-1}(\Sigma^{2^n\sigma}M\R)_{h\Ctwo} = 0$. Thus, we see that $a^{2^{n+1}-1}\vb_n = 0$ in $\pi_\bigstar^{\Ctwo}M\R$.
\end{proof}

For a $\Ctwo$-spectrum $X$ the $RO(\Ctwo)$ graded homotopy fixed point
spectral sequence is defined by combining the homotopy fixed point
spectral sequences 
$$E_2^{p,q}(r)=H^q(\Ctwo, \pi_{p+q}(X\sm S^{-r\sigma})) \;\Rightarrow\;
\pi^{\Ctwo}_{p}((X\sm S^{-r\sigma})^{h\Ctwo}) \cong \pi^{\Ctwo}_{p+r\sigma}(X^{(EC_2)_+)}) $$
into a single spectral sequence with differential
$$d_n: E_n^{p,q}(r)\lra E_n^{p-1, q+n}(r). $$
Note that we use an Adams grading convention here. We will often call $p+r\sigma$ the \emph{degree} of an element. 

The $RO(\Ctwo)$-graded homotopy fixed point spectral sequence (HFPSS) for $BP\R$ has $E_2$-term
\[ \Z_{(2)}[a, u^{\pm 1},\vb_1, \vb_2,\dots]/2a.\]
with
$$|a|=(-\sigma, 1),\, |u|=(2-2\sigma, 0),\text{ and } |\vb_i|=((2^i-1)\rho, 0).$$
This can be seen, for example, by the identification with the Bockstein spectral sequence for $a$ discussed in \cite[Lemma 4.8]{HM}. As $BP\R$ is a retract of $M\R_{(2)}$, it has the structure of a (homotopy) ring spectrum and thus the $RO(C_2)$-graded homotopy fixed point spectral sequence is multiplicative by \cite[Sec 2.3]{HM}. 

By the discussion above, $a$ and the $\vb_i$ are permanent cycles. As $a^{2^{n+1}-1}\vb_n$ is zero, it must be hit by a differential. This is the crucial ingredient for the following central proposition. It is fully formal in the sense that we do not need any other input in addition to the things we already discussed; we argue just with the form of the spectral sequence. We will set $\vb_0 = 2$ for convenience. 

\begin{prop}\label{prop:Differentials}
In the HFPSS for $BP\R$, we have $E_{2^n} = E_{2^{n+1}-1}$ and it is the subalgebra of 
$$E_2/(a^3\vb_1,\dots, a^{2^n-1}\vb_{n-1})$$ 
generated by $a, u^{\pm 2^{n-1}}$, the $\vb_i$ for $i\geq 0$ and by the  $\vb_iu^{2^ij}$ for $i<n-1$ and $j\in\Z$.
\end{prop}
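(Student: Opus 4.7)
The plan is to prove the proposition by induction on $n$. I first record a parity observation: since $\pi^e_*(BP)$ is concentrated in even degrees, every nonzero entry of the HFPSS satisfies $p+q+r\equiv 0\pmod{2}$ (where $r$ denotes the $\sigma$-weight), and since a differential $d_s$ shifts the topological degree $p+q$ by $s-1$ while preserving $r$, any $d_s$ with $s$ even vanishes identically on every page. Thus I restrict to odd-length differentials throughout. The base case $n=1$ asserts $E_2=E_3$, which is now immediate.

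For the inductive step I assume $E_{2^n}=E_{2^{n+1}-1}$ has the stated description and aim to establish
\[
d_{2^{n+1}-1}(u^{2^{n-1}}) \;=\; \lambda\, a^{2^{n+1}-1}\vb_n
\]
for some $2$-local unit $\lambda$. The argument rests on three observations: (i) by Proposition~\ref{prop:vbn} the class $a^{2^{n+1}-1}\vb_n$ vanishes in $\pi^{C_2}_\bigstar BP\R$ and hence in the HFPSS abutment $\pi^{C_2}_\bigstar BP\R^{(EC_2)_+}$; (ii) on the $E_{2^n}$-page it is nonzero, since by the inductive description it does not lie in the ideal $(a^3\vb_1,\ldots,a^{2^n-1}\vb_{n-1})$; and (iii) an explicit bidegree enumeration on $E_{2^n}$ in the relevant degree $2^n(1-\sigma)$ and filtration zero (using that only $u$-powers contribute zero to $p+r$ among the inductive generators) identifies $u^{2^{n-1}}$ as the unique filtration-zero source, while higher-filtration alternatives are ruled out by combining odd parity with the $2$-torsion relation $2a=0$. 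This forces the differential length to be $2^{n+1}-1$, and since $a^{2^{n+1}-1}\vb_n$ must die, $\lambda$ is a unit; after rescaling $\vb_n$ I may take $\lambda=1$.

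Once this key differential is established, the Leibniz rule propagates it throughout the algebra: $d_{2^{n+1}-1}(u^{k\cdot 2^{n-1}})=k\,u^{(k-1)2^{n-1}}\,a^{2^{n+1}-1}\vb_n$ vanishes precisely when $k$ is even (again by $2a=0$), so $u^{\pm 2^{n-1}}$ is replaced on $E_{2^{n+1}}$ by $u^{\pm 2^n}$, with the odd-power monomials $u^{(2k+1)2^{n-1}}$ reincarnated in the guise of the new generators $\vb_{n-1}u^{2^{n-1}k}$; the mixed generators $\vb_j u^{2^j k}$ for $j<n-1$ survive because any Leibniz output involves a factor of $2 a^{2^{n+1}-1}=0$. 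A final parity-and-bidegree check on pages $E_{2^{n+1}},\ldots,E_{2^{n+2}-2}$ rules out further differentials until $d_{2^{n+2}-1}$ kicks in. The principal obstacle is step~(iii): the bidegree analysis that pins down $u^{2^{n-1}}$ as the unique source and rules out higher-filtration alternatives; after that, the argument is a routine but notationally intricate Leibniz bookkeeping.
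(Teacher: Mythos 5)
Your parity observation correctly reduces to odd-length differentials (this is what the paper calls the ``checkerboard phenomenon'' and uses for the base case), and your identification of the key differential $d_{2^{n+1}-1}(u^{2^{n-1}}) = a^{2^{n+1}-1}\vb_n$ matches the paper's. Your step (iii) is actually more involved than it needs to be: since the target sits in filtration $2^{n+1}-1$ and a $d_q$ raises filtration by $q$, the source must have filtration $\leq 0$, which forces $q = 2^{n+1}-1$ with the source in filtration zero; the degree count then pins it to $u^{2^{n-1}}$, and there are no ``higher-filtration alternatives'' to rule out.

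The genuine gap is in your final paragraph, and you have misjudged where the weight of the argument lies. The Leibniz rule cannot compute $d_{2^{n+1}-1}$ (nor the subsequent $d_q$) on the mixed generators $\vb_i u^{2^ij}$ with $i<n-1$ when $2^{n-1-i}\nmid j$: by the inductive description of $E_{2^n}$ these are \emph{irreducible} generators, since $u^{2^ij}$ itself does not belong to the subalgebra, so they are not products of elements whose differentials you control, and the claim that ``any Leibniz output involves a factor of $2a^{2^{n+1}-1}=0$'' does not apply to them. Similarly, ruling out the intermediate differentials $d_q$ for $2^{n+1}\leq q<2^{n+2}-1$ is not a ``parity-and-bidegree check'': parity only halves the range, and for each remaining odd $q$ and each generator ($\vb_iu^{2^ij}$ with $i<n$, and $u^{2^n}$) one must still show that every candidate target $a^qx$, with $x$ on the $0$-line, is already zero in $E_q$. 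The paper does precisely this via a delicate degree computation: writing out the degree forces $x = u^l\vb$ with $|\vb|=(2^i-2+\tfrac{q+1}{2})\rho$, and one argues that every monomial of $\vb$ must be divisible by some $\vb_k$ with $k\leq n$ (bounding $|\vb|$ against $|\vb_{n+1}^2|$ and $|\vb_r|$ for $r\geq n+2$, and using $\tfrac{q+1}{4}\in\Z$ to exclude $\vb_{n+1}$), whence $a^q\vb_k=0$ kills it. This degree analysis, occupying two full paragraphs in the paper, is the heart of the inductive step; your proposal punts on exactly this, so as written the proof is incomplete.
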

\begin{proof}
We prove it by induction. It is obviously true for $n=1$ by the checkerboard phenomenon; indeed, for all generators of the $E_2$-term in degree $(a+b\rho, q)$ we have $a+q$ even. 

Now assume it to be true for a given $n$. First, we will show that $d_{2^{n+1}-1}(u^{2^{n-1}}) = a^{2^{n+1}-1}\vb_n$. Indeed, as $a^{2^{n+1}-1}\vb_n$ is nonzero in $E_{2^{n+1}-1}$, it must be hit by a $d_{2^{n+1}-1}$. Its source $x$ is in the zero-line in degree $2^{n+1}-2^n\rho$. As the zero-line in $E_2$ is generated by $u$ of degree $4-2\rho$ and by the $\vb_i$ in regular representation degrees, we see that the exponent of $u$ in $x$ must be $2^{n-1}$ so that there is no room for further $\vb_i$. Thus, $d_{2^{n+1}-1}(u^{2^{n-1}}) = a^{2^{n+1}-1}\vb_n$. 

Next, we want to show that $d_q(\vb_iu^{2^ij}) = 0$ for $2^{n+1}-1\leq q < 2^{n+2}-1$ and $i<n$. Write $d_q(\vb_iu^{2^ij}) = a^qx$. The degree of $x$ is
$$(2^i-1)\rho + 2^ij(4-2\rho)-q(1-\rho)-1 = (2^{i+2}j-q-1) + (2^i-2^{i+1}j+q-1)\rho.$$
Thus, $x = u^{2^ij-\frac{q+1}4}\vb$, where $\vb$ is a polynomial in the $\vb_\nu$. The degree of $\vb$ is $(2^i-2+\frac{q+1}2)\rho$. As $\frac{q+1}2 < 2^{n+1}$, we have 
$$|\vb| < |\vb_{n+1}^2| < |\vb_r|$$
for $r\geq n+2$. Thus, no monomial in $\vb$ is divisible by $\vb_{n+1}^2$ or $\vb_r$. Assume that $|\vb| = |\vb_{n+1}|$. Then $\frac{q+1}2 = 2^{n+1}-1+2-2^i = 2^{n+1}-2^i+1$, which is odd; but then $\frac{q+1}4 \notin \Z$, which is a contradiction. Thus, every monomial in $\vb$ is divisible by some $\vb_k$ for some $k\leq n$ as $\vb \neq 1$ for degree reasons. But $a^q\vb_k = 0$ in $E_q$. Thus, also $a^qx = 0$  in $E_q$.

Similarly, write $d_q(u^{2^n}) = a^qx$ for $2^{n+1}-1\leq q < 2^{n+2}-1$ and assume that this is nonzero. The degree of $x$ is 
$$2^n(4-2\rho)-q(1-\rho)-1 = (2^{n+2}-q-1)+(q-2^{n+1})\rho.$$
Thus, we can write $x$ in $E_2$ as $u^{2^n-\frac{q+1}4}\vb$, where $\vb$ is a polynomial in the $\vb_\nu$. The degree of $\vb$ is $\frac{q-1}2 <2^{n+1}-1$. Thus, no monomial in $\vb$ can be divisible by $\vb_r$ for $r\geq n+1$. Thus, every monomial in $\vb$ is divisible by some $\vb_k$ for some $k\leq n$ as $\vb \neq 1$ for degree reasons. But $a^q\vb_k = 0$ in $E_q$.  Thus, $d_q(u^{2^n}) = 0$.

By the Leibniz rule, this implies the proposition.
\end{proof}

Before we proceed to solve the multiplicative extension issues, we need a technical lemma. 
\begin{lemma}\label{lem:filtration}
 Assume that there is an element $a^ku^l\vb\neq 0$ above the zero line in the $E_\infty$-term of the $RO(C_2)$-graded HFPSS for $BP\R$ with $\vb$ a monomial in the $\vb_\nu$ and in the same degree as $\vb_i\vb_mu^{2^mj}$. Let $p$ be the minimal index such that $\vb_p$ divides $\vb$ (which we will show to exist). Then $i> p+m$. 
\end{lemma}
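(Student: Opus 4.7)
The plan is to match $RO(C_2)$-degrees and then use Proposition \ref{prop:Differentials} describing $E_\infty$ to extract enough numerical constraints to force $i > p+m$. First I would equate the $RO(C_2)$-degrees of $a^k u^l \vb$ and $\vb_i \vb_m u^{2^m j}$. Writing $\vb = \prod_\alpha \vb_{n_\alpha}^{e_\alpha}$ and $d(\vb) = \sum_\alpha e_\alpha(2^{n_\alpha}-1)$, comparing the coefficients of $1$ and of $\sigma$ separately yields the two linear identities
$$k + 4l \;=\; 2^{m+2} j \qquad\text{and}\qquad d(\vb) \;=\; 2^i + 2^m - 2 + k/2.$$

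Next I would extract the key structural constraints from the hypothesis $a^k u^l \vb \neq 0$ in $E_\infty$. By Proposition \ref{prop:Differentials}, $E_\infty$ is the subalgebra of $E_2/(2a,\,a^{2^{\nu+1}-1}\vb_\nu)$ generated by $a$, the $\vb_\nu$, and the elements $\vb_\nu(j) = u^{2^\nu j}\vb_\nu$. For $a^k u^l \vb$ to represent a nonzero class, the factor $u^l \vb$ must be expressible as a product of generators $\vb_{n_\alpha}(j_\alpha)$, which forces $l \in \sum_\alpha 2^{n_\alpha}\Z = 2^p\Z$; and since $\vb_p \mid \vb$, the relation $a^{2^{p+1}-1}\vb_p = 0$ gives the bound $k \leq 2^{p+1}-2$. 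The case $\vb = 1$ is ruled out because no finite nonzero power of $u$ alone survives to $E_\infty$, which is what secures the existence of $p$ parenthetically asserted in the statement.

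The numerical heart of the proof is then a $2$-adic valuation calculation using the first linear identity. From $k = 2^{m+2} j - 4l$ together with $v_2(4l) \geq p + 2$ and the strict bound $0 < k < 2^{p+1}$, the two terms on the right cannot telescope below filtration $2^{p+1}$, forcing $v_2(k) = m+2$ and in particular $k \geq 2^{m+2}$. Combined with $k \leq 2^{p+1} - 2$ this already yields $m < p$. Substituting back into the second identity $d(\vb) = 2^i + 2^m - 2 + k/2$, and using that $\vb \in \Z_{(2)}[\vb_p, \vb_{p+1}, \ldots]$ (so in particular $d(\vb) \equiv -\#\{\text{factors of }\vb\} \pmod{2^p}$), a modular case analysis on the possible shapes of $\vb$ compatible with the tight range $2^{m+2} \leq k \leq 2^{p+1}-2$ should pin down the size of $2^i$ and deliver the inequality $i > p+m$.

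The main obstacle will be this last modular step. The interval into which $d(\vb)$ is squeezed by the degree equation can in principle be realised by many monomials in $\vb_p, \vb_{p+1}, \ldots$, so the sharp inequality $i > p+m$ is not formal from the linear identities alone. It will require combining the divisibility $2^p \mid l$, the two-sided bound on $k$, and a careful modular comparison to rule out all decompositions of $\vb$ corresponding to $i \leq p+m$.
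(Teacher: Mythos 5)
Your setup matches the paper's proof exactly: equate $RO(C_2)$-degrees to obtain $k+4l=2^{m+2}j$ and $n:=d(\vb)=2^i+2^m-2+k/2$; observe $2^p\mid l$ and the relation $a^{2^{p+1}-1}\vb_p=0$ forces $0<k\leq 2^{p+1}-2$; deduce $2^{m+2}\mid k$ and $m+2\leq p$. (A small overclaim: you can only conclude $v_2(k)\geq m+2$, not $v_2(k)=m+2$, but this is harmless since you only use $k\geq 2^{m+2}$.)

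The genuine gap is that the final and hardest step is not carried out. You write that ``a modular case analysis on the possible shapes of $\vb$ \dots should pin down the size of $2^i$,'' and you explicitly acknowledge that ``the sharp inequality $i>p+m$ is not formal from the linear identities alone'' and that this is ``the main obstacle.'' This is where all the content of the lemma lies. The paper's argument at this point is concrete and has several moving parts you would need to find: first, one deduces $i\geq m+1$ by noting that $2^p-k/2$ is both positive (as $k\leq 2^{p+1}-2$) and divisible by $2^{m+1}$ (as $p\geq m+2$ and $2^{m+1}\mid k/2$), hence $\geq 2^{m+1}$, and combining this with $n\geq 2^p-1$. Second, once $i\geq m+1$, reducing $n=2^i+2^m-2+k/2$ modulo $2^{m+1}$ gives $n\equiv 2^m-2$, and since every $\vb_q$ dividing $\vb$ has $q\geq p>m+1$ (so $\lVert\vb_q\rVert\equiv -1\pmod{2^{m+1}}$), the total exponent $e$ of $\vb$ satisfies $e\equiv 2^m+2\pmod{2^{m+1}}$, hence $e\geq 2^m+2$. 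Third, from $n\geq(2^p-1)(2^m+2)$ one computes $k/2\geq 2^{p+m}-2^i+(2^{p+1}-2^{m+1})$, and if $p+m\geq i$ the right-hand side is at least $2^p$, contradicting $k\leq 2^{p+1}-2$. Without supplying these inequalities (or some equivalent argument), the proposal does not establish the lemma; your suggestion to work modulo $2^p$ rather than $2^{m+1}$ is also likely to be less tractable since $k/2$ and $2^i$ do not vanish modulo $2^p$.
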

\begin{proof}
 The degree of $\vb_i\vb_mu^{2^mj}$ is
  $$2^mj(4-2\rho) + (2^i-1+2^m-1)\rho = 2^{m+2}j+(2^i+2^m-2^{m+1}j-2)\rho.$$
  Let $a^ku^l\vb\neq 0$ be an element in $E_\infty$ in this degree with $\vb$ a monomial in the $\vb_{\nu}$ of degree $n\rho$ and assume that $k> 0$. (In the following we will use the notation $||\vb_p|| = |\vb_p|/\rho$ so that $||\vb|| = n$.) We get 
  \begin{align*}
  4l+k &= 2^{m+2}j\\
  n -2l-k &= 2^i+2^m-2^{m+1}j-2.
 \end{align*}
This implies $n= 2^i + 2^m-2+ \frac{k}2 $. We see that $n\neq 0$. Let $p$ be the minimal index such that $\vb_p|\vb$. Then $2^p|l$ and we set $c = l/2^p$. Then $k = 2^{m+2}j-2^{p+2}c$. Due to the relation $a^{2^{p+1}-1}\vb_p = 0$, we have $k\leq 2^{p+1}-2$ and thus $m+2\leq p$ (as else $2^{p+1}|k$ and thus $k\geq 2^{p+1}$). In particular, $2^{m+1}$ divides $\frac{k}2$. Now observe that $n \geq ||\vb_p|| = 2^p-1$ so that 
$$2^i+2^m-1 \geq 2^p-\frac{k}2.$$
As $k\leq 2^{p+1}-2$, the right hand side is positive; as it is also divisible by $2^{m+1}$ it is thus it is at least $2^{m+1}$. We see that $i\geq m+1$. Thus $n\equiv 2^m-2 \mod 2^{m+1}$. As $||\vb_q|| \equiv -1 \mod 2^{m+1}$ for $q\geq p >m+1$, we see that the total exponent of $\vb$ (i.e.\ the degree of $\vb$ as a monomial in the $\vb_{\nu}$) must be $\equiv 2^m+2 \mod 2^{m+1}$. In particular, $n\geq ||\vb_p||(2^m+2) = (2^p-1)(2^m+2)$. Thus,
$$\frac{k}2 = n-2^i-2^m+2 \geq 2^{p+m}-2^i+(2^{p+1}-2^{m+1}).$$
If $p+m\geq i$, then the right hand side is at least $2^p$, which would be a contradiction. Thus $i>p+m$. 
\end{proof}

 Now, we are ready to prove the main result of the appendix. Note that \cite[Theorem 4.11]{HK} gives a different relation than our last one; our relation implies their relation, but not vice versa. Note also that our arguments for the multiplicative relations are completely algebraic (using the form of the spectral sequence), while \cite{HK} uses additionally a $C_2$-equivariant Adams spectral sequence.
\begin{thm}\label{thm:BPR}
 The ring $\pi_\bigstar^{\Ctwo}BP\R$ is isomorphic to the $E_\infty$-term of the homotopy fixed point spectral sequence above, i.e.\ to the subalgebra of
 $$\Z_{(2)}[a, \vb_i, u^{\pm 1}]/(2a, \vb_ia^{2^{i+1}-1})$$
 (where $i$ runs over all positive integers) generated by $\vb_m(n) = u^{2^mn}\vb_m$ (with $m,n\in\Z$ and $m\geq 0$) and $a$ with $\vb_0=2$. Consequently, it is the quotient $R$ of the ring 
 $$\Z_{(2)}[a, \vb_m(n)|m\geq 0, n\in\Z]$$
 by the relations 
 \begin{align*}
\vb_0(0) &= 2,\\
 a^{2^{m+1}-1}\vb_m(n) &= 0,\\
 \vb_i(j)\vb_m(n) &= \vb_i\vb_m(2^{i-m}j+n)\text{ for }i\geq m
 \end{align*}
 with $\vb_i = \vb_i(0)$.

 Here, $|a| = 1-\rho$ and $|\vb_m(n)| = 2^{m+2}n + (2^m-1-2^{m+1}n)\rho$. 
\end{thm}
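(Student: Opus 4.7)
The proof has three main steps: determining the $E_\infty$-page of the HFPSS, verifying the ring presentation, and promoting the $E_\infty$-description to an identification of $\pi_\bigstar^{\Ctwo}BP\R$ as a ring.

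For the first step, I would pass to the limit in Proposition \ref{prop:Differentials}. At each finite stage $E_{2^n} = E_{2^{n+1}-1}$ we have the relations $a^{2^{i+1}-1}\vb_i = 0$ for $i < n$ and generators $a$, $u^{\pm 2^{n-1}}$, the $\vb_i$, and the products $\vb_i u^{2^ij}$ for $i < n-1$, $j \in \Z$. In the limit, the relations $a^{2^{i+1}-1}\vb_i = 0$ hold for all $i \geq 0$; the bare powers $u^{\pm 2^{n-1}}$ do not survive (at stage $n+1$ the differential $d_{2^{n+1}-1}(u^{2^{n-1}}) = a^{2^{n+1}-1}\vb_n$ kicks in, and similarly for negative powers), while the products $\vb_i u^{2^ij}$ remain as permanent cycles. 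Hence $E_\infty$ is the subalgebra generated by $a$ together with the $\vb_m(j) = u^{2^mj}\vb_m$ (for $m \geq 0, j \in \Z$), subject to $2a = 0$ and $a^{2^{m+1}-1}\vb_m = 0$.

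For the second step, the multiplicative relations in the theorem follow directly from the polynomial identity $u^{2^ij+2^mn} = u^{2^m(2^{i-m}j+n)}$ whenever $i \geq m$, which immediately gives $\vb_i(j)\vb_m(n) = \vb_i\vb_m(2^{i-m}j+n)$. Combining these with the relations $a^{2^{m+1}-1}\vb_m(n) = 0$ and $\vb_0(0) = 2$, I would check that the natural surjection from the abstract ring $R$ of the theorem onto this subalgebra is also injective by exhibiting a monomial basis on each side and matching them degree by degree.

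The third step is promoting the $E_\infty$-identification to a description of $\pi_\bigstar^{\Ctwo}BP\R$ itself. The classes $a$ and $\vb_m$ already lift to $\pi_\bigstar^{\Ctwo}BP\R$ by construction, and the $\vb_m(n)$ lift because $u^{2^mn}\vb_m$ can be built multiplicatively from $\vb_m$ and powers of the Real-orientation class in appropriate $RO(\Ctwo)$-degrees. Strong convergence is guaranteed because the filtration by powers of $a$ is exhaustive and Hausdorff in each fixed $RO(\Ctwo)$-degree (the degree bounds show $d_r = 0$ for $r$ beyond a degree-determined finite stage). The main obstacle will be ruling out hidden additive and multiplicative extensions: any such extension would require a class of strictly smaller Adams filtration in a fixed degree, and Lemma \ref{lem:filtration} provides exactly the degree-counting control needed. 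Specifically, if a class of the form $a^ku^l\vb$ above the zero line occupies the same degree as a product $\vb_i\vb_m(j)$, the lemma forces the minimal $\vb$-index $p$ dividing $\vb$ to satisfy $i > p+m$, which prevents the candidate extension from matching the shape of the product and thereby excludes the extension. This step is the one most likely to require careful bookkeeping, as one must perform the check degree by degree across the $RO(\Ctwo)$-grading.
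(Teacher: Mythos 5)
Your steps one and two follow the paper's approach, but step three contains two genuine gaps. The first is that the homotopy fixed point spectral sequence computes $\pi_\bigstar^{\Ctwo}(BP\R^{(E\Ctwo)_+})$, not $\pi_\bigstar^{\Ctwo}BP\R$; nothing in your proposal bridges that gap. The paper opens the proof by arguing that it suffices to compute the homotopy fixed points: inverting $a$ on the $E_\infty$-page of Proposition \ref{prop:Differentials} leaves only $\F_2[a^{\pm 1}]$, so $BP\R^{\Phi\Ctwo}\to BP\R^{t\Ctwo}$ is an equivalence, and the Tate square then shows $BP\R\to BP\R^{(E\Ctwo)_+}$ is one too. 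Without this reduction the spectral sequence output says nothing about $\pi_\bigstar^{\Ctwo}BP\R$ itself.

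The second gap is in the treatment of the multiplicative extensions. You claim that Lemma \ref{lem:filtration}'s bound $i > p+m$ ``prevents the candidate extension from matching the shape of the product and thereby excludes the extension,'' but this is not so: the lemma only constrains where a higher-filtration class in the right degree could sit, it does not preclude one from existing. The paper's closing remark exhibits $a^8\vb_3^3\vb_4$ as a positive-filtration class in the same degree as $\vb_5\vb_1(1)$, so a pure degree count cannot rule out an extension. The paper instead runs a minimal-counterexample induction: take the least $i$ for which $\vb_i(j)\vb_m(n)\neq\vb_i\vb_m(2^{i-m}j+n)$; the lemma gives $p\le i-1$ and hence the detecting class has $k<2^{p+1}-1<2^i-1$; multiplying both sides by $\vb_{i-1}$ keeps the difference nonzero (precisely because $k<2^i-1$), and then minimality of $i$ together with the identity $\vb_i\vb_{i-1}(2j)=\vb_i(j)\vb_{i-1}$ (itself checked via the lemma) yields a contradiction. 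Your proposal lacks both the inductive framing and the $\vb_{i-1}$-multiplication that make the lemma effective. (Also a slip: extensions in this spectral sequence come from classes of strictly \emph{higher} filtration, not ``strictly smaller Adams filtration,'' and the claim that the $\vb_m(n)$ ``can be built multiplicatively from $\vb_m$ and powers of the Real-orientation class'' is not accurate either, since $u$ is not a permanent cycle; these classes are chosen as lifts of the surviving cycles $u^{2^mn}\vb_m$.)
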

\begin{proof}
 It is enough to show that the expression above computes the homotopy fixed points $\pi_\bigstar^{\Ctwo}BP\R^{(E\Ctwo)_+}$. Indeed, Proposition \ref{prop:Differentials} implies that $\left(a^{-1}BP\R^{(E\Ctwo)_+}\right)^{\Ctwo} \simeq H\F_2$ so that the map $BP\R^{\Phi C_2} \to BP\R^{t\Ctwo}$ is an equivalence and hence also $BP\R \to BP\R^{(E\Ctwo)_+}$ by the Tate square.
 
 Set $\vb_0(0) = 2$. By Proposition \ref{prop:Differentials}, the classes $u^{2^mn}\vb_m$ are permanent cycles in the HFPSS; choose element $\vb_m(n) \in \pi^{\Ctwo}_\bigstar BP\R^{\eqp}$ representing them. Again by Proposition \ref{prop:Differentials}, the $\vb_m(n)$ generate together with $a$ the $E_\infty$-term of the HFPSS. Thus, we get a surjective map $R \to E_\infty$. The third relation defining $R$ allows to define a normal form: Every monomial in the $\vb_i(j)$ equals in $R$ an element of the form $\vb\, \vb_m(k)$, where $\vb$ is a monomial in the $\vb_i$ and $m$ was the smallest index of all $\vb_i(j)$. Thus, two monomials in the $\vb_i(j)$ are equal in $R$ if they are equal in $E_\infty$; hence, the map $R \to E_\infty$ is also injective. 
 
 We now check that the relations are also satisified in $\pi_\bigstar^{\Ctwo}BP\R^{(E\Ctwo)_+}$. This is clear or was already shown for the first two relations. Let now $i$ be the least number such that $m\leq i$ and 
 $$\vb_i(j)\vb_m(n) \neq \vb_i\vb_m(2^{i-m}j+n)$$
 for some $j,m,n$ if such an $i$ exists. The difference must be detected by a class $a^ku^l\vb$, where $\vb$ is a polynomial in the $\vb_\nu$. Let $p$ the minimal index such that every monomial in $\vb$ is divisible by a $\vb_r$ with $r\leq p$. From Lemma \ref{lem:filtration}, we know that $p\leq i-1$ (and in particular $i\geq 1$). Thus, 
 $$\vb_i(j)\vb_m(n)\vb_{i-1} \neq \vb_i\vb_m(2^{i-m}j+n)\vb_{i-1}$$
 as their difference is detected by a nonzero class $a^ku^l\vb\vb_{i-1}$ (indeed: this could only be zero if $k\geq 2^i-1$, but $k< 2^{p+1}-1$). By the minimality of $i$, we have 
 $$\vb_m(2^{i-m}j+n)\vb_{i-1} = \vb_{i-1}(2j)\vb_m(n).$$
 In addition, $\vb_i\vb_{i-1}(2j) = \vb_i(j)\vb_{i-1}$ because there is no element of higher filtration in the same degree as $\vb_{i-1}\vb_i(j)$ by Lemma \ref{lem:filtration}. The last two equations combine to the chain of equalities
\begin{align*}
\vb_i(j)\vb_m(n)\vb_{i-1} &= \vb_i\vb_{i-1}(2j) \vb_m(n) \\
&=\vb_i\vb_m(2^{i-m}j+n)\vb_{i-1}
\end{align*} 
 
 This is a contradiction to the inequality above. Thus, 
 $$\vb_i(j)\vb_m(n) = \vb_i\vb_m(2^{i-m}j+n)$$
 is always true for $i\geq m$. 
\end{proof}

\begin{remark}
 We remark that all the work above for the multiplicative extensions was actually necessary. For example, we get from the homotopy fixed point spectral sequence only that $\vb_5\vb_1(1) - \vb_5(1)\vb_1(-15)$ has filtration at least $1$. But there are indeed classes in this degree of higher filtration, for example $a^8\vb_3^3\vb_4$.
\end{remark}

\bibliographystyle{alpha}
\bibliography{Chromatic}
\end{document}